\theoremstyle{plain}
\newtheorem{theorem}{Theorem}[section]
\newtheorem{main}{Theorem} 
\newtheorem{proposition}[theorem]{Proposition}
\newtheorem{lemma}[theorem]{Lemma}
\newtheorem{corollary}[theorem]{Corollary}
\theoremstyle{definition}
\newtheorem{definition}[theorem]{Definition}
\newtheorem{notation}[theorem]{Notation}
\newtheorem{example}{Example}
\theoremstyle{remark}
\newtheorem{remark}[theorem]{Remark}
\numberwithin{table}{section} \numberwithin{figure}{section}
\numberwithin{equation}{section} \numberwithin{example}{section}
\DeclareMathOperator{\id}{id} \DeclareMathOperator{\D}{D}
\DeclareMathOperator{\GL}{GL} \DeclareMathOperator{\orthogonal}{O}
\DeclareMathOperator{\SO}{SO} 
\DeclareMathOperator{\res}{res} \DeclareMathOperator{\Iso}{Iso}
 \DeclareMathOperator{\Irr}{Irr}
\DeclareMathOperator{\Vect}{Vect} \DeclareMathOperator{\Rep}{Rep}
\DeclareMathOperator{\ind}{ind} \DeclareMathOperator{\Hom}{Hom}
\DeclareMathOperator{\ext}{ext} 
 \DeclareMathOperator{\pr}{pr}
\DeclareMathOperator{\modulo}{mod}
\newcommand{\complex}[1]{\mathcal{#1}}
\newcommand{\complexK}{\complex{K}}
\newcommand{\complexL}{\complex{L}}
\newcommand{\lineK}{{\bar{\complexK}}}
\newcommand{\lineL}{{\bar{\complexL}}}
\newcommand{\hatL}{{\hat{\complexL}}}
\newcommand{\rot}{\mathrm{rot}}
\newcommand{\tetra}{\mathrm{T}}
\newcommand{\octa}{\mathrm{O}}
\newcommand{\icosa}{\mathrm{I}}
\newcommand{\vect}{\mathrm{vect}}
\newcommand{\field}[1]{\mathbb{#1}}
\newcommand{\C}{\field{C}}
\newcommand{\R}{\field{R}}
\newcommand{\Z}{\field{Z}}
\newcommand{\N}{\field{N}}
\begin{document}

\title[equivariant vector bundles over two-sphere]
{Classification of equivariant vector bundles over two-sphere}

\author{Min Kyu Kim}
\address{Department of Mathematics Education,
Gyeongin National University of Education, San 59-12, Gyesan-dong,
Gyeyang-gu, Incheon, 407-753, Korea}

\email{mkkim@kias.re.kr}

\subjclass[2000]{Primary 57S25, 55P91 ; Secondary 20C99}

\keywords{equivariant vector bundle, equivariant homotopy,
representation}

\maketitle

\begin{abstract}
We classify equivariant topological complex vector bundles over
two-sphere under a compact Lie group (not necessarily effective)
action. It is shown that nonequivariant Chern classes and isotropy
representations at (at most) three points are sufficient to classify
equivariant vector bundles except a few cases. To do it, we
calculate homotopy of the set of equivariant clutching maps. In
other papers, we also give classifications over two-torus, real
projective plane, Klein bottle.
\end{abstract}

\section{Introduction} \label{section: introduction}
In topology, nonequivariant complex vector bundles can be classified
just by calculating their Chern classes under a dimension condition
\cite{P}. However, there is no such general result on equivariant
vector bundles. Instead, a few results on extreme cases are known.
Let us mention four of them. Let a compact Lie group $G$ act on a
topological space $X.$
\begin{itemize}
  \item If $G$ is trivial, then $G$-vector bundles over $X$ are just
  nonequivariant vector bundles, and are classified by their
  Chern classes in $H^* (X).$
  \item In \cite[Proposition 1.6.1]{At}, \cite[p. 132]{S},
  $G$-vector bundles over a free $G$-space $X$ are
  in one-to-one correspondence with nonequivariant
  vector bundles over $X/G,$ and are classified by Chern classes
  in $H^* (X/G).$
  \item For a closed subgroup $H$ of $G$ and any point $x$ in $X=G/H,$
  $G$-vector bundles over $X$ are classified by their
  isotropy representations at $x$ which are contained in $\Rep(G_x).$
  For this, see \cite[p. 130]{S}, \cite[Proposition II.3.2]{B}.
  In \cite{CKMS}, $G$-vector bundles
  over $X=S^1$ are classified by their isotropy representations at
  (at most) two points.
\end{itemize}
Since invariants live in $H^* (X),$ $H^* (X/G),$ $\Rep (G_x),$ these
results might be considered to have three different types. In this
paper, we classify equivariant topological complex vector bundles
over two-sphere under a compact Lie group (not necessarily
effective) action. Readers will see those different types at once in
it. By developing ideas and machineries of it, classification over
two-torus is given in \cite{Ki1} which shows that our method is not
restrictive. Classifications over real projective plane and Klein
bottle are also obtained as corollaries in \cite{Ki2}, \cite{Ki3}.
After these, classification of equivariant holomorphic vector
bundles over Riemann sphere under a holomorphic complex reductive
group (not necessarily effective) action will follow which is an
equivariant version of Grothendieck's Theorem for holomorphic vector
bundles on Riemann sphere \cite{G}, and is the motivation for the
paper.

To state main results, we need introduce several notations. It is
well-known that a topological action on $S^2$ by a compact Lie group
is conjugate to a linear action \cite[Theorem 1.2]{Ko}, \cite{CK}.
Let a compact Lie group $G$ act linearly (not necessarily
effectively) on the unit sphere $S^2$ in $\R^3$ through a
representation $\rho : G \rightarrow \orthogonal(3).$ Let $\Vect_G
(S^2)$ be the set of isomorphism classes of topological complex
$G$-vector bundles over $S^2$ with the given $G$-action. For a
bundle $E$ in $\Vect_G (S^2)$ and a point $x$ in $S^2,$ denote by
$E_x$ the isotropy $G_x$-representation on the fiber at $x.$ It is
needed to decompose $\Vect_G (S^2)$ as the sum of more simple
subsemigroups. For this, some terminologies are introduced. Put $H =
\ker \rho,$ i.e. the kernel of the $G$-action on $S^2.$ Let
$\Irr(H)$ be the set of characters of irreducible complex
$H$-representations which has a $G$-action defined as
\begin{equation*}
(g \cdot \chi) (h) = \chi (g^{-1} h g)
\end{equation*}
for $\chi \in \Irr(H),$ $g \in G,$ $h \in H.$ Sometimes, we also use
the notation $\Irr(H)$ to denote the set of isomorphism classes of
irreducible complex $H$-representations themselves. For $\chi \in
\Irr(H),$ an $H$-representation is called $\chi$-isotypical if its
character is a multiple of $\chi.$ We slightly generalize this
concept. For $\chi \in \Irr(H)$ and a compact Lie group $K$
satisfying $H \lhd K < G$ and $K \cdot \chi = \chi,$ a
$K$-representation $W$ is called $\chi$-\textit{isotypical} if
$\res_H^K W$ is $\chi$-isotypical, and denote by $\Vect_K (S^2,
\chi)$ the set
\begin{equation*}
\Big\{ ~ [E] \in \Vect_K (S^2) ~ \big| ~ E_x \text{ is }
\chi\text{-isotypical for each } x \in S^2 ~ \Big\}
\end{equation*}
where $S^2$ delivers the restricted $K$-action. In \cite{CKMS}, the
(\textit{isotypical}) \textit{decomposition} of a $G$-bundle is
defined, and from this a semigroup isomorphism is constructed to
satisfy
\begin{equation*}
\Vect_G (S^2) \cong \bigoplus_{\chi \in \Irr(H)/G} \Vect_{G_\chi}
(S^2, \chi)
\end{equation*}
where $G_\chi$ is the isotropy subgroup of $G$ at $\chi.$ As a
result, our classification is reduced to $\Vect_{G_\chi} (S^2,
\chi)$ for each $\chi \in \Irr(H).$ Details are found in
\cite[Section 2]{CKMS}.

The classification of $\Vect_{G_\chi} (S^2, \chi)$ is highly
dependent on the $G_\chi$-action on the base space $S^2,$ i.e. on
the image $\rho(G_\chi),$ and classification is actually given case
by case according to $\rho(G_\chi).$ So, we need to describe
$\rho(G_\chi)$ in a moderate way. For this, we would list all
possible $\rho(G_\chi)$'s up to conjugacy, and then assign an
equivariant simplicial complex structure of $S^2$ to each finite
$\rho(G_\chi).$ Cases of nonzero-dimensional $\rho(G_\chi)$ are
relatively simple and separately dealt with as special cases. First,
let us define some polyhedra. Let $P_m$ for $m \ge 3$ be the regular
$m$-gon on $xy$-plane in $\R^3$ whose center is the origin and one
of whose vertices is $(1,0,0).$ Then,
\begin{enumerate}
  \item $| \complexK_m |$ is defined as the boundary
  of the convex hull of $P_m,$ $S=(0,0,-1),$
  $N=(0,0,1),$
  \item $| \complexK_\tetra |$ is defined as
  the tetrahedron which is the boundary of the
  convex hull of four points
  $(\frac {1} {3}, \frac {1} {3}, \frac {1} {3}),$
  $(-\frac {1} {3}, -\frac {1} {3}, \frac {1} {3}),$
  $(-\frac {1} {3}, \frac {1} {3}, -\frac {1} {3}),$
  $(\frac {1} {3}, -\frac {1} {3}, -\frac {1} {3}),$
  and which is inscribed to $| \complexK_4 |,$
  \item $| \complexK_\icosa |$ is defined as
  an icosahedron which has the origin as the center.
\end{enumerate}
With these, denote natural simplicial complex structures on $|
\complexK_m |,$ $| \complexK_\tetra |,$ $| \complexK_\icosa |$ by
$\complexK_m,$ $\complexK_\tetra,$ $\complexK_\icosa,$ respectively.
Then, it is well-known that each closed subgroup of $\SO(3)$ is
conjugate to one of the following subgroups \cite[Theorem 11]{R}:
\begin{enumerate}
  \item $\Z_n$ generated by the rotation $a_n$ through the angle
  $2\pi/n$ around $z$-axis,
  \item $\D_n$ generated by $a_n$ and the rotation $b$ through
  the angle $\pi$ around $x$-axis,
  \item the tetrahedral group $\tetra$ which is the rotation group
  of $| \complexK_\tetra |,$
  \item the octahedral group $\octa$ which is the rotation group
  of $| \complexK_4 |,$
  \item the icosahedral group $\icosa$ which is the rotation group
  of $| \complexK_\icosa |,$
  \item $\SO(2)$ which is the set of rotations around $z$-axis,
  \item $\orthogonal(2)$ which is defined as $\langle \SO(2), b \rangle,$
  \item $\SO(3)$ itself.
\end{enumerate}
Note that $\tetra \subset \octa,$ and pick an element $o_0$ of
$\octa \setminus \tetra$ so that $\octa = \langle \tetra, o_0
\rangle.$ And, denote by $Z$ the centralizer $\{ \id, -\id \}$ of
$\orthogonal(3).$ In Section \ref{section: closed subgroups}, it is
shown that each closed subgroup of $\orthogonal(3)$ is conjugate to
an $R$-entry of Table \ref{table: introduction} (there is no
literature on this as far as the author knows). In the table, the
notation $\times$ means internal direct product of two subgroups in
$\orthogonal(3).$ Henceforward, it is assumed that $\rho(G_\chi) =
R$ for some $R.$ Let $\orthogonal(3)$ and their subgroups act
naturally on $\R^3.$ To each finite $R,$ we assign a simplicial
complex $\complexK_R$ of Table \ref{table: introduction} where
$\complexK_\octa$ is defined in the below. Each $|\complexK_R|$ is
invariant under the natural $R$-action on $\R^3$ as shown in Section
\ref{section: simplicial complex}, and is also invariant under the
$G_\chi$-action defined through $\rho.$ So, we assume that
$|\complexK_R|$ delivers the $G_\chi$-action, and that $\complexK_R$
delivers the $G_\chi$-action inherited from it. Henceforward, we
consider $|\complexK_R|$ as the base space instead of the usual
two-sphere $S^2$ when $R$ is finite.

\begin{table}[ht!]
\begin{center}
{\footnotesize
\begin{tabular}{l||c|c|c}
$R$                                             & $\complexK_R$        & $D_R$                                & $d^{-1}$  \\

\hhline{=#=|=|=}

$\D_n,$ $n > 1$                                 & $\complexK_n$        & $[v^0, b(e^0)]$                      & $S$     \\
$\Z_n$                                          & $\complexK_n$        & $|e^0|$                              & $S$     \\

\hline

$\D_n \times Z,$ odd $n$                        & $\complexK_{2n}$     & $[v^0, b(e^0)]$                      & $S$    \\
$\langle a_n, -b \rangle,$ odd $n$              & $\complexK_{2n}$     & $[b(e^0), v^1] \cup [v^1, b(e^1)]$   & $S$    \\
$\Z_n \times Z,$ odd $n$                        & $\complexK_{2n}$     & $|e^0|$                              & $S$     \\

\hline

$\D_n \times Z,$ even $n$                       & $\complexK_n$        & $[v^0, b(e^0)]$                      & $S$   \\
$\langle a_n, -b \rangle,$ even $n$             & $\complexK_n$        & $[v^0, b(e^0)]$                      & $S$   \\
$\langle -a_n, b \rangle,$ odd $n/2,$ $n > 2$   & $\complexK_{n/2}$    & $|e^0|$                              & $S$   \\
$\langle -a_n, b \rangle,$ even $n/2$           & $\complexK_n$        & $[v^0, b(e^0)]$                      & $S$   \\
$\langle -a_n, -b \rangle,$ odd $n/2,$ $n>2$    & $\complexK_n$        & $[b(e^0), v^1] \cup [v^1, b(e^1)]$   & $S$   \\
$\langle -a_n, -b \rangle,$ even $n/2$          & $\complexK_n$        & $[v^0, b(e^0)]$                      & $S$   \\
$\Z_n \times Z,$ even $n,$ $n>2$                & $\complexK_n$        & $|e^0|$                              & $S$   \\
$\langle -a_n \rangle,$ odd $n/2,$ $n>2$        & $\complexK_{n/2}$    & $|e^0|$                              & $S$   \\
$\langle -a_n \rangle,$ even $n/2$              & $\complexK_n$        & $|e^0|$                              & $S$   \\

\hline

$\tetra$                                        & $\complexK_\tetra$   & $[v^0, b(e^0)]$                      & $b(f^{-1})$     \\
$\octa$                                         & $\complexK_\octa$    & $[v^0, b(e^0)]$                      & $b(f^{-1})$     \\
$\icosa$                                        & $\complexK_\icosa$   & $[v^0, b(e^0)]$                      & $b(f^{-1})$     \\

\hline

$\langle \tetra, -o_0 \rangle$                  & $\complexK_\tetra$   & $[v^0, b(e^0)]$                      & $b(f^{-1})$     \\
$\tetra \times Z$                               & $\complexK_\octa$    & $|e^0|$                              & $b(f^{-1})$     \\
$\octa \times Z$                                & $\complexK_\octa$    & $[v^0, b(e^0)]$                      & $b(f^{-1})$     \\
$\icosa \times Z$                               & $\complexK_\icosa$   & $[v^0, b(e^0)]$                      & $b(f^{-1})$     \\

\hline

$\orthogonal(3)$                                &                      & $\{ v^0 \}$                          & $v^0$   \\
$\orthogonal(2) \times Z$                       &                      & $\{ v^0 \}$                          & $S$     \\
$\langle \SO(2), -b \rangle$                    &                      & $\{ v^0 \}$                          & $S$     \\
$\langle \SO(2), -a_2 \rangle$                  &                      & $\{ v^0 \}$                          & $S$     \\
$\SO(3)$                                        &                      & $\{ v^0 \}$                          & $v^0$   \\
$\orthogonal(2)$                                &                      & $\{ v^0 \}$                          & $S$     \\
$\SO(2)$                                        &                      & $\{ v^0 \}$                          & $S$     \\

\end{tabular}}
\caption{\label{table: introduction} $\complexK_R,$ $D_R,$ $d^{-1}$
for closed subgroup $R$}
\end{center}
\end{table}

In dealing with equivariant vector bundles over two-sphere, we need
to consider isotropy representations at a few points (at most three
points) of $|\complexK_R|.$ To specify those points, we introduce
some more notations. When $m \ge 3,$ denote by $v^i$ the vertex
$\exp \big( \frac {2 \pi i \sqrt{-1} ~} m \big) \in \R^2$ of
$\complexK_m,$ and by $e^i$ the edge of $\complexK_m$ connecting
$v^i$ and $v^{i+1}$ for $i \in \Z_m.$ These notations are
illustrated in Figure \ref{figure: simple form K_4}. When we use the
notation $\Z_m$ to denote an index set, it is just the group $\Z /
m\Z$ of integers modulo $m.$ In Section \ref{section: simplicial
complex}, $\complexK_m,$ $v^i,$ $e^i$ for $m=1,$ $2$ are also
defined. We would define similar notations for $\complexK_\tetra,$
$\complexK_\icosa.$ For $\complexK_\tetra,$ $\complexK_\icosa,$ pick
two adjacent faces in each case, and call them $f^{-1}$ and $f^0.$
And, label vertices of $f^{-1}$ as $v^i$ for $i \in \Z_3$ to satisfy
\begin{enumerate}
  \item $v^0,$ $v^1,$ $v^2$ are arranged
  in the clockwise way around $f^{-1},$
  \item $v^0,$ $v^1$ are contained in $f^{-1} \cap f^0.$
\end{enumerate}
For $i \in \Z_3,$ denote by $e^i$ be the edge connecting $v^i$ and
$v^{i+1},$ and by $f^i$ the face which is adjacent to $f^{-1}$ and
contains the edge $e^i.$ We distinguish the superscripts $-1$ and
$2$ only for $f^i,$ i.e. $f^{-1} \ne f^2$ in contrast to $v^{-1} =
v^2,$ $e^{-1} = e^2.$ These notations are illustrated in Figure
\ref{figure: example of notation}.(a). Here, we define one more
simplicial complex denoted by $\complexK_\octa$ which is the same
simplicial complex with $\complexK_4$ but has the same convention of
notations $v^i, e^i, f^{-1}, f^i$ with $\complexK_\tetra,$
$\complexK_\icosa.$ Also, put $|\complexK_\octa|=|\complexK_4|.$
These notations are illustrated in Figure \ref{figure: simple form
TxZ}.(a). With these notations, we explain for $D_R$-entry of Table
\ref{table: introduction}. To each finite $R,$ we assign a path
$D_R$ (called the (closed) \textit{one-dimensional fundamental
domain}) in $|\complexK_R|$ which is listed in the third column of
Table \ref{table: introduction} where $b(\sigma)$ is the barycenter
of $\sigma$ for any simplex $\sigma$ and $[x,y]$ is the shortest
path in $|\complexK|$ for any simplicial complex $\complexK$ and two
points $x,y$ in $|\complexK|.$ And, let $d^0$ and $d^1$ be boundary
points of $D_R$ such that $d^0$ is nearer to $v^0$ than $d^1.$ Here,
we define one more point $d^{-1}$ for each finite $R$ which is
listed in the fourth column in Table \ref{table: introduction}. If
$R$ is one-dimensional, then denote by $D_R$ the one point set $\{
v^0 = (1,0,0) \},$  and let $d^{-1},$ $d^0,$ $d^1$ be equal to $S,$
$v^0,$ $v^0,$ respectively. Similarly, if $R$ is three-dimensional,
then denote by $D_R$ the one point set $\{ v^0 = (1,0,0) \},$ and
let $d^{-1},$ $d^0,$ $d^1$ be all equal to $v^0.$ So far, we have
defined $d^{-1},$ $d^0,$ $d^1$ for each $R.$ Then, $\{ S, N \}$ or
$\{ d^{-1},$ $d^0,$ $d^1 \}$ are wanted points according to $R,$ and
we will consider the restriction $E|_{\{ S, ~ N \}}$ or $E|_{\{
d^{-1}, ~ d^0, ~ d^1 \}}$ for each $E$ in $\Vect_{G_\chi} (S^2,
\chi).$ Define the following semigroup which will be shown to be
equal to the set of all the restrictions:

\begin{definition} \label{definition: A_R}
For $\chi \in \Irr(H),$ assume that $\rho(G_\chi) = R$ for some $R$
of Table \ref{table: introduction}.
\begin{enumerate}
  \item If $R = \Z_n,$ $\langle a_n, -b \rangle,$ $\SO(2),$
  $\langle \SO(2), -b \rangle,$ then let
  $A_{G_\chi} ( S^2, \chi )$ be the semigroup of pairs
  $(W_S, W_N)$ in $\Rep( G_\chi )^2$
  satisfying
  \begin{enumerate}
    \item[i)] $W_S$ is $\chi$-isotypical,
    \item[ii)] $\res_{(G_\chi)_x}^{G_\chi} W_S \cong
              \res_{(G_\chi)_x}^{G_\chi} W_N$
              for $x = d^0, d^1.$
  \end{enumerate}
  And, let $p_{\vect} : \Vect_{G_\chi} (S^2, \chi) \rightarrow
A_{G_\chi} ( S^2, \chi )$ be the semigroup homomorphism defined as
$[E] \mapsto ( E_S, E_N ).$
  \item Otherwise, let
  $A_{G_\chi} ( S^2, \chi )$ be the semigroup of
  triples $( W_{d^{-1}}, W_{d^0}, W_{d^1} )$ in $\Rep \big(
  (G_\chi)_{d^{-1}} \big) \times \Rep \big( (G_\chi)_{d^0} \big)
  \times \Rep \big( (G_\chi)_{d^1} \big)$ satisfying
  \begin{enumerate}
    \item[i)] $W_{d^{-1}}$ is $\chi$-isotypical,
    \item[ii)] $W_{d^1} \cong ~ ^g W_{d^0}$ if there
          exists $g \in G_\chi$
          such that $g d^0 = d^1,$
    \item[iii)] for any two points $x, x^\prime$ of three points $d^{-1},$ $d^0,$ $d^1,$
          \begin{equation*}
          \res_{(G_\chi)_x \cap (G_\chi)_{x^\prime}}^{(G_\chi)_x} W_x \cong
          \res_{(G_\chi)_x \cap (G_\chi)_{x^\prime}}^{(G_\chi)_{x^\prime}}
          W_{x^\prime}.
          \end{equation*}
\end{enumerate}
And, let $p_{\vect} : \Vect_{G_\chi} (S^2, \chi) \rightarrow
A_{G_\chi} ( S^2, \chi )$ be the semigroup homomorphism defined as
$[E] \mapsto ( E_{d^{-1}}, E_{d^0}, E_{d^1} ).$
\end{enumerate}
\end{definition}
See Definition \ref{definition: conjugate representation} for the
superscript $g.$ Well-definedness of $p_{\vect}$ is proved in Lemma
\ref{lemma: elementary lemma on isotropy representation} and Lemma
\ref{lemma: restricted isotropy representation}. Put $I= \{ 0, 1 \}$
and $I^+= \{ -1, 0, 1 \}.$ And, denote a triple $( W_{d^{-1}},
W_{d^0}, W_{d^1} )$ by $(W_{d^i})_{i \in I^+}.$ Especially, if
$\rho(G_\chi) = \SO(3),$ $\orthogonal(3),$ then $A_{G_\chi} ( S^2,
\chi )$ is equal to the set
\begin{equation*}
\Big\{ ~ (W_{d^i})_{i \in I^+} \in \Rep \Big( (G_\chi)_{v_0} \Big)^3
~ \Big| ~ W_{d^i}\text{'s are the same } \chi \text{-isotypical
representation} \Big\},
\end{equation*}
and $p_\vect$ becomes an isomorphism by classification over
homogeneous space.

Now, we can state main results. Let $c_1 : \Vect_{G_\chi} (S^2,
\chi) \rightarrow H^2 (S^2)$ be the map defined as $[E] \mapsto c_1
(E).$ Denote by $l_R$ the number $|G_\chi| / |G_\chi(D_R)|$ where
$G_\chi(D_R)$ is the subgroup of $G_\chi$ preserving $D_R.$

\begin{main} \label{main: by isotropy and chern}
Assume that $\rho(G_\chi) = R$ is equal to one of $\Z_n,$ $\D_n,$
$\tetra,$ $\octa,$ $\icosa.$ Then, $p_\vect$ is surjective, and
\begin{equation*}
p_\vect \times c_1 : \Vect_{G_\chi} (S^2, \chi) \rightarrow
A_{G_\chi} ( S^2, \chi ) \times H^2 (S^2)
\end{equation*}
is injective. More precisely, bundles in
$p_{\vect}^{-1}(\mathbf{W})$ for each $\mathbf{W}$ in $A_{G_\chi} (
S^2, \chi )$ have all different Chern classes, and $c_1 \big(
p_{\vect}^{-1}(\mathbf{W}) \big)$ is equal to $\big\{ ~ \chi( \id )
\big( l_R k + k_0 \big) ~ \big| ~ k \in \Z ~ \big\}$ where $k_0$ is
dependent on $\mathbf{W}.$
\end{main}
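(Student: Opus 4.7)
The plan is to realize each bundle in $\Vect_{G_\chi}(S^2, \chi)$ via an equivariant clutching construction supported on the one-dimensional subcomplex $G_\chi \cdot D_R \subset |\complexK_R|$. For each rotation group $R$ appearing in the hypothesis, the complement of this orbit is a disjoint union of equivariantly contractible open \emph{caps} whose closures retract onto a single orbit of $d^{-1}$ and onto the orbit of $\{d^0,d^1\}$. By the homogeneous-space classification recalled in the introduction, a $G_\chi$-bundle over each cap is determined up to equivariant isomorphism by its fiber isotropy representation at the central point. Hence every class $[E]$ is encoded by the pair $\bigl(p_\vect(E),[\phi]\bigr)$, where $\phi$ is an equivariant clutching isomorphism over $G_\chi\cdot D_R$ between the two cap bundles determined by $p_\vect(E)$.

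For surjectivity of $p_\vect$, given $\mathbf{W}\in A_{G_\chi}(S^2,\chi)$, I would produce such a $\phi$ realizing $\mathbf{W}$. Conditions (i)--(iii) of Definition \ref{definition: A_R} are exactly what is needed to define an equivariant fiberwise isomorphism at the vertices $d^{-1},d^0,d^1$, from which the whole vertex set of $G_\chi \cdot D_R$ is covered by translation. Extending across the interior of one fundamental edge of $D_R$ reduces to choosing a path in a fiberwise $\Iso$-space whose structure group is the edge stabilizer; such a path exists because that $\Iso$-space is non-empty by construction and path-connected by the isotypical hypothesis.

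For injectivity of $p_\vect\times c_1$, two bundles in $p_\vect^{-1}(\mathbf{W})$ correspond to clutching maps $\phi,\phi'$ agreeing at the vertices of $G_\chi\cdot D_R$, and their difference $\phi^{-1}\phi'$ is an equivariant self-automorphism of the one-skeleton bundle trivial at every vertex. Equivariance reduces such an automorphism to its restriction to a single fundamental edge, and the $\chi$-isotypicality of the fiber together with Schur's lemma identifies the pointwise automorphism group with $\GL$ of the multiplicity space of $\chi$. Hence the classes of such self-automorphisms, modulo equivariant homotopy rel endpoints, form a copy of $\pi_1(\GL)\cong\Z$, and $p_\vect^{-1}(\mathbf{W})$ becomes a $\Z$-torsor. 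The Chern class of the bundle corresponding to the integer $k$ in this torsor is the total degree of the associated clutching map into $\GL$ summed over all $l_R$ edges of the orbit of $D_R$, weighted by the dimension $\chi(\id)$; this yields the arithmetic progression $\chi(\id)(l_R k + k_0)$ with $k_0$ fixed by any baseline realization of $\mathbf{W}$.

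The principal obstacle will be the precise execution of this degree count in the equivariant setting, in particular showing that the generator of the $\Z$-action on $p_\vect^{-1}(\mathbf{W})$ shifts $c_1$ by exactly $\chi(\id)\cdot l_R$ rather than a proper divisor of it. I would handle this by first treating a canonical trivial isotypical bundle of the form $S^2\times\chi^{\oplus m}$, for which $c_1=0$ and the torsor action can be made explicit, and then transporting the statement to arbitrary $\mathbf{W}$ using the torsor structure together with the homogeneous-space classification over the caps.
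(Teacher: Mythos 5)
Your plan is, in substance, the paper's own proof: realize every bundle by an equivariant clutching construction over the one-skeleton $G_\chi\cdot D_R$ with the cap bundles pinned down by the isotropy representation over the orbit of $d^{-1}$; get surjectivity of $p_\vect$ from the representation-theoretic compatibility conditions of Definition \ref{definition: A_R} (which guarantee the needed equivariant pointwise gluings at the vertices) plus path-connectedness along a fundamental edge; and identify the fiber of $p_\vect$ with $\pi_1$ of a product of general linear groups of multiplicity spaces via Schur's lemma, the generator shifting $c_1$ by $\chi(\id)\,l_R$.

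One step is asserted before the only fact that justifies it. You claim that two clutching maps over $\mathbf W$ may be arranged to agree at the vertices, that their difference taken \emph{rel endpoints} yields $\pi_1(\GL)\cong\Z$, and hence that $p_\vect^{-1}(\mathbf W)$ is a $\Z$-torsor. But isomorphism of the resulting bundles corresponds to homotopy through equivariant clutching maps in which the values at $d^0,d^1$ are free to move inside the whole path component of admissible pointwise gluings there; homotopy rel endpoints is strictly finer, and the honest count is $\pi_1(\Iso_H)$ modulo the images of $\pi_1$ of the two endpoint spaces (Lemma \ref{lemma: relative homotopy}). This is precisely where Theorem \ref{main: by isotropy and chern} parts company with Theorem \ref{main: not by isotropy and chern class}: for $\Z_n\times Z$ with odd $n$ your count would also produce $\Z$, yet $p_\vect^{-1}(\mathbf W)$ there has exactly two elements. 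For the groups of Theorem \ref{main: by isotropy and chern} the claim is rescued in one of two ways, and you must commit to one: either check that the endpoint components are simply connected (Proposition \ref{proposition: psi for cyclic}.(3), using that the relevant stabilizers modulo $H$ are cyclic for these $R$), so that the relative homotopy set really is $\Z$; or run your degree count first, getting only a surjection $\Z\twoheadrightarrow p_\vect^{-1}(\mathbf W)$ with $c_1$ of the $k$-th class equal to $\chi(\id)(l_Rk+k_0)$, and deduce injectivity of the surjection a posteriori from injectivity of this arithmetic progression. With that repaired, your model-bundle-plus-transport scheme for the degree count is sound, since the increment $\chi(\id)\,l_R$ produced by inserting one generator loop on a fundamental edge and propagating it equivariantly over all $l_R$ translates is independent of the base clutching map.
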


\begin{main} \label{main: not by isotropy and chern class}
Assume that $\rho(G_\chi) = R$ is equal to one of $\Z_n \times Z$
with odd $n,$ $\langle -a_n \rangle$ with even $n/2.$ For each
$\mathbf{W}$ in $A_{G_\chi} ( S^2, \chi ),$ its preimage
$p_{\vect}^{-1} ( \mathbf{W} )$ has exactly two elements which have
the same Chern class. Also, $[E \oplus E_1] \ne [E \oplus E_2]$ for
any bundles $E,$ $E_1,$ $E_2$ in $\Vect_{G_\chi} (S^2, \chi)$ such
that $p_\vect([E_1]) = p_\vect([E_2])$ and $[E_1] \ne [E_2].$
\end{main}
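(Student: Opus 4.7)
The plan is to extract an additional $\Z/2$-valued invariant from the equivariant clutching description of bundles and to show it is the only ambiguity remaining beyond $p_\vect$. The equality of Chern classes within each fiber of $p_\vect$ follows from a general observation: for both admissible subgroups $R$, the image $\rho(G_\chi)$ contains an element of $\orthogonal(3)$ with determinant $-1$ (namely $-\id$ itself in the first case, or $-a_n$ in the second), so the induced action on $H^2(S^2) \cong \Z$ is multiplication by $-1$. Since every $G_\chi$-bundle $E$ satisfies $g^* c_1(E) = c_1(E)$ for every $g \in G_\chi$, we obtain $c_1(E) = -c_1(E)$, forcing $c_1(E) = 0$. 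Thus all bundles in $p_\vect^{-1}(\mathbf{W})$ share Chern class zero.

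First, I would realize $p_\vect^{-1}(\mathbf{W})$ as a set of equivariant clutching homotopy classes. After fixing $\mathbf{W} \in A_{G_\chi}(S^2,\chi)$, the chosen isotropy representations trivialize the bundle over the $G_\chi$-orbit of the boundary of $D_R$, and a $G_\chi$-bundle over $|\complexK_R|$ with this $p_\vect$-type is reconstructed from the gluing data along an invariant tubular neighborhood of $D_R = |e^0|$. This sets up a bijection between $p_\vect^{-1}(\mathbf{W})$ and equivariant homotopy classes of certain clutching maps relative to $\mathbf{W}$-fixed boundary values.

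Second, I would carry out the homotopy computation. In Theorem A's cases, this set is in bijection with $\Z$ via the winding of the clutching map, which matches $c_1$ up to normalization. In the two exceptional cases considered here, the additional orientation-reversing element induces an involution on the space of clutching maps that identifies a winding with its negative, collapsing the $\Z$ to the single class $0$. The residual ambiguity is then exactly a $\Z/2$: the sign choice needed to extend a clutching map equivariantly across the orientation-reversing twist. This yields $|p_\vect^{-1}(\mathbf{W})| = 2$ with both elements sharing Chern class $0$.

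For the cancellation statement, I would verify that the resulting $\Z/2$-valued invariant $\nu$ is additive under Whitney sum: the clutching data for $E \oplus E_i$ is the block-diagonal juxtaposition of those for $E$ and $E_i$, so $\nu([E \oplus E_1]) - \nu([E \oplus E_2]) = \nu([E_1]) - \nu([E_2])$, which is nonzero whenever $[E_1] \neq [E_2]$ and $p_\vect([E_1]) = p_\vect([E_2])$. The principal obstacle will be the clutching-homotopy computation itself, in particular ruling out classes beyond the expected $\Z/2$ and handling both exceptional subgroups uniformly, since only the first contains $-\id$ itself while the second realizes the orientation-reversal only in combination with a nontrivial rotation.
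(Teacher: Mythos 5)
Your opening observation about Chern classes is correct and is actually cleaner than what the paper does: both admissible $R$'s contain an orientation-reversing isometry ($-\id$, resp.\ $-a_n$), every $G_\chi$-bundle $E$ satisfies $E\cong g^*E$, and $g^*=-1$ on $H^2(S^2)$, so $c_1(E)=0$ for every bundle in $\Vect_{G_\chi}(S^2,\chi)$. That disposes of the ``same Chern class'' clause in one line, whereas the paper verifies triviality of $c_1$ by explicit computation with clutching maps and line bundles.

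The rest of the argument has a genuine gap, located precisely where the paper has to work hardest. You assert a \emph{bijection} between $p_\vect^{-1}(\mathbf W)$ and equivariant-homotopy classes of clutching maps. But only one direction is automatic: homotopic clutching maps give isomorphic bundles. The converse fails to be free exactly in these two cases, and the entire content of the count ``$=2$'' is the analysis of when two non-homotopic clutching maps still produce isomorphic bundles. The paper's criterion (its Lemma on the equivalent condition for isomorphism) is that $F_{V_B}/\Phi\cong F_{V_B}/\Phi'$ iff $\Phi'=(p^*\Theta)\Phi(p^*\Theta)^{-1}$ for some $G_\chi$-automorphism $\Theta$ of $F_{V_B}$; one must then put $\Theta$ into a normal form on the equator ($\eta_S|_{|\bar e_S^0|\cup|\bar e_S^1|}\simeq \sigma_0^j\wedge(\sigma_0^{tr})_{-1}^j$, using that $\eta_S$ extends over the hemisphere and is therefore null-homotopic on its boundary) and compute that conjugation shifts the winding along $\bar D_R$ by exactly $-2j$. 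This is what makes the parity of the winding a well-defined invariant of the \emph{bundle}, which is needed both for $|p_\vect^{-1}(\mathbf W)|=2$ and for the additivity argument in your last paragraph. Your proposal never confronts the automorphism action, so the invariant $\nu$ is only defined on clutching maps, not on isomorphism classes.

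Separately, the mechanism you describe for arriving at $\Z/2$ is not coherent as stated: an involution identifying a winding $k$ with $-k$ would collapse $\Z$ to $\N$, not to a point, and would not leave a residual $\Z/2$. What actually happens is that the orientation-reversing element forces the endpoint constraint $\Phi(\bar d^1)=g_0\Phi(\bar d^0)^{-1}g_0^{-1}$, whose inversion acts by $-1$ on $\pi_1(\Iso_H(V_S,V_N))\cong\Z$; consequently moving the basepoint value $\Phi(\bar d^0)$ around a loop of class $[\delta]$ shifts the relative winding by $-2[\delta]$. The identification is by \emph{even translations}, $k\sim k+2j$, not by negation, and the surviving invariant is the parity class in $\Z/2$. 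To complete the proof along your lines you would need (i) this corrected homotopy computation and (ii) the automorphism analysis above showing that parity descends to isomorphism classes; with (ii) in hand your block-diagonal additivity argument for the failure of cancellation does go through, and it is the same parity argument the paper uses.
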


\begin{main} \label{main: only by isotropy}
Assume that $\rho(G_\chi) = R$ for some $R$ of Table \ref{table:
introduction} which is not equal to any group appearing in the above
two theorems. Then, $p_{\vect}$ is an isomorphism.
\end{main}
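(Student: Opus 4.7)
The plan is to split Theorem \ref{main: only by isotropy} into the positive-dimensional and finite regimes for $R$. For $R = \SO(3)$ or $\orthogonal(3)$, $G_\chi$ acts transitively on $S^2$ and the classification over homogeneous spaces cited in the introduction already yields $p_\vect$ as an isomorphism, as is noted after Definition \ref{definition: A_R}. For the remaining positive-dimensional entries in Table \ref{table: introduction}, $S^2$ splits $G_\chi$-equivariantly as a southern cap, a generic orbit (or union of generic orbits), and a northern cap; I would reconstruct a bundle from the isotropy representations at the pole orbits and glue across the generic orbits by an equivariant clutching map whose $\pi_0$ is a singleton once the boundary isotropies are pinned down, because the stabilizer along a circular orbit is positive-dimensional and the compatibility axiom at $d^0, d^1$ in Definition \ref{definition: A_R} is exactly what is needed for the clutching to exist.

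For finite $R$ I would invoke the clutching construction underlying the paper: $|\complexK_R|$ is the $G_\chi$-orbit of a closed fundamental region $F_R$ whose boundary lies in $G_\chi \cdot D_R$ and whose vertices are the $G_\chi$-orbits of the special points listed in the last two columns of Table \ref{table: introduction}. A $G_\chi$-bundle over $|\complexK_R|$ is equivalent to a $G_\chi(F_R)$-bundle over $F_R$ together with an equivariant clutching along $\partial F_R$. Since $F_R$ is contractible with a predictable isotropy stratification, a bundle over $F_R$ is determined up to isomorphism by the isotropy representations at its vertices, and the compatibility axioms of Definition \ref{definition: A_R} are precisely what is needed to extend the vertex data to a bundle over $F_R$; this gives surjectivity of $p_\vect$. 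For injectivity I would show that the $\pi_0$ of equivariant clutching maps, modulo the vertex gauge action, is a singleton for each $R$ listed in the theorem.

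The contrast with Theorems \ref{main: by isotropy and chern} and \ref{main: not by isotropy and chern class} is the guide: in those theorems this $\pi_0$ is $\Z$ (detected by the first Chern class) and $\Z/2$ (producing the two-element fibers of $p_\vect$), respectively. For every $R$ under Theorem \ref{main: only by isotropy}, $R$ contains an additional involution --- typically $-\id$ itself, or $-\id$ combined with a rotation or reflection in a manner dictated by the parity conditions on $n$ or $n/2$ recorded in Table \ref{table: introduction} --- that reverses the clutching annulus while preserving the vertex data. This involution acts on the relevant $\pi_0$ by identifications that collapse the $\Z$ or $\Z/2$ of the other two theorems to a single element, leaving $p_\vect$ as a bijection.

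The main obstacle is a uniform execution of this $\pi_0$-trivialization across the numerous rows of Table \ref{table: introduction} falling under Theorem \ref{main: only by isotropy}: each row carries distinct parity conditions and a distinct generator for the trivializing involution, and the geometry of $F_R$ shifts accordingly (for instance $\langle -a_n, b \rangle$ behaves differently depending on the parity of $n/2$). I would organize the case analysis by underlying simplicial complex --- $\complexK_n$, $\complexK_{2n}$, $\complexK_\tetra$, $\complexK_\octa$, $\complexK_\icosa$ --- and reduce each row to one of a small number of model clutching calculations on the annular neighborhood of $\partial F_R$, checking in each reduction that the relevant involution acts transitively on the set of $\pi_0$-classes compatible with a fixed triple $(W_{d^{-1}}, W_{d^0}, W_{d^1})$ (or fixed pair $(W_S, W_N)$ in the cases covered by Definition \ref{definition: A_R}(1)).
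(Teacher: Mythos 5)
Your overall architecture (fundamental domain, equivariant clutching along its boundary, reduction to a $\pi_0$ computation, with the homogeneous and positive-dimensional cases split off) matches the paper's. The gap is in the mechanism you propose for why that $\pi_0$ is a singleton in the finite cases. You assert that each $R$ of Theorem~\ref{main: only by isotropy} contains an extra involution which ``acts on the relevant $\pi_0$ by identifications that collapse the $\Z$ or $\Z/2$ of the other two theorems to a single element.'' This is not how the collapse happens, and as stated it would not work: equivariant clutching maps are the \emph{fixed points} of the group action on preclutching maps, not orbits, so there is no residual action of an element of $G_\chi$ on $\pi_0(\Omega_{V_B})$ left to quotient by --- $\pi_0$ of a fixed-point set is not the quotient of a nonequivariant $\pi_0$ by the group. (The only place the paper identifies distinct homotopy classes after the fact is via bundle automorphisms, Lemma~\ref{lemma: equivalent condition for isomorphism}, and that is precisely the Theorem~\ref{main: not by isotropy and chern class} situation, where the fibers of $p_\vect$ remain of size two; it is not what happens in Theorem~\ref{main: only by isotropy}.)

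What actually trivializes $\pi_0$ in the paper is that the extra symmetries \emph{enlarge the isotropy groups along the one-dimensional fundamental domain and at its endpoints}, which constrains the clutching maps before any homotopy is taken. Concretely: if the stabilizer of the interior of $D_R$ is strictly larger than $H$ (e.g.\ $\Z_n\times Z$ with even $n$, where the equatorial reflection $-a_n^{n/2}$ fixes $D_R$ pointwise), the restricted clutching maps are forced to take values in a path component of a pointwise clutching space that is simply connected by Proposition~\ref{proposition: psi for cyclic}.(3) (via the fibration $\GL(l,\C)/\prod_k\GL(l_k,\C)$); and if only the endpoints $d^0,d^1$ acquire larger stabilizers (e.g.\ $\langle a_n,-b\rangle$), the endpoint-constraint subspaces include into $\Iso_H$ surjectively on $\pi_1$ by Lemma~\ref{lemma: reduce to smaller matrix cyclic extension}, so the relative homotopy set of Lemma~\ref{lemma: relative homotopy} is a point. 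Your plan omits both of these ingredients, and without them the case-by-case ``$\pi_0$-trivialization'' you defer to the end cannot be carried out. A smaller point: for the one-dimensional $R$ with $R\ne\SO(3),\orthogonal(3)$, the generic-orbit stabilizers are finite (often trivial); the singleton $\pi_0$ there comes from the clutching map being determined by its value at a single point of the equator via transitivity of the $G_\chi$-action, with that value ranging over a connected component of a pointwise clutching space --- not from positive-dimensional stabilizers along the orbit.
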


Theorem \ref{main: only by isotropy} says that isotropy
representations at some points classify equivariant vector bundles
for some $R$'s. Comparing to this, Theorem \ref{main: by isotropy
and chern}, \ref{main: not by isotropy and chern class} might be
unsatisfactory to some readers because statements on Chern classes,
especially $k_0$, are not so concrete. To handle this, we show the
following:

\begin{main} \label{main: reduction to line bundle}
Assume that $\rho(G_\chi)=R$ for some $R$ appearing in Theorem
\ref{main: by isotropy and chern}, \ref{main: not by isotropy and
chern class}. Then, $\Vect_{G_\chi} (S^2, \chi)$ is isomorphic to
$\Vect_R (S^2)$ as semigroups, and $\Vect_R (S^2)$ is generated by
line bundles. Also, $A_R (S^2, \id)$ is generated by all the
elements with one-dimensional entries. The number of such elements
is equal to
\begin{equation*}
\left\{
  \begin{array}{ll}
    |R_S| \times |R_N|                              &  \text{if } R = \Z_n,   \\
    |R_{d^{-1}}| \times |R_{d^0}| \times |R_{d^1}|  &  \text{if } R \ne
\Z_n
  \end{array}
\right.
\end{equation*}
where $R_x$ is the isotropy subgroup at $x \in S^2$ and we denote
simply by $\id$ the trivial character of the trivial group.
\end{main}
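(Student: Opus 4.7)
The plan is to derive the theorem in three stages: first, establish the semigroup isomorphism $\Vect_{G_\chi}(S^2, \chi) \cong \Vect_R(S^2)$, which reduces the problem to the trivial-kernel case; second, show that $A_R(S^2, \id)$ is generated by one-dimensional triples and verify the counting formula; third, propagate the generation from $A_R(S^2, \id)$ to $\Vect_R(S^2)$ via Theorems~\ref{main: by isotropy and chern} and~\ref{main: not by isotropy and chern class}.

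For the first stage, I exploit the $G_\chi$-invariance of $\chi$ together with the fact that the isotropy subgroups $R_{d^i}$ are cyclic (hence have trivial Schur multiplier) to extend $V_\chi$ locally on each $(G_\chi)_{d^i}$. The isotypical decomposition then identifies triples $(W_{d^i}) \in A_{G_\chi}(S^2, \chi)$ with triples $(V_\chi \otimes W'_{d^i})$ for $(W'_{d^i}) \in A_R(S^2, \id)$; combined with the Chern class structure of Theorem~\ref{main: by isotropy and chern} (where $c_1 \in \chi(\id) \cdot \Z$), this assembles into a bijection between the classifying invariants of the two semigroups, which is seen to be a semigroup isomorphism by checking additivity on representations and Chern classes. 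For the second stage, I examine each row of Table~\ref{table: introduction} for the $R$ listed in the hypothesis; in every case the isotropy subgroups $R_{d^{-1}}, R_{d^0}, R_{d^1}$ (or $R_S, R_N$ in the $\Z_n$ case) are cyclic rotation groups about distinct axes, so any two of them intersect trivially. Hence every entry $W_{d^i}$ of a triple in $A_R(S^2, \id)$ splits as a sum of characters, and the compatibility condition~(iii) of Definition~\ref{definition: A_R} becomes vacuous on one-dimensional entries. Therefore $A_R(S^2, \id)$ is generated by triples of characters, and their count equals $|R_{d^{-1}}| \cdot |R_{d^0}| \cdot |R_{d^1}|$ (the $\Z_n$ case being analogous with pairs $(W_S, W_N)$).

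For the third stage, Theorem~\ref{main: by isotropy and chern} (or~\ref{main: not by isotropy and chern class}) says that $[E] \in \Vect_R(S^2)$ is essentially determined by $(p_\vect([E]), c_1([E]))$. Given $[E]$, I decompose $p_\vect([E]) = \sum_j \mathbf{w}_j$ into one-dimensional triples from the second stage, realize each $\mathbf{w}_j$ by an equivariant line bundle $L_j$ via the clutching construction (using the freedom in $c_1\big(p_\vect^{-1}(\mathbf{w}_j)\big)$ described in Theorem~\ref{main: by isotropy and chern}), and arrange $\sum_j c_1(L_j) = c_1([E])$. Then $\bigoplus_j L_j$ has the same $p_\vect$-image and Chern class as $E$, so equals $[E]$ by the classification. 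The main obstacle is the regime of Theorem~\ref{main: not by isotropy and chern class}, where $p_\vect \times c_1$ has two-element fibres: there one must show that \emph{both} bundles in a fibre are direct sums of line bundles, which requires exhibiting explicit line-bundle identities reflecting the splitting statement in the second half of Theorem~\ref{main: not by isotropy and chern class}. A secondary difficulty is controlling the local extensions of $V_\chi$ in the first stage when $R$ itself has nontrivial Schur multiplier (as for $\D_{2k}$, $\tetra$, $\octa$, $\icosa$); this is resolved case by case using the triviality of the isotropy-restricted multipliers noted above.
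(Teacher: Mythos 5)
The main gap is in your first stage. A bijection between ``classifying invariants'' is not by itself a semigroup isomorphism, and in the cases of Theorem \ref{main: not by isotropy and chern class} the invariants $(p_\vect, c_1)$ are not even complete --- each fibre of $p_\vect$ contains two bundles with the same Chern class --- so matching triples and Chern data cannot distinguish the two elements of a fibre, let alone transport them by a well-defined additive map. What is needed, and what the paper's proof supplies, is a single global object to tensor with: since every relevant isotropy group $R_{d^i}$ (resp.\ $R_S$, $R_N$) is cyclic, the $H$-irreducible $U$ with character $\chi$ admits a $(G_\chi)_{d^i}$-extension $\bar U_i$ at each point; after twisting by characters so that conditions (ii)--(iii) of Definition \ref{definition: A_R} hold, the tuple of these irreducible $\chi$-isotypical representations lies in $A_{G_\chi}(S^2,\chi)$, and surjectivity of $p_\vect$ (Theorems \ref{main: by isotropy and chern}, \ref{main: not by isotropy and chern class}) yields a bundle $L$ of rank $\chi(\id)$ in $\Vect_{G_\chi}(S^2,\chi)$. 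Tensoring with $L$ then gives the isomorphism $\Vect_R(S^2)\cong\Vect_{G_\chi}(S^2,\chi)$ by \cite[Lemma 2.2]{CKMS}. Your local extensions are exactly the right ingredients, but you never assemble them into this global $L$; without it the map realizing your bijection is undefined, and the compatibility of the extensions $\bar U_i$ across the intersections $(G_\chi)_{d^i}\cap(G_\chi)_{d^j}$ (needed even to get a bijection of the sets $A_{G_\chi}(S^2,\chi)$ and $A_R(S^2,\id)$) is left unaddressed.

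The obstacle you flag in your third stage is real but is closed by the cancellation assertion of Theorem \ref{main: not by isotropy and chern class} rather than by new line-bundle identities: if $p_\vect([E])=\sum_j \mathbf{w}_j$ with each $\mathbf{w}_j$ one-dimensional and $L_j^0, L_j^1$ are the two line bundles in $p_\vect^{-1}(\mathbf{w}_j)$, then $[L_1^0\oplus(\oplus_{j\ge 2}L_j^0)]\ne[L_1^1\oplus(\oplus_{j\ge 2}L_j^0)]$ by that assertion, so these two sums exhaust the two-element fibre over $\sum_j\mathbf{w}_j$ and every bundle is a sum of line bundles. Your second stage is essentially the paper's reasoning (condition (iii) is vacuous on one-dimensional entries because the cyclic isotropy groups meet trivially, giving the product count); just note that when $d^0$ and $d^1$ lie in one $R$-orbit (e.g.\ $R=\Z_n\times Z$ with odd $n$) condition (ii) of Definition \ref{definition: A_R} is not vacuous, though it does not affect the count there because $R_{d^0}$ and $R_{d^1}$ are trivial.
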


In Section \ref{section: reduction to effective}, we calculate Chern
classes of line bundles in $\Vect_R (S^2),$ and from this we obtain
$k_0.$ Also in Section \ref{section: reduction to effective}, we
explain for the reason why we prove the isomorphism of Theorem
\ref{main: reduction to line bundle} only for $R$'s appearing in
Theorem \ref{main: by isotropy and chern}, \ref{main: not by
isotropy and chern class}.

This paper is organized as follows. In Section \ref{section: closed
subgroups}, we list all closed subgroups of $\orthogonal(3)$ up to
conjugacy. In Section \ref{section: simplicial complex}, we give an
equivariant simplicial complex structure $\complexK_R$ on $S^2$
according to finite $R,$ and investigate equivariance of $S^2$ by
calculating isotropy subgroups at vertices and barycenters of
$|\complexK_R|.$ Section \ref{section: clutching
construction}$\sim$\ref{section: nonzero-dimensional case} are
divided into three parts. In Section \ref{section: clutching
construction}$\sim$\ref{section: platonic case}, we first deal with
cases when $\complexK_R = \complexK_\tetra,$ $\complexK_\octa,$
$\complexK_\icosa.$ In Section \ref{section: clutching
construction}, we introduce a new simplicial complex denoted by
$\lineK_R$ which is just a disjoint union of faces of $\complexK_R,$
and consider $|\complexK_R|$ as a quotient space of the underlying
space $|\lineK_R|.$ Also, we consider an equivariant vector bundle
over $S^2$ as an equivariant clutching construction of an
equivariant vector bundle over $|\lineK_R|.$ For this, we define
equivariant clutching map. In Section \ref{section: relation}, we
investigate relations among $\Vect_{G_\chi} (S^2, \chi),$
$A_{G_\chi} (S^2, \chi),$ and homotopy of the set of equivariant
clutching maps. From these relations, it is shown that our
classification in most cases is obtained by calculation of the
homotopy. In Section \ref{section: pointwise clutching map}, we
develop our machinery called equivariant pointwise gluing which
glues an equivariant vector bundle over a finite set through a map
called equivariant pointwise clutching map. In calculation of the
homotopy, equivariant pointwise clutching map plays a key role
because an equivariant clutching map can be considered as a
continuous collection of equivariant pointwise clutching maps. Here,
the concept of representation extension enters with which
equivariant pointwise gluing is described in the language of
representation theory. In this way, the homotopy is techniquely
related to $A_{G_\chi} ( S^2, \chi ),$ and calculation of it becomes
reduced to calculation of a relative (nonequivariant) homotopy. So,
we prove a lemma on relative homotopy in Section \ref{section:
lemmas on fundamental group}. In Section \ref{section: equivariant
clutching maps}, we prove technical lemmas needed in dealing with
equivariant clutching maps through equivariant pointwise clutching
maps. In Section \ref{section: platonic case}, we prove main
theorems for cases when $\complexK_R = \complexK_\tetra,$
$\complexK_\octa,$ $\complexK_\icosa.$ In Section \ref{section:
clutching construction cyclic dihedral}$\sim$\ref{section: cyclic
dihedral cases}, we second deal with cases when $\complexK_R =
\complexK_m$ for some $m \in \N.$ In Section \ref{section: clutching
construction cyclic dihedral}, we rewrite what we have done in
Section \ref{section: clutching construction}, \ref{section:
relation}, \ref{section: equivariant clutching maps} to be in
accordance with $\complexK_m.$ In Section \ref{section: cyclic
dihedral cases}, we prove main theorems for the cases when
$\complexK_R = \complexK_m.$ In Section \ref{section:
nonzero-dimensional case}, we third prove Theorem \ref{main: only by
isotropy} for cases when $R$ is one-dimensional. In Section
\ref{section: reduction to effective}, we prove Theorem \ref{main:
reduction to line bundle}. Section \ref{section: representation
extension} is the appendix on representation extension.

\section{Closed subgroups of $\orthogonal (3)$} \label{section: closed subgroups}

In this section, we list all closed subgroups of $\orthogonal(3)$ up
to conjugacy. In this section, the notation $\times$ is internal
direct product of two subgroups in $\orthogonal(3).$ Since $Z$ is
the centralizer of $\orthogonal(3),$ $\orthogonal(3) = \SO(3) \times
Z$ and this gives the exact sequence
\begin{equation*}
0 \rightarrow Z \hookrightarrow \orthogonal(3)
\overset{\pr}{\longrightarrow} \SO(3) \rightarrow 0
\end{equation*}
where $\pr|_{\SO(3)}$ is the identity map. First, we deal with
finite subgroups.

\begin{proposition} \label{proposition: finite subgroup of O(3)}
Let $R$ be a finite subgroup of $\orthogonal(3)$ such that $R
\nsubseteq \SO(3).$
\begin{enumerate}
  \item If $\pr(R)$ is conjugate to $\Z_n,$ then
        \[
        R \text{ is conjugate to }
        \begin{cases}
        \Z_n \times Z & \hbox{if $n$ is odd,} \\
        \Z_n \times Z \hbox{ or } \langle -a_n \rangle
        & \hbox{if $n$ is even.}
        \end{cases}
        \]
  \item If $\pr(R)$ is conjugate to $\D_n,$ then
        \[
        R \text{ is conjugate to }
        \begin{cases}
        \D_n \times Z \hbox{ or } \langle a_n, -b \rangle
        & \hbox{if $n$ is odd,} \\
        \D_n \times Z, ~ \langle a_n, -b \rangle,
         \\
        \langle -a_n, b \rangle, \hbox{ or } \langle -a_n, -b \rangle
        & \hbox{if $n$ is even.}
        \end{cases}
        \]
  \item If $\pr(R)$ is conjugate to $\tetra,$ then $R$ is conjugate to $\tetra \times Z.$
  \item If $\pr(R)$ is conjugate to $\octa,$ then $R$ is conjugate to $\octa \times Z$ or
  $\langle \tetra, -o_0 \rangle.$
  \item If $\pr(R)$ is conjugate to $\icosa,$ then $R$ is conjugate to $\icosa \times Z.$
\end{enumerate}
\end{proposition}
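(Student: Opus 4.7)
The splitting $\orthogonal(3)=\SO(3)\times Z$ lets every element of $\orthogonal(3)$ be written uniquely as $\pm g$ with $g\in\SO(3)$, so $\pr(\pm g)=g$.  For a finite $R\le\orthogonal(3)$ with $R\nsubseteq\SO(3)$ I would split into two cases according to whether $-\id\in R$ or not.

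If $-\id\in R$, then $R$ is closed under multiplication by $-\id$, so $R=\pr(R)\cup(-\pr(R))=\pr(R)\times Z$; this immediately yields the entries $\Z_n\times Z$, $\D_n\times Z$, $\tetra\times Z$, $\octa\times Z$, $\icosa\times Z$.  If $-\id\notin R$, the kernel $R\cap Z$ of $\pr|_R$ is trivial, so $\pr|_R$ is an isomorphism onto $K:=\pr(R)$; the homomorphism $\varepsilon:R\to Z$, $r\mapsto r\cdot\pr(r)^{-1}$, is surjective (as $R\nsubseteq\SO(3)$) with kernel $R^+:=R\cap\SO(3)$, so $R^+$ has index $2$ in $R$ and corresponds under $\pr|_R$ to an index-$2$ subgroup $K^+\le K$.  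Conversely, for any index-$2$ subgroup $K^+\le K$ the set $K^+\cup\bigl(-(K\setminus K^+)\bigr)$ is a subgroup of $\orthogonal(3)$ not containing $-\id$ and projecting onto $K$.  Thus the non-Case-A situation reduces to enumerating index-$2$ subgroups of each candidate $K$.

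The enumeration proceeds through the abelianization $K^{\mathrm{ab}}$, since index-$2$ subgroups of $K$ correspond bijectively to surjections $K^{\mathrm{ab}}\twoheadrightarrow\Z/2$.  For $K=\Z_n$ one has $\Z_n^{\mathrm{ab}}=\Z_n$, yielding a unique index-$2$ subgroup $\langle a_n^2\rangle$ iff $n$ is even, and hence $R=\langle -a_n\rangle$.  For $K=\D_n$ one has $\D_n^{\mathrm{ab}}\cong\Z/2$ for odd $n$ and $\Z/2\times\Z/2$ for even $n$, so one always gets the index-$2$ subgroup $\langle a_n\rangle$ (yielding $R=\langle a_n,-b\rangle$), and for even $n$ additionally $\langle a_n^2,b\rangle$ and $\langle a_n^2,a_nb\rangle$ (yielding $R=\langle -a_n,b\rangle$ and $R=\langle -a_n,-b\rangle$).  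For $K=\tetra\cong A_4$ one has $A_4^{\mathrm{ab}}\cong\Z/3$, so none.  For $K=\octa\cong S_4$ the unique index-$2$ subgroup is $\tetra$, yielding $\langle\tetra,-o_0\rangle$ (independent of the choice of $o_0$, since any two such elements differ by a member of $\tetra$).  For $K=\icosa\cong A_5$ the group is simple, so none.  Combining the two cases for each $\pr(R)$ produces exactly the lists (1)--(5).

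The only technically delicate step is the even-$n$ dihedral case, where one must match each of the three surjections $\D_n^{\mathrm{ab}}\twoheadrightarrow\Z/2$ with its concrete index-$2$ subgroup and verify by direct computation, using relations like $(-a_n)^2=a_n^2$ and $(-a_n)b=-a_nb$, that the set $K^+\cup(-(K\setminus K^+))$ is indeed generated by the two elements claimed; the remaining cases follow at once from the abelianization calculations.
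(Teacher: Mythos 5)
Your proposal is correct and follows essentially the same route as the paper: both split on whether $-\id\in R$, conclude $R=\pr(R)\times Z$ in the first case, and in the second case reduce to enumerating the index-two subgroups $K^{+}$ of $K=\pr(R)$ and forming $\langle K^{+},-g_0\rangle=K^{+}\cup\bigl(-(K\setminus K^{+})\bigr)$ for $g_0\in K\setminus K^{+}$. The only cosmetic difference is that you enumerate the index-two subgroups systematically via abelianizations, whereas the paper simply lists them by inspection.
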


\begin{proof}
We may assume that $\pr(R)$ is equal to one of $\Z_n,$ $\D_n,$
$\tetra,$ $\octa,$ $\icosa.$ Denote by $R_\rot$ the subgroup $R \cap
\SO(3).$ Then, $R_\rot$ is an index two subgroup of $R.$ Denoting
$\pr(R)$ by $K,$ $R_\rot \subset K$ because $\pr|_{\SO(3)}$ is the
identity map. Also, since the preimage $\pr^{-1}(g)$ of $g \in
\SO(3)$ is equal to $\{ g, -g \},$ it is obtained that $R \subset K
\times Z.$ Here are two possibilities. First, if $Z = \ker \pr
\subset R,$ then $|R| = 2 |K|$ so that
\begin{equation}
\label{equation: R=KxZ} R = K \times Z
\end{equation}
from $R \subset K \times Z.$ Second, if $Z \nsubseteq R,$ then
$\pr|_R$ is injective and $|R| = |K|$ so that
\begin{equation}
\label{equation: K=2Rrot} |K| = 2 |R_\rot|
\end{equation}
because $R_\rot$ is an index two subgroup of $R.$ For an element
$g_0 \in K - R_\rot$ and its preimage $\pr^{-1} (g_0) = \{ g_0, -g_0
\},$ $R$ is equal to $\langle R_\rot, g_0 \rangle$ or $\langle
R_\rot, -g_0 \rangle$ because $K$ $=$ $\langle R_\rot, g_0 \rangle$
is the injective image of $R.$ However, if $R = \langle R_\rot, g_0
\rangle,$ then $R \subset \SO(3)$ and this contradicts the
assumption. So, we obtain
\begin{equation}
\label{equation: R=Rrot-g0} R = \langle R_\rot, -g_0 \rangle.
\end{equation}
In the remaining proof, we apply (\ref{equation: R=KxZ}),
(\ref{equation: K=2Rrot}), (\ref{equation: R=Rrot-g0}) to each
possible $K.$

If $K = \Z_n$ with odd $n$ and $Z \subset R,$ then $R = \Z_n \times
Z.$ If $K = \Z_n$ with odd $n$ and $Z \nsubseteq R,$ then $K$ has no
index two subgroup so that there exists no possible $R_\rot$ in $K.$
That is, there exists no such $R.$ Therefore, a proof of (1) for odd
$n$ is obtained.

If $K = \Z_n$ with even $n$ and $Z \subset R,$ then $R = \Z_n \times
Z.$ If $K = \Z_n$ with even $n$ and $Z \nsubseteq R,$ then there is
the unique index two subgroup $\langle a_n^2 \rangle$ of $K$ which
should be equal to $R_\rot.$ Since $a_n \in K - R_\rot,$ $R =
\langle R_\rot, -a_n \rangle = \langle -a_n \rangle$ and this is the
proof of (1) for even $n.$

If $K = \D_n$ with odd $n$ and $Z \subset R,$ then $R = \D_n \times
Z.$ If $K = \D_n$ with odd $n$ and $Z \nsubseteq R,$ then there is
the unique index two subgroup $\Z_n$ of $\D_n$ which should be equal
to $R_\rot.$ Since $b \in K - R_\rot,$ $R = \langle R_\rot, -b
\rangle = \langle a_n, -b \rangle$ and this is a proof of (2) for
odd $n.$

If $K = \D_n$ with even $n$ and $Z \subset R,$ then $R = \D_n \times
Z.$ If $K = \D_n$ with even $n$ and $Z \nsubseteq R,$ then index two
subgroups of $\D_n$ are $\Z_n,$ $\langle a_n^2, b \rangle,$ $\langle
a_n^2, a_n b \rangle$ which are candidates for $R_\rot$ for some
$R.$ If $R_\rot = \Z_n,$ then $R= \langle R_\rot, -b \rangle =
\langle a_n, -b \rangle.$ If $R_\rot = \langle a_n^2, b \rangle,$
then $R = \langle R_\rot, -a_n \rangle = \langle -a_n, b \rangle.$
If $R_\rot = \langle a_n^2, a_n b \rangle,$ then $R = \langle
R_\rot, -a_n \rangle = \langle -a_n, a_n b \rangle = \langle -a_n,
-b \rangle.$ Therefore, a proof of (2) for even $n$ is obtained.

If $K=\tetra$ or $\icosa,$ then it is well-known that $K$ has no
index two subgroup because $\tetra \cong A_4$ and $\icosa \cong
A_5.$ So, $Z \nsubseteq R$ can not happen. Therefore, we obtain a
proof for (3) and (5).

If $K=\octa$ and $Z \subset R,$ then $R = \octa \times Z.$ If
$K=\octa$ and $Z \nsubseteq R,$ then $\tetra$ is the only index two
subgroup of $\octa,$ so $R_\rot = \tetra.$ Since $o_0 \in \octa -
\tetra,$ we have $R= \langle R_\rot, -o_0 \rangle = \langle \tetra,
-o_0 \rangle,$ and this is a proof of (4). \qed
\end{proof}

In Proposition \ref{proposition: finite subgroup of O(3)}, it can be
observed that \\
\begin{tabular}{lrlr}
  (1) & $\D_n$ with $n=1$                      & is conjugate to & $\Z_n$ with $n=2,$  \\
  (2) & $\langle -a_n, b \rangle$ with $n=2$   & is conjugate to & $\langle a_n, -b \rangle$ with $n=2,$  \\
  (3) & $\langle -a_n \rangle$ with $n=2$      & is conjugate to & $\langle a_n, -b \rangle$ with $n=1,$ \\
  (4) & $\langle -a_n, -b \rangle$ with $n=2$  & is conjugate to & $\langle a_n, -b \rangle$ with $n=2,$ \\
  (5) & $\Z_n \times Z$ with $n=2$             & is conjugate to & $\D_n \times Z$ with $n=1.$ \\
\end{tabular}
\\ To avoid these repetition, we have put conditions `$n>1$' or
`$n>2$' in five $R$-entries of Table \ref{table: introduction}.

Second, we deal with closed nonzero-dimensional subgroups of
$\orthogonal(3).$



\begin{proposition}
\label{proposition: infinite subgroup of O(3)} Let $R$ be a
nonzero-dimensional closed subgroup of $\orthogonal(3)$ such that $R
\nsubseteq \SO(3).$
\begin{enumerate}
  \item If $\pr(R)$ is conjugate to $\SO(2),$
  then $R$ is conjugate to the group $\SO(2) \times Z$ $=$ $\langle \SO(2), -a_2 \rangle.$
  \item If $\pr(R)$ is conjugate to $\orthogonal(2),$
  then $R$ is conjugate to $\langle \SO(2), -b \rangle$ or $\orthogonal(2) \times Z.$
  \item If $\pr(R)$ is conjugate to $\SO(3),$ then
        $R$ is conjugate to $\orthogonal(3).$
\end{enumerate}
\end{proposition}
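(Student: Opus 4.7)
The plan is to adapt the proof of Proposition~\ref{proposition: finite subgroup of O(3)} line by line, replacing the arithmetic index count by the classification of closed index-two subgroups in each of $\SO(2),$ $\orthogonal(2),$ $\SO(3).$ As there, assume $K := \pr(R)$ equals one of $\SO(2),$ $\orthogonal(2),$ $\SO(3)$ outright, and set $R_\rot = R \cap \SO(3).$ The hypothesis $R \nsubseteq \SO(3)$ forces $[R : R_\rot] = 2$ and $R \subset K \times Z.$ Distinguish the two cases $Z \subset R$ and $Z \nsubseteq R.$ In the first case the containment argument of the finite proof yields $R = K \times Z.$ In the second, $R \cap Z = \{\id\}$ so $\pr|_R : R \to K$ is an isomorphism, the image of $R_\rot$ is a closed index-two subgroup of $K,$ and for any $g_0 \in K \setminus R_\rot$ one has $R = \langle R_\rot, -g_0 \rangle$ (the other candidate $\langle R_\rot, g_0 \rangle$ would lie in $\SO(3),$ contradicting the hypothesis).

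Next, enumerate the closed index-two subgroups of each $K.$ The closed subgroups of $\SO(2)$ are $\SO(2)$ itself and the finite cyclic groups $\Z_n;$ none has index two, so the case $Z \nsubseteq R$ is vacuous for $K = \SO(2),$ and $R = \SO(2) \times Z$ is the only outcome in (1). To match the stated form, I check $\SO(2) \times Z = \langle \SO(2), -a_2 \rangle:$ since $a_2 \in \SO(2),$ we have $-\id = (-a_2) \cdot a_2^{-1} \in \langle \SO(2), -a_2 \rangle,$ which forces $Z \subset \langle \SO(2), -a_2 \rangle$ and hence equality of the two groups.

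For $K = \orthogonal(2),$ any closed index-two subgroup $L$ must have positive dimension (otherwise $\orthogonal(2)/L$ would be infinite), hence contain the identity component $\SO(2),$ and then $L = \SO(2)$ by an index count. Taking $g_0 = b$ yields $R = \langle \SO(2), -b \rangle,$ which combined with $R = \orthogonal(2) \times Z$ from the first case exhausts (2). For $K = \SO(3),$ any closed index-two subgroup would be a proper closed normal subgroup, but $\SO(3)$ has none (its Lie algebra is simple and its centre is trivial); therefore the case $Z \nsubseteq R$ is vacuous, and the only possibility is $R = \SO(3) \times Z = \orthogonal(3),$ proving (3).

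The single new ingredient beyond the finite case proof is the classification of closed index-two subgroups in $\SO(2),$ $\orthogonal(2),$ $\SO(3).$ This is immediate in the one-dimensional cases from the well-known list of closed subgroups of compact one-dimensional Lie groups, and in the three-dimensional case from the simplicity of $\SO(3);$ I anticipate no genuine obstacle.
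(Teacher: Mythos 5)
Your proof is correct and is exactly the argument the paper intends: the paper gives no details for this proposition, stating only that it ``is done in a similar way with Proposition \ref{proposition: finite subgroup of O(3)},'' and your write-up carries out precisely that adaptation, replacing the index-two count by the observation that $\SO(2)$ and $\SO(3)$ have no closed index-two subgroups while $\orthogonal(2)$ has only $\SO(2)$.
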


Proof of this is done in a similar way with Proposition
\ref{proposition: finite subgroup of O(3)}. We have explained for
$R$-entry of Table \ref{table: introduction}. Here, we calculate
some isotropy subgroups for later use.

\begin{lemma}
 \label{lemma: isotropy at v^0 for nonzero dimensional}
For each one-dimensional $R$ of Table \ref{table: introduction} and
its natural action on $S^2,$ isotropy subgroups $R_{v^0}$ and
$R_{v^0} \cap R_S$ are calculated as in Table \ref{table: isotropy
of nonzero-dimensional}.
\begin{table}[ht!]
\begin{center}
{\footnotesize
\begin{tabular}{c|c|c}
$R$                             & $R_{v^0}$                  & $R_{v^0} \cap R_S$ \\

\hhline{=|=|=}

$\orthogonal(2) \times Z$       & $\langle b, -a_2 \rangle$  & $\langle -a_2 b \rangle$  \\
$\langle \SO(2), -b \rangle$    & $\langle -a_2 b \rangle$   & $\langle -a_2 b \rangle$  \\
$\langle \SO(2), -a_2 \rangle$  & $\langle -a_2 \rangle$     & $\langle \id \rangle$     \\
$\orthogonal(2)$                & $\langle b \rangle$        & $\langle \id \rangle$     \\
$\SO(2)$                        & $\langle \id \rangle$      & $\langle \id \rangle$     \\

\end{tabular}}
\caption{\label{table: isotropy of nonzero-dimensional} Isotropy
subgroups at $v^0 = (1,0,0)$ for closed one-dimensional subgroups of
$\orthogonal(3)$}
\end{center}
\end{table}

\end{lemma}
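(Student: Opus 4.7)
The plan is a direct matrix calculation. For each of the five groups $R$ in Table \ref{table: isotropy of nonzero-dimensional} I will write each group element as a $3\times 3$ orthogonal matrix, read off which elements fix $v^0=(1,0,0)$ and which fix $S=(0,0,-1)$, and then intersect to obtain $R_{v^0}\cap R_S$.

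First I record the relevant matrices once and for all. The rotation $a_\theta$ around the $z$-axis acts as
\begin{equation*}
a_\theta(x,y,z)=\bigl(\cos\theta\cdot x-\sin\theta\cdot y,\ \sin\theta\cdot x+\cos\theta\cdot y,\ z\bigr),
\end{equation*}
so $\SO(2)$ sits inside every $R$ of the table. With $b=\mathrm{diag}(1,-1,-1)$ and $-\id=\mathrm{diag}(-1,-1,-1)$ I then get $-a_2=\mathrm{diag}(1,1,-1)$, $-b=\mathrm{diag}(-1,1,1)$, and $-a_2 b=\mathrm{diag}(1,-1,1)$; all four commute because they are diagonal. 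From this I tabulate once: $b$ and $-a_2$ fix $v^0$, while $-b$ and $-a_2 b$ send $v^0$ to $-v^0$; and $-b$ and $-a_2 b$ fix $S$, while $b$, $-\id$, and $-a_2$ send $S$ to $N$.

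Second, I use that each $R$ contains $\SO(2)$ as its identity component, so $R$ is a finite union of cosets $c\,\SO(2)$. Since $a_\theta\in\SO(2)$ fixes $v^0$ iff $a_\theta=\id$ and always fixes $S$, an element $c\,a_\theta$ fixes $v^0$ iff $a_\theta(v^0)=c^{-1}(v^0)$, which has a unique solution when $c(v^0)$ lies on the unit circle in the $xy$-plane (and none otherwise), and $c\,a_\theta$ fixes $S$ iff $c$ itself fixes $S$. Applied to $c\in\{b,-\id,-a_2,-b,-a_2 b\}$, this principle combined with the tabulated values of $c(v^0),c(S)$ pins down, in each coset, the unique $v^0$-stabilizing element and tells whether it also stabilizes $S$.

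Finally, I run this case by case. For example, for $R=\orthogonal(2)\times Z$ the nontrivial cosets of $\SO(2)$ are represented by $b$, $-\id$, $-b$; the $v^0$-fixing representatives in these cosets are $b$, $-a_2$, $-a_2 b$ respectively; and of these only $-a_2 b$ fixes $S$. This yields $R_{v^0}=\langle b,-a_2\rangle$ and $R_{v^0}\cap R_S=\langle -a_2 b\rangle$. The remaining four rows have at most one nontrivial coset of $\SO(2)$ in $R$ and are handled by the same rule. There is no substantive obstacle; the only care required is the sign tracking inside products $\pm a_2\cdot\pm b$, which is settled once by the matrix list above.
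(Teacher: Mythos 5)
Your computation is correct and reaches exactly the table's entries; the paper itself offers no proof for this lemma, treating it as a routine verification, so your direct coset-by-coset matrix check is precisely the intended argument. One internal slip to fix: you assert that $-a_2 b$ sends $v^0$ to $-v^0$, but by your own matrix $-a_2 b=\mathrm{diag}(1,-1,1)$ it \emph{fixes} $v^0=(1,0,0)$ (only $-b=\mathrm{diag}(-1,1,1)$ sends $v^0$ to $-v^0$); your later case analysis correctly uses $-a_2 b$ as the $v^0$-fixing representative of the coset $-b\cdot\SO(2),$ so the conclusion is unaffected, but the tabulated sentence contradicts both the matrix list and the final paragraph and should be corrected.
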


\section{Equivariant simplicial complex structures for finite subgroups of $\orthogonal(3)$}
\label{section: simplicial complex}

Let a finite $R$ of Table \ref{table: introduction} act naturally on
$S^2.$ In this section, we explain for $\complexK_R$- and
$D_R$-entries of Table \ref{table: introduction}, and investigate
equivariance of $|\complexK_R|$ by calculating isotropy subgroups at
some points. We do these for $\pr(R) = \Z_n,$ $\D_n,$ and then for
$\pr(R) = \tetra,$ $\octa,$ $\icosa.$

First, we define $\complexK_m$ and its $v^i,$ $e^i$ for $m=1,$ $2$
as promised in Introduction. Denote by $\complexK_1$ the simplicial
complex which is the same with $\complexK_4$ but has the following
notations:
\begin{align*}
v^0           &= (1,0,0), \\
b(e^0)        &= (-1,0,0), \\
[v^0, b(e^0)] &= [(1,0,0), (0,1,0)] \cup [(0,1,0), (-1,0,0)], \\
|e^0|         &= P_4.
\end{align*}
And, denote by $\complexK_2$ the simplicial complex which is the
same with $\complexK_4$ but has the following notations:
\begin{align*}
v^i    &= \exp \Big( ~ \pi i \sqrt{-1} ~ \Big), \\
b(e^i) &= \exp \Big( ~ \frac{2 \pi (2i+1) \sqrt{-1}} {4} ~ \Big), \\
|e^i|  &= [v^i, b(e^i)] \cup [b(e^i), v^{i+1}]
\end{align*}
for $i \in \Z_2.$ Also, let $|\complexK_1|$ and $|\complexK_2|$ be
equal to $|\complexK_4|.$ Here, we remark that if $\complexK_R =
\complexK_1$ and $D_R = |e^0|$ in Table \ref{table: introduction},
then $d^0$ and $d^1$ are defined as $v^0.$ So, we have finished
defining $\complexK_m$ for all natural number $m.$ Similarly, denote
by $P_m$ for $m = 1,2$ the regular polygon $P_4.$ For reader's
convenience, we list all possible $R$'s with $\complexK_R =
\complexK_1$ or $\complexK_2.$

\begin{table}[ht!]
\begin{center}
{\footnotesize
\begin{tabular}{l|c||c}
$R$                                   & $n$ & $\complexK_R$ \\

\hhline{=|=#=}

$\D_n$                                & $2$ & $\complexK_2$  \\
$\Z_n$                                & $1$ & $\complexK_1$  \\
$\Z_n$                                & $2$ & $\complexK_2$  \\

\hline

$\D_n \times Z,$ odd $n$              & $1$ & $\complexK_2$  \\
$\langle a_n, -b \rangle,$ odd $n$    & $1$ & $\complexK_2$  \\
$\Z_n \times Z,$ odd $n$              & $1$ & $\complexK_2$  \\

\hline

$\D_n \times Z,$ even $n$             & $2$ & $\complexK_2$  \\
$\langle a_n, -b \rangle,$ even $n$   & $2$ & $\complexK_2$  \\
\end{tabular}}
\caption{\label{table: group with m_R <=2} Groups with $\complexK_R
= \complexK_1$ or $\complexK_2$}
\end{center}
\end{table}

\begin{lemma}
 \label{lemma: cases of m_R less than 3}
There are eight $R$'s in Table \ref{table: introduction} which
satisfy $\pr(R) = \Z_n,$ $\D_n$ and $\complexK_R = \complexK_1,$
$\complexK_2.$ They are listed in Table \ref{table: group with m_R
<=2}.
\end{lemma}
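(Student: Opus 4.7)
The plan is to go row by row through those entries of Table \ref{table: introduction} for which $\pr(R)$ is $\Z_n$ or $\D_n$, and in each row translate the requirement $\complexK_R \in \{\complexK_1, \complexK_2\}$ into an arithmetic condition on $n$ that is then checked against the side conditions (parities of $n$ or of $n/2$, and lower bounds on $n$) recorded in the table. Since each $\complexK_R$ in the relevant block has the form $\complexK_n$, $\complexK_{2n}$, or $\complexK_{n/2}$, the condition becomes respectively $n \in \{1,2\}$, $n = 1$, or $n \in \{2,4\}$.

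First I would dispose of the two rows of the first block of Table \ref{table: introduction}: for $R = \D_n$ with $n > 1$ and for $R = \Z_n$, $\complexK_R = \complexK_n$, so the admissible values are $n = 2$ (in the $\D_n$ case) and $n = 1, 2$ (in the $\Z_n$ case), producing three entries. Next I would handle the three odd-$n$ rows $\D_n \times Z$, $\langle a_n, -b \rangle$, $\Z_n \times Z$: here $\complexK_R = \complexK_{2n}$ forces $n = 1$, giving three more entries. Then the two even-$n$ rows $\D_n \times Z$ and $\langle a_n, -b \rangle$ have $\complexK_R = \complexK_n$ and the even-$n$ restriction forces $n = 2$, producing the last two entries.

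It remains to check that the remaining rows contribute nothing. For the rows involving $\langle -a_n, \pm b \rangle$ with even $n/2$, $\langle -a_n \rangle$ with even $n/2$, and $\Z_n \times Z$ with even $n > 2$, the side condition is either $n > 2$ or ``$n/2$ is even" (forcing $n \geq 4$), so $n \in \{1, 2\}$ is immediately ruled out in every case with $\complexK_R = \complexK_n$. For the two rows $\langle -a_n, b \rangle$ and $\langle -a_n \rangle$ with odd $n/2$ and $n > 2$, where $\complexK_R = \complexK_{n/2}$, the requirement $n/2 \in \{1,2\}$ forces $n \in \{2, 4\}$; but $n > 2$ excludes $n = 2$ and the odd-$n/2$ restriction excludes $n = 4$, so again nothing survives. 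Summing the surviving rows gives exactly $3 + 3 + 2 = 8$ entries, matching Table \ref{table: group with m_R <=2}.

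The argument is pure bookkeeping with no hidden difficulty; the one point to be careful about is that the ``$n > 1$" and ``$n > 2$" conditions appearing in five rows are precisely the constraints introduced (via the observation after Proposition \ref{proposition: finite subgroup of O(3)}) to avoid conjugacy redundancies, and they exactly account for why no further small-$n$ entries appear in the enumeration.
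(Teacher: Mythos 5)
Your proposal is correct and is exactly the row-by-row table inspection that the paper leaves implicit (the lemma is stated in the paper without any written proof, as a direct reading of Table \ref{table: introduction}). The only cosmetic point: the row $\langle -a_n, -b \rangle$ with odd $n/2$, $n>2$ (which has $\complexK_R = \complexK_n$) is not named explicitly in your case list, but it is covered by your blanket clause that the side condition $n>2$ rules out $n \in \{1,2\}$ for every remaining row with $\complexK_R = \complexK_n$, so the count $3+3+2=8$ stands.
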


Now, we explain for $\complexK_R$-entry of Table \ref{table:
introduction} when $\pr(R) = \Z_n,$ $\D_n.$ Denoting by $m_R$ the
number $|R| / |R_{v^0}|,$ we can check that $\complexK_R$ is equal
to $\complexK_{m_R}.$ Since $R \cdot v^0$ is equal to the set of
vertices of $\complexK_{m_R}$ in $xy$-plane, $|\complexK_R|$ and
$\complexK_R$ are $R$-invariant for each $R.$ Here, we calculate
$R_{v^0}.$

\begin{lemma}
 \label{lemma: isotropy for cyclic dihedral}
For each finite $R$ of Table \ref{table: introduction} such that
$\pr(R) = \Z_n,$ $\D_n,$ isotropy subgroups $R_{v^0},$ $R_{v^0} \cap
R_S$ are calculated as in Table \ref{table: isotropy at (1,0,0)}
when $R$ acts naturally on $S^2.$
\end{lemma}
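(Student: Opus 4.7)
The plan is a case-by-case verification using the explicit generators of each $R$ listed in Table~\ref{table: introduction}. I would begin by recording two basic observations in $\orthogonal(3)$. First, a transformation fixes $v^0 = (1,0,0)$ iff it fixes the positive $x$-axis, and fixes both $v^0$ and $S = (0,0,-1)$ iff it is the identity on the entire $xz$-plane; the latter stabilizer is the two-element subgroup $\{\id, \sigma_{xz}\}$ where the reflection $\sigma_{xz}\colon (x,y,z) \mapsto (x,-y,z)$ can be written as $-a_2 b$ (direct computation: $a_2 b(x,y,z) = (-x,y,-z)$). Second, I would list the elementary arithmetic used throughout: $a_n^k(v^0) = v^0$ iff $n \mid k$; $-a_n^k(v^0) = v^0$ iff $n$ is even and $k \equiv n/2 \pmod{n}$; $b(v^0) = v^0$ while $-b(v^0) = -v^0$; and $a_n^k(S) = S$ for every $k$, whereas $b$, $-\id$, and $-b$ all send $S$ to $N$.

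With these in hand, the first column of the table is obtained row by row. For each finite $R$ with $\pr(R) \in \{\Z_n, \D_n\}$, I write every element of $R$ in the standard form $\varepsilon a_n^k$ or $\varepsilon a_n^k b$ with $\varepsilon \in \{\pm 1\}$, using the generator list and parity condition prescribed in the corresponding row of Table~\ref{table: introduction}, and then test $g(v^0) = v^0$ against the arithmetic above. For example, for $R = \langle -a_n, b \rangle$ with even $n/2$, one finds $(-a_n)^{n/2} = a_n^{n/2}$, which does not fix $v^0$, whereas $(-a_n)^n = \id$ and $b$ does, so $R_{v^0} = \langle b\rangle$; for $R = \langle -a_n \rangle$ with even $n/2$, $(-a_n)^{n/2}$ fixes $v^0$, giving a larger answer. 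Each of the eleven rows in question is dispatched by such a direct check.

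For the second column I use the observation above, which gives the compact identity $R_{v^0} \cap R_S = R \cap \{\id, -a_2 b\}$. The entire task thus reduces to deciding a single membership question in each row: is $-a_2 b \in R$? For the rows contained in $\SO(3)$ (namely $\D_n$ and $\Z_n$) the element $-a_2 b$ is a reflection and hence absent, so the intersection is trivial. For rows involving $-\id$ or $-b$, one tries to write $-a_2 b$ as a word in the listed generators; whether this is possible is governed precisely by the parity restrictions on $n$ (and, where relevant, on $n/2$) appearing in the row, reproducing the dichotomy recorded in Table~\ref{table: isotropy at (1,0,0)}.

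The step I expect to be the main obstacle is not conceptual but organizational: one must carefully match the roughly dozen rows, each carrying its own constraints on the parity of $n$ (and sometimes of $n/2$), to the entries of Table~\ref{table: isotropy at (1,0,0)}, and verify that the redundancies noted after Proposition~\ref{proposition: finite subgroup of O(3)} are consistently excluded by the conditions $n>1$ or $n>2$. Once the bookkeeping is in place, the verification is a uniform arithmetic exercise built on the two observations above.
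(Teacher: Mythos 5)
Your proposal is correct and is essentially the paper's own argument: an elementary row-by-row verification of the two columns using the explicit generators, with all entries governed by the parity conditions on $n$ and $n/2$. The only organizational difference is that the paper first computes $(\D_n\times Z)_{v^0}$ for odd and even $n$ and obtains every other row as $R\cap(\D_n\times Z)_{v^0}$, whereas you check each row directly; your identity $R_{v^0}\cap R_S = R\cap\{\id,\,-a_2b\}$ (valid since an orthogonal map fixing both $v^0$ and $S$ fixes the $xz$-plane pointwise) is a clean shortcut for the second column that the paper leaves implicit.
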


\begin{proof}
By calculation, $R_{v^0} = \langle b \rangle$ for $R = \D_n \times
Z$ with odd $n,$ and $R_{v^0} = \langle -a_n^{n/2}, b \rangle$ for
$R = \D_n \times Z$ with even $n.$ Other $R$'s are subgroups of one
of these two. So, $R_{v^0} = R \cap (\D_n)_{v^0}$ is easily
calculated, and from this the group $R_{v^0} \cap R_S$ is also
obtained. \qed
\end{proof}

\begin{table}[ht!]
\begin{center}
{\footnotesize
\begin{tabular}{l||c|c}
$R$                                            & $R_{v^0}$                         & $R_{v^0} \cap R_S$  \\

\hhline{=#=|=}

$\D_n,$ $n>1$                                  & $\langle b \rangle$               & $\langle \id \rangle$ \\
$\Z_n$                                         & $\langle \id \rangle$             & $\langle \id \rangle$ \\

\hline

$\D_n \times Z,$ odd $n$                       & $\langle b \rangle$               & $\langle \id \rangle$ \\
$\langle a_n, -b \rangle,$ odd $n$             & $\langle \id \rangle$             & $\langle \id \rangle$ \\
$\Z_n \times Z,$ odd $n$                       & $\langle \id \rangle$             & $\langle \id \rangle$ \\

\hline

$\D_n \times Z,$ even $n$                      & $\langle -a_n^{n/2}, b \rangle$   & $\langle -a_n^{n/2}b \rangle$ \\
$\langle a_n, -b \rangle,$ even $n$            & $\langle -a_n^{n/2}b \rangle$     & $\langle -a_n^{n/2}b \rangle$ \\
$\langle -a_n, b \rangle,$ odd $n/2,$ $n>2$    & $\langle -a_n^{n/2}, b \rangle$   & $\langle -a_n^{n/2}b \rangle$ \\
$\langle -a_n, b \rangle,$ even $n/2$          & $\langle b \rangle$               & $\langle \id \rangle$ \\
$\langle -a_n, -b \rangle,$ odd $n/2,$ $n>2$   & $\langle -a_n^{n/2} \rangle$      & $\langle \id \rangle$ \\
$\langle -a_n, -b \rangle,$ even $n/2$         & $\langle -a_n^{n/2}b \rangle$     & $\langle -a_n^{n/2}b \rangle$ \\
$\Z_n \times Z,$ even $n,$ $n>2$               & $\langle -a_n^{n/2} \rangle$      & $\langle \id \rangle$ \\
$\langle -a_n \rangle,$ odd $n/2,$ $n>2$       & $\langle -a_n^{n/2} \rangle$      & $\langle \id \rangle$ \\
$\langle -a_n \rangle,$ even $n/2$             & $\langle \id \rangle$             & $\langle \id \rangle$ \\
\end{tabular}}
\caption{\label{table: isotropy at (1,0,0)} Isotropy subgroup at
$v^0$ for $\pr(R) = \Z_n$ or $\D_n.$}
\end{center}
\end{table}

We calculate other isotropy subgroups.

\begin{lemma}
 \label{lemma: isotropy at b(e^0), x for cyclic dihedral}
Let $R$ be a finite group in Table \ref{table: introduction} such
that $\pr(R) = \Z_n,$ $\D_n.$ Isotropy subgroups $R_{b(e^0)},$
$R_{b(e^0)} \cap R_S,$ $R_x$ are calculated as in Table \ref{table:
isotropy for cyclic dihedral} where $x$ is a point in $P_{m_R} - \{
v^i, b(e^i) | i \in \Z_{m_R} \}.$

\begin{table}[ht!]
\begin{center}
{\footnotesize
\begin{tabular}{l||c|c|c|c}
$R$                                            & $\complexK_R$     & $R_{b(e^0)}$                          & $R_{b(e^0)} \cap R_S$             & $R_x$                \\

\hhline{=#=|=|=|=}

$\D_n,$ $n>1$                                  & $\complexK_n$     & $\langle a_n b \rangle$               & $\langle \id \rangle$              & $\langle \id \rangle$ \\
$\Z_n$                                         & $\complexK_n$     & $\langle \id \rangle$                  & $\langle \id \rangle$              & $\langle \id \rangle$ \\

\hline

$\D_n \times Z,$ odd $n$                       & $\complexK_{2n}$  & $\langle -a_n^{(n+1)/2}b \rangle$     & $\langle -a_n^{(n+1)/2}b \rangle$ & $\langle \id \rangle$ \\
$\langle a_n, -b \rangle,$ odd $n$             & $\complexK_{2n}$  & $\langle -a_n^{(n+1)/2}b \rangle$     & $\langle -a_n^{(n+1)/2}b \rangle$ & $\langle \id \rangle$ \\
$\Z_n \times Z,$ odd $n$                       & $\complexK_{2n}$  & $\langle \id \rangle$                  & $\langle \id \rangle$              & $\langle \id \rangle$ \\

\hline

$\D_n \times Z,$ even $n$                      & $\complexK_n$     & $\langle -a_n^{n/2}, a_n b \rangle$   & $\langle -a_n^{n/2 +1} b \rangle$ & $\langle -a_n^{n/2} \rangle$ \\
$\langle a_n, -b \rangle,$ even $n$            & $\complexK_n$     & $\langle -a_n^{n/2+1}b \rangle $      & $\langle -a_n^{n/2+1} b \rangle $ & $\langle \id \rangle$ \\
$\langle -a_n, b \rangle,$ odd $n/2,$ $n>2$    & $\complexK_{n/2}$ & $\langle -a_n^{n/2}, a_n^2 b \rangle$ & $\langle -a_n^{n/2+2} b \rangle$  & $\langle -a_n^{n/2} \rangle$ \\
$\langle -a_n, b \rangle,$ even $n/2$          & $\complexK_n$     & $\langle -a_n^{n/2+1}b \rangle$       & $\langle -a_n^{n/2+1} b \rangle$  & $\langle \id \rangle$ \\
$\langle -a_n, -b \rangle,$ odd $n/2,$ $n>2$   & $\complexK_n$     & $\langle -a_n^{n/2}, a_n b \rangle$   & $\langle -a_n^{n/2+1} b \rangle$  & $\langle -a_n^{n/2} \rangle$ \\
$\langle -a_n, -b \rangle,$ even $n/2$         & $\complexK_n$     & $\langle a_n b \rangle$               & $\langle \id \rangle$              & $\langle \id \rangle$ \\
$\Z_n \times Z,$ even $n,$ $n>2$               & $\complexK_n$     & $\langle -a_n^{n/2} \rangle$          & $\langle \id \rangle$              & $\langle -a_n^{n/2} \rangle$ \\
$\langle -a_n \rangle,$ odd $n/2,$ $n>2$       & $\complexK_{n/2}$ & $\langle -a_n^{n/2} \rangle$          & $\langle \id \rangle$              & $\langle -a_n^{n/2} \rangle$ \\
$\langle -a_n \rangle,$ even $n/2$             & $\complexK_n$     & $\langle \id \rangle$                  & $\langle \id \rangle$              & $\langle \id \rangle$ \\
\end{tabular}}
\caption{\label{table: isotropy for cyclic dihedral} Isotropy
subgroups for $\pr(R) = \Z_n,$ $\D_n.$}
\end{center}
\end{table}
\end{lemma}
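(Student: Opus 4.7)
The plan is to follow the same strategy as Lemma \ref{lemma: isotropy for cyclic dihedral} (the $v^0$-case just proved), namely: compute the isotropy at $b(e^0)$ and at a generic $x$ directly for the two maximal cases $R = \D_n \times Z$ with odd $n$ and with even $n$ respectively, and then recover every remaining row by intersecting with the appropriate subgroup inside $\D_n \times Z$ (or inside $\D_n$ for the three purely rotational rows). The last column, $R_x$ for a generic $x \in P_{m_R} \setminus \{ v^i, b(e^i) \}$, will reduce to asking which elements of $R$ act trivially on a nonempty open arc of the $xy$-plane.

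First I would pin down coordinates. When $\complexK_R = \complexK_m$ with $m = m_R$, the edge $e^0$ connects $v^0 = (1,0,0)$ and $v^1 = (\cos(2\pi/m), \sin(2\pi/m), 0)$, so $b(e^0)$ lies in the $xy$-plane at polar angle $\pi/m$, and a generic $x$ lies in the $xy$-plane at an angle not of the form $k\pi/m$. The critical subtlety is that $m$ varies across the table between $n$, $n/2$, and $2n$, so the reflection axis passing through $b(e^0)$ changes from row to row. I would then use the basic observation that an element of $\orthogonal(3)$ fixes a point in the $xy$-plane iff its restriction to that plane is either trivial or a reflection through a line containing the point, and the companion observation that $-a_n^{n/2}$ (defined when $n$ is even) acts as $(x,y,z) \mapsto (x,y,-z)$, hence fixes every point of $P_{m_R}$ pointwise. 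This single fact is what produces the entries $R_x = \langle -a_n^{n/2} \rangle$ in the table and contributes the extra generator to $R_{b(e^0)}$ in the relevant even-$n$ rows.

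Next I would carry out the maximal-case calculations. For $R = \D_n$ with $\complexK_R = \complexK_n$, the reflection fixing the line at angle $\pi/n$ is $a_n b$, giving $R_{b(e^0)} = \langle a_n b \rangle$. For $R = \D_n \times Z$ with even $n$ and $\complexK_R = \complexK_n$, the same axis gives one generator $a_n b$, and $-a_n^{n/2}$ gives the second, yielding $R_{b(e^0)} = \langle -a_n^{n/2}, a_n b \rangle$. For $R = \D_n \times Z$ with odd $n$, the ambient complex is $\complexK_{2n}$, so $b(e^0)$ now sits at angle $\pi/(2n)$; a direct angle computation, using that $n$ is odd forces the external $-\id$ to enter, identifies the unique reflection in $R$ through this line as $-a_n^{(n+1)/2} b$. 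Each intersection with $R_S$ is then read off from the fact that $S = (0,0,-1)$ is fixed precisely by elements whose $z$-action is $+1$, which for reflections of the form $\pm a_n^k b$ or $\pm a_n^k$ is a one-line parity check.

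The remaining rows will be obtained from the identities $R_{b(e^0)} = R \cap (\D_n \times Z)_{b(e^0)}$ and $R_x = R \cap (\D_n \times Z)_x$, using the subgroup lattice worked out in Section \ref{section: closed subgroups}. The only real obstacle is bookkeeping: matching the correct exponent of $a_n$ to the reflection through $b(e^0)$ in each row, given that $m_R$ varies and that multiplying a reflection by $-\id$ shifts its rotation exponent by $n/2$ modulo $n$. Once the two maximal cases are handled carefully, every other entry in Table \ref{table: isotropy for cyclic dihedral} drops out by a mechanical intersection.
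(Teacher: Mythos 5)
Your proposal is correct and is essentially the paper's argument: a direct case-by-case identification of the elements of $O(3)$ fixing $b(e^0)$ and a generic $x \in P_{m_R}$, hinging on the same two facts (the reflection $-a_n^{n/2}$ through the $xy$-plane, and the unique reflection/rotation axis through $b(e^0)$ whose exponent shifts with $m_R$). The paper merely organizes the bookkeeping differently — it first pins down $R_x$ as $\langle \id\rangle$ or $\langle -a_n^{n/2}\rangle$ using simpliciality of the action, then deduces $R_{b(e^0)}$ from $R_x \subset R_{b(e^0)} \le \Z_2\times\Z_2$ with index at most $2$, whereas you compute the maximal cases $\D_n\times Z$ explicitly and intersect; both routes are sound.
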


\begin{proof}
Since the $R$-action on $|\complexK_R|$ is simplicial, any element
$g$ in $R_x$ fixes the whole $P_{m_R}.$ Therefore, $R_x = \langle
\id \rangle$ or $\langle -a_n^{n/2} \rangle$ for even $n,$ and it is
easy to calculate $R_x.$

Observing that $R_{b(e^0)}$ is isomorphic to a subgroup of $\Z_2
\times \Z_2$ and that $R_x \subset R_{b(e^0)}$ and $R_{b(e^0)} / R_x
\cong \langle \id \rangle$ or $\Z_2,$ we can calculate $R_{b(e^0)}$
case by case. \qed
\end{proof}

\begin{remark}
In the cases of $\langle a_n, -b \rangle$ with odd $n$ and $\langle
-a_n, -b \rangle$ with odd $n/2,$ it is observed that $| R_{v^0} | >
| R_{b(e^0)} |,$ so $R$ does not act transitively on $b(e^i)$'s.
And, in the case of $\langle -a_n, -b \rangle$ with odd $n/2,$ we
additionally calculate $R_{b(e^1)} = \langle -a_n^{n/2}, a_n^3 b
\rangle.$ \qed
\end{remark}

Here, we explain for $D_R.$

\begin{lemma}
 \label{lemma: D_R for cyclic dihedral}
For each finite $R$ of Table \ref{table: introduction} such that
$\pr(R) = \Z_n,$ $\D_n,$ the $R$-orbit of the $D_R$-entry in Table
\ref{table: introduction} covers $P_{m_R},$ and $D_R$ is a minimal
path satisfying such a property. So, any interior point $x$ of $D_R$
is not moved to other point in $D_R$ by $R.$
\end{lemma}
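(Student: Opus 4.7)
The plan is to verify both $R\cdot D_R = P_{m_R}$ and the minimality claim case by case through Table~\ref{table: introduction}, relying on the isotropy data of Lemma~\ref{lemma: isotropy for cyclic dihedral} and Lemma~\ref{lemma: isotropy at b(e^0), x for cyclic dihedral}. The $D_R$-entries fall into three shapes: a full edge $|e^0|$; a half edge $[v^0,b(e^0)]$; and two half edges $[b(e^0),v^1]\cup [v^1,b(e^1)]$ joined at $v^1$. In every case I will use the identity $m_R=|R|/|R_{v^0}|$, which immediately gives vertex transitivity $R\cdot v^0=\{v^i\mid i\in\Z_{m_R}\}$, and then translate the shape by $R$.

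For the covering claim, in the full-edge cases ($\Z_n$, $\Z_n\times Z$, $\langle -a_n,b\rangle$ with odd $n/2$, and $\langle -a_n\rangle$) the isotropy table gives $|R_{b(e^0)}|=|R|/m_R$, so $R$ is transitive on edge barycenters and $R\cdot |e^0|=P_{m_R}$. In the half-edge cases ($\D_n$, $\D_n\times Z$, $\langle a_n,-b\rangle$ with even $n$, and $\langle -a_n,\pm b\rangle$ with even $n/2$) I will exhibit an element $g_0\in R_{v^0}$ that acts on the $xy$-plane as reflection across the $x$-axis, namely $b$ itself or $-a_n^{n/2}b$, which agree on the $xy$-plane because $-a_n^{n/2}$ fixes that plane whenever $n$ is even. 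Since $g_0$ swaps $e^0$ with $e^{-1}$, $D_R\cup g_0 D_R$ is the half closed star of $v^0$, and translating by $R$ yields $P_{m_R}$ by vertex transitivity. In the double-half-edge cases ($\langle a_n,-b\rangle$ with odd $n$ and $\langle -a_n,-b\rangle$ with odd $n/2$), the remark after Lemma~\ref{lemma: isotropy at b(e^0), x for cyclic dihedral} records that the edge barycenters split into two $R$-orbits; $D_R$ is exactly the closed star of $v^1$ and contains one representative from each orbit, so vertex transitivity finishes the argument.

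For minimality, let $x$ be interior to $D_R$ and suppose $gx\in D_R$. The simplicial action sends the open edge through $x$ to an open edge whose closure lies in $D_R$. In the full-edge cases this pins $g$ down to $R_{b(e^0)}$, and the only possibly nontrivial element there is $-a_n^{n/2}$, which acts as the identity on the $xy$-plane; hence $gx=x$. In the half-edge cases, $v^0$ and $b(e^0)$ are distinguished inside $D_R$ as the unique vertex endpoint and the unique edge barycenter, so $g\in R_{v^0}\cap R_{b(e^0)}$, and direct comparison of the two isotropy tables shows this intersection acts trivially on $|e^0|$ in every subcase. The double-half-edge cases are analogous: $g$ must preserve each half edge individually, so $g\in R_{v^1}\cap R_{b(e^0)}\cap R_{b(e^1)}$, and the additional datum $R_{b(e^1)}=\langle -a_n^{n/2},a_n^3 b\rangle$ from the remark cuts this intersection down to the pointwise stabilizer of $D_R$.

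The main obstacle is not conceptual but the sheer bookkeeping: verifying line by line in the two isotropy tables that the chosen $g_0$ really sits in $R_{v^0}$ and acts as the required reflection, and that each intersection invoked in the minimality step is in fact trivial on the relevant simplex. Noting at the outset that $-a_n^{n/2}$ restricts to the identity on the $xy$-plane whenever $n$ is even removes most of this tedium and collapses the various reflection subcases into a single uniform statement.
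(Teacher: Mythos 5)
Your argument is, in substance, the paper's own proof unpacked: the paper converts the covering question into the counting criterion that $|R_{v^0}|=|R_{b(e^0)}|$ holds iff $R$ is transitive on the barycenters $b(e^i)$ iff the orbit of $|e^0|$ covers $P_{m_R}$, and then separates the half-edge rows from the full-edge rows according to whether $R_{b(e^0)}/R_x\cong\Z_2$ or $R_{b(e^0)}=R_x$; you instead exhibit explicit group elements row by row. Your observation that $b$ and $-a_n^{n/2}b$ restrict to the same reflection of the $xy$-plane does correctly collapse the half-edge cases, and the covering arguments are fine.

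There is, however, a genuine failure in your minimality step for the full-edge cases, where you assert that the only possibly nontrivial element of $R_{b(e^0)}$ is $-a_n^{n/2}$. You yourself list $R=\langle -a_n,b\rangle$ with odd $n/2$ as a full-edge row, and for that row Table \ref{table: isotropy for cyclic dihedral} gives $R_{b(e^0)}=\langle -a_n^{n/2},\,a_n^2b\rangle$. The element $a_n^2b$ lies in $R$ (since $(-a_n)^2=a_n^2$), and on $P_{n/2}$ it is the reflection $\theta\mapsto -\theta+4\pi/n$, which fixes $b(e^0)$ and swaps $v^0$ with $v^1$; it therefore reverses $e^0$ and moves every interior point of $D_R=|e^0|$ other than $b(e^0)$ to a different point of $D_R$. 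Worse, it maps $[v^0,b(e^0)]$ onto $[b(e^0),v^1]$, so the orbit of the half edge already covers $P_{n/2}$ and $|e^0|$ is not minimal. This is not something you can repair by a cleverer argument for that row: the paper's own dichotomy ($R_{b(e^0)}/R_x\cong\Z_2$ versus $R_{b(e^0)}=R_x$), applied to the isotropy data of that row, forces $D_R=[v^0,b(e^0)]$, which contradicts the entry $|e^0|$ in Table \ref{table: introduction}. Either the $D_R$-entry or the isotropy entry for $\langle -a_n,b\rangle$ with odd $n/2$ must be corrected before the lemma as stated can hold; you should flag this discrepancy rather than assert that $R_{b(e^0)}$ acts trivially on the $xy$-plane there. (A minor cosmetic point: in the double-half-edge cases $D_R$ is only the inner half of the closed star of $v^1$, not the closed star itself; your two-orbit argument for those rows is otherwise sound.)
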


\begin{proof}
First, observe that $|R_{v^0}| = |R_{b(e^0)}|$ if and only if $R$
acts transitively on $b(e^i)$'s because $R$ acts transitively on
$v^i$'s and $|\complexK_R|$ has the same number of $v^i$'s and
$b(e^i)$'s. And, $R$ acts transitively on $b(e^i)$'s if and only if
the $R$-orbit of $|e^0|$ cover $P_{m_R}.$ By Table \ref{table:
isotropy at (1,0,0)} and \ref{table: isotropy for cyclic dihedral},
$|R_{v^0}| = |R_{b(e^0)}|$ except two cases $\langle a_n, -b
\rangle$ with odd $n$ and $\langle -a_n, -b \rangle$ with odd $n/2.$
If $|R_{v^0}| = |R_{b(e^0)}|$ and $R_{b(e^0)}/R_x \cong \Z_2,$ then
$[v^0, b(e^0)]$ can be moved to $[b(e^0), v^1]$ by $R$ so that the
$R$-orbit of $[v^0, b(e^0)]$ covers $P_{m_R}.$ In the other side, if
$|R_{v^0}| = |R_{b(e^0)}|$ and $R_{b(e^0)} = R_x,$ then the
$R$-orbit of $[v^0, b(e^0)]$ does not cover $P_{m_R}.$ If $|R_{v^0}|
< |R_{b(e^0)}|,$ then the $R$-orbit of $|e^0|$ does not cover
$P_{m_R}$ because $R$ does not act transitively on $b(e^i)$'s, but
the $R$-orbit of $[b(e^0), v^1] \cup [v^1, b(e^1)]$ covers $P_{m_R}$
because $R$ acts transitively on $v^i$'s. So, the remaining of proof
is done by comparing Table \ref{table: isotropy at (1,0,0)} with
Table \ref{table: isotropy for cyclic dihedral}. \qed
\end{proof}

Now, we repeat these arguments for $R$'s satisfying $\pr(R) =
\tetra, \octa, \icosa.$

\begin{lemma}
 \label{lemma: K_R for platonic}
For each finite $R$ of Table \ref{table: introduction} such that
$\pr(R) = \tetra, \octa, \icosa,$ $|\complexK_R|$ is $R$-invariant.
And, $R$ act transitively on vertices, edges, faces of
$\complexK_R,$ respectively.
\end{lemma}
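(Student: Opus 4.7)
The plan is to go through the seven possible groups $R$ one by one, grouping them by which simplicial complex they act on. Invariance and transitivity for the three rotation groups $\tetra$, $\octa$, $\icosa$ themselves are tautological or classical: by the very definitions given in Introduction, these are the rotation groups of $|\complexK_\tetra|$, $|\complexK_4|=|\complexK_\octa|$, and $|\complexK_\icosa|$, so $R$-invariance of the underlying space is by construction, and the classical fact that each Platonic solid is vertex-, edge-, and face-transitive under its rotation group handles the three transitivity claims.

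Next I would handle the central extensions $\octa \times Z$ and $\icosa \times Z$. Since the octahedron and the icosahedron are both centrally symmetric, the reflection $-\id \in Z$ maps vertices to (antipodal) vertices, edges to edges, and faces to faces; combined with the previous paragraph, this gives $R$-invariance. Transitivity on all three types of simplices is already achieved by the subgroup $\octa$, respectively $\icosa$, so nothing further is needed.

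The case $\tetra \times Z$ acting on $\complexK_\octa$ requires the most work. The three involutions of $\tetra \cong A_4$ are exactly the $\pi$-rotations around the coordinate axes; each fixes precisely two antipodal vertices of the octahedron. By orbit–stabilizer the $\tetra$-orbit of $v^0=(1,0,0)$ has size $|\tetra|/2 = 6$, so $\tetra$ is already vertex-transitive on $\complexK_\octa$. A similar stabilizer computation (the stabilizer of a generic edge in $\tetra$ is trivial) gives transitivity on the $12$ edges. On the $8$ faces, however, $\tetra$ has exactly two orbits of size $4$: these are the two tetrahedra inscribed in the cube dual to the octahedron. The antipodal map $-\id$ exchanges these two orbits, so $\tetra \times Z$ becomes face-transitive. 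The invariance of $|\complexK_\octa|$ under $\tetra \times Z$ follows from the same facts (each generator preserves the underlying polyhedron).

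Finally, for $R=\langle \tetra, -o_0\rangle$ acting on $\complexK_\tetra$: the subgroup $\tetra$ preserves $|\complexK_\tetra|$ and is already vertex-, edge-, face-transitive on it, so the only thing left is to check that $-o_0$ preserves $|\complexK_\tetra|$. The element $o_0 \in \octa \setminus \tetra$ swaps the two tetrahedra inscribed in the cube, and $-\id$ also swaps them (the antipode of a vertex of one inscribed tetrahedron is a vertex of the other); hence their composition $-o_0$ preserves each of the two inscribed tetrahedra, and in particular preserves $|\complexK_\tetra|$. The step I expect to be the real obstacle is the $\tetra \times Z$ case on $\complexK_\octa$, because one must notice that $\tetra$ alone fails to be face-transitive and recognize the two orbits of $4$ faces as the two inscribed tetrahedra, which is what forces the introduction of $-\id$; all the remaining cases reduce to this observation plus classical Platonic-solid symmetry.
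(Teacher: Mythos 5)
Your proof is correct. Note that the paper itself states this lemma without proof (it is treated as a standard fact about the Platonic solids, with only Remark \ref{remark: TxZ} afterwards singling out the case $R=\tetra\times Z$ for special attention), so there is no argument in the paper to compare against line by line; your write-up supplies the missing details. You correctly identify the one genuinely non-classical point, namely that $\tetra$ acting on $\complexK_\octa$ is vertex- and edge-transitive but has two face-orbits of size $4$ (the two tetrahedra inscribed in the dual cube), which are interchanged by $-\id$ --- this is exactly the phenomenon the paper's remark is gesturing at, and your stabilizer computations ($\Z_2$ at a vertex, trivial at an edge, $\Z_3$ at a face) are all accurate. The argument that $-o_0$ preserves $|\complexK_\tetra|$ because $o_0$ and $-\id$ each swap the two inscribed tetrahedra is also correct and consistent with the paper's choice of $D_R$ and isotropy data in Table \ref{table: all about K_R}.
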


\begin{remark}
 \label{remark: TxZ}
For later use, we need understand the case of $R = \tetra \times Z$
because it is not equal to the full symmetry of $|\complexK_R| =
|\complexK_\octa|.$ In this case, $R_{b(f^{-1})}$ is equal to
$\tetra_{b(f^{-1})} \cong \Z_3.$ Also, $-a_4^2, -a_4^2 b$ are in
$\tetra \times Z$ because $b, a_4^2 \in \tetra.$ Here, $-a_4^2,$
$-a_4^2 b$ are reflections through the $xy$-plane, $xz$-plane,
respectively. \qed
\end{remark}

\begin{lemma}
 \label{lemma: isotropy for platonic}
For each finite $R$ of Table \ref{table: introduction} such that
$\pr(R) = \tetra,$ $\octa,$ $\icosa,$ isotropy subgroups $R_{v^0},$
$R_{b(e^0)},$ $R_x$ are in Table \ref{table: all about K_R} where
$x$ is an interior point of $[v^0, b(e^0)].$ In the table, the
notation $\cong$ is used when an isotropy subgroup is not equal to
$\Z_n = \langle a_n \rangle$ or $\D_n = \langle a_n, b \rangle$ for
some $n$ but isomorphic to one of them.
\end{lemma}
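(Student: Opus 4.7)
The plan is to combine two inputs: (a) the classical geometric description of the rotation groups $\tetra$, $\octa$, $\icosa$ acting on the corresponding Platonic solid, and (b) the description of each non-rotation $R$ provided by Proposition \ref{proposition: finite subgroup of O(3)}, so that every $R$ in the rows with $\pr(R)=\tetra,\octa,\icosa$ is either contained in $\SO(3)$, is of the form $K\times Z$, or is $\langle\tetra,-o_0\rangle$.

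First, for $R\in\{\tetra,\octa,\icosa\}$ I would read off the isotropy groups directly from the geometry of the solid: $R_{v^0}$ is the cyclic group of rotations around the axis through $v^0$ (of order $3,4,5$ for $\tetra,\octa,\icosa$ respectively); $R_{b(e^0)}$ is the $\Z_2$ generated by the half-turn interchanging the two endpoints of $e^0$; and $R_x=\langle\id\rangle$ for an interior point $x\ne b(e^0)$ of $e^0$, because a nontrivial rotation of a Platonic solid can never fix an entire edge pointwise.

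Next, for $R=K\times Z$ with $K\in\{\tetra,\octa,\icosa\}$, I would write $g=\epsilon k$ with $\epsilon\in Z$ and $k\in K$; the equation $\epsilon kx=x$ then yields
\begin{equation*}
R_x=K_x\ \cup\ \{-k\mid k\in K,\ kx=-x\}.
\end{equation*}
For $K=\octa$ and $K=\icosa$ the associated solid is centrally symmetric, so $-x$ always lies in the $K$-orbit of $x$ when $x$ is a vertex, an edge barycenter, or an interior edge point; this adjoins one reflection, and the resulting group is identified by checking whether the new involution centralizes $K_x$ (giving $K_x\times\Z_2$) or conjugates a generating rotation to its inverse (giving a dihedral extension). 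For $K=\tetra$ the solid $|\complexK_\tetra|$ is not centrally symmetric, so $-v^0,-b(e^0)$ lie outside the $\tetra$-orbit and no new element appears, giving $R_x=K_x$; Remark \ref{remark: TxZ} then handles the fact that $\tetra\times Z$ acts on $|\complexK_\octa|$ rather than on $|\complexK_\tetra|$, and records that $R_{b(f^{-1})}\cong\Z_3$.

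Finally, for $R=\langle\tetra,-o_0\rangle\subset\octa\times Z$, I would use $R_x=R\cap(\octa\times Z)_x$ and determine directly which of the coset elements $-k$, $k\in\octa\setminus\tetra$, fix the point in question. The main obstacle of the lemma lies in this last case together with the book-keeping of conventions: the labels $v^0,e^0,f^{-1}$ on $|\complexK_\octa|$ follow the tetrahedral-style rule (two adjacent faces, three labelled vertices), not the rule of $\complexK_4$, so one must keep track of which concrete elements of $\orthogonal(3)$ stabilize these specific points. Once the concrete generators of each isotropy group have been exhibited, the isomorphism labels $\cong\Z_n,\cong\D_n$ are justified by the order of the group together with the standard fact that a subgroup of $\orthogonal(3)$ generated by a rotation of order $n$ and an involution is either $\Z_n\times\Z_2$ or the dihedral group $\D_n$ depending on whether the involution centralizes or inverts the rotation. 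Filling Table \ref{table: all about K_R} is then a mechanical case-by-case check.
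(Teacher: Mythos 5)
The paper states this lemma without proof, so the only question is whether your argument is sound. Most of it is: the direct geometric reading of the vertex, edge-midpoint and generic-edge-point stabilizers for $\tetra,$ $\octa,$ $\icosa,$ the decomposition $R_x = K_x \cup \{-k \mid k \in K, \ kx=-x\}$ for $R=K\times Z,$ and the coset analysis for $\langle\tetra,-o_0\rangle$ are all correct and reproduce the corresponding entries of Table \ref{table: all about K_R}.

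The gap is in the case $R=\tetra\times Z.$ You argue that, since $|\complexK_\tetra|$ is not centrally symmetric, no element of the form $-k$ fixes the relevant points and hence $R_x=\tetra_x.$ But by Table \ref{table: introduction} the complex assigned to $\tetra\times Z$ is $\complexK_\octa,$ so $v^0,$ $b(e^0)$ and $x$ are points of the octahedron $|\complexK_4|,$ which \emph{is} centrally symmetric, and new elements do appear. Concretely, with $v^0=(1,0,0)$ and $e^0$ the equatorial edge joining $(1,0,0)$ to $(0,1,0),$ the rotation $a_4^2\in\tetra$ acts as $-\id$ on $|e^0|,$ so the reflection $-a_4^2$ lies in $R_x$ for \emph{every} $x\in|e^0|;$ this is exactly what the table records: $R_{v^0}=\langle -a_4^2, b\rangle\cong\Z_2\times\Z_2$ and $R_{b(e^0)}=R_x=\langle -a_4^2\rangle\cong\Z_2,$ each strictly larger than $\tetra_{v^0}=\langle b\rangle,$ $\tetra_{b(e^0)}=\tetra_x=\langle\id\rangle.$ Your conclusion ``$R_x=K_x$'' therefore contradicts the very entries you are trying to verify, and Remark \ref{remark: TxZ} does not repair this: it only records $R_{b(f^{-1})}\cong\Z_3$ and exhibits the reflections $-a_4^2,$ $-a_4^2 b,$ without computing the three isotropy groups the lemma asserts. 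The fix is simply to run your $K\times Z$ analysis on the polyhedron the group actually acts on, namely the octahedron, remembering that $\tetra$ has index two in $\octa,$ so that $\tetra_{v^0}=\octa_{v^0}\cap\tetra\cong\Z_2$ and $\tetra_{b(e^0)}=\langle\id\rangle$ before the coset $\{-k\mid k\in\tetra,\ kx=-x\}$ is adjoined.
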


\begin{lemma}
 \label{lemma: D_R for platonic}
For each finite $R$ of Table \ref{table: introduction} such that
$\pr(R) = \tetra, \octa, \icosa,$ the $R$-orbit of the $D_R$-entry
in Table \ref{table: introduction} covers $|\complexK_R^{(1)}|,$ and
$D_R$ is a minimal path satisfying such a property.
\end{lemma}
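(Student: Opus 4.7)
The plan is to parallel Lemma~\ref{lemma: D_R for cyclic dihedral}. The first ingredient is Lemma~\ref{lemma: K_R for platonic}, which gives that $R$ acts transitively on the edges of $\complexK_R$. Consequently the $R$-orbit of any full edge already covers the $1$-skeleton $|\complexK_R^{(1)}|$, so the substantive question is to decide, for each $R$, whether the strictly shorter half-edge $[v^0, b(e^0)]$ already suffices.

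I would argue that $R \cdot [v^0, b(e^0)]$ covers $|e^0|$ (and hence, by edge-transitivity, the whole $1$-skeleton) if and only if there exists $g \in R_{b(e^0)}$ swapping $v^0$ with $v^1$ --- equivalently $|R_{b(e^0)}| > |R_x|$ for $x$ an interior point of $[v^0, b(e^0)]$ --- since $R$ acts simplicially and the only translates of $[v^0, b(e^0)]$ meeting $|e^0|$ nontrivially come from $R_{|e^0|}$. Reading off Table~\ref{table: all about K_R} from Lemma~\ref{lemma: isotropy for platonic} then settles every case: the strict inequality holds for $\tetra, \octa, \icosa, \langle \tetra, -o_0 \rangle, \octa \times Z, \icosa \times Z$, whereas for $R = \tetra \times Z$ one has $R_{b(e^0)} = R_x$ because the only non-trivial element of $R_{b(e^0)}$ is the reflection $-a_4^2$ through the $xy$-plane (from Remark~\ref{remark: TxZ}), which fixes $|e^0|$ pointwise. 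This exactly matches the $D_R$-column of Table~\ref{table: introduction}.

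For minimality, the plan is to observe that any proper sub-path $D_R' \subsetneq D_R$ whose $R$-orbit covers the $1$-skeleton must contain a vertex of $\complexK_R$ (by simpliciality, vertices only enter the orbit from vertices in $D_R'$), so one may assume $D_R' = [v^0, p]$ with $p$ strictly interior to $D_R$. Repeating the orbit analysis on $|e^0|$ shows that the orbit of $D_R'$ misses an open subarc of $|e^0|$ --- around $b(e^0)$ when a swap exists, or adjacent to $v^1$ in the $\tetra \times Z$ case --- contradicting coverage. The main obstacle I anticipate is cleanly disentangling the $\tetra \times Z$ row from the other six platonic rows; this I expect to handle by invoking Remark~\ref{remark: TxZ} to check explicitly that no element of $\tetra \times Z$ swaps the endpoints of any edge of $\complexK_\octa$.
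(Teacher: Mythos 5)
Your proof is correct. The paper states Lemma \ref{lemma: D_R for platonic} without proof, and your argument is precisely the intended transfer of the paper's proof of Lemma \ref{lemma: D_R for cyclic dihedral} to the platonic setting: edge-transitivity from Lemma \ref{lemma: K_R for platonic} reduces everything to whether $R_{b(e^0)}/R_x$ is trivial or $\Z_2$, which you correctly read off from Table \ref{table: all about K_R}, and your singling out of $\tetra \times Z$ via Remark \ref{remark: TxZ} matches the $D_R$-column of Table \ref{table: introduction}.
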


\begin{table}[ht!]
\begin{center}
{\footnotesize
\begin{tabular}{l||c|c|c|c|c|c}
$R$                                           & $\complexK_R$      & $D_R$              & $R_{v^0}$                        & $R_{b(e^0)}$                          & $R_x$                         & $R_{d^{-1}}$                    \\

\hhline{=#=|=|=|=|=|=}

$\D_n,$ $n>1$                                 & $\complexK_n$      & $[v^0, b(e^0)]$    & $\langle b \rangle$              & $\langle a_n b \rangle$               & $\langle \id \rangle$         & $\Z_n$                          \\
$\Z_n$                                        & $\complexK_n$      & $|e^0|$            & $\langle \id \rangle$            & $\langle \id \rangle$                 & $\langle \id \rangle$         & $\Z_n$                          \\

\hline

$\D_n \times Z,$ odd $n$                      & $\complexK_{2n}$   & $[v^0, b(e^0)]$    & $\langle b \rangle$              & $\langle -a_n^{(n+1)/2}b \rangle$     & $\langle \id \rangle$         & $\langle a_n, -b \rangle$       \\
$\langle a_n, -b \rangle,$ odd $n$            & $\complexK_{2n}$   & $[b(e^0), b(e^1)]$ & $\langle \id \rangle$            & $\langle -a_n^{(n+1)/2}b \rangle$     & $\langle \id \rangle$         & $\langle a_n, -b \rangle$       \\
$\Z_n \times Z,$ odd $n$                      & $\complexK_{2n}$   & $|e^0|$            & $\langle \id \rangle$            & $\langle \id \rangle$                 & $\langle \id \rangle$         & $\Z_n$                          \\

\hline

$\D_n \times Z,$ even $n$                     & $\complexK_n$      & $[v^0, b(e^0)]$    & $\langle -a_n^{n/2}, b \rangle$  & $\langle -a_n^{n/2}, a_n b \rangle$   & $\langle -a_n^{n/2} \rangle$  & $\langle a_n, -b \rangle$       \\
$\langle a_n, -b \rangle,$ even $n$           & $\complexK_n$      & $[v^0, b(e^0)]$    & $\langle -a_n^{n/2}b \rangle$    & $\langle -a_n^{n/2+1}b \rangle $      & $\langle \id \rangle$         & $\langle a_n, -b \rangle$       \\
$\langle -a_n, b \rangle,$ odd $n/2,$ $n>2$   & $\complexK_{n/2}$  & $|e^0|$            & $\langle -a_n^{n/2}, b \rangle$  & $\langle -a_n^{n/2}, a_n^2 b \rangle$ & $\langle -a_n^{n/2} \rangle$  & $\langle a_n^2, -a_n b \rangle$ \\
$\langle -a_n, b \rangle,$ even $n/2$         & $\complexK_n$      & $[v^0, b(e^0)]$    & $\langle b \rangle$              & $\langle -a_n^{n/2+1}b \rangle$       & $\langle \id \rangle$         & $\langle a_n^2, -a_n b \rangle$ \\
$\langle -a_n, -b \rangle,$ odd $n/2,$ $n>2$  & $\complexK_n$      & $[b(e^0), b(e^1)]$ & $\langle -a_n^{n/2} \rangle$     & $\langle -a_n^{n/2}, a_n b \rangle$   & $\langle -a_n^{n/2} \rangle$  & $\langle a_n^2, -b \rangle$     \\
$\langle -a_n, -b \rangle,$ even $n/2$        & $\complexK_n$      & $[v^0, b(e^0)]$    & $\langle -a_n^{n/2}b \rangle$    & $\langle a_n b \rangle$               & $\langle \id \rangle$         & $\langle a_n^2, -b \rangle$     \\
$\Z_n \times Z,$ even $n,$ $n>2$              & $\complexK_n$      & $|e^0|$            & $\langle -a_n^{n/2} \rangle$     & $\langle -a_n^{n/2} \rangle$          & $\langle -a_n^{n/2} \rangle$  & $\Z_n$                          \\
$\langle -a_n \rangle,$ odd $n/2,$ $n>2$      & $\complexK_{n/2}$  & $|e^0|$            & $\langle -a_n^{n/2} \rangle$     & $\langle -a_n^{n/2} \rangle$          & $\langle -a_n^{n/2} \rangle$  & $\langle a_n^2 \rangle$         \\
$\langle -a_n \rangle,$ even $n/2$            & $\complexK_n$      & $|e^0|$            & $\langle \id \rangle$            & $\langle \id \rangle$                 & $\langle \id \rangle$         & $\langle a_n^2 \rangle$         \\

\hline

$\tetra$                                      & $\complexK_\tetra$ & $[v^0, b(e^0)]$    &  $\cong \Z_3$                    & $\cong \Z_2$                          & $\langle \id \rangle$         & $\cong \Z_3$                    \\
$\octa$                                       & $\complexK_\octa$  & $[v^0, b(e^0)]$    &  $\cong \Z_4$                    & $\cong \Z_2$                          & $\langle \id \rangle$         & $\cong \Z_3$                    \\
$\icosa$                                      & $\complexK_\icosa$ & $[v^0, b(e^0)]$    &  $\cong \Z_5$                    & $\cong \Z_2$                          & $\langle \id \rangle$         & $\cong \Z_3$                    \\

\hline

$\langle \tetra, -o_0 \rangle$                & $\complexK_\tetra$ & $[v^0, b(e^0)]$    &  $\cong \D_3$                    & $\cong \Z_2 \times \Z_2$              & $\cong \Z_2$                  & $\cong \D_3$                    \\
$\tetra \times Z$                             & $\complexK_\octa$  & $|e^0|$            &  $\langle -a_4^2, b \rangle$     & $\langle -a_4^2 \rangle$              & $\langle -a_4^2 \rangle$      & $\cong \Z_3$                    \\
$\octa \times Z$                              & $\complexK_\octa$  & $[v^0, b(e^0)]$    &  $\cong \D_4$                    & $\cong \Z_2 \times \Z_2$              & $\cong \Z_2$                  & $\cong \D_3$                    \\
$\icosa \times Z$                             & $\complexK_\icosa$ & $[v^0, b(e^0)]$    &  $\cong \D_5$                    & $\cong \Z_2 \times \Z_2$              & $\cong \Z_2$                  & $\cong \D_3$                    \\

\hline

$\orthogonal(3)$                              &                    &                    &                                  &                                       &                               &                                 \\
$\orthogonal(2) \times Z$                     &                    & $\{ v^0 \}$        & $\langle b, -a_2 \rangle$        &                                       &                               & $\langle \SO(2), -b \rangle$    \\
$\langle \SO(2), -b \rangle$                  &                    & $\{ v^0 \}$        & $\langle -a_2 b \rangle$         &                                       &                               & $\langle \SO(2), -b \rangle$    \\
$\langle \SO(2), -a_2 \rangle$                &                    & $\{ v^0 \}$        & $\langle -a_2 \rangle$           &                                       &                               & $\SO(2)$                        \\
$\SO(3)$                                      &                    &                    &                                  &                                       &                               &                                 \\
$\orthogonal(2)$                              &                    & $\{ v^0 \}$        & $\langle b \rangle$              &                                       &                               & $\SO(2)$                        \\
$\SO(2)$                                      &                    & $\{ v^0 \}$        & $\langle \id \rangle$            &                                       &                               & $\SO(2)$                        \\
\end{tabular}}
\caption{\label{table: all about K_R} $\complexK_R, D_R,$ and
isotropy subgroups}
\end{center}
\end{table}

We summarize all results of Section \ref{section: closed subgroups},
\ref{section: simplicial complex} in Table \ref{table: all about
K_R} so that it will be repeatedly referred. In Table \ref{table:
all about K_R}, $R_{d^{-1}}$ is also calculated. In Section
\ref{section: equivariant clutching maps}, \ref{section: cyclic
dihedral cases}, we need the following lemma on isotropy subgroups:

\begin{lemma} \label{lemma: intersection of isotropy}
For each finite $R$ of Table \ref{table: introduction} such that $R
\ne \Z_n,$ $\langle a_n, -b \rangle$ for any $n,$ isotropy subgroups
$R_{d^i}$'s satisfy
\begin{enumerate}
  \item $R_{d^{-1}} \cap R_{d^i} = R_{[d^{-1}, d^i]}$
  and $[d^{-1}, d^i] \subset |\complexK_R|^{R_{d^{-1}} \cap R_{d^i}}$
  for $i \in I,$
  \item $R_{d^0} \cap R_{d^1} = R_{D_R}$
  and $D_R \subset |\complexK_R|^{R_{d^0} \cap R_{d^1}}$
\end{enumerate}
where $R_X$ for a subset $X$ of $|\complexK_R|$ is the subgroup of
$R$ fixing $X.$
\end{lemma}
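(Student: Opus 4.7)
The plan is to reduce both parts to the following linear-algebra fact: if $g \in \orthogonal(3)$ fixes two linearly independent vectors $p, q \in \R^3$, then $g$ restricted to the plane $\Pi = \mathrm{span}(p, q)$ is an orthogonal transformation of this $2$-plane fixing two linearly independent vectors, hence acts as the identity on $\Pi$; consequently $g$ fixes every point of $\Pi$ pointwise, regardless of whether $g$ itself is the identity of $\R^3$ or the reflection through $\Pi$. The inclusions $R_{[d^{-1}, d^i]} \subseteq R_{d^{-1}} \cap R_{d^i}$ and $R_{D_R} \subseteq R_{d^0} \cap R_{d^1}$ are immediate from the definition of $R_X$ as a pointwise stabilizer. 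The substance of the lemma lies in the reverse inclusions together with the fixed-set statements; both reduce to showing that every element $g$ of the relevant intersection fixes the entire path pointwise.

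To establish this, the strategy is to verify from Table \ref{table: all about K_R} and the description of $\complexK_R$ that (i) each relevant pair of endpoints is linearly independent in $\R^3$, and (ii) the path is contained in the plane $\Pi$ through the origin spanned by its two endpoints. For (i): in the platonic cases $d^{-1} = b(f^{-1})$ is not a scalar multiple of any vertex $v^0$, $v^1$ or edge-midpoint $b(e^0)$ of $f^{-1}$; in the cyclic-dihedral cases $d^{-1} = S = (0,0,-1)$ is orthogonal to the equatorial plane that contains $d^0, d^1$, so the pair is automatically independent. For (ii): the paths $[d^{-1}, d^i]$ are in every case single straight line segments in $\R^3$ (either a median of a triangular face, an edge of the bipyramid, or a segment of such a face), and any straight segment trivially lies in the plane through the origin and its two endpoints; similarly $D_R$ is a single straight segment whenever $D_R = [v^0, b(e^0)]$ or $|e^0|$. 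Combining (i) and (ii) with the key linear-algebra observation yields the desired conclusion in all but one family of cases.

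The one genuine geometric check is the bent path $D_R = [b(e^0), v^1] \cup [v^1, b(e^1)]$, which among the $R$'s not excluded from the lemma occurs only for $\langle -a_n, -b \rangle$ with odd $n/2$; here one must show that the interior vertex $v^1$ still lies in the plane $\Pi = \mathrm{span}(b(e^0), b(e^1))$. This is immediate from the construction of $\complexK_n$: the three points $b(e^0), v^1, b(e^1)$ all lie in the equatorial $xy$-plane, and since this plane contains the origin it coincides with $\Pi$, so the entire bent path is contained in $\Pi$. The exclusion of $\Z_n$ and $\langle a_n, -b\rangle$ is consistent with this approach: in those cases one or both of $R_{d^0}, R_{d^1}$ is trivial, so the statement either holds trivially or requires a different formulation, and the lemma is simply not needed for them in later sections. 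The principal, though still mild, obstacle is thus the bent-path coplanarity, after which the remaining cases are geometrically transparent and the two conclusions follow simultaneously from pointwise fixing of $\Pi$.
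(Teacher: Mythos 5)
Your approach is the same as the paper's: the paper's proof rests on the single observation that for two non-antipodal points $x \ne \pm x'$ of $S^2$ one has $R_x \cap R_{x'} = R_C$ and $C = (S^2)^{R_x \cap R_{x'}}$ for the great circle $C$ through $x$ and $x'$, which is precisely your plane-spanning fact read on the sphere, and the rest of both arguments is the case-by-case check that the relevant paths lie on the corresponding great circle (equivalently, in the plane spanned by their endpoints).

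There is, however, one concrete gap. Your step (ii) asserts that $D_R$ is a single straight segment whenever $D_R = |e^0|$, and your linear-independence check in (i) is only carried out for the pairs $(d^{-1}, d^i)$, not for the pair $(d^0, d^1)$ needed in part (2). Both claims fail simultaneously for $R = \Z_n \times Z$ with $n = 1$: there $\complexK_R = \complexK_{2n} = \complexK_2$, and by the conventions of Section \ref{section: simplicial complex} the set $|e^0|$ is the bent path $[(1,0,0),(0,1,0)] \cup [(0,1,0),(-1,0,0)]$ whose endpoints $d^0 = (1,0,0)$ and $d^1 = (-1,0,0)$ are antipodal, so $\mathrm{span}(d^0, d^1)$ is a line and your key linear-algebra fact does not apply. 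This is exactly the case the paper's proof isolates as ``the remaining case is $\Z_n \times Z$ with $n=1$.'' The conclusion is still true there, but for a reason your argument does not supply: $R = \{\id, -\id\}$ and $-\id$ moves $d^0$, so $R_{d^0} \cap R_{d^1} = \{\id\}$ and both assertions of part (2) hold vacuously. Your list of bent paths is therefore also incomplete (it is not only $\langle -a_n, -b\rangle$ with odd $n/2$), though in the other low $m_R$ cases ($\D_2$, $\D_1 \times Z$, $\D_2 \times Z$) the endpoints $(1,0,0)$ and $(0,1,0)$ are independent and the segment is straight, so your argument does go through there. With this one degenerate case handled separately, the proof is complete.
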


\begin{proof}
For the natural $R$-action on $S^2$ and two points $x \ne \pm
x^\prime$ of $S^2,$ we have
\begin{equation}
\tag{*} R_x \cap R_{x^\prime} = R_C \quad \text{and} \quad C =
(S^2)^{R_x \cap R_{x^\prime}}
\end{equation}
where $C$ is the great circle containing $x$ and $x^\prime.$ From
this, we easily obtain proof for cases when $\complexK_R =$
$\complexK_\tetra,$ $\complexK_\octa,$ $\complexK_\icosa,$ or
$\complexK_m$ with $m \ge 3.$ Then, there are remaining 8 cases by
Lemma \ref{lemma: cases of m_R less than 3}. Four cases of these are
$\Z_n$ or $\langle a_n, -b \rangle$ for some $n$ so that four cases
are remaining. We can apply (*) to three of remaining four. The
remaining case is $\Z_n \times Z$ with $n=1.$ Proof for this is
easy. \qed
\end{proof}

\section{Equivariant clutching construction}
 \label{section: clutching construction}

Let a compact Lie group $G_\chi$ act linearly (not necessarily
effectively) on $S^2$ through a representation $\rho: G_\chi
\rightarrow \orthogonal(3).$ Assume that $\rho(G_\chi)=R$ for some
finite $R$ in Table \ref{table: introduction}. In the below, our
treatment is different according to $R.$ First, we deal with cases
when $\pr(R) = \tetra, \octa, \icosa$ in Section \ref{section:
clutching construction}$\sim$\ref{section: platonic case}. In a
similar way, we deal with cases when $\pr(R) = \Z_n, \D_n$ in
Section \ref{section: clutching construction cyclic
dihedral}$\sim$\ref{section: cyclic dihedral cases}. Then, cases of
one-dimensional $R$'s are dealt with.

Assume that $\pr(R) = \tetra, \octa, \icosa.$ Let $\lineK_R$ be the
simplicial complex $\amalg_{f \in \complexK_R} f,$ i.e. the disjoint
union of faces of $\complexK_R.$ In this section, we would consider
$|\complexK_R|$ as the quotient of the underlying space $|\lineK_R|
= \amalg_{f \in \complexK_R} |f|.$ And, we would consider an
equivariant vector bundle over $|\complexK_R|$ as an equivariant
clutching construction of an equivariant vector bundle over
$|\lineK_R|.$ For this, we would define equivariant clutching map
and its generalization preclutching map. And, we state equivalent
conditions under which a preclutching map is an equivariant
clutching map. Before these, we need introduce notations on some
relevant simplicial complices. Since we should deal with various
cases at the same time, these notations are necessary. Examples of
the notations are illustrated in Figure \ref{figure: example of
notation} and Figure \ref{figure: simple form TxZ}.

\begin{figure}[ht!]
\begin{center}
\mbox{ \subfigure[$\complexK_\tetra$ near $v^0$ seen from above]{
\begin{pspicture}(-2.5,-2.5)(2.5,2.5)\footnotesize
\pspolygon[fillstyle=solid,fillcolor=lightgray,linestyle=none](2,1)(-2,1)(0,-2)(2,1)
\psline[linewidth=1.5pt](0,0)(2,1)
\psline[linewidth=1.5pt](0,0)(-2,1)
\psline[linewidth=1.5pt](0,0)(0,-2)
\psline[linewidth=1.5pt](2,1)(-2,1)(0,-2)(2,1)


\uput[u](0,1.2){$f^0$} \uput[dr](0.9,-0.9){$f^{-1}$}
\uput[dl](-0.9,-0.9){$f^2$}

\uput[d](0.8,0.5){$e^0$} \uput[ul](1.15,-0.6){$e^1$}
\uput[r](-0.1,-0.7){$e^2$}

\uput[dl](0.1,0.1){$v^0$} \uput[ur](1.9,0.9){$v^1$}
\uput[d](0,-1.9){$v^2$}
\end{pspicture}

}

\subfigure[$\lineK_\tetra$ near $\{ \bar{v}_j^0 ~|~ j \in \Z_3 \}$]{
\begin{pspicture}(-2.5,-3.5)(2.5,1.5)\footnotesize
\pspolygon[fillstyle=solid,fillcolor=lightgray,linestyle=none](0,0)(2,1)(-2,1)(0,0)
\pspolygon[fillstyle=solid,fillcolor=lightgray,linestyle=none](-0.5,-1)(-0.5,-3)(-2.5,0)(-0.5,-1)
\pspolygon[fillstyle=solid,fillcolor=lightgray,linestyle=none](0.5,-1)(0.5,-3)(2.5,0)(0.5,-1)

\psline[linewidth=1.5pt](0,0)(2,1)(-2,1)(0,0)
\psline[linewidth=1.5pt](-0.5,-1)(-0.5,-3)(-2.5,0)(-0.5,-1)
\psline[linewidth=1.5pt](0.5,-1)(0.5,-3)(2.5,0)(0.5,-1)

\uput[u](0,1.2){$\bar{f}^0$} \uput[dr](1.4,-1.9){$\bar{f}^{-1}$}
\uput[dl](-1.4,-1.9){$\bar{f}^2$}

\uput[d](0,0.1){$\bar{v}_1^0$} \uput[ul](0.6,-1.1){$\bar{v}_0^0$}
\uput[ur](-0.6,-1.1){$\bar{v}_2^0$}

\uput[ur](2.4,-0.1){$\bar{v}_0^1$} \uput[d](0.5,-2.9){$\bar{v}_0^2$}
\uput[ur](1.9,0.9){$\bar{v}_2^1$} \uput[d](-0.5,-2.9){$\bar{v}_1^2$}

\uput[d](1.3,-0.5){$\bar{e}^0$} \uput[ul](1.65,-1.6){$\bar{e}^1$}
\uput[r](0.4,-1.5){$\bar{e}^2$}

\uput[ul](0.9,0.3){$c(\bar{e}^0)$}
\uput[l](-0.3,-1.5){$c(\bar{e}^2)$}

\end{pspicture}
} }

\mbox{ \subfigure[$\hatL_\tetra$ near $\{ \hat{v}_{j, \pm}^0 ~|~ j
\in \Z_3 \}$]{
\begin{pspicture}(-3,-3.5)(3,3)\footnotesize
\psline[linewidth=1.5pt](0.5,1)(2,1.75)
\psline[linewidth=1.5pt](-0.5,1)(-2,1.75)
\psline[linewidth=1.5pt](1.5,2.5)(-1.5,2.5)

\psline[linewidth=1.5pt](1.5,-0.75)(3,0)
\psline[linewidth=1.5pt](1,-1.5)(1,-3)
\psline[linewidth=1.5pt](3.5,-0.5)(1.75,-3)

\psline[linewidth=1.5pt](-1.5,-0.75)(-3,0)
\psline[linewidth=1.5pt](-1,-1.5)(-1,-3)
\psline[linewidth=1.5pt](-3.5,-0.5)(-1.75,-3)

\uput[u](1,-1.6){$\hat{v}_{0,-}^0$}
\uput[ul](1.6,-0.85){$\hat{v}_{0,+}^0$}
\uput[ur](2.9,-0.1){$\hat{v}_{0,-}^1$}
\uput[ur](3.4,-0.6){$\hat{v}_{0,+}^1$}
\uput[dl](2.1,-2.9){$\hat{v}_{0,-}^2$}
\uput[d](1,-2.9){$\hat{v}_{0,+}^2$}

\uput[u](-1,-1.6){$\hat{v}_{2,+}^0$}
\uput[ur](-1.6,-0.85){$\hat{v}_{2,-}^0$}
\uput[dr](-2.1,-2.9){$\hat{v}_{1,+}^2$}
\uput[d](-1,-2.9){$\hat{v}_{1,-}^2$}

\uput[d](0.5,1.1){$\hat{v}_{1,-}^0$}
\uput[d](-0.5,1.1){$\hat{v}_{1,+}^0$}
\uput[r](1.9,1.75){$\hat{v}_{2,+}^1$}
\uput[r](1.4,2.5){$\hat{v}_{2,-}^1$}

\uput[dr](2.15,-0.365){$\hat{e}^0$}
\uput[dr](2.65,-1.65){$\hat{e}^1$} \uput[r](0.9,-2.25){$\hat{e}^2$}

\uput[l](-0.9,-2.25){$c(\hat{e}^2)$}
\uput[ul](1.35,1.275){$c(\hat{e}^0)$}

\psline[linearc=2]{<->}(2,0)(1.9,0.5)(1.5,1)
\uput[ur](1.8,0.5){$|c|$}

\psline[linearc=1]{<->}(-0.5,-2.25)(0,-2.45)(0.5,-2.25)
\uput[d](0,-2.45){$|c|$}

\psline[linearc=2]{<->}(-1.5,1)(-1.9,0.5)(-2,0)
\uput[ul](-1.8,0.5){$|c|$}

\end{pspicture}
} }
\end{center}
\caption{\label{figure: example of notation} Relation between
$\complexK_R,$ $\lineK_R,$ $\hatL_R$ in the case when $\complexK_R =
\complexK_\tetra$ and $j_R=3$}
\end{figure}

First, we define some notations on $\complexK_R$ and $\lineK_R.$ We
denote simply by $\pi,$ $|\pi|$ natural quotient maps from
$\lineK_R,$ $|\lineK_R|$ to $\complexK_R, |\complexK_R|,$
respectively. By definition, $|\pi| ~ \big|_{|f|}$ is bijective for
each face $f \in \lineK_R.$ From this, the $G_\chi$-actions on
$\complexK_R,$ $|\complexK_R|$ induce $G_\chi$-actions on
$\lineK_R,$ $|\lineK_R|$ so that $\pi,$ $|\pi|$ are equivariant,
respectively. We use notations $\bar{v},$ $\bar{e},$ $\bar{f}$ to
denote a vertex, an edge, a face of $\lineK_R,$ respectively. We use
the notation $\bar{x}$ to denote an arbitrary point of
$|\lineK_R^{(1)}|.$ When $\bar{v},$ $\bar{e},$ $\bar{f},$ $\bar{x}$
are understood, we use notations $v, e, f, x$ to denote images $\pi
(\bar{v}),$ $\pi (\bar{e}),$ $\pi (\bar{f}),$ $|\pi|(\bar{x}),$
respectively. Denote by $\bar{f}^{-1},$ $\bar{f}^i$ faces of
$\lineK_R$ such that $\pi(\bar{f}^{-1}) = f^{-1}$ and
$\pi(\bar{f}^i) = f^i,$ and denote by $\bar{v}^i,$ $\bar{e}^i$
simplices of $\bar{f}^{-1}$ such that $\pi(\bar{v}^i) = v^i$ and
$\pi(\bar{e}^i) = e^i.$ And, denote by $\bar{d}^0,$ $\bar{d}^1$
points of $|\bar{f}^{-1}|$ such that $|\pi|(\bar{d}^0) = d^0$ and
$|\pi|(\bar{d}^1) = d^1,$ and denote by $\bar{D}_R$ the path
$|\pi|^{-1}(D_R) \cap |\bar{f}^{-1}|,$ i.e. $\bar{D}_R = [\bar{d}^0,
\bar{d}^1].$ Define the integer $j_R$ as the cardinality of
$\pi^{-1}(v^i)$ for $i \in \Z_3,$ i.e. $j_R = 3,$ $4,$ $5$ according
to $\complexK_R = \complexK_\tetra,$ $\complexK_\octa,$
$\complexK_\icosa,$ respectively. Let $B$ be the subset $\{ ~
b(\bar{f}) ~|~ \bar{f} \in \lineK_R ~ \}$ of $|\lineK_R|$ on which
$R$ (and $G_\chi$) acts transitively by Lemma \ref{lemma: K_R for
platonic}, and $B$ is often confused with $|\pi|(B) = \{ ~ b(f) ~|~
f \in \complexK_R ~ \}.$ So far, we have defined superscript $i$ for
simplices in $\lineK_R.$ Next, we define $\bar{x}_j$ with subscript
$j$ for any point $\bar{x}$ of $|\lineK_R^{(1)}|.$
\begin{notation}
~
\begin{enumerate}
  \item For a vertex $\bar{v}$ in $\lineK_R^{(1)}$ and $v =
\pi(\bar{v}),$ we label vertices in $\pi^{-1} (v)$ with $\bar{v}_j$
to satisfy
\begin{enumerate}
  \item[i)] $\pi^{-1} (v) = \{ \bar{v}_j ~|~ j \in \Z_{j_R} \},$
  \item[ii)] $\bar{v}_0 = \bar{v},$
  \item[iii)] in each face $|\bar{f}_j|$ containing $\bar{v}_j$
  for $j \in \Z_{j_R},$ we can take a small neighborhood
  $U_j$ of $\bar{v}_j$ so that
  $|\pi|(U_j)$'s are arranged in the counterclockwise way around
  $v.$
\end{enumerate}
  \item For a non-vertex $\bar{x}$ in $|\lineK_R^{(1)}|$
  and $x =
|\pi| (\bar{x}),$ we label two points in $|\pi|^{-1} (x)$ with $\{
\bar{x}_j ~|~ j \in \Z_2 \}$ to satisfy $\bar{x}_0 = \bar{x}.$
\end{enumerate}
For simplicity, we denote $(\bar{v}^i)_j,$ $(\bar{d}^i)_j$ by
$\bar{v}_j^i,$ $\bar{d}_j^i,$ respectively.
\end{notation}

We need introduce two more simplicial complices. Denote by
$\lineL_R$ and $\hatL_R$ the 1-skeleton $\lineK_R^{(1)}$ of
$\lineK_R$ and the disjoint union $\amalg_{\bar{e} \in \lineL_R} ~
\bar{e},$ respectively. Then, $\lineL_R$ is a subcomplex of
$\lineK_R,$ and can be regarded as a quotient of $\hatL_R.$ These
relations are expressed by two natural simplicial maps
\begin{equation*}
\imath_\lineL : \lineL_R \rightarrow \lineK_R, \quad p_\lineL :
\hatL_R \rightarrow \lineL_R
\end{equation*}
where $\imath_\lineL$ is the inclusion and $p_\lineL$ is the
quotient map whose preimage of each vertex and edge of $\lineL_R$
consists of two vertices and one edge of $\hatL_R,$ respectively.
Two maps on underlying spaces are denoted by
\begin{equation*}
\imath_{|\lineL|} : |\lineL_R| \rightarrow |\lineK_R|, \quad
p_{|\lineL|} : |\hatL_R| \rightarrow |\lineL_R|.
\end{equation*}
The $G_\chi$-actions on $\lineK_R,$ $|\lineK_R|$ naturally induce
$G_\chi$-actions on these relevant simplicial complices $\lineL_R,
\hatL_R$ and their underlying spaces. We need introduce notations on
simplices of $\hatL_R$ and points of $|\hatL_R|.$ We use notations
$\hat{v}$ and $\hat{e}$ to denote a vertex and an edge of $\hatL_R,$
respectively. And, we use the notation $\hat{x}$ to denote an
arbitrary point in $|\hatL_R|.$ When $\hat{v},$ $\hat{e},$ $\hat{x}$
are understood, we use notations $\bar{v},$ $\bar{e},$ $\bar{x}$ to
denote $p_\lineL (\hat{v}),$ $p_\lineL (\hat{e}),$ $p_{|\lineL|}
(\hat{x}),$ respectively. Two edges $\hat{e}, \hat{e}^\prime$ of
$\hatL_R$ (and their images $\bar{e}, \bar{e}^\prime$ in $\lineL_R$)
are called \textit{adjacent} if $\hat{e} \ne \hat{e}^\prime$ and
$\pi( p_\lineL (\hat{e}) ) = \pi( p_\lineL (\hat{e}^\prime) ).$ And,
two faces $\bar{f}, \bar{f}^\prime$ of $\lineK_R$ are called
\textit{adjacent} if their images $f, f^\prime$ are adjacent.

Next, we introduce superscript $i$ and subscripts $+, -$ for
vertices and edges of $\hatL_R.$ Before it, we introduce a
simplicial map. Let $c: \hatL_R \rightarrow \hatL_R$ be the
simplicial map whose underlying space map $|c| : |\hatL_R|
\rightarrow |\hatL_R|$ is defined as
\begin{itemize}
  \item[] for any adjacent $\hat{e}, \hat{e}^\prime \in \hatL_R,$
        each point $\hat{x}$ in $|\hat{e}|$ is sent to the point $|c|(\hat{x})$
        in $|\hat{e}^\prime|$ to satisfy $|\pi|(p_{|\lineL|}(\hat{x}))
        = |\pi|(p_{|\lineL|}(c(\hat{x}))).$
\end{itemize}
For example, $\hat{e}$ and $c(\hat{e})$ are adjacent for any edge
$\hat{e}$ in $\hatL_R.$ Easily, $c$ and $|c|$ are
$G_\chi$-equivariant. For notational simplicity, we define $c$ also
on edges of $\lineL_R$ to satisfy $c( p_{\lineL}(\hat{e}) ) =
p_{\lineL}( c(\hat{e}) )$ for each edge $\hat{e}.$

\begin{notation}
~
\begin{enumerate}
  \item For a vertex $\bar{v}$ in $\lineL_R,$ we label two vertices in
$p_{\lineL}^{-1} (\bar{v})$ with $\hat{v}_\pm$ to satisfy
\begin{equation*}
p_{\lineL} (c(\hat{v}_+)) = \bar{v}_1 ~ \text{ and } ~ p_{\lineL}
(c(\hat{v}_-)) = \bar{v}_{-1} = \bar{v}_{j_R-1}.
\end{equation*}
  \item For a non-vertex $\bar{x}$ in $|\lineL_R|,$
  we label the point in $p_{|\lineL|}^{-1} (\bar{x})$ with
  $\hat{x}_+$ or $\hat{x}_-,$ i.e. $\hat{x}_+ = \hat{x}_-.$
\end{enumerate}

For simplicity, denote $\hat{x}_\pm$ for $\bar{x}= \bar{v}^i,
\bar{v}_j^i, \bar{d}^i, \bar{d}_j^i$ by $\hat{v}_\pm^i,
\hat{v}_{j,\pm}^i, \hat{d}_\pm^i, \hat{d}_{j,\pm}^i,$ respectively.
So, if $\bar{d}^i$ is a barycenter of an edge, then $\hat{d}_+^i =
\hat{d}_-^i.$ And, denote by $\hat{e}^i$ the edge in $\hatL_R$ such
that $p_\lineL (\hat{e}^i) = \bar{e}^i$ for $i \in \Z_3.$
\end{notation}

\begin{figure}[ht!]
\begin{center}
\mbox{ \subfigure[$\complexK_\octa$]{
\begin{pspicture}(-3,-3)(3,2.5)\footnotesize
\psline[linewidth=0.5pt](0,2.5)(-3,-0.5)(0,-2.5)(3,0.5)(0,2.5)
\psline[linewidth=0.5pt](-3,-0.5)(1,-0.5)(3,0.5)
\psline[linewidth=0.5pt](0,2.5)(1,-0.5)(0,-2.5)
\psline[linewidth=0.5pt, linestyle=dotted](-3,-0.5)(-1,0.5)(3,0.5)
\psline[linewidth=0.5pt, linestyle=dotted](0,2.5)(-1,0.5)(0,-2.5)

\uput[l](-3,-0.5){$v^0$} \uput[ul](1,-0.5){$v^1$}
\uput[d](0,-2.5){$v^2$}

\uput[d](-1,-0.4){$e^0$} \uput[l](0.6,-1.5){$e^1$}
\uput[dl](-1.4,-1.5){$e^2$}

\put(-0.7,-1.5){$f^{-1}$} \put(-0.7,0.5){$f^0$}
\put(1.1,-0.9){$f^1$}

\end{pspicture}
} }

\mbox{ \subfigure[$\hatL_R$ near $\hat{\mathbf{D}}_R$]{
\begin{pspicture}(-5,-3)(5,3.5)\footnotesize
\psline[linewidth=3pt](-1,1)(1,1)
\psline[linewidth=3pt](-1,-1)(1,-1)

\psline[linewidth=0.5pt](-1.5, 1.5)(-1,3)
\psline[linewidth=0.5pt](1.5, 1.5)(1,3)
\psline[linewidth=0.5pt](-1.5, -1.5)(-1,-3)
\psline[linewidth=0.5pt](1.5, -1.5)(1,-3)

\psline[linewidth=0.5pt](3.5,1.5)(4,3)
\psline[linewidth=0.5pt](3.5,-1.5)(4,-3)
\psline[linewidth=0.5pt](-3.5,1.5)(-4,3)
\psline[linewidth=0.5pt](-3.5,-1.5)(-4,-3)

\psline[linewidth=0.5pt](4,1)(5.5,1)
\psline[linewidth=0.5pt](-4,1)(-5.5,1)
\psline[linewidth=0.5pt](4,-1)(5.5,-1)
\psline[linewidth=0.5pt](-4,-1)(-5.5,-1)

\psdots[dotsize=5pt](-1,1) \psdots[dotsize=5pt](1,1)
\psdots[dotsize=5pt](-1,-1) \psdots[dotsize=5pt](1,-1)

\psdots[dotsize=5pt](-1.5,1.5) \psdots[dotsize=5pt](1.5,1.5)
\psdots[dotsize=5pt](-1.5,-1.5) \psdots[dotsize=5pt](1.5,-1.5)

\psdots[dotsize=5pt](3.5,1.5) \psdots[dotsize=5pt](3.5,-1.5)
\psdots[dotsize=5pt](-3.5,1.5) \psdots[dotsize=5pt](-3.5,-1.5)

\psdots[dotsize=5pt](4,1) \psdots[dotsize=5pt](-4,1)
\psdots[dotsize=5pt](4,-1) \psdots[dotsize=5pt](-4,-1)

\uput[dl](-1.5,-1.5){$\hat{v}_{0,-}^0$}
\uput[ul](-1,-1){$\hat{v}_{0,+}^0$}
\uput[dl](-1,1){$\hat{v}_{1,-}^0$}
\uput[ul](-1.5,1.5){$\hat{v}_{1,+}^0$}
\uput[ur](-3.5,1.5){$\hat{v}_{2,-}^0$}
\uput[dr](-4,1){$\hat{v}_{2,+}^0$}
\uput[ur](-4,-1){$\hat{v}_{3,-}^0$}
\uput[dr](-3.5,-1.5){$\hat{v}_{3,+}^0$}

\uput[ur](1,-1){$\hat{v}_{0,-}^1$}
\uput[dr](1.5,-1.5){$\hat{v}_{0,+}^1$}
\uput[dl](3.5,-1.5){$\hat{v}_{1,-}^1$}
\uput[ul](4,-1){$\hat{v}_{1,+}^1$} \uput[dl](4,1){$\hat{v}_{2,-}^1$}
\uput[ul](3.5,1.5){$\hat{v}_{2,+}^1$}
\uput[ur](1.5,1.5){$\hat{v}_{3,-}^1$}
\uput[dr](1,1){$\hat{v}_{3,+}^1$}

\uput[u](0,1){$c(\hat{e}^0)$} \uput[d](0,-1){$\hat{e}^0$}
\uput[r](-1.25,-2.25){$\hat{e}^2$} \uput[l](1.25,-2.25){$\hat{e}^1$}
\uput[l](-3.75,-2.25){$c(\hat{e}^2)$}
\uput[r](3.75,-2.25){$c(\hat{e}^1)$}
\end{pspicture} } }
\end{center}
\caption{\label{figure: simple form TxZ} $\hatL_R$ and
$\hat{\mathbf{D}}_R$ in the case when $R = \tetra \times Z,$
$\complexK_R = \complexK_\octa,$ $D_R = |e^0|$}
\end{figure}

Until now, we have finished introducing notations in Figure
\ref{figure: example of notation}. By using these notations, we
introduce one-dimensional fundamental domain in $|\hatL_R|.$ For
$\bar{D}_R= [\bar{d}^0, \bar{d}^1] \subset |\lineK_R|,$ we define
$\hat{D}_R$ in $|\hatL_R|$ as $[\hat{d}_+^0, \hat{d}_-^1]$ so that
$p_{|\lineL|}( \hat{D}_R )=\bar{D}_R.$ And, denote by
$\hat{\mathbf{D}}_R$ the set $( |\pi| \circ p_{|\lineL|})^{-1} ( D_R
)$ in $|\hatL_R|$ which is equal to
\begin{equation*}
[\hat{d}_+^0, \hat{d}_-^1] ~ \bigcup ~ |c| \big( [\hat{d}_+^0,
\hat{d}_-^1] \big) ~ \bigcup ~ \Big( \bigcup_{v \in D_R} ( \pi \circ
p_{\lineL} )^{-1} (v) \Big).
\end{equation*}
The union of thick points and edges of Figure \ref{figure: simple
form TxZ}.(b) is $\hat{\mathbf{D}}_R$ in the case of $R = \tetra
\times Z.$

For convenience in calculation, we parameterize each edge of
$|\hatL_R|$ linearly by $s \in [0,1]$ to satisfy
\begin{enumerate}
  \item $\hat{v}_+ = 0,$ $\hat{v}_- = 1,$
  $b(\hat{e}) = 1/2$ for each vertex $\bar{v}$ of $\lineK_R$
  and each edge $\hat{e}$ of $\hatL_R,$
  \item $|c|(s) = 1-s$ for each edge $\hat{e}$ of $\hatL_R$
  and $s \in |\hat{e}|.$
\end{enumerate}
We repeatedly use this parametrization.

Now, we describe an equivariant vector bundle over $|\complexK_R|$
as an equivariant clutching construction of an equivariant vector
bundle over $|\lineK_R|.$ Let $V_B$ be a $G_\chi$-vector bundle over
$B$ such that $(\res_H^{G_\chi} V_B)|_{b(\bar{f})}$ is
$\chi$-isotypical at each $b(\bar{f})$ in $B.$ If we denote by
$V_{\bar{f}}$ the isotropy representation of $V_B$ at each
$b(\bar{f}),$ then $V_B \cong G_\chi \times_{(G_\chi)_{b(\bar{f})}}
V_{\bar{f}}$ because $G_\chi$ acts transitively on $B.$ And,
$\res_H^{(G_\chi)_{b(\bar{f})}} V_{\bar{f}}$'s are all isomorphic
because they are all $\chi$-isotypical. We define $\Vect_{G_\chi}
(|\complexK_R|, \chi)_{V_B}$ as the set
\begin{equation*}
\big\{ \quad [E] \in \Vect_{G_\chi} (|\complexK_R|, \chi) \quad
\big| \quad E|_{B} \cong V_B \quad \big\}.
\end{equation*}
Similarly, $\Vect_{G_\chi} (|\lineK_R|, \chi)_{V_B}$ is defined.
Observe that $\Vect_{G_\chi} (|\lineK_R|, \chi)_{V_B}$ has the
unique element $[F_{V_B}]$ for the bundle $F_{V_B} = G_\chi
\times_{(G_\chi)_{b(\bar{f}^{-1})}} ( |\bar{f}^{-1}| \times
V_{\bar{f}^{-1}} )$ because $|\lineK_R| \cong G_\chi
\times_{(G_\chi)_{b(\bar{f}^{-1})}} |\bar{f}^{-1}|$ is equivariant
homotopically equivalent to $B.$ Henceforward, we use
trivializations
\begin{equation}
\label{equation: trivialization}
\begin{array}{ll} |\bar{f}| \times
V_{\bar{f}} & \quad \text{ for } \big(
\res_{(G_\chi)_{b(\bar{f})}}^{G_\chi} F_{V_B} \big)
\big|_{|\bar{f}|}, \\
|\bar{e}| \times
\res_{(G_\chi)_{b(\bar{e})}}^{(G_\chi)_{b(\bar{f})}} V_{\bar{f}} &
\quad \text{ for } \big( \res_{(G_\chi)_{b(\bar{e})}}^{G_\chi}
F_{V_B} \big) \big|_{|\bar{e}|}
\end{array}
\end{equation}
for each face $\bar{f}$ and edge $\bar{e} \in \bar{f}.$ Observe that
each $E \in \Vect_{G_\chi} (|\complexK_R|, \chi)_{V_B}$ can be
constructed by gluing the pull-back bundle $|\pi|^* E$ along edges.
Let us describe this more precisely. Let $\widetilde{|\pi|}$ be the
bundle morphism covering $|\pi|$
\begin{equation*}
\begin{CD}
|\pi|^* E      @>\widetilde{|\pi|}>>      E                 \\
@VVV     @VVV   \\
|\lineK_R|  @>|\pi|>>    |\complexK_R|.        \\
\end{CD}
\end{equation*}
Consider the following equivalence relation $\sim$ on vectors of
$|\pi|^* E$ :
\begin{itemize}
  \item[] for any two $u,$ $u^\prime$ in $|\pi|^* E,$
  $u \sim u^\prime$ if and only if
  $\widetilde{|\pi|} (u) = \widetilde{|\pi|} (u^\prime).$
\end{itemize}
Since $\widetilde{|\pi|}$ is equivariant, the quotient $|\pi|^* E /
\sim$ of $|\pi|^* E$ through the relation delivers the equivariant
vector bundle structure inherited from $|\pi|^* E.$ Here, note that
it suffices to define the relation only on vectors in $|\pi|^* E
\big|_{|\lineL_R|}$ to define the quotient which is trivially
isomorphic to $E.$ We call the construction of the bundle $|\pi|^* E
/ \sim$ \textit{equivariant clutching construction}. Since $F_{V_B}
\cong |\pi|^* E$ for any $E,$ we may rewrite the construction by
using $F_{V_B}$ instead of $|\pi|^* E.$ Pick an equivariant
isomorphism $\bar{A} : F_{V_B} \rightarrow |\pi|^* E.$ Then, the
relation $\sim$ induces the following equivalence relation
$\sim^\prime$ on vectors of $F_{V_B}$ :
\begin{itemize}
  \item[] for any two $u,$ $u^\prime$ in $F_{V_B},$
  $u \sim^\prime u^\prime$ if and only if
  $\bar{A}(u) \sim \bar{A}(u^\prime).$
\end{itemize}
Then, the quotient $F_{V_B} / \sim^\prime$ delivers the equivariant
vector bundle structure inherited from $F_{V_B},$ and trivially $(
F_{V_B} / \sim^\prime ) \cong E.$ Let us describe the relation on
vectors in $F_{V_B} \big|_{|\lineL_R|}$ more precisely. By using
trivialization (\ref{equation: trivialization}) of $F_{V_B},$ the
quotient $F_{V_B} / \sim^\prime$ can be also constructed by gluing
$F_{V_B}$ along edges through
\begin{equation}
\label{equation: gluing} |\bar{e}| \times V_{\bar{f}}
\longrightarrow |c(\bar{e})| \times V_{\bar{f}^\prime}, \quad ( ~
p_{|\lineL|}(\hat{x}), u ~ ) \mapsto \Big( ~ p_{|\lineL|} \big(
|c|(\hat{x}) \big), ~ \varphi_{\hat{e}} (\hat{x}) ~ u ~ \Big)
\end{equation}
via some continuous maps
\begin{equation*}
\varphi_{\hat{e}} : |\hat{e}| \rightarrow \Iso ( V_{\bar{f}},
V_{\bar{f}^\prime} )
\end{equation*}
for each edge $\hat{e},$ $\hat{x} \in |\hat{e}|,$ $u \in
V_{\bar{f}}$ where $\bar{e} = p_{\lineL}(\hat{e})$ and $\bar{e} \in
\bar{f},$ $c(\bar{e}) \in \bar{f}^\prime.$ Here, the notation $\Iso$
with no subscript means the set of nonequivariant isomorphisms. The
union $\Phi = \bigcup_{\hat{e} \in \hatL_R} \varphi_{\hat{e}}$ is
called an \textit{equivariant clutching map} of $E$ with respect to
$V_B.$ The relation $\sim^\prime$ on vectors in $F_{V_B}
|_{|\lineL_R|}$ is defined by $\Phi,$ and the quotient $F_{V_B} /
\sim^\prime$ is denoted by $F_{V_B} / \Phi.$ And, the equivariant
vector bundle $F_{V_B} / \Phi$ is called \textit{determined} by
$\Phi$ with respect to $V_B.$ When we use the phrase `with respect
to $V_B$', it is assumed that we use the bundle $F_{V_B}$ and its
trivialization (\ref{equation: trivialization}) in gluing.
Equivariance of $\widetilde{|\pi|}$ and $\bar{A}$ guarantees
equivariance of $\Phi,$ i.e.
\begin{equation*}
(g \cdot \Phi)(\hat{x}) = g \Phi(g^{-1} \hat{x}) g^{-1} =
\Phi(\hat{x})
\end{equation*}
for all $g \in G_\chi, \hat{x} \in |\hatL_R|.$ We denote by $p_\Phi$
the quotient map from $F_{V_B}$ to $F_{V_B} / \Phi.$ Here, note that
$\Phi$ is defined on $|\hatL_R|.$ That is why we define $\hatL_R.$
Sometimes, we regard $\Phi$ as the map
\begin{equation*}
p_{|\lineL|}^* F_{V_B} \rightarrow p_{|\lineL|}^* F_{V_B},
\quad(\hat{x}, u) \mapsto \big( |c|(\hat{x}), \Phi (\hat{x}) u \big)
\end{equation*}
by using trivialization (\ref{equation: trivialization}) for each
$(\hat{x}, u) \in |\hat{e}| \times V_{\bar{f}}$ where $\bar{e} \in
\bar{f}.$  An equivariant clutching map of some bundle in
$\Vect_{G_\chi} (|\complexK_R|, \chi)_{V_B}$ with respect to $V_B$
is called simply an \textit{equivariant clutching map} with respect
to $V_B,$ and let $\Omega_{V_B}$ be the set of all equivariant
clutching maps with respect to $V_B.$ In the next section, we will
see that we need calculate the (nonequivariant) homotopy
$\pi_0(\Omega_{V_B})$ to classify equivariant vector bundles. To do
it, we need to restrict an equivariant clutching map in
$\Omega_{V_B}$ to $\hat{D}_R.$ We explain for this. Let
$\Omega_{\hat{D}_R, V_B}$ be the set
\begin{equation*}
\{ \Phi |_{\hat{D}_R} ~ | ~ \Phi \in \Omega_{V_B} \}.
\end{equation*}
If two equivariant clutching maps coincide on $\hat{D}_R,$ then they
are identical by equivariance and definition of one-dimensional
fundamental domain. So, the restriction map $\Omega_{V_B}
\rightarrow \Omega_{\hat{D}_R, V_B}, ~ \Phi \mapsto
\Phi|_{\hat{D}_R}$ is bijective, and we obtain a bijection $\pi_0
(\Omega_{V_B}) \cong \pi_0 (\Omega_{\hat{D}_R, V_B})$ between two
homotopies. It is conceivable that it is easier to deal with
$\Omega_{\hat{D}_R, V_B}$ than $\Omega_{V_B}$ because of smaller
domain of definition. This is why we restrict an equivariant
clutching map to $\hat{D}_R.$ We call a map $\Phi$ in $\Omega_{V_B}$
the \textit{extension} of $\Phi|_{\hat{D}_R}$ in $\Omega_{\hat{D}_R,
V_B}.$ And, denote the bundle $F_{V_B} / \Phi$ also by $F_{V_B} / ~
\Phi|_{\hat{D}_R}.$

Next, we define preclutching map, a generalization of equivariant
clutching map. Let $C^0 (|\hatL_R|, V_B)$ be the set of continuous
functions $\Phi$ on $|\hatL_R|$ satisfying
$\Phi|_{|\hat{e}|}(\hat{x}) \in \Iso(V_{\bar{f}},
V_{\bar{f}^\prime})$ for each $\hat{e}$ and $\hat{x} \in |\hat{e}|$
where $\bar{e} \in \bar{f},$ $c(\bar{e}) \in \bar{f}^\prime.$ Note
that we can define the quotient $F_{V_B} / \Phi$ also for any $\Phi
\in C^0 (|\hatL_R|, V_B)$ as we have done in (\ref{equation:
gluing}) though $F_{V_B} / \Phi$ need not deliver a suitable
equivariant vector bundle structure or even nonequivariant vector
bundle structure. Let $C^0 (\hat{D}_R, V_B)$ be the set
\begin{equation*}
\Big\{ ~ \Phi |_{\hat{D}_R} ~ \Big| ~ \Phi \in C^0 \big( |\hatL_R|,
V_B \big) ~ \Big\}.
\end{equation*}
A function $\Phi$ in $C^0 (|\hatL_R|, V_B)$ or a function
$\Phi_{\hat{D}_R}$ in $C^0 (\hat{D}_R, V_B)$ is called a
\textit{preclutching map} with respect to $V_B.$ Then, it is a
natural question under which conditions a preclutching map becomes
an equivariant clutching map. We can answer this question for a
preclutching map in $C^0 (|\hatL_R|, V_B).$ A preclutching map
$\Phi$ in $C^0 (|\hatL_R|, V_B)$ is an equivariant clutching map
with respect to $V_B$ if and only if it satisfies the following
conditions:
\begin{itemize}
  \item[N1.] $\Phi(|c|(\hat{x})) = \Phi(\hat{x})^{-1}$ for each $\hat{x} \in |\hatL_R|,$
  \item[N2.] For each vertex $\bar{v} \in \lineK_R,$
  \begin{equation*}
  \Phi(\hat{v}_{j_R-1,+}) \cdots \Phi(\hat{v}_{j,+}) \cdots
  \Phi(\hat{v}_{0,+}) = \id
  \end{equation*}
  for $j \in \Z_{j_R},$
  \item[E1.] $\Phi(g\hat{x}) = g \Phi(\hat{x}) g^{-1}$ for each $\hat{x} \in |\hatL_R|, g \in G_\chi.$
\end{itemize}
We explain for this more precisely. As a slight generalization of
the classical result \cite[p. 20, 21]{At}, if $\Phi$ satisfies
Condition N1., N2., then the quotient $F_{V_B} / \Phi$ becomes a
nonequivariant vector bundle though it need not be an equivariant
vector bundle. Moreover, if $\Phi$ also satisfies Condition E1.,
then $F_{V_B} / \Phi$ becomes an equivariant vector bundle so that
$\Phi$ is an equivariant clutching map with respect to $V_B.$ We
will answer the same question for a preclutching map in $C^0
(\hat{D}_R, V_B)$ in Section \ref{section: equivariant clutching
maps}.

\section{Relations among
$\Vect_{G_\chi} (S^2, \chi)_{V_B},$ $A_{G_\chi} (S^2, \chi),$ $\pi_0
( \Omega_{ V_B } )$}
 \label{section: relation}

In this section, we investigate relations among
\begin{equation*}
\Vect_{G_\chi} (S^2, \chi)_{V_B}, \quad A_{G_\chi} (S^2, \chi),
\quad \pi_0 ( \Omega_{V_B} ).
\end{equation*}
Our classification of the paper is based on these relations. Before
it, we state two basic facts on equivariant vector bundles. First,
two equivariantly homotopic equivariant clutching maps give
isomorphic equivariant vector bundles.
\begin{lemma}
 \label{lemma: homotopy gives isomorphism}
For two maps $\Phi$ and $\Phi^\prime$ in $\Omega_{V_B},$ if $\Phi$
and $\Phi^\prime$ are homotopic in $\Omega_{V_B},$ i.e. $[\Phi] =
[\Phi^\prime]$ in $\pi_0 ( \Omega_{V_B} ),$ then $\big[ F_{V_B} /
\Phi \big] = \big[ F_{V_B} / \Phi^\prime \big]$ in $\Vect_{G_\chi}
(S^2, \chi)_{V_B}.$
\end{lemma}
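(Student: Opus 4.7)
The plan is to upgrade the given homotopy between $\Phi$ and $\Phi'$ into an equivariant vector bundle over $|\complexK_R|\times[0,1]$ and then invoke equivariant homotopy invariance.

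Concretely, let $\Phi_t \in \Omega_{V_B}$ for $t\in [0,1]$ be a continuous path with $\Phi_0=\Phi$ and $\Phi_1=\Phi'.$ Consider the product $|\complexK_R|\times[0,1]$ with the $G_\chi$-action trivial on $[0,1].$ Define a preclutching map
\[
\widetilde{\Phi} : |\hatL_R|\times[0,1] \longrightarrow \bigcup_{\hat{e}} \Iso(V_{\bar{f}},V_{\bar{f}^\prime}), \qquad \widetilde{\Phi}(\hat{x},t) := \Phi_t(\hat{x}),
\]
with respect to the $G_\chi$-bundle $V_B\times[0,1]$ over $B\times[0,1].$ Because each $\Phi_t$ lies in $\Omega_{V_B},$ the conditions N1, N2, E1 of Section \ref{section: clutching construction} hold pointwise in $t,$ so $\widetilde{\Phi}$ is an equivariant clutching map for the analogous construction over $|\complexK_R|\times[0,1],$ using the bundle $F_{V_B}\times[0,1]$ over $|\lineK_R|\times[0,1].$ The resulting quotient $\widetilde{E} := (F_{V_B}\times[0,1])/\widetilde{\Phi}$ is then a $G_\chi$-equivariant complex vector bundle over $|\complexK_R|\times[0,1]$ whose restrictions to the slices $t=0$ and $t=1$ are, by construction, $F_{V_B}/\Phi$ and $F_{V_B}/\Phi^\prime$ respectively.

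Finally, apply the equivariant homotopy-invariance theorem for equivariant vector bundles over $Y\times[0,1]$ with trivial action on $[0,1]$ (see e.g.\ \cite[Proposition 1.6.1]{At} or \cite[p.~130]{S}): projection to $Y$ induces an isomorphism between the isomorphism classes of $G_\chi$-bundles over $Y\times[0,1]$ and over $Y,$ so the restrictions $\widetilde{E}|_{|\complexK_R|\times\{0\}}$ and $\widetilde{E}|_{|\complexK_R|\times\{1\}}$ are equivariantly isomorphic. Both restrictions are $\chi$-isotypical on fibers (since each $\Phi_t$ is an equivariant clutching map with respect to the same $V_B$), so the isomorphism lives in $\Vect_{G_\chi}(S^2,\chi)_{V_B},$ yielding $[F_{V_B}/\Phi]=[F_{V_B}/\Phi^\prime]$ as required.

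The one point that takes genuine verification is that $\widetilde{\Phi}$ really determines a locally trivial equivariant vector bundle on $|\complexK_R|\times[0,1],$ not merely a family of bundles glued set-theoretically along each slice. This reduces to checking local triviality near points of $\pi(|\lineL_R|)\times[0,1],$ which is exactly where the (already verified) N1, N2 conditions do their work: since they hold uniformly in $t,$ the standard argument producing local trivializations of $F_{V_B}/\Phi$ extends continuously in $t,$ giving the required local trivializations in the product. Everything else in the argument is a routine application of homotopy invariance.
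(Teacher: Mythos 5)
Your proof is correct and is precisely the argument the paper has in mind: the paper omits the proof, deferring to the classical clutching-function homotopy lemma of Atiyah (Lemma 1.4.6 and Section 1.6), whose standard proof is exactly your construction of a bundle over $|\complexK_R|\times[0,1]$ followed by equivariant homotopy invariance. The only quibble is bibliographic: the homotopy-invariance statement you want is Segal's equivariant version (or Atiyah's Lemma 1.4.3 made equivariant), not \cite[Proposition 1.6.1]{At}, which concerns free actions.
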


\begin{proof}
This is a slight generalization of the classical result \cite[Lemma
1.4.6. and Section 1.6.]{At}. So, we omit the proof. \qed
\end{proof}

Lemma \ref{lemma: homotopy gives isomorphism} gives a sufficient
condition for isomorphism. Sometimes, we need an equivalent
condition. When we consider a map in $\Omega_{V_B}$ as defined on
$p_{|\lineL|}^* F_{V_B},$ we have the following equivalent
condition:
\begin{lemma}
 \label{lemma: equivalent condition for isomorphism}
For any $\Phi$ and $\Phi^\prime$ in $\Omega_{V_B},$ $\big[ F_{V_B} /
\Phi \big] = \big[ F_{V_B} / \Phi^\prime \big]$ in $\Vect_{G_\chi}
(S^2, \chi)_{V_B}$ if and only if there is a $G_\chi$-isomorphism
$\Theta : F_{V_B} \rightarrow F_{V_B}$ such that $( p_{|\lineL|}^*
\Theta ) \Phi = \Phi^\prime ( p_{|\lineL|}^* \Theta )$ where
$p_{|\lineL|}^* \Theta : p_{|\lineL|}^* F_{V_B} \rightarrow
p_{|\lineL|}^* F_{V_B} $ is the pull-back of $\Theta.$
\end{lemma}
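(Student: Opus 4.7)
The plan is a straightforward two-directional chase through the clutching construction, with the intertwining condition tracking exactly the gluing relations on fibers.

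For the implication $(\Leftarrow),$ I would start by assuming such a $\Theta$ exists and defining a candidate bundle map $\bar{\Theta} : F_{V_B}/\Phi \to F_{V_B}/\Phi^\prime$ by the formula $\bar{\Theta}([u]) = [\Theta(u)],$ where brackets denote the relevant equivalence classes under the gluing relations (\ref{equation: gluing}). The key verification is well-definedness: if $u$ and $u^\prime$ are glued in $F_{V_B}$ via $\Phi,$ say $u = (p_{|\lineL|}(\hat{x}), v)$ with $\hat{x} \in |\hat{e}|$ and $u^\prime = (p_{|\lineL|}(|c|(\hat{x})), \Phi(\hat{x})v),$ then applying $\Theta$ fiberwise and using $(p_{|\lineL|}^* \Theta)\Phi = \Phi^\prime(p_{|\lineL|}^* \Theta)$ evaluated at $(\hat{x}, v)$ yields $\Theta(u^\prime) = (p_{|\lineL|}(|c|(\hat{x})), \Phi^\prime(\hat{x}) \cdot \Theta_{p_{|\lineL|}(\hat{x})} v),$ which is precisely the $\Phi^\prime$-gluing partner of $\Theta(u).$ Equivariance of $\bar{\Theta}$ is inherited from $\Theta,$ and bijectivity on fibers is inherited from $\Theta$ being a bundle isomorphism.

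For the implication $(\Rightarrow),$ fix an equivariant isomorphism $\bar{\Theta} : F_{V_B}/\Phi \to F_{V_B}/\Phi^\prime$ and set $\Theta := |\pi|^* \bar{\Theta}.$ Because $F_{V_B} \cong |\pi|^*(F_{V_B}/\Phi)$ and $F_{V_B} \cong |\pi|^*(F_{V_B}/\Phi^\prime)$ are the canonical isomorphisms induced by the quotient maps $p_\Phi$ and $p_{\Phi^\prime},$ this $\Theta$ is a well-defined $G_\chi$-isomorphism $F_{V_B} \to F_{V_B}$ fitting into the commuting square
\begin{equation*}
\begin{CD}
F_{V_B}            @>\Theta>>         F_{V_B}               \\
@Vp_\Phi VV                          @VVp_{\Phi^\prime}V   \\
F_{V_B}/\Phi       @>\bar{\Theta}>>   F_{V_B}/\Phi^\prime.
\end{CD}
\end{equation*}
To extract the intertwining relation, pick any $\hat{x} \in |\hat{e}|,$ $v \in V_{\bar{f}},$ and write $u = (p_{|\lineL|}(\hat{x}), v),$ $u^\prime = (p_{|\lineL|}(|c|(\hat{x})), \Phi(\hat{x})v).$ Then $p_\Phi(u) = p_\Phi(u^\prime)$ by definition of $F_{V_B}/\Phi,$ so the commuting square forces $p_{\Phi^\prime}(\Theta u) = p_{\Phi^\prime}(\Theta u^\prime),$ which means $\Theta(u^\prime) = \Phi^\prime(\hat{x}) \cdot \Theta(u)$ in the fiber at $p_{|\lineL|}(|c|(\hat{x})).$ Translating this back into the pullback bundle $p_{|\lineL|}^* F_{V_B}$ gives $(p_{|\lineL|}^* \Theta)\Phi(\hat{x}, v) = \Phi^\prime(p_{|\lineL|}^* \Theta)(\hat{x}, v),$ which is the required identity.

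The only mild obstacle is bookkeeping: one must keep straight the distinction between $F_{V_B}$ (living over $|\lineK_R|$), its restriction to $|\lineL_R|,$ and its pullback to $|\hatL_R|$ via $p_{|\lineL|},$ since $\Phi$ is defined on $|\hatL_R|$ but the equivalence relation it induces lives on $F_{V_B}$ over $|\lineL_R|.$ Once the trivializations (\ref{equation: trivialization}) are fixed and one reads (\ref{equation: gluing}) carefully, both directions reduce to the single observation above that $\Theta$ descends to the quotients if and only if it conjugates $\Phi$ into $\Phi^\prime$ on the pullback bundle. Lemma \ref{lemma: homotopy gives isomorphism} is not needed here; this is purely a statement about compatibility of a bundle automorphism with two distinct clutching relations on the same total space.
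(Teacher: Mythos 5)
Your proof is correct and follows essentially the same route as the paper: both directions reduce to the commutative square relating $p_\Phi,$ $p_{\Phi^\prime},$ $\Theta,$ and the induced map on quotients, with the intertwining relation read off from (or used to verify) compatibility with the gluing relations. The paper merely cites Atiyah and leaves the lifting/descending checks implicit, whereas you spell them out; the substance is identical.
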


\[
\begin{CD}
p_{|\lineL|}^* F_{V_B}   @>{p_{|\lineL|}^* \Theta}>> p_{|\lineL|}^* F_{V_B} \\
@VV{\Phi}V    @VV{\Phi^\prime}V \\
p_{|\lineL|}^* F_{V_B}   @>{p_{|\lineL|}^* \Theta}>> p_{|\lineL|}^* F_{V_B} \\
\end{CD}
\]

\begin{proof}
This is also a slight generalization of the classical result
\cite[p. 22, (ii) and Section 1.6.]{At}. First, we prove
sufficiency. Let $A: F_{V_B} / \Phi \rightarrow F_{V_B} /
\Phi^\prime$ be a $G_\chi$-isomorphism. Then, we can show that there
exists the $G_\chi$-isomorphism $\bar{A} : F_{V_B} \rightarrow
F_{V_B}$ satisfying the following commutative diagram
\begin{equation}
\label{diagram: endomorphism}
\begin{CD}
F_{V_B}         @>\bar{A}>>             F_{V_B}   \\
@VV{p_\Phi}V    @VV{p_{\Phi^\prime}}V             \\
F_{V_B} / \Phi  @>A>>                   F_{V_B} / \Phi^\prime.  \\
\end{CD}
\end{equation}
From this, $( p_{|\complexL|}^* \bar{A} ) \Phi = \Phi^\prime (
p_{|\complexL|}^* \bar{A} )$ is obtained.

Next, we prove necessity. If we put $\bar{A} = \Theta,$ there exists
the unique $G_\chi$-isomorphism $A$ satisfying (\ref{diagram:
endomorphism}) by the assumption. So, we obtain a proof. \qed
\end{proof}

Consider the map $\imath_\Omega : \pi_0 ( \Omega_{V_B} ) \rightarrow
\Vect_{G_\chi} (S^2, \chi)_{V_B}$ mapping $[\Phi]$ to $\big[ F_{V_B}
/ \Phi \big].$ This is well-defined by Lemma \ref{lemma: homotopy
gives isomorphism}, and also surjective because each bundle in
$\Vect_{G_\chi} (S^2, \chi)_{V_B}$ can be considered as an
equivariant clutching construction. Then, the map $p_\Omega : \pi_0
( \Omega_{V_B} ) \rightarrow A_{G_\chi} (S^2, \chi)$ defined as
$p_\Omega = p_\vect \circ \imath_\Omega$ satisfies the following
diagram:
\begin{equation}
\label{equation: diagram} \SelectTips{cm}{} \xymatrix{ \pi_0 (
\Omega_{V_B} ) \ar[r]^-{\imath_\Omega}
\ar[dr]_-{p_\Omega} & \Vect_{G_\chi} (S^2, \chi)_{V_B} \\
& A_{G_\chi} (S^2, \chi) \ar@{<-}[u]_-{p_\vect} }.
\end{equation}
Let $p_{\pi_0} : \Omega_{V_B} \rightarrow \pi_0 ( \Omega_{V_B} )$ be
the natural quotient map. For different elements in $A_{G_\chi}
(S^2, \chi),$ their preimages through $(p_\Omega \circ
p_{\pi_0})^{-1}$ do not intersect each other so that we obtain a
decomposition of $\Omega_{V_B}.$ We describe this decomposition more
precisely. For each $(W_{d^i})_{i \in I^+} \in A_{G_\chi} (S^2,
\chi),$ put
\begin{equation*}
V_B = G_\chi \times_{(G_\chi)_{d^{-1}}} W_{d^{-1}}, \quad F_{V_B} =
G_\chi \times_{(G_\chi)_{d^{-1}}} ( |\bar{f}^{-1}| \times W_{d^{-1}}
),
\end{equation*}
and by using these define $\Omega_{(W_{d^i})_{i \in I^+}}$ as the
subset $(p_\Omega \circ p_{\pi_0})^{-1} \big( (W_{d^i})_{i \in I^+}
\big )$ of $\Omega_{V_B}.$ Henceforward, we will use these $V_B$ and
$F_{V_B}$ whenever we deal with $\Omega_{(W_{d^i})_{i \in I^+}}.$
Then, given a bundle $V_B,$ the set $\Omega_{V_B}$ is equal to the
disjoint union
\begin{equation*}
\bigcup_{(W_{d^i})_{i \in I^+} \in A_{G_\chi} (S^2, \chi) \text{
with } W_{d^{-1}} = V_{\bar{f}^{-1}} } \quad \Omega_{(W_{d^i})_{i
\in I^+}}.
\end{equation*}
Since $\Omega_{V_B}$ and $\Omega_{\hat{D}_R, V_B}$ are in one-to-one
correspondence, we may consider $\imath_\Omega,$ $p_\Omega,$
$p_{\pi_0}$ as defined also on $\pi_0 ( \Omega_{\hat{D}_R, V_B} ),$
$\pi_0 ( \Omega_{\hat{D}_R, V_B} ),$ $\Omega_{\hat{D}_R, V_B},$
respectively. So, if we define $\Omega_{\hat{D}_R, (W_{d^i})_{i \in
I^+}}$ as the subset $(p_\Omega \circ p_{\pi_0})^{-1} \big(
(W_{d^i})_{i \in I^+} \big )$ of $\Omega_{\hat{D}_R, V_B},$ then we
obtain the decomposition of $\Omega_{\hat{D}_R, V_B}$
\begin{equation}
\label{equation: decomposition of Omega} \bigcup_{(W_{d^i})_{i \in
I^+} \in A_{G_\chi} (S^2, \chi) \text{ with } W_{d^{-1}} =
V_{\bar{f}^{-1}} } \quad \Omega_{\hat{D}_R, (W_{d^i})_{i \in I^+}}.
\end{equation}
From this decomposition, it suffices to focus on $\Omega_{\hat{D}_R,
(W_{d^i})_{i \in I^+}}$ to understand $\Omega_{\hat{D}_R, V_B}.$
Now, we state a classification result.

\begin{proposition}
 \label{proposition: proposition for isomorphism}
\begin{enumerate}
  \item Assume that $\pi_0 ( \Omega_{\hat{D}_R,
(W_{d^i})_{i \in I^+}} )$ is nonempty and $c_1 : \pi_0 (
\Omega_{\hat{D}_R, (W_{d^i})_{i \in I^+}} ) \rightarrow H^2 (S^2),$
$[\Phi_{\hat{D}_R}] \mapsto c_1( F_{V_B} / ~ \Phi_{\hat{D}_R} )$ is
injective for each $(W_{d^i})_{i \in I^+}$ in $A_{G_\chi} ( S^2 ).$
Then, $p_\vect$ is surjective, and
\begin{equation*}
p_{\vect} \times c_1 : \Vect_{G_\chi} (S^2, \chi) \rightarrow
A_{G_\chi} ( S^2, \chi ) \times H^2 (S^2)
\end{equation*}
is injective.
  \item Assume that $\pi_0 ( \Omega_{\hat{D}_R,
(W_{d^i})_{i \in I^+}} )$ consists of exactly one element for each
$(W_{d^i})_{i \in I^+}$ in $A_{G_\chi} ( S^2, \chi ).$ Then,
$p_{\vect}$ is an isomorphism.
\end{enumerate}
\end{proposition}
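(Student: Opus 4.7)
The plan is to prove both parts by diagram-chasing on (4.1), using the decomposition of $\Omega_{\hat{D}_R, V_B}$ displayed just above the proposition together with the surjectivity of $\imath_\Omega$ and the well-definedness provided by Lemma 4.2. The central observation is that, for a bundle $E \in \Vect_{G_\chi}(S^2,\chi)$, the value $p_{\vect}([E])=(W_{d^i})_{i\in I^+}$ forces $W_{d^{-1}}=V_{\bar{f}^{-1}}$ where $V_B$ is the restriction $E|_B$, so any clutching presentation $E \cong F_{V_B}/\Phi$ must use $\Phi \in \Omega_{(W_{d^i})_{i\in I^+}}$. This pins down which component of the decomposition of $\Omega_{V_B}$ a given bundle lives in.

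For part (1), surjectivity of $p_{\vect}$ is immediate: given $(W_{d^i})_{i\in I^+}\in A_{G_\chi}(S^2,\chi)$, the assumption that $\pi_0(\Omega_{\hat{D}_R,(W_{d^i})_{i\in I^+}})$ is nonempty supplies a $\Phi$, and then $F_{V_B}/\Phi$ is a bundle whose $p_{\vect}$-image is exactly $(W_{d^i})_{i\in I^+}$. For injectivity of $p_{\vect}\times c_1$, suppose $E,E'$ satisfy $p_{\vect}([E])=p_{\vect}([E'])=(W_{d^i})_{i\in I^+}$ and $c_1(E)=c_1(E')$. The isotropy condition fixes the $V_B$-component, so by surjectivity of $\imath_\Omega$ one writes $E=F_{V_B}/\Phi$ and $E'=F_{V_B}/\Phi'$ with $[\Phi],[\Phi']\in\pi_0(\Omega_{\hat{D}_R,(W_{d^i})_{i\in I^+}})$. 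The first Chern classes of these two bundles coincide, so the injectivity hypothesis on $c_1$ forces $[\Phi]=[\Phi']$, and Lemma 4.2 then gives $[E]=[E']$.

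For part (2), the hypothesis says each fiber $\pi_0(\Omega_{\hat{D}_R,(W_{d^i})_{i\in I^+}})$ is a singleton. Combined with (1), surjectivity of $p_{\vect}$ already follows from nonemptiness. To show $p_{\vect}$ is injective, take $[E],[E']$ with common $p_{\vect}$-image $(W_{d^i})_{i\in I^+}$, write $E=F_{V_B}/\Phi$ and $E'=F_{V_B}/\Phi'$ as above; both $[\Phi]$ and $[\Phi']$ now lie in the \emph{same singleton} component of the decomposition, hence $[\Phi]=[\Phi']$ and again Lemma 4.2 gives $[E]=[E']$. Since $p_{\vect}$ is a semigroup homomorphism by construction, being a bijection makes it a semigroup isomorphism.

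I expect no single hard step; the argument is a clean diagram chase. The only point that requires real care is bookkeeping the identification $V_B \leftrightarrow W_{d^{-1}}$ and the fact that the decomposition of $\Omega_{V_B}$ is over those triples with $W_{d^{-1}}=V_{\bar{f}^{-1}}$, so that preimages under $\imath_\Omega$ of bundles sharing a $p_{\vect}$-image really do land in the \emph{same} component $\Omega_{\hat{D}_R,(W_{d^i})_{i\in I^+}}$ rather than in different components sharing only $(W_{d^0},W_{d^1})$. Once that identification is made explicit, both injectivity statements reduce to Lemma 4.2 applied inside a single homotopy class.
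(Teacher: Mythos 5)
Your argument is correct and is essentially the paper's own proof: both reduce to the decomposition of $\Omega_{V_B}$, the identification $E|_B \cong V_B$ forced by $E_{d^{-1}} \cong W_{d^{-1}}$, surjectivity of $\imath_\Omega$, and Lemma \ref{lemma: homotopy gives isomorphism}; you merely run the injectivity step directly ($c_1(E)=c_1(E') \Rightarrow [\Phi]=[\Phi'] \Rightarrow [E]=[E']$) where the paper argues the contrapositive. Your explicit treatment of part (2), which the paper dismisses as ``easier,'' is also correct.
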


\begin{proof}
We prove only (1), and (2) is easier. Surjectivity of $p_\vect$ is
trivial by assumption. For arbitrary $[E] \ne [E^\prime]$ in
$\Vect_{G_\chi} (S^2, \chi),$ if $p_\vect([E])$ $\ne
p_\vect([E^\prime]),$ then there is nothing to prove. Assume that
two elements $[E] \ne [E^\prime]$ in the set $\Vect_{G_\chi} (S^2,
\chi)$ satisfy $p_\vect([E])$ $= p_\vect([E^\prime])$ $=(W_{d^i})_{i
\in I^+}$ for some $(W_{d^i})_{i \in I^+}.$ Then, it suffices to
show $c_1(E) \ne c_1(E^\prime)$ to prove injectivity. Put $V_B =
G_\chi \times_{(G_\chi)_{d^{-1}}} W_{d^{-1}}.$ Then, $E|_B \cong
E^\prime|_B \cong V_B$ because their isotropy representations at
$d^{-1}$ are all $W_{d^{-1}}.$ That is, $[E]$ and $[E^\prime]$ are
in $\Vect_{G_\chi} (S^2, \chi)_{V_B}.$ Since $\imath_\Omega$ is
surjective, $E \cong F_{V_B} / \Phi$ and $E^\prime \cong F_{V_B} /
\Phi^\prime$ for some $\Phi$ and $\Phi^\prime$ in $\Omega_{V_B},$
especially in $\Omega_{(W_{d^i})_{i \in I^+}}.$ By Lemma \ref{lemma:
homotopy gives isomorphism}, $[\Phi] \ne [\Phi^\prime]$ in $\pi_0
(\Omega_{(W_{d^i})_{i \in I^+}})$ because $[E] \ne [E^\prime].$ So,
$c_1 \big( F_{V_B} / \Phi \big) \ne c_1 \big( F_{V_B} / \Phi^\prime
\big)$ by assumption. Therefore, we obtain a proof for injectivity.
\qed
\end{proof}

By this lemma, we only have to calculate $\pi_0 ( \Omega_{\hat{D}_R,
(W_{d^i})_{i \in I^+}} )$ to classify equivariant vector bundles in
many cases. In fact, we can apply this lemma except the case when
$R=\rho(G_\chi)$ is equal to $\Z_n \times Z$ with odd $n$ or
$\langle -a_n \rangle$ with even $n/2$ as we shall see in Section
\ref{section: cyclic dihedral cases}. When we can not apply this
lemma, we should apply Lemma \ref{lemma: equivalent condition for
isomorphism} directly.

\section{Equivariant pointwise clutching map}
 \label{section: pointwise clutching map}

Let $\Phi$ be an equivariant clutching map determining $E$ in
$\Vect_{G_\chi} ( S^2, \chi )_{V_B},$ i.e. the map $\Phi$ glues
$F_{V_B}$ along $| \lineL_R |$ to give $E.$ Let us investigate this
gluing process pointwisely. For each $\bar{x} \in | \lineL_R |$ and
$x = |\pi|(\bar{x}),$ let $\bar{\mathbf{x}} = \pi^{-1} ( x ) = \{
\bar{x}_j | j \in \Z_m \}$ for some $m.$ Then, the map $\Phi$ glues
the $(G_\chi)_x$-bundle $\big( \res_{(G_\chi)_x}^{G_\chi} F_{V_B}
\big) \big|_{\bar{\mathbf{x}}}$ along $\bar{\mathbf{x}}$ to give the
$(G_\chi)_x$-representation $E_x,$ and we call this process
\textit{equivariant pointwise gluing}. Here, note that
$(G_\chi)_{\bar{x}_j} < (G_\chi)_x$ for each $j \in \Z_m$ and
\begin{equation}
\label{equation: extension} \res_{(G_\chi)_{\bar{x}_j}}^{(G_\chi)_x}
E_x \cong (F_{V_B})_{\bar{x}_j}
\end{equation}
by equivariance of $\Phi.$ In dealing with equivariant clutching
maps, technical difficulties occur in equivariant pointwise gluings
because gluing by $\Phi$ can be considered as just a continuous
collection of equivariant pointwise gluings at points in $| \lineL_R
|.$ In this section, we prove results on equivariant pointwise
gluing. To deal with equivariant pointwise gluing, we need the
concept of representation extension. For compact Lie groups $N_1 <
N_2,$ let $W_2$ be an $N_2$-representation and $W_1$ be an
$N_1$-representation. Then, $W_2$ is called a \textit{representation
extension} or an $N_2$-\textit{extension} of $W_1$ if
$\res^{N_2}_{N_1} W_2 \cong W_1.$ For example, $E_x$ is an
$(G_\chi)_x$-extension of $(F_{V_B})_{\bar{x}_j}$ for each $j \in
\Z_m$ by (\ref{equation: extension}). And, let $\ext_{N_1}^{N_2}
W_1$ be the set
\begin{equation*}
\{ W_2 \in \Rep(N_2) ~ | ~ \res_{N_1}^{N_2} W_2 \cong W_1 \}.
\end{equation*}

Let us investigate equivariant pointwise gluings more precisely
under a little bit general setting. Let a compact Lie group $N_2$
act on a finite set $\bar{\mathbf{x}} = \{ \bar{x}_j | ~ j \in \Z_m
\}$ for $m \ge 2,$ and let $N_0$ and $N_1$ be the kernel of the
action and the isotropy subgroup $(N_2)_{\bar{x}_0},$ respectively.
Let $F$ be an $N_2$-vector bundle over $\bar{\mathbf{x}}.$
Consider an arbitrary map
\begin{equation*}
\psi : \bar{\mathbf{x}} \rightarrow \amalg_{j \in \Z_m} \Iso(
F_{\bar{x}_j}, F_{\bar{x}_{j+1}} )
\end{equation*}
such that $\psi( \bar{x}_j ) \in \Iso( F_{\bar{x}_j},
F_{\bar{x}_{j+1}} ).$ Call such a map \textit{pointwise preclutching
map} with respect to $F.$ By using $\psi,$ we glue
$F_{\bar{x}_j}$'s, i.e. a vector $u$ in $F_{\bar{x}_j}$ is
identified with $\psi(\bar{x}_j) u$ in $F_{\bar{x}_{j+1}}$ for each
$j.$ Let $F/\psi$ be the quotient of $F$ through this
identification, and let $p_\psi : F \rightarrow F/\psi$ be the
quotient map. Let $\imath_\psi : F_{\bar{x}_0} \rightarrow F/\psi$
be the composition of the natural injection $\imath_{\bar{x}_0} :
F_{\bar{x}_0} \rightarrow F$ and the quotient map $p_\psi.$
\begin{equation}
\label{figure: diagram of equivariant pointwise gluing}
\SelectTips{cm}{} \xymatrix{ F_{\bar{x}_0}
\ar[r]^-{\imath_{\bar{x}_0}}
\ar[d]_-{\imath_\psi} & F \\
F/\psi \ar@{<-}[ur]_-{p_\psi} }
\end{equation}
We would find conditions on $\psi$ under which the quotient $F/\psi$
inherits an $N_2$-representation structure from $F$ and the map
$\imath_\psi$ becomes an $N_1$-isomorphism from $F_{\bar{x}_0}$ to
$\res_{N_1}^{N_2} (F/\psi).$ For notational simplicity, denote
\begin{equation*}
\psi(\bar{x}_{j^\prime}) \cdots \psi(\bar{x}_{j+1}) \psi(\bar{x}_j)
u
\end{equation*}
by $\psi^{j^\prime-j+1} u$ for $u \in F_{\bar{x}_j}$ and $j \le
j^\prime$ in $\Z.$

\begin{lemma}
 \label{lemma: equivalent condition for psi}
For a pointwise preclutching map $\psi$ with respect to $F,$ the
quotient $F/\psi$ carries an $N_2$-representation structure so that
$p_\psi$ is $N_2$-equivariant and $\imath_\psi$ is an
$N_1$-isomorphism if and only if the following conditions hold :
\begin{enumerate}
  \item $\psi^m = \id$ in $\Iso(F_{\bar{x}_j})$ for each $j \in
  \Z_m.$ So, $\psi^k$ is well-defined for all $k \in \Z_m.$
  \item $\psi^{j_3 - j_1} = g \psi( \bar{x}_{j_2} ) g^{-1}$ in $F_{\bar{x}_{j_1}}$ for
  each $j_1 \in \Z_m,$ $g \in N_2$ when $g^{-1} \bar{x}_{j_1} = \bar{x}_{j_2}$
        and $g \bar{x}_{j_2 +1} = \bar{x}_{j_3}$ for some $j_2, j_3 \in \Z_m.$
\end{enumerate}
\end{lemma}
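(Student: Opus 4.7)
The plan is to prove the two directions of the equivalence separately, treating (1) as the condition that makes $F/\psi$ a well-defined vector space of the expected dimension, and (2) as the condition that makes the induced $N_2$-action well-defined.

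First I would handle necessity. Suppose $F/\psi$ has the desired structure. Since $p_\psi$ is $N_2$-equivariant and $\imath_\psi$ is an $N_1$-isomorphism, each restriction $p_\psi|_{F_{\bar{x}_j}}$ must be a linear isomorphism onto $F/\psi$: indeed, $p_\psi|_{F_{\bar{x}_0}} = \imath_\psi$, and transporting by any $g \in N_2$ carrying $\bar{x}_0$ to $\bar{x}_j$ gives the claim. From the defining relation $p_\psi(\psi(\bar{x}_j) u) = p_\psi(u)$, iteration yields $p_\psi(\psi^m u) = p_\psi(u)$, so injectivity of $p_\psi|_{F_{\bar{x}_j}}$ forces $\psi^m = \id$, which is (1). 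For (2), take any $g \in N_2$ and indices with $g^{-1}\bar{x}_{j_1} = \bar{x}_{j_2}$, $g\bar{x}_{j_2+1} = \bar{x}_{j_3}$. For $u \in F_{\bar{x}_{j_1}}$, equivariance of $p_\psi$ combined with the quotient relation gives
\begin{equation*}
p_\psi(u) = g\, p_\psi(g^{-1}u) = g\, p_\psi\bigl(\psi(\bar{x}_{j_2}) g^{-1} u\bigr) = p_\psi\bigl(g\psi(\bar{x}_{j_2}) g^{-1} u\bigr).
\end{equation*}
Since $u \in F_{\bar{x}_{j_1}}$ and $g\psi(\bar{x}_{j_2})g^{-1}u \in F_{\bar{x}_{j_3}}$, the injectivity on each fiber together with the description of the equivalence relation forces $g\psi(\bar{x}_{j_2})g^{-1} u = \psi^{j_3 - j_1} u$, which is (2).

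For sufficiency, assume (1) and (2). Condition (1) makes the naive pointwise identification into an honest equivalence relation: define $u \sim v$ for $u \in F_{\bar{x}_j}$, $v \in F_{\bar{x}_{j'}}$ by $v = \psi^{j'-j} u$, where the exponent is interpreted modulo $m$ using (1). Symmetry, reflexivity and transitivity are immediate, and linearity of each $\psi(\bar{x}_j)$ makes $F/\psi$ a vector space with each $p_\psi|_{F_{\bar{x}_j}}$ a linear isomorphism. Next, define the $N_2$-action on $F/\psi$ by $g \cdot [u] = [gu]$ for $u \in F_{\bar{x}_j}$. The only nontrivial point is well-definedness: if $u \sim \psi(\bar{x}_j) u$, I must verify $gu \sim g\psi(\bar{x}_j) u$. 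Setting $\bar{x}_{j_1} = g\bar{x}_j$, $\bar{x}_{j_3} = g\bar{x}_{j+1}$ and $j_2 = j$, condition (2) rewritten gives $\psi^{j_3-j_1}(gu) = g\psi(\bar{x}_{j_2}) g^{-1}(gu) = g\psi(\bar{x}_{j_2})u$, which is exactly the required equivalence. Continuity/linearity of this action, and equivariance of $p_\psi$, are then automatic from the definition.

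Finally, $\imath_\psi$ is an $N_1$-isomorphism: it is bijective because $p_\psi|_{F_{\bar{x}_0}}$ is, and for $g \in N_1 = (N_2)_{\bar{x}_0}$ one has $\imath_\psi(gu) = [gu] = g[u] = g\,\imath_\psi(u)$ by definition of the $N_2$-action. The main obstacle is condition (2), since it is the nonobvious cocycle-type relation that encodes compatibility between the gluing and the group action at every pair of points in the orbit; everything else is essentially bookkeeping once (1) guarantees that the quotient behaves like a vector space. \qed
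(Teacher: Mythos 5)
Your proof is correct and follows essentially the same route as the paper's: condition (1) is identified with injectivity of $\imath_\psi$ (equivalently, well-definedness of the quotient vector space structure on $F/\psi$), and condition (2) with well-definedness of the induced $N_2$-action, verified on the generating identifications $u \sim \psi(\bar{x}_j)u$ exactly as the paper reduces the check from arbitrary $\psi^l$ to $l=1$. One small caveat: in your necessity argument for (1) you obtain injectivity of $p_\psi|_{F_{\bar{x}_j}}$ for $j \ne 0$ by transporting with some $g \in N_2$ sending $\bar{x}_0$ to $\bar{x}_j$, but the lemma does not assume the $N_2$-action on $\bar{\mathbf{x}}$ is transitive (and in the paper's Condition F1 it is not); this is harmless, since $\psi^m|_{F_{\bar{x}_j}}$ is conjugate to $\psi^m|_{F_{\bar{x}_0}}$ by the invertible map $\psi^{j}|_{F_{\bar{x}_0}}$, so $\psi^m = \id$ at $j=0$ already forces it at every $j$.
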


\begin{proof}
To begin with, it is obvious that $\imath_\psi$ is surjective. And,
note that $\imath_\psi$ is injective if and only if Condition (1)
holds.

Next, we show that $F/\psi$ carries an $N_2$-representation
structure such that $p_\psi$ is equivariant if and only if Condition
(2) holds. The possible group action on $F/\psi$ to guarantee
equivariance of $p_\psi$ is as follows:
\begin{equation*}
g \cdot u = p_\psi ( g \bar{u} )
\end{equation*}
for each $g \in N_2, u \in F/\psi, \bar{u} \in p_\psi^{-1} (u).$
This is well-defined if and only if $p_\psi ( g \bar{u} ) = p_\psi (
g \psi^l (\bar{u}) )$ for each $l \in \Z,$ i.e. $\psi^k g \bar{u} =
g \psi^l \bar{u}$ for some $k.$ Putting $\bar{u}^\prime = g
\bar{u},$ this can be written as $\psi^k \bar{u}^\prime = g \psi^l
g^{-1} \bar{u}^\prime.$ Since $g \psi^l g^{-1} = (g \psi g^{-1})^l,$
this holds for each $l$ if and only if it holds for $l=1,$ i.e.
Condition (2). This gives a proof. \qed
\end{proof}

A pointwise preclutching map satisfying (1), (2) of Lemma
\ref{lemma: equivalent condition for psi} is called an
\textit{equivariant pointwise clutching map} with respect to $F.$
Let $\mathcal{A}$ be the set of all equivariant pointwise clutching
maps with respect to $F,$ and we topologize $\mathcal{A}$ with the
subspace topology of $\prod_{j \in \Z_m} \Iso (F_{\bar{x}_j} ,
F_{\bar{x}_{j+1}}).$ An $N_2$-representation $W$ is called
\textit{determined} by $\psi \in \mathcal{A}$ with respect to $F$ if
$W \cong F/\psi.$ In the next corollary, we can see that
representation extension is related to equivariant pointwise
clutching map and especially guarantees nonemptiness of
$\mathcal{A}.$

\begin{corollary}
 \label{corollary: equivalent condition for psi}
For an $N_2$-extension $W$ of $F_{\bar{x}_0},$ assume that there
exists an $N_2$-morphism $p: F \rightarrow W$ such that
$p|_{F_{\bar{x}_j}}$ is a nonequivariant isomorphism for each $j \in
\Z_m.$ Let $\psi$ be the pointwise preclutching map defined by
$\psi(\bar{x}_j) = ( p|_{F_{\bar{x}_{j+1}}} )^{-1} \circ
(p|_{F_{\bar{x}_j}})$ for $j \in \Z_m.$ Then, $\psi$ is in
$\mathcal{A},$ and there is an $N_2$-isomorphism $\imath: W
\rightarrow F/\psi$ such that $p_\psi = \imath \circ p,$ i.e. $\psi$
determines $W$ with respect to $F.$ Especially, $\mathcal{A}$ is
nonempty.
\end{corollary}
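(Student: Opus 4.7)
The plan is to verify, via Lemma~\ref{lemma: equivalent condition for psi}, that the prescribed $\psi$ lies in $\mathcal{A}$, and then to read off $\imath$ from the universal property of the quotient $F/\psi$.

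First, I would check the two conditions of Lemma~\ref{lemma: equivalent condition for psi}. With $\psi(\bar{x}_k) = (p|_{F_{\bar{x}_{k+1}}})^{-1} \circ (p|_{F_{\bar{x}_k}})$, consecutive factors telescope, so on $F_{\bar{x}_j}$ the composition $\psi^m$ collapses to $(p|_{F_{\bar{x}_j}})^{-1} \circ (p|_{F_{\bar{x}_j}}) = \id$ (using $\bar{x}_{j+m} = \bar{x}_j$), which is Condition (1); the same telescoping gives $\psi^{j_3 - j_1} = (p|_{F_{\bar{x}_{j_3}}})^{-1} \circ (p|_{F_{\bar{x}_{j_1}}})$ on $F_{\bar{x}_{j_1}}$. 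For Condition (2), I would substitute the defining formula of $\psi(\bar{x}_{j_2})$ into $g\,\psi(\bar{x}_{j_2})\,g^{-1}$ and apply $N_2$-equivariance of $p$ in the form $p \circ g = g \cdot p$; the identities $g^{-1}\bar{x}_{j_1} = \bar{x}_{j_2}$ and $g\,\bar{x}_{j_2+1} = \bar{x}_{j_3}$ then rewrite the resulting composition as exactly $(p|_{F_{\bar{x}_{j_3}}})^{-1} \circ (p|_{F_{\bar{x}_{j_1}}})$, matching the telescoping expression for $\psi^{j_3 - j_1}$. Hence $\psi \in \mathcal{A}$.

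Second, I would construct $\imath$. By Lemma~\ref{lemma: equivalent condition for psi}, $F/\psi$ carries an $N_2$-representation structure making $p_\psi$ equivariant. The defining identification $u \sim \psi(\bar{x}_j) u$ is, by construction of $\psi$, precisely the condition $p|_{F_{\bar{x}_j}}(u) = p|_{F_{\bar{x}_{j+1}}}(\psi(\bar{x}_j) u)$, so $p$ is constant on $\psi$-orbits and descends to an $N_2$-equivariant map $\bar{p} : F/\psi \to W$ with $\bar{p} \circ p_\psi = p$. Since $\bar{p} \circ \imath_\psi = p|_{F_{\bar{x}_0}}$ is already an isomorphism and the two spaces have the same dimension, $\bar{p}$ is an $N_2$-isomorphism; setting $\imath = \bar{p}^{-1}$ yields $p_\psi = \imath \circ p$ as required.

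Finally, nonemptiness of $\mathcal{A}$ reduces to exhibiting one admissible pair $(W, p)$: identify $F$ with $N_2 \times_{N_1} F_{\bar{x}_0}$, pick any extension $W \in \ext_{N_1}^{N_2} F_{\bar{x}_0}$ (such extensions are discussed in Section~\ref{section: representation extension}), and define $p([g, v]) = g \cdot \phi(v)$ for any $N_1$-isomorphism $\phi : F_{\bar{x}_0} \to \res_{N_1}^{N_2} W$. The substantive content is contained in the two verifications of the first step; the only mild obstacle is the index bookkeeping in Condition (2), where one must carefully track how $g$ permutes the factors $p|_{F_{\bar{x}_k}}$ to end up with the telescoped expression.
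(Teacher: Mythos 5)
Your first two paragraphs are correct and complete, and they are precisely the ``easy check'' that the paper omits: the telescoping of $\psi^k$ to $(p|_{F_{\bar{x}_{j+k}}})^{-1}\circ(p|_{F_{\bar{x}_j}})$ gives Condition (1) of Lemma \ref{lemma: equivalent condition for psi} at once, equivariance of $p$ turns $g\,\psi(\bar{x}_{j_2})\,g^{-1}$ into the same telescoped expression for Condition (2), and the descended map $\bar{p}$ with $\imath=\bar{p}^{-1}$ is exactly the required isomorphism.

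One caveat about your final paragraph: the clause ``Especially, $\mathcal{A}$ is nonempty'' is conditional on the corollary's hypotheses and follows immediately from $\psi\in\mathcal{A}$, so nothing further is needed. Your attempt to establish it unconditionally by manufacturing a pair $(W,p)$ would fail in general: it presupposes both that $F\cong N_2\times_{N_1}F_{\bar{x}_0}$ (Condition F2, which is only imposed later in the section, not in this corollary) and that $\ext_{N_1}^{N_2}F_{\bar{x}_0}$ is nonempty, which can fail --- indeed Theorem \ref{theorem: bijectivity with extensions} shows nonemptiness of $\mathcal{A}$ is \emph{equivalent} to existence of an extension under Condition F2, and the remark in Section \ref{section: reduction to effective} notes that extensions need not exist when the relevant quotient is dihedral of the form $\D_{2m}$. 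Deleting that paragraph leaves a complete and correct proof.
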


\begin{proof}
Easy check. \qed
\end{proof}

To calculate $\pi_0 ( \Omega_{ V_B } )$ later, we need to understand
topology of $\mathcal{A}.$ For this, we would consider $F/\psi$ as
an additional structure over the fixed $F_{\bar{x}_0}$ for each
$\psi \in \mathcal{A}$ as follows:
\begin{lemma}
\label{lemma: star action}
 Define a operation $\star_\psi$ of $N_2$ on
$F_{\bar{x}_0}$ as $g \star_\psi u = \imath_\psi^{-1} p_\psi(gu)$
for $\psi \in \mathcal{A},$ $g \in N_2,$ $u \in F_{\bar{x}_0}.$
Then,
\begin{enumerate}
  \item $\star_\psi$ is an $N_2$-action on $F_{\bar{x}_0}$ such that
  $\imath_\psi : ( F_{\bar{x}_0}, \star_\psi ) \rightarrow F/\psi$ is an
  $N_2$-isomorphism.
  \item $g \star_\psi u = \psi^{m-j} g u = g \psi^k u$ when $g
\bar{x}_0 = \bar{x}_j$ and $g \bar{x}_k = \bar{x}_0.$ Especially, $g
\star_\psi u = g u$ for $g \in N_1.$
\end{enumerate}
Henceforth, we consider $F/\psi$ as $( F_{\bar{x}_0}, \star_\psi ).$
\end{lemma}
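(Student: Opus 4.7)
The strategy is to leverage the two properties of $\psi$ already established in Lemma \ref{lemma: equivalent condition for psi}, namely that $p_\psi$ is $N_2$-equivariant and that $\imath_\psi = p_\psi \circ \imath_{\bar{x}_0}$ is an $N_1$-linear bijection from $F_{\bar{x}_0}$ onto $F/\psi.$ Since $\imath_\psi$ is a bijection of vector spaces, the formula $g \star_\psi u := \imath_\psi^{-1} p_\psi(gu)$ is well-defined.

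For part (1), I would first verify the group action axioms. Using the identity $p_\psi|_{F_{\bar{x}_0}} = \imath_\psi$ together with $N_2$-equivariance of $p_\psi,$ a short chain of equalities yields $g_1 \star_\psi (g_2 \star_\psi u) = \imath_\psi^{-1} p_\psi(g_1 g_2 u) = (g_1 g_2) \star_\psi u,$ and the identity axiom is clear from $\psi^m = \id.$ That $\imath_\psi$ is then an $N_2$-isomorphism between $(F_{\bar{x}_0}, \star_\psi)$ and $F/\psi$ is essentially tautological: $\imath_\psi(g \star_\psi u) = p_\psi(gu) = g \cdot \imath_\psi(u)$ by equivariance of $p_\psi,$ and bijectivity of $\imath_\psi$ is already known.

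For part (2), the key observation is that the identifications constituting $F/\psi$ send any $v \in F_{\bar{x}_j}$ to $\psi^{m-j}(v) \in F_{\bar{x}_0}$ (using $\psi^m = \id$ from Condition (1) of Lemma \ref{lemma: equivalent condition for psi}). Specializing to $v = gu$ when $g \bar{x}_0 = \bar{x}_j$ gives $p_\psi(gu) = \imath_\psi(\psi^{m-j} g u),$ hence $g \star_\psi u = \psi^{m-j} g u.$ The alternative expression $g \psi^k u$ for $k$ with $g \bar{x}_k = \bar{x}_0$ follows by pushing the equivariance in the reverse direction: $p_\psi(gu) = g \cdot p_\psi(u) = g \cdot p_\psi(\psi^k u) = p_\psi(g \psi^k u),$ and injectivity of $\imath_\psi$ on $F_{\bar{x}_0}$ forces the two expressions to agree. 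The special case $g \in N_1$ falls out immediately by taking $j = 0$ so that $\psi^{m-j} = \psi^m = \id.$

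There is no real obstacle here; the whole statement is a bookkeeping consequence of Lemma \ref{lemma: equivalent condition for psi}. The only technicality to watch is keeping track of the fiber in which each intermediate expression lies, so that the $\psi$-identifications and the $N_2$-equivariance of $p_\psi$ can be invoked at the correct indices modulo $m.$
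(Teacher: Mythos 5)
Your proposal is correct and follows essentially the same route as the paper: both rest on the identity $\imath_\psi^{-1}p_\psi|_{F_{\bar{x}_0}}=\id$ and the $N_2$-equivariance of $p_\psi$ from Lemma \ref{lemma: equivalent condition for psi}, with part (2) obtained by rewriting $p_\psi(gu)$ as $p_\psi(\psi^{m-j}gu)$. The only cosmetic difference is that you check the action axioms for $\star_\psi$ directly and derive $g\psi^k u$ via equivariance plus injectivity, whereas the paper transports the axioms through the bijection $\imath_\psi$ and cites Lemma \ref{lemma: equivalent condition for psi}.(2) for the second expression; both are sound.
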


\begin{proof}
To prove (1), we show that $\imath_\psi ( g \star_\psi u ) = g \cdot
\imath_\psi (u)$ for each $g \in N_2,$ $u \in F_{\bar{x}_0}$ as
follows:
\begin{align*}
\imath_\psi ( g \star_\psi u ) &= \imath_\psi ( \imath_\psi^{-1}
p_\psi(gu) ) = p_\psi(gu), \\
g \cdot \imath_\psi (u) &= g \cdot p_\psi (u) = p_\psi(gu).
\end{align*}
Since $F/\psi$ already delivers an $N_2$-action and $\imath_\psi$ is
bijective, this shows that $\star_\psi$ is an action. Therefore, we
prove (1).

Next, we prove (2). Note that $\imath_\psi^{-1} p_\psi
|_{F_{\bar{x}_0}}$ is an identity. Using this,
\begin{equation*}
g \star_\psi u = \imath_\psi^{-1} p_\psi(gu) = \imath_\psi^{-1}
p_\psi( \psi^{m-j} gu ) = \psi^{m-j} gu.
\end{equation*}
Also, $\psi^{m-j} gu = g \psi^k u$ by Lemma \ref{lemma: equivalent
condition for psi}.(2). Therefore, we prove (2). \qed
\end{proof}

To obtain more precise results on $\mathcal{A},$ we assume that the
$N_2$-bundle $F$ over $\bar{\mathbf{x}}$ satisfies one of the
following two conditions:
\begin{itemize}
  \item[F1.] $N_2$ fixes
  $\bar{\mathbf{x}} = \{ \bar{x}_j ~ | ~ j \in \Z_m \}$
  with $m=2,$ and $F_{\bar{x}_0} \cong F_{\bar{x}_1},$
  \item[F2.] $N_2$ acts transitively on
  $\bar{\mathbf{x}} = \{ \bar{x}_j ~ | ~ j \in \Z_m \}$
  with $m \ge 2.$
\end{itemize}
These conditions are not too restrictive as the following example
shows:

\begin{example}
\label{example: pointwise clutching}
 Given the bundle $F_{V_B}$ over
$|\lineK_R|$ of Section \ref{section: clutching construction}, let
$\bar{\mathbf{x}} = \pi^{-1} ( x ) = \{ \bar{x}_j ~ | ~ j \in \Z_m
\}$ for each $\bar{x} \in | \lineL_R |,$ $x = |\pi|(\bar{x}),$ some
$m \ge 2.$ Put $N_2 = (G_\chi)_x$ and $F = \big(
\res_{(G_\chi)_x}^{G_\chi} F_{V_B} \big) \big|_{\bar{\mathbf{x}}}.$
By Table \ref{table: all about K_R} and definition of $F_{V_B},$ we
can check that $F$ satisfies Condition F1. or F2. according to
$\bar{x}.$ Given a map $\Phi$ in $\Omega_{V_B},$ we can define a map
$\psi$ in $\mathcal{A}$ as follows:
\begin{enumerate}
\item if $\bar{x}$ is a vertex, then
$\psi_{\bar{x}} (\bar{x}_j) = \Phi (\hat{x}_{j,+})$ for each $j \in
\Z_{j_R},$
\item if $\bar{x}$ is not a vertex and $\bar{x} = p_{|\lineL|}(\hat{x}),$
then
\begin{equation*}
\psi (\bar{x}_0) = \Phi (\hat{x}) \quad \text{and} \quad \psi
(\bar{x}_1) = \Phi \big( |c|(\hat{x}) \big).
\end{equation*}
\end{enumerate}
Then, we have $F/\psi \cong \big( F_{V_B} / \Phi \big)_x$ for each
$x.$ \qed
\end{example}

Let $\mathcal{A}^j$ be the set
\begin{equation*}
\{ \psi(\bar{x}_j) ~|~ \psi \in \mathcal{A} \},
\end{equation*}
and let $\mathcal{A}^{j,j^\prime}$ be the set
\begin{equation*}
\{ ( \psi(\bar{x}_j), \psi(\bar{x}_{j^\prime}) ) ~|~ \psi \in
\mathcal{A} \}.
\end{equation*}
In the below, it will be witnessed that $\mathcal{A}$ is
homeomorphic to $\mathcal{A}^j$ or $\mathcal{A}^{j,j^\prime}$ in
many cases.

\begin{lemma}
 \label{lemma: pointwise clutching for m=2 nontransitive}
Assume that $F$ satisfies Condition F1. Then, $\mathcal{A}$ is
homeomorphic to nonempty $\mathcal{A}^0 = \Iso_{N_2} (
F_{\bar{x}_0}, F_{\bar{x}_1} )$ through the the evaluation map
\begin{equation*}
\mathcal{A} \rightarrow \mathcal{A}^0, ~ \psi \mapsto
\psi(\bar{x}_0).
\end{equation*}
\end{lemma}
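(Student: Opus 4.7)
The plan is to verify that the evaluation map $\psi \mapsto \psi(\bar{x}_0)$ is a well-defined continuous bijection onto $\Iso_{N_2}(F_{\bar{x}_0},F_{\bar{x}_1})$ and to exhibit an explicit continuous inverse. Throughout, the two conditions defining $\mathcal{A}$ in Lemma \ref{lemma: equivalent condition for psi} are the only things that need checking, and both simplify drastically under hypothesis F1.

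First, I unpack what Conditions (1), (2) of Lemma \ref{lemma: equivalent condition for psi} become when $N_2$ fixes $\bar{\mathbf{x}}$ pointwise and $m=2$. Condition (1) with $m=2$ says $\psi(\bar{x}_1)\psi(\bar{x}_0)=\id$ on $F_{\bar{x}_0}$ (and symmetrically on $F_{\bar{x}_1}$), so $\psi(\bar{x}_1)$ is forced to equal $\psi(\bar{x}_0)^{-1}$. For Condition (2), any $g\in N_2$ satisfies $g^{-1}\bar{x}_{j_1}=\bar{x}_{j_1}$ and $g\bar{x}_{j_1+1}=\bar{x}_{j_1+1}$, so $j_2=j_1$ and $j_3=j_1+1$; thus the condition reduces to $\psi(\bar{x}_{j_1})=g\psi(\bar{x}_{j_1})g^{-1}$ for every $g\in N_2$, i.e., $\psi(\bar{x}_0)$ is $N_2$-equivariant. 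So in this setting $\psi\in\mathcal{A}$ is completely equivalent to the pair $(\varphi,\varphi^{-1})$ for some $\varphi\in\Iso_{N_2}(F_{\bar{x}_0},F_{\bar{x}_1})$.

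From this, the map
\[
\mathcal{A}\longrightarrow\mathcal{A}^0=\Iso_{N_2}(F_{\bar{x}_0},F_{\bar{x}_1}),\qquad \psi\longmapsto\psi(\bar{x}_0)
\]
is well-defined (the image lies in $\Iso_{N_2}$ by the equivariance just derived), injective (since $\psi(\bar{x}_1)$ is determined by $\psi(\bar{x}_0)$), and surjective with explicit inverse $\varphi\mapsto(\varphi,\varphi^{-1})$: I check that this pair satisfies both reduced conditions. Nonemptiness of $\mathcal{A}^0$ follows from hypothesis F1, which asserts $F_{\bar{x}_0}\cong F_{\bar{x}_1}$ as $N_2$-representations, so $\Iso_{N_2}(F_{\bar{x}_0},F_{\bar{x}_1})$ is nonempty (alternatively one can invoke Corollary \ref{corollary: equivalent condition for psi} with $W=F_{\bar{x}_0}$).

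Finally, continuity in both directions: the evaluation map is the restriction of the projection $\prod_j\Iso(F_{\bar{x}_j},F_{\bar{x}_{j+1}})\to\Iso(F_{\bar{x}_0},F_{\bar{x}_1})$, hence continuous, while the inverse $\varphi\mapsto(\varphi,\varphi^{-1})$ is continuous because inversion is continuous on the space of linear isomorphisms between finite-dimensional vector spaces. There is no substantive obstacle here; the only subtlety is just translating Condition (2) correctly under the hypothesis that $N_2$ fixes the two points, and noticing that the $m=2$ form of Condition (1) eliminates $\psi(\bar{x}_1)$ as a free parameter.
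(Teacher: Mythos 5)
Your proof is correct and follows essentially the same route as the paper: reduce Condition (1) of Lemma \ref{lemma: equivalent condition for psi} with $m=2$ to show $\psi(\bar{x}_1)=\psi(\bar{x}_0)^{-1}$, reduce Condition (2) to $N_2$-equivariance of $\psi(\bar{x}_0)$ using that $N_2$ fixes both points, and get nonemptiness from Condition F1. You merely spell out the surjectivity and continuity checks that the paper leaves implicit.
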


\begin{proof}
By Lemma \ref{lemma: equivalent condition for psi}.(1), each $\psi
\in \mathcal{A}$ satisfies $\psi^2 = \id$ so that $\psi$ is
determined by $\psi( \bar{x}_0 ).$ From this, $\mathcal{A}$ is
homeomorphic to $\mathcal{A}^0$ through the evaluation. And, $\psi$
satisfies Lemma \ref{lemma: equivalent condition for psi}.(2) if and
only if $\psi (\bar{x}_j)$ is $\Iso_{N_2} ( F_{\bar{x}_j},
F_{\bar{x}_{j+1}} )$-valued because $N_2$ fixes $\bar{x}_0,
\bar{x}_1.$ Nonemptiness is guaranteed by Condition F1. \qed
\end{proof}

In the remaining of this section, we assume that $F$ satisfies
Condition F2. so that $F \cong N_2 \times_{N_1} F_{\bar{x}_0}.$
Under this assumption, the zeroth homotopy of $\mathcal{A}$ will be
related to $\ext_{N_1}^{N_2} F_{\bar{x}_0}.$ For this, we need a
technical lemma. We would express a map $\psi$ in $\mathcal{A}$ as
$m$ endomorphisms of $F_{\bar{x}_0}.$ This will be useful in dealing
with $\mathcal{A}.$ For each $j \in \Z_m,$ pick an element $g_j \in
N_2$ such that $g_j \bar{x}_j = \bar{x}_0,$ and denote by
$\mathbf{g}$ the $m$-tuple $(g_j)_{j \in \Z_m}.$ Also, express a
pointwise preclutching map $\psi$ by the $m$-tuple $(
\psi(\bar{x}_j) )_{j \in \Z_m}.$ For each $\psi =
(\psi(\bar{x}_j))_{j \in \Z_m},$ define the map
\begin{equation*}
\psi_{\mathbf{g}} : \bar{\mathbf{x}} \rightarrow
\Iso(F_{\bar{x}_0}), ~ \bar{x}_j \mapsto g_{j+1} \psi(\bar{x}_j)
g_j^{-1},
\end{equation*}
and express it by the $m$-tuple $( g_{j+1} \psi(\bar{x}_j) g_j^{-1}
)_{j \in \Z_m}.$ And, denote by $\mathcal{A}_\mathbf{g}$ the set
\begin{equation*}
\{ ~ \psi_{\mathbf{g}} ~|~ \psi \in \mathcal{A} ~ \}.
\end{equation*}
Here, we consider an action of $\Iso_{N_1} ( F_{\bar{x}_0} )$ on
these $\psi_{\mathbf{g}}$'s. For $B \in \Iso_{N_1} ( F_{\bar{x}_0}
),$ denote by $B \psi_{\mathbf{g}} B^{-1}$ the $m$-tuple
\begin{equation*}
(B g_{j+1} \psi(\bar{x}_j) g_j^{-1} B^{-1})_{j \in \Z_m}.
\end{equation*}
Then, this action preserves $\mathcal{A}_{\mathbf{g}}$ as follows:

\begin{lemma} \label{lemma: conjugate psi gives equivalent representation}
\begin{enumerate}
  \item For $\psi \in \mathcal{A}$ and
  $B \in \Iso_{N_1} (F_{\bar{x}_0}),$
  put
  \begin{equation*}
  \psi^\prime (\bar{x}_j) = g_{j+1}^{-1} B g_{j+1} \psi(\bar{x}_j) g_j^{-1} B^{-1}
  g_j
  \end{equation*}
  for each $j \in \Z_m$ so that
  $\psi^\prime$ satisfies
  $B \psi_\mathbf{g} B^{-1} = \psi_\mathbf{g}^\prime.$
  Then, $\psi^\prime \in \mathcal{A}.$
  \item For two elements $\psi, \psi^\prime$ in $\mathcal{A},$ $F/\psi \cong
F/\psi^\prime$ as $N_2$-representation if and only if $B
\psi_{\mathbf{g}} B^{-1} = \psi_{\mathbf{g}}^\prime$ for some $B \in
\Iso_{N_1} (F_{\bar{x}_0}).$
\end{enumerate}
\end{lemma}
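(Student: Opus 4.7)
The strategy is to realize $\psi'$ in part (1) as the conjugate of $\psi$ by an $N_2$-equivariant bundle automorphism of $F$ built from $B$, and then in part (2) to recover such a $B$ from any $N_2$-isomorphism $F/\psi \cong F/\psi'$ via the identification of Lemma \ref{lemma: star action}.

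For (1), define $\Theta : F \to F$ fiberwise by $\Theta|_{F_{\bar{x}_j}} = g_j^{-1} B g_j$. The only non-formal point is $N_2$-equivariance of $\Theta$: for any $g \in N_2$ with $g \bar{x}_j = \bar{x}_{j'}$, the element $h = g_{j'} g g_j^{-1}$ fixes $\bar{x}_0$ and hence lies in $N_1$, so it commutes with $B$; writing $g = g_{j'}^{-1} h g_j$ immediately yields $\Theta \circ g = g \circ \Theta$ on $F_{\bar{x}_j}$. The explicit formula in the statement is then exactly $\psi'(\bar{x}_j) = \Theta \circ \psi(\bar{x}_j) \circ \Theta^{-1}$, so $(\psi')^m = \Theta \psi^m \Theta^{-1} = \id$ and the second defining relation of Lemma \ref{lemma: equivalent condition for psi} is preserved under the $N_2$-equivariant conjugation, placing $\psi'$ in $\mathcal{A}$. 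The same $\Theta$ also satisfies $\Theta \circ \psi = \psi' \circ \Theta$, so $p_{\psi'} \circ \Theta$ identifies exactly the same pairs as $p_\psi$ and descends to an $N_2$-isomorphism $F/\psi \to F/\psi'$; this gives the ``if'' direction of (2).

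For the ``only if'' direction, use Lemma \ref{lemma: star action} to identify $F/\psi$ with $(F_{\bar{x}_0}, \star_\psi)$ and $F/\psi'$ with $(F_{\bar{x}_0}, \star_{\psi'})$. Any $N_2$-isomorphism between them is then a linear map $B : F_{\bar{x}_0} \to F_{\bar{x}_0}$ intertwining the two $\star$-actions; restricting to $g \in N_1$, where both actions reduce to the original one, forces $B \in \Iso_{N_1}(F_{\bar{x}_0})$. Substituting $g = g_j^{-1}$ into the intertwining relation and expanding $g_j^{-1} \star_\psi u$ by Lemma \ref{lemma: star action}.(2) gives $B \, \psi^{m-j} g_j^{-1} = (\psi')^{m-j} g_j^{-1} \, B$ in $\End(F_{\bar{x}_0})$. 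Normalizing $g_0 = e$ and setting $\chi_k = \psi_{\mathbf{g}}(\bar{x}_k)$, the composition $\psi^{m-j} g_j^{-1}$ telescopes to $\chi_{m-1} \chi_{m-2} \cdots \chi_j$, so the relation becomes $B \chi_{m-1} \cdots \chi_j = \chi'_{m-1} \cdots \chi'_j \, B$ for every $j \in \Z_m$; a descending induction on $j$ starting from $j = m-1$ extracts $\chi'_k = B \chi_k B^{-1}$ for all $k$, which is $B \psi_{\mathbf{g}} B^{-1} = \psi'_{\mathbf{g}}$. The principal obstacle is this telescoping: each $\psi(\bar{x}_k)$ sits between distinct fibers, and one has to insert the identities $g_k^{-1} g_k$ to reassemble the iterated composition into an endomorphism of $F_{\bar{x}_0}$; the normalization $g_0 = e$ is harmless and keeps indices transparent.
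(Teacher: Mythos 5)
Your proof is correct. For part (2), the ``only if'' direction follows the paper's own route: identify $F/\psi$ with $(F_{\bar{x}_0},\star_\psi)$ via Lemma \ref{lemma: star action}, observe that any intertwiner restricts on $N_1$ to an ordinary $N_1$-isomorphism, and then peel off the entries $\chi_k'=B\chi_kB^{-1}$ one at a time by induction on the length of the telescoped products (the paper ascends from $j=1$, you descend from $j=m-1$; the two are interchangeable, and your normalization $g_0=e$ is harmless since $B$ commutes with $N_1$ so the conclusion is insensitive to replacing any $g_j$ by $n g_j$ with $n\in N_1$). Where you genuinely diverge is part (1) and the ``if'' direction of (2): the paper verifies Condition (2) of Lemma \ref{lemma: equivalent condition for psi} for $\psi'$ by a direct chain of substitutions $g g_{j_2+1}^{-1}=g_{j_3}^{-1}(g_{j_3}gg_{j_2+1}^{-1})$, and then separately checks that $B$ intertwines $\star_\psi$ and $\star_{\psi'}$. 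You instead package $B$ into the fiberwise automorphism $\Theta|_{F_{\bar{x}_j}}=g_j^{-1}Bg_j$ of $F$, whose $N_2$-equivariance is a one-line consequence of $g_{j'}gg_j^{-1}\in N_1$; then $\psi'=\Theta\psi\Theta^{-1}$ makes membership in $\mathcal{A}$ and the descent of $\Theta$ to an $N_2$-isomorphism $F/\psi\to F/\psi'$ both immediate. This is a cleaner and more conceptual organization of the same computation, and it has the added benefit of unifying (1) with one direction of (2); the paper's version proves those two facts by separate calculations.
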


\begin{proof}
To prove (1), we would show that $\psi^\prime$ satisfies two
conditions of Lemma \ref{lemma: equivalent condition for psi}. It is
easy to show that $\psi^\prime$ satisfies Condition (1) of Lemma
\ref{lemma: equivalent condition for psi}. Condition (2) of Lemma
\ref{lemma: equivalent condition for psi} is written as
$(\psi^\prime )^{j_3 - j_1} u = g \psi^\prime( \bar{x}_{j_2} )
g^{-1} u$ in $F_{\bar{x}_{j_1}}$ for $j_1 \in \Z_m,$ $g \in N_2$
when $g \bar{x}_{j_2} = \bar{x}_{j_1}$ and $g \bar{x}_{j_2 +1} =
\bar{x}_{j_3}.$ Here,
\begin{align*}
& g \psi^\prime( \bar{x}_{j_2} ) g^{-1} \\
= & g \Big( g_{j_2 +1}^{-1} B g_{j_2 +1} \psi( \bar{x}_{j_2} )
g_{j_2}^{-1} B^{-1} g_{j_2} \Big)
g^{-1} \\
= & \Big( g g_{j_2 +1}^{-1} B g_{j_2 +1} g^{-1} \Big) \Big( g \psi(
\bar{x}_{j_2} ) g^{-1} \Big) \Big( g g_{j_2}^{-1} B^{-1} g_{j_2}
g^{-1} \Big).
\end{align*}
Note that $g g_{j_2 +1}^{-1} = g_{j_3}^{-1} ( g_{j_3} g g_{j_2
+1}^{-1} )$ and $g g_{j_2}^{-1} = g_{j_1}^{-1} ( g_{j_1} g
g_{j_2}^{-1} )$ where $g_{j_3} g g_{j_2 +1}^{-1}$ and $g_{j_1} g
g_{j_2}^{-1}$ are in $N_1.$ With these,
\begin{align*}
g g_{j_2 +1}^{-1} B g_{j_2 +1} g^{-1} & = g_{j_3}^{-1} ( g_{j_3} g
g_{j_2 +1}^{-1} ) B ( g_{j_3} g g_{j_2 +1}^{-1} )^{-1} g_{j_3} \\
& = g_{j_3}^{-1} B g_{j_3},
\end{align*}
and
\begin{align*}
g g_{j_2}^{-1} B^{-1} g_{j_2} g^{-1} &= g_{j_1}^{-1} ( g_{j_1} g
g_{j_2}^{-1} ) B^{-1} ( g_{j_1} g g_{j_2}^{-1} )^{-1} g_{j_1} \\
&= g_{j_1}^{-1} B^{-1} g_{j_1}
\end{align*}
because $B \in \Iso_{N_1} ( F_{\bar{x}_0} ).$ So,
\begin{align*}
g \psi^\prime( \bar{x}_{j_2} ) g^{-1} &= \Big( g_{j_3}^{-1} B
g_{j_3} \Big) \Big( g \psi( \bar{x}_{j_2} )
g^{-1} \Big) \Big( g_{j_1}^{-1} B^{-1} g_{j_1} \Big) \\
&= \Big( g_{j_3}^{-1} B g_{j_3} \Big) \psi^{j_3 - j_1} \Big(
g_{j_1}^{-1} B^{-1} g_{j_1} \Big) \\
&= \Big( g_{j_3}^{-1} B g_{j_3} \Big) \Big( g_{j_3}^{-1} B^{-1}
g_{j_3}
\psi^\prime( \bar{x}_{j_3 -1} ) g_{j_3 -1}^{-1} B g_{j_3 -1} \Big) \cdots \\
& \qquad \Big( g_{j_1 +1}^{-1} B^{-1} g_{j_1 +1} \psi^\prime(
\bar{x}_{j_1} ) g_{j_1}^{-1} B g_{j_1} \Big) \Big( g_{j_1}^{-1}
B^{-1}
g_{j_1} \Big) \\
&= \psi^\prime( \bar{x}_{j_3 -1} ) \cdots \psi^\prime( \bar{x}_{j_1}
)
\end{align*}
in $F_{\bar{x}_{j_1}},$ and this proves (1).

Next, we prove (2). For sufficiency, assume that $F/\psi \cong
F/\psi^\prime.$ Considering $F/\psi$ as $( F_{\bar{x}_0}, \star_\psi
)$ by Lemma \ref{lemma: star action}, there exists an
$N_2$-isomorphism $B : ( F_{\bar{x}_0}, \star_\psi ) \rightarrow (
F_{\bar{x}_0}, \star_{\psi^\prime} ),$ and especially $B \in
\Iso_{N_1} (F_{\bar{x}_0})$ by Lemma \ref{lemma: star action}.(2).
So, $B g_j \star_\psi g_0^{-1} u = g_j \star_{\psi^\prime} B
g_0^{-1} u$ for each $j \in \Z_m,$ $u \in F_{\bar{x}_0},$ and if we
substitute $B^{-1} u$ into $u,$ this is written as
\begin{equation*}
( B g_j \psi^j g_0^{-1} B^{-1} ) u = ( g_j {\psi^\prime}^j g_0^{-1}
) u
\end{equation*}
by Lemma \ref{lemma: star action}.(2). Substituting $j=1$ in this,
we obtain $B g_1 \psi g_0^{-1} B^{-1} = g_1 \psi^\prime g_0^{-1}$ on
$F_{\bar{x}_0}.$ By using this and mathematical induction, we would
show $B \psi_{\mathbf{g}} B^{-1} = \psi_{\mathbf{g}}^\prime.$ Assume
that $B g_j \psi g_{j-1}^{-1} B^{-1} = g_j \psi^\prime g_{j-1}^{-1}$
on $F_{\bar{x}_0}$ from $j=1$ to $j=k-1.$ Then,
\begin{align*}
& ( g_k \psi^\prime g_{k-1}^{-1} ) ( g_{k-1} \psi^\prime
g_{k-2}^{-1} )
\cdots ( g_1 \psi^\prime g_0^{-1} ) \\
&= g_k {\psi^\prime}^k g_0^{-1}     \\
&= B g_k \psi^k g_0^{-1} B^{-1}     \\
&= ( B g_k \psi g_{k-1}^{-1} B^{-1} ) ( B
g_{k-1} \psi g_{k-2}^{-1} B^{-1} ) \cdots ( B g_1 \psi g_0^{-1} B^{-1} ) \\
&= ( B g_k \psi g_{k-1}^{-1} B^{-1} ) ( g_{k-1} \psi^\prime
g_{k-2}^{-1} ) \cdots ( g_1 \psi^\prime g_0^{-1} ).
\end{align*}
Comparing the first line with the last, we have $B g_k \psi
g_{k-1}^{-1} B^{-1} = g_k \psi^\prime g_{k-1}^{-1}.$ By mathematical
induction, we obtain $B g_j \psi g_{j-1}^{-1} B^{-1} = g_j
\psi^\prime g_{j-1}^{-1}$ on $F_{\bar{x}_0}$ for each $j \in \Z_m,$
and this shows $B \psi_{\mathbf{g}} B^{-1} =
\psi_{\mathbf{g}}^\prime.$

Last, we prove necessity of (2). By assumption, we have $B g_{j+1}
\psi g_j^{-1} B^{-1} = g_{j+1} \psi^\prime g_j^{-1}$ for each $j \in
\Z_m$ on $F_{\bar{x}_0}.$ Then,
\begin{align*}
& B g_j \psi^j g_0^{-1} B^{-1} \\
= &( B g_j \psi g_{j-1}^{-1} B^{-1} ) ( B
g_{j-1} \psi g_{j-2}^{-1} B^{-1} ) \cdots ( B g_1 \psi g_0^{-1} B^{-1} ) \\
= &( g_j \psi^\prime g_{j-1}^{-1} ) ( g_{j-1} \psi^\prime
g_{j-2}^{-1} ) \cdots ( g_1 \psi^\prime g_0^{-1} ) \\
= & g_j {\psi^\prime}^j g_0^{-1}
\end{align*}
for each $j \in \Z_m$ on $F_{\bar{x}_0}.$ Acting these on $B g_0 u$
for $u \in F_{\bar{x}_0},$ we obtain
\begin{equation*}
B g_j \psi^j u = g_j {\psi^\prime}^j B u
\end{equation*}
because $B \in \Iso_{N_1} (F_{\bar{x}_0}).$ And, this is written as
$B g_j \star_\psi u = g_j \star_{\psi^\prime} B u$ by Lemma
\ref{lemma: star action}.(2). This means that $B$ is equivariant for
$\langle N_1, g_j | j \in \Z_m \rangle$ so that $B$ is an
$N_2$-isomorphism between $( F_{\bar{x}_0}, \star_\psi )$ and $(
F_{\bar{x}_0}, \star_{\psi^\prime} ).$ Therefore, we obtain proof.
\qed
\end{proof}

This lemma means that each orbit of $\mathcal{A}_\mathbf{g}$ under
the $\Iso_{N_1} ( F_{\bar{x}_0} )$-action corresponds to a
representation. More precisely, we have the following:

\begin{theorem}
 \label{theorem: bijectivity with extensions}
Assume that $F$ satisfies Condition F2. Then, the map
\begin{equation*}
\pi_0 (\mathcal{A}) \longrightarrow \ext_{N_1}^{N_2} F_{\bar{x}_0},
\quad [\psi] \mapsto F/\psi
\end{equation*}
is bijective. So, $\mathcal{A}$ is nonempty if and only if
$F_{\bar{x}_0}$ has an $N_2$-extension.
\end{theorem}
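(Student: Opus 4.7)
The plan is to establish well-definedness, surjectivity, and injectivity of the map $[\psi]\mapsto F/\psi$ separately, making heavy use of the preceding two lemmas and of the structure $F \cong N_2 \times_{N_1} F_{\bar{x}_0}$ coming from Condition F2.

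For \textbf{well-definedness}, I would take a continuous path $\{\psi_t\}_{t\in[0,1]}$ in $\mathcal{A}$ and, using Lemma \ref{lemma: star action}, view each $F/\psi_t$ as the fixed vector space $F_{\bar{x}_0}$ equipped with the $N_2$-action $\star_{\psi_t}$. Since each generator $g\in N_2$ acts by $g\star_{\psi_t}u = \psi_t^{m-j}gu$ which depends continuously on $t$, we get a continuous one-parameter family of $N_2$-representations on a fixed finite-dimensional vector space. The character of such a family is continuous in $t$ and takes values in a discrete set, hence is constant; therefore $F/\psi_0\cong F/\psi_1$ as $N_2$-representations.

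For \textbf{surjectivity}, given $W\in\ext_{N_1}^{N_2}F_{\bar{x}_0}$, I fix an $N_1$-isomorphism $\alpha:F_{\bar{x}_0}\to\res^{N_2}_{N_1}W$. Using the identification $F\cong N_2\times_{N_1}F_{\bar{x}_0}$, define the $N_2$-morphism
\begin{equation*}
p:F\longrightarrow W,\qquad p\bigl([g,u]\bigr)=g\cdot\alpha(u).
\end{equation*}
This is well-defined because $\alpha$ is $N_1$-equivariant. For each $j\in\Z_m$, writing $\bar{x}_j = g_j^{-1}\bar{x}_0$, the restriction $p|_{F_{\bar{x}_j}}$ equals $g_j^{-1}\circ\alpha\circ g_j$ up to the identifications, which is a linear isomorphism. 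Corollary \ref{corollary: equivalent condition for psi} then produces $\psi\in\mathcal{A}$ determining $W$, so the map is surjective and, in particular, $\mathcal{A}$ is nonempty whenever $F_{\bar{x}_0}$ admits an $N_2$-extension. The converse nonemptiness statement is automatic, since any $\psi\in\mathcal{A}$ gives an extension $F/\psi$ of $F_{\bar{x}_0}$ via $\imath_\psi$.

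For \textbf{injectivity}, assume $F/\psi\cong F/\psi'$ as $N_2$-representations. By Lemma \ref{lemma: conjugate psi gives equivalent representation}.(2), there exists $B\in\Iso_{N_1}(F_{\bar{x}_0})$ with $B\psi_{\mathbf{g}}B^{-1}=\psi'_{\mathbf{g}}$. The key input is that $\Iso_{N_1}(F_{\bar{x}_0})$ is path-connected: decomposing $F_{\bar{x}_0}$ into $N_1$-isotypical components and applying Schur's lemma over $\C$, this group is isomorphic to a finite product $\prod_i\GL(n_i,\C)$, and each factor is connected. Choose a continuous path $B_t$ in $\Iso_{N_1}(F_{\bar{x}_0})$ from $\id$ to $B$, and define $\psi_t$ by the formula of Lemma \ref{lemma: conjugate psi gives equivalent representation}.(1) applied to $B_t$; that lemma guarantees $\psi_t\in\mathcal{A}$ for every $t$, and continuous dependence on $t$ is clear. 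Then $\psi_0=\psi$ and $\psi_1=\psi'$ (by construction $(\psi_1)_{\mathbf{g}}=B\psi_{\mathbf{g}}B^{-1}=\psi'_{\mathbf{g}}$, and recovering $\psi(\bar{x}_j)$ from $\psi_{\mathbf{g}}(\bar{x}_j)$ is just conjugation by fixed $g_j$'s), so $[\psi]=[\psi']$. The only real obstacle is this connectedness input; once it is in hand, the three parts combine to give the bijection, and hence the final nonemptiness equivalence.
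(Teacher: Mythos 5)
Your proposal is correct and follows essentially the same route as the paper: well-definedness via the discreteness of characters along a continuous family $(F_{\bar{x}_0},\star_{\psi_t})$, surjectivity via the map $[g,u]\mapsto g\cdot\alpha(u)$ and Corollary \ref{corollary: equivalent condition for psi}, and injectivity via Lemma \ref{lemma: conjugate psi gives equivalent representation} together with path-connectedness of $\Iso_{N_1}(F_{\bar{x}_0})$ as a product of general linear groups. The only difference is organizational (you separate well-definedness from injectivity, and make the isomorphism $\alpha$ explicit where the paper assumes equality), which does not change the argument.
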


\begin{proof}
To begin with, we show that each $N_2$-extension of $F_{\bar{x}_0}$
is obtained as $F/\psi$ for some $\psi \in \mathcal{A}.$ Let $W$ be
an $N_2$-extension of $F_{\bar{x}_0}$ such that $\res_{N_1}^{N_2} W$
is equal to $F_{\bar{x}_0}.$ Consider a map $p: N_2 \times_{N_1}
F_{\bar{x}_0} \rightarrow W, ~ [ g, u] \mapsto gu.$ Since $F \cong
N_2 \times_{N_1} F_{\bar{x}_0},$ the map $p$ satisfies conditions of
Corollary \ref{corollary: equivalent condition for psi}. Therefore,
there exists $\psi$ such that $F/\psi \cong W$ by Corollary
\ref{corollary: equivalent condition for psi}. That is, surjectivity
is proved.

Next, we show that two $\psi$ and $\psi^\prime$ in $\mathcal{A}$ are
connected by a path in $\mathcal{A}$ if and only if $F/\psi \cong
F/\psi^\prime.$ First, we show sufficiency. Let $\psi_t$ for $t \in
[0,1]$ be a continuous path in $\mathcal{A}$ such that $\psi_0 =
\psi$ and $\psi_1 = \psi^\prime.$ Then, all $( F_{\bar{x}_0},
\star_{\psi_t} )$'s are isomorphic because the path of their
characters are in the discrete space $\Rep(N_2).$ Second, we show
necessity. If $F/\psi \cong F/\psi^\prime,$ then Lemma \ref{lemma:
conjugate psi gives equivalent representation} says that $B
\psi_{\mathbf{g}} B^{-1} = \psi_{\mathbf{g}}^\prime$ so that
$\psi_{\mathbf{g}}$ and $\psi_{\mathbf{g}}^\prime$ are connected in
$\mathcal{A}_{\mathbf{g}}$ by a path because $\Iso_{N_1}
(F_{\bar{x}_0})$ is a product of general linear groups by Schur's
Lemma. Since the map from $\mathcal{A}$ to
$\mathcal{A}_{\mathbf{g}}$ sending $\psi$ to $\psi_{\mathbf{g}}$ is
a homeomorphism, $\psi$ and $\psi^\prime$ are connected by a path in
$\mathcal{A},$ and we obtain necessity. Therefore, we obtain
injectivity.

Nonemptiness is clear from the one-to-one correspondence. \qed
\end{proof}

Denote by $\mathcal{A}_\psi$ the path-component of $\mathcal{A}$
\begin{equation*}
\{ ~ \psi^\prime \in \mathcal{A} ~ | ~ F/\psi \cong F/\psi^\prime ~
\},
\end{equation*}
and by $\mathcal{A}_{\psi, \mathbf{g}}$ the path-component of
$\mathcal{A}_{\mathbf{g}}$
\begin{equation*}
\{ ~ \psi_{\mathbf{g}}^\prime \in \mathcal{A}_{\mathbf{g}} ~ | ~
F/\psi \cong F/\psi^\prime ~ \}.
\end{equation*}
Note that $\mathcal{A}_\psi$ and $\mathcal{A}_{\psi, \mathbf{g}}$
are homeomorphic. To calculate homotopy of equivariant clutching
maps in next sections, we need to calculate $\pi_1 (
\mathcal{A}_\psi )$ for each $\psi \in \mathcal{A}.$ To do it, we
would investigate the shape of $\mathcal{A}_\psi.$

\begin{lemma}
 \label{lemma: topology of mathcal A}
For each $\psi \in \mathcal{A},$ $\mathcal{A}_\psi$ is homeomorphic
to
\begin{equation*}
\Iso_{N_1} ( F/\psi ) \big/ \Iso_{N_2} ( F/\psi ).
\end{equation*}
\end{lemma}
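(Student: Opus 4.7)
The plan is to identify $\mathcal{A}_\psi$ with an orbit of $\Iso_{N_1}(F_{\bar{x}_0})$ under conjugation and then recognize the stabilizer as $\Iso_{N_2}(F/\psi)$.

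First, I would pass from $\mathcal{A}_\psi$ to $\mathcal{A}_{\psi,\mathbf{g}}$ via the homeomorphism $\psi' \mapsto \psi'_{\mathbf{g}}$ already noted after Theorem \ref{theorem: bijectivity with extensions}. Then I would consider the continuous map
\begin{equation*}
\Theta : \Iso_{N_1}(F_{\bar{x}_0}) \longrightarrow \mathcal{A}_{\mathbf{g}}, \quad B \longmapsto B\, \psi_{\mathbf{g}}\, B^{-1},
\end{equation*}
which is well-defined by Lemma \ref{lemma: conjugate psi gives equivalent representation}(1). By Lemma \ref{lemma: conjugate psi gives equivalent representation}(2), the image of $\Theta$ is precisely the set of $\psi'_{\mathbf{g}}$ with $F/\psi' \cong F/\psi$, i.e.\ $\mathcal{A}_{\psi,\mathbf{g}}$, so $\Theta$ factors as a continuous bijection
\begin{equation*}
\bar{\Theta} : \Iso_{N_1}(F_{\bar{x}_0})\big/\mathrm{Stab}(\psi_{\mathbf{g}}) \longrightarrow \mathcal{A}_{\psi,\mathbf{g}}.
\end{equation*}

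Next I would compute the stabilizer. The equation $B\psi_{\mathbf{g}}B^{-1} = \psi_{\mathbf{g}}$ reads $Bg_{j+1}\psi(\bar{x}_j)g_j^{-1}B^{-1} = g_{j+1}\psi(\bar{x}_j)g_j^{-1}$ for all $j \in \Z_m$. Arguing exactly as in the necessity direction of Lemma \ref{lemma: conjugate psi gives equivalent representation}(2)—multiplying these relations together and applying both sides to $Bg_0 u$—this is equivalent to $Bg_j \star_\psi u = g_j \star_\psi Bu$ for all $j$ and $u \in F_{\bar{x}_0}$. Since $B$ already commutes with the $N_1$-action (which coincides with the $N_1$-part of $\star_\psi$ by Lemma \ref{lemma: star action}(2)) and the $g_j$'s together with $N_1$ generate $N_2$, the stabilizer equals $\Iso_{N_2}(F_{\bar{x}_0},\star_\psi) = \Iso_{N_2}(F/\psi)$. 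Moreover, the $N_1$-action on $F/\psi = (F_{\bar{x}_0},\star_\psi)$ coincides with the original $N_1$-action on $F_{\bar{x}_0}$, so $\Iso_{N_1}(F_{\bar{x}_0}) = \Iso_{N_1}(F/\psi)$.

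Finally, I need to upgrade $\bar{\Theta}$ from a continuous bijection to a homeomorphism. This is where the only real subtlety lies: by Schur's lemma $\Iso_{N_1}(F/\psi)$ is a product of general linear groups, hence a Lie group, and $\Iso_{N_2}(F/\psi)$ is a closed Lie subgroup. The group $\Iso_{N_1}(F/\psi)$ acts smoothly on the finite-dimensional vector space $\prod_j \mathrm{End}(F_{\bar{x}_0})$ by the conjugation formula defining $\Theta$, and $\mathcal{A}_{\psi,\mathbf{g}}$ is a single orbit. Standard facts for smooth actions of Lie groups on Hausdorff manifolds then guarantee that the orbit map descends to a homeomorphism $\Iso_{N_1}(F/\psi)/\Iso_{N_2}(F/\psi) \cong \mathcal{A}_{\psi,\mathbf{g}} \cong \mathcal{A}_\psi$, completing the proof. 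The main obstacle is this last step—ensuring that the algebraic bijection is a topological quotient; once the stabilizer computation is in place, it reduces to the standard orbit-stabilizer homeomorphism for Lie group actions.
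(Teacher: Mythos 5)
Your proposal is correct and follows essentially the same route as the paper: identify $\mathcal{A}_\psi$ with the conjugation orbit $\mathcal{A}_{\psi,\mathbf{g}}$ of $\psi_{\mathbf{g}}$ under $\Iso_{N_1}(F_{\bar{x}_0})$ via Lemma \ref{lemma: conjugate psi gives equivalent representation}, compute the stabilizer to be $\Iso_{N_2}(F_{\bar{x}_0},\star_\psi)=\Iso_{N_2}(F/\psi)$ using Lemma \ref{lemma: star action}, and conclude by orbit--stabilizer. Your closing discussion of why the continuous bijection from the quotient is actually a homeomorphism is a point the paper leaves implicit, and it is handled adequately since the conjugation action of the product of general linear groups $\Iso_{N_1}(F_{\bar{x}_0})$ is algebraic, so its orbits are locally closed and the standard orbit-map argument applies.
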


\begin{proof}
In Lemma \ref{lemma: conjugate psi gives equivalent
representation}.(2), we have shown that $\mathcal{A}_{\psi,
\mathbf{g}}$ is equal to the orbit of $\psi_{\mathbf{g}}$ by
$\Iso_{N_1} (F_{\bar{x}_0}).$ To understand this orbit, we need to
know the isotropy subgroup $\Iso_{N_1} ( F_{\bar{x}_0}
)_{\psi_\mathbf{g}},$ i.e.
\begin{equation*}
\{ B \in \Iso_{N_1} ( F_{\bar{x}_0} ) ~ | ~ B \psi_\mathbf{g} B^{-1}
= \psi_\mathbf{g} \}.
\end{equation*}
By definition, $B \psi_\mathbf{g} B^{-1} = \psi_\mathbf{g}$ is
expressed as $B g_{j+1} \psi(\bar{x}_j) g_j^{-1} B^{-1} = g_{j+1}
\psi(\bar{x}_j) g_j^{-1}$ for each $j \in \Z_m.$ Since $g_{j+1} \psi
g_j^{-1} = g_{j+1} g_j^{-1} g_j \psi g_j^{-1} = (g_{j+1} g_j^{-1})
\psi^k$ on $F_{\bar{x}_0}$ for some k by Lemma \ref{lemma:
equivalent condition for psi}.(2), this is written as
\begin{equation*}
B ( g_{j+1} g_j^{-1} \star_\psi  B^{-1} u  ) = g_{j+1} g_j^{-1}
\star_\psi u.
\end{equation*}
Since $g_0 \in N_1,$ we have $N_2 = \langle N_1, g_{j+1} g_j^{-1} |
j \in \Z_m \rangle.$ Therefore, $B \in \Iso_{N_1} ( F_{\bar{x}_0}
)_{\psi_\mathbf{g}}$ if and only if $B \in \Iso_{N_2}
(F_{\bar{x}_0}, \star_\psi).$ Therefore, $A_{\psi, \mathbf{g}}$ is
homeomorphic to the quotient $\Iso_{N_1} ( F/\psi ) / \Iso_{N_2} (
F/\psi )$ because $( F_{\bar{x}_0}, \star_\psi ) \cong F/\psi.$ \qed
\end{proof}

In some special cases, we can understand $\mathcal{A}_\psi$ more
precisely. Denote by $\mathcal{A}_\psi^j$ the set
\begin{equation*}
\{ ~ \psi^\prime (\bar{x}_j) ~|~ \psi^\prime \in \mathcal{A}_\psi ~
\},
\end{equation*}
and by $\mathcal{A}_\psi^{j,j^\prime}$ the set
\begin{equation*}
\{ ~ \big( \psi^\prime (\bar{x}_j), \psi^\prime (\bar{x}_{j^\prime})
\big) ~|~ \psi^\prime \in \mathcal{A}_\psi ~ \}.
\end{equation*}

\begin{proposition}
 \label{proposition: psi for cyclic}
Assume that $N_2 = \langle N_0, a_0 \rangle$ with some $a_0 \in N_2$
such that $a_0 \bar{x}_j = \bar{x}_{j+1}$ for each $j \in \Z_m$ so
that $N_2 / N_0 \cong \Z_m$ and $N_1 = N_0.$ Then,
\begin{enumerate}
  \item A pointwise preclutching map $\psi$ with respect to $F$ is
  in $\mathcal{A}$ if and only if $\psi^m = \id,$ $\psi(
\bar{x}_0 ) \in \Iso_{N_0} (F_{\bar{x}_0}, F_{\bar{x}_1}),$ and
$\psi( \bar{x}_j ) = a_0^j \psi(\bar{x}_0) a_0^{-j}$ for each $j \in
\Z_m.$
  \item $\mathcal{A}, \mathcal{A}_\psi$ are homeomorphic to
  $\mathcal{A}^0, \mathcal{A}_\psi^0$
  for any $\psi \in \mathcal{A},$ respectively.
  \item If $F_{\bar{x}_0}$ is $N_0$-isotypical, then
  $\mathcal{A}_\psi$ is simply connected
  for each $\psi$ in $\mathcal{A}.$
\end{enumerate}
\end{proposition}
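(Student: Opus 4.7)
My plan is to handle (1) and (2) by directly applying Lemma~\ref{lemma: equivalent condition for psi} together with the group structure $N_2 = \langle N_0, a_0 \rangle$, and to handle (3) by combining Lemma~\ref{lemma: topology of mathcal A} with the homotopy long exact sequence of a fibration of general linear groups.

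For (1), I would apply Lemma~\ref{lemma: equivalent condition for psi} and exploit that every $g \in N_2$ factors as $a_0^j n$ with $n \in N_0$ (using $N_0 \lhd N_2$, since $N_0$ is the kernel of the action on $\bar{\mathbf{x}}$), so it is enough to test the second condition on $g \in N_0$ and on $g = a_0^j$ separately. Testing on $g \in N_0$ gives $j_2 = j_1$ and $j_3 = j_1+1$, which collapses the covariance condition to $\psi(\bar{x}_{j_1}) \in \Iso_{N_0}(F_{\bar{x}_{j_1}}, F_{\bar{x}_{j_1+1}})$; testing on $g = a_0^j$ gives $j_2 = j_1 - j$ and $j_3 = j_1+1$, yielding $\psi(\bar{x}_{j_1}) = a_0^j \psi(\bar{x}_{j_1-j}) a_0^{-j}$, which upon setting $j_1 = j$ produces the cycling formula $\psi(\bar{x}_j) = a_0^j \psi(\bar{x}_0) a_0^{-j}$. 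The converse, that these plus $\psi^m = \id$ recover the full second condition, is a direct index and normality check (normality ensures $a_0^j \psi(\bar{x}_0) a_0^{-j}$ is automatically $N_0$-equivariant once $\psi(\bar{x}_0)$ is).

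For (2), the formula from (1) shows that $\psi \in \mathcal{A}$ is fully determined by $\psi(\bar{x}_0)$, so the evaluation $\psi \mapsto \psi(\bar{x}_0)$ is a continuous bijection onto $\mathcal{A}^0$, and its inverse, the assembly map $\psi(\bar{x}_0) \mapsto (a_0^j \psi(\bar{x}_0) a_0^{-j})_{j \in \Z_m}$, is manifestly continuous; restricting this homeomorphism to the path component containing $\psi$ gives $\mathcal{A}_\psi \cong \mathcal{A}_\psi^0$.

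For (3), I first use Lemma~\ref{lemma: topology of mathcal A} and $N_1 = N_0$ to write $\mathcal{A}_\psi \cong \Iso_{N_0}(F/\psi) / \Iso_{N_2}(F/\psi)$. With $F/\psi \cong V \otimes \C^k$ for an irreducible $N_0$-module $V$, Schur's lemma identifies $\Iso_{N_0}(F/\psi) \cong \GL_k(\C)$. Since $F/\psi$ remains $N_0$-isotypical under $a_0$, the twist $V^{a_0}$ must be isomorphic to $V$, so fixing an $N_0$-isomorphism $\phi : V \to V^{a_0}$ expresses the $a_0$-action on $V \otimes \C^k$ uniquely in the form $\phi \otimes A$ for some $A \in \GL_k(\C)$, whence $\Iso_{N_2}(F/\psi)$ is exactly the centralizer $C_{\GL_k(\C)}(A)$. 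Because $a_0^m \in N_0$ acts on $V$ by a scalar, $A^m$ is a scalar multiple of the identity, so $A$ is semisimple and $C_{\GL_k(\C)}(A) \cong \prod_i \GL_{k_i}(\C)$ where $k_i$ are the eigenvalue multiplicities of $A$. The homotopy long exact sequence of the fibration $\prod_i \GL_{k_i}(\C) \to \GL_k(\C) \to \mathcal{A}_\psi$ then gives $\pi_1(\mathcal{A}_\psi) \cong \pi_1(\GL_k(\C)) / \mathrm{im}\, \pi_1(\prod_i \GL_{k_i}(\C))$; since each block inclusion $\GL_{k_i}(\C) \hookrightarrow \GL_k(\C)$ induces the identity on $\pi_1 \cong \Z$ via the determinant, the image exhausts $\pi_1(\GL_k(\C))$, forcing $\pi_1(\mathcal{A}_\psi) = 0$. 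The main obstacle is precisely the semisimplicity-plus-centralizer identification in this paragraph; once $C_{\GL_k(\C)}(A)$ is exhibited as a direct product of general linear groups, the fundamental group computation is immediate.
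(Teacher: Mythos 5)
Your proposal is correct. Parts (1) and (2) follow exactly the paper's route: substitute $g=h\in N_0$ and $g=a_0^j$ into Lemma~\ref{lemma: equivalent condition for psi}.(2) to extract $N_0$-equivariance of $\psi(\bar{x}_0)$ and the cycling formula, use normality of $N_0$ for the converse, and observe that $\psi$ is determined by $\psi(\bar{x}_0)$. For (3) both arguments start from $\mathcal{A}_\psi\cong\Iso_{N_0}(F/\psi)/\Iso_{N_2}(F/\psi)$ (Lemma~\ref{lemma: topology of mathcal A}) and end with surjectivity of $\pi_1\bigl(\prod_i\GL(k_i,\C)\bigr)\rightarrow\pi_1\bigl(\GL(k,\C)\bigr)$ in the homotopy exact sequence, but you identify the subgroup $\Iso_{N_2}(F/\psi)$ differently: the paper invokes the representation-extension machinery of the appendix (Theorem~\ref{theorem: extension by cyclic} and Corollary~\ref{corollary: expressed by extensions}) to decompose $F/\psi\cong\oplus_k\, l_k(\bar{U}\otimes\Omega(k))$ and reads off $\Iso_{N_2}(F/\psi)\cong\prod_k\GL(l_k,\C)$ by Schur's Lemma, whereas you write the $a_0$-action on the multiplicity space as an operator $A$, prove $A$ semisimple from $A^m$ being scalar, and compute its centralizer. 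The two are equivalent in content --- the eigenspace multiplicities of $A$ are precisely the $l_k$ --- but your version is self-contained linear algebra and does not rely on the appendix; the paper's version buys a description of $F/\psi$ in terms of the extensions $\bar{U}\otimes\Omega(k)$ that it reuses elsewhere (e.g.\ in Lemma~\ref{lemma: reduce to smaller matrix cyclic extension}).
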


\begin{proof}
For (1), we only have to show that $\psi$ satisfies two conditions
of Lemma \ref{lemma: equivalent condition for psi} if and only if
$\psi^m = \id,$ $\psi( \bar{x}_0 ) \in \Iso_{N_0} (F_{\bar{x}_0},
F_{\bar{x}_1}),$ and $\psi( \bar{x}_j ) = a_0^j \psi(\bar{x}_0)
a_0^{-j}$ for $j \in \Z_m.$ First, we prove sufficiency. If we
substitute each $h \in N_0$ into $g$ in Condition (2) of Lemma
\ref{lemma: equivalent condition for psi}, $\psi( \bar{x}_j )$ is
$N_0$-equivariant for each $j \in \Z_m.$ Also, if we substitute
$a_0^j$ into $g$ in the same formula, then we obtain $\psi(
\bar{x}_j ) = a_0^j \psi(\bar{x}_0) a_0^{-j}.$ Next, we prove
necessity. Note that $\psi( \bar{x}_j ) = a_0^j \psi(\bar{x}_0)
a_0^{-j}$ is $N_0$-equivariant for each $j \in \Z_m$ because $N_0$
is normal in $N_2$ and $\psi( \bar{x}_0 )$ is $N_0$-equivariant. An
arbitrary $g$ in $N_2$ is expressed as $a_0^j h$ for some $h \in
N_0, j \in \Z_m.$ Then, since $g^{-1} \bar{x}_{j_1} = \bar{x}_{j_2}$
and $g \bar{x}_{j_2 + 1} = \bar{x}_{j_3}$ with $j_2 = j_1 - j, j_3 =
j_1 + 1$ for each $j_1 \in \Z_m,$ we have
\begin{align*}
g \psi( \bar{x}_{j_2} ) g^{-1} u &= ( a_0^j h ) \psi(
\bar{x}_{j_2} ) ( a_0^j h )^{-1} u \\
&= a_0^j \psi( \bar{x}_{j_2} ) a_0^{-j} u \\
&= a_0^{j+ j_2} \psi( \bar{x}_0 ) a_0^{-j- j_2} u \\
&= a_0^{j_1} \psi( \bar{x}_0 ) a_0^{-j_1} u \\
&= \psi( \bar{x}_{j_1} ) u
\end{align*}
for each $u \in F_{\bar{x}_{j_1}}$ so that $\psi$ satisfies
Condition (2) of Lemma \ref{lemma: equivalent condition for psi}.
Therefore, we obtain a proof of (1).

(2) is easy because a map $\psi$ in $\mathcal{A}$ is determined by
$\psi( \bar{x}_0 ).$

Before we prove (3), we prove that the $N_0$-character of
$F_{\bar{x}_0}$ is fixed by $N_2.$ Assume that $F_{\bar{x}_0}$ is
$\chi$-isotypical for some $\chi \in \Irr(N_0).$ By Theorem
\ref{theorem: bijectivity with extensions}, existence of $\psi$
guarantees existence of an $N_2$-extension, say $V.$ For each $g \in
N_2,$ we have
\begin{equation}
\label{equation: fixed character} ^g F_{\bar{x}_0} \cong
\res_{gN_0g^{-1}}^{N_2} ~ ^g V
 \cong \res_{N_0}^{N_2} ~ ^g V \cong \res_{N_0}^{N_2} V
 \cong F_{\bar{x}_0}
\end{equation}
by using normality of $N_0.$ By Definition \ref{definition:
conjugate representation}, $ ^g F_{\bar{x}_0}$ is $(g \cdot
\chi)$-isotypical so that $g \cdot \chi = \chi$ for each $g.$

Now, we prove (3). Put $F_{\bar{x}_0} \cong lU$ for some $l \in \N$
and $U \in \Irr ( N_0 ).$ We already know that the character of $U$
is fixed by $N_2.$ Let $\bar{U}$ be an $N_2$-extension of $U$ whose
existence is guaranteed by Theorem \ref{theorem: extension by
cyclic}. By Corollary \ref{corollary: expressed by extensions},
$F/\psi$ is isomorphic to $( l_0 \bar{U} \otimes \Omega(0) ) \oplus
\cdots \oplus ( l_{m-1} \bar{U} \otimes \Omega(m-1) )$ for some
$l_k$'s in $\Z$ so that $ l = \sum l_k.$ Schur's Lemma says that
$\Iso_{N_0} ( F/\psi ) \cong \Iso_{N_0} ( lU ) \cong \GL(l, \C)$ and
$\Iso_{N_2} ( F/\psi ) \cong \GL(l_0, \C) \times \cdots \times
\GL(l_{m-1}, \C).$ So, $\Iso_{N_0} ( F/\psi ) / \Iso_{N_2} ( F/\psi
)$ is homeomorphic to $\GL(l, \C) / \GL(l_0, \C) \times \cdots
\times \GL(l_{m-1}, \C).$ By long exact sequence of homotopy of a
fibration,
\begin{align*}
\longrightarrow \pi_1 \Big( \GL (l_0, \C) \times \cdots \times \GL
(l_{m-1}, \C) \Big)
\overset{i}{\longrightarrow} \pi_1 \Big( \GL (l, \C) \Big) \\
\longrightarrow \pi_1 \Big( \GL (l, \C) / \GL (l_0, \C) \times
\cdots \times \GL (l_{m-1}, \C) \Big) \longrightarrow 0.
\end{align*}
Here, $i$ is surjective. So, the quotient space is simply connected,
and we obtain a proof of (3). \qed
\end{proof}

\begin{proposition}
 \label{proposition: psi for dihedral}
Assume that $N_2 = \langle {N_0}, a_0 , b_0 \rangle$ with some $a_0,
b_0 \in N_2$ such that $a_0 \bar{x}_j = \bar{x}_{j+1}$ and $b_0
\bar{x}_j = \bar{x}_{-j+1}$ for each $j \in \Z_m.$ So, $N_2 / {N_0}
\cong \D_m$ and $N_1 = \langle N_0, b_0 a_0 \rangle.$ Then,
$\mathcal{A},$ $\mathcal{A}_\psi$ are homeomorphic to
$\mathcal{A}^0,$ $\mathcal{A}_\psi^0$ for any $\psi \in
\mathcal{A},$ respectively.
\end{proposition}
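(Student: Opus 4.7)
The plan is to adapt the argument of Proposition \ref{proposition: psi for cyclic}.(2). The key observation is that the cyclic subgroup $\langle N_0, a_0 \rangle \leq N_2$ already acts transitively on $\bar{\mathbf{x}}$ in precisely the way assumed in the cyclic proposition, so the reflection $b_0$ only imposes an additional condition on the value $\psi(\bar{x}_0)$ without linking it to any further independent data.

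First I would apply Lemma \ref{lemma: equivalent condition for psi}.(2) with $g = a_0^j$, $j_1 = j$, and $j_2 = 0$. Since $a_0^{-j}\bar{x}_j = \bar{x}_0$ and $a_0^j \bar{x}_1 = \bar{x}_{j+1}$, one has $j_3 - j_1 = 1$, and the condition reads
\[
\psi(\bar{x}_j) \;=\; a_0^j\, \psi(\bar{x}_0)\, a_0^{-j}
\]
for every $j \in \Z_m$. Consequently every $\psi \in \mathcal{A}$ is completely determined by its single value $\psi(\bar{x}_0)$, so the evaluation map
\[
\mathrm{ev}\colon \mathcal{A} \longrightarrow \mathcal{A}^0, \qquad \psi \mapsto \psi(\bar{x}_0),
\]
is a continuous bijection.

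Next I would construct an explicit continuous inverse. By the very definition of $\mathcal{A}^0$, each $\varphi \in \mathcal{A}^0$ is realized as $\psi(\bar{x}_0)$ for some $\psi \in \mathcal{A}$, and by the displayed formula that $\psi$ is unique and is given on the remaining points by $\psi(\bar{x}_j) = a_0^j\, \varphi\, a_0^{-j}$. Since conjugation by a fixed group element is continuous, the map $\mathrm{ev}^{-1}\colon \mathcal{A}^0 \to \mathcal{A}$ is continuous, and thus $\mathrm{ev}$ is a homeomorphism. Restricting to the path-component $\mathcal{A}_\psi \subset \mathcal{A}$ then produces the homeomorphism $\mathcal{A}_\psi \cong \mathcal{A}_\psi^0$.

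No serious obstacle arises, because the argument is structurally identical to the cyclic case: the remaining instances of Lemma \ref{lemma: equivalent condition for psi}.(2) with $g = b_0$ (or $g = b_0 a_0^k$) translate into a compatibility condition purely on the single endomorphism $\psi(\bar{x}_0)$, and hence serve only to cut out the subset $\mathcal{A}^0$ inside $\Iso(F_{\bar{x}_0}, F_{\bar{x}_1})$ rather than to introduce further independent parameters. The mildly delicate point worth spelling out is therefore merely that $\mathrm{ev}^{-1}$ actually lands in $\mathcal{A}$, which is automatic from the definition of $\mathcal{A}^0$ as the image of $\mathrm{ev}$.
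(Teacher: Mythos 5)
Your proof is correct and follows essentially the same route as the paper, which simply restricts the action to the cyclic subgroup $\langle N_0, a_0 \rangle$ and invokes the argument of Proposition \ref{proposition: psi for cyclic}.(2); your use of Lemma \ref{lemma: equivalent condition for psi}.(2) with $g = a_0^j$ to get $\psi(\bar{x}_j) = a_0^j \psi(\bar{x}_0) a_0^{-j}$ is exactly that argument, with the continuity of the inverse spelled out explicitly.
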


\begin{proof}
If we restrict the action to $\langle {N_0}, a_0 \rangle,$ then the
proof is the same with Proposition \ref{proposition: psi for
cyclic}.(2). \qed
\end{proof}

\begin{proposition}
 \label{proposition: psi for Z_2xZ_2}
Assume that $N_2 = \langle N_0, \alpha_1, \alpha_2, \alpha_3
\rangle$ with some $\alpha_i$'s in $N_2$ such that $\alpha_1
\bar{x}_j = \bar{x}_{-j+1},$ $\alpha_2 \bar{x}_j = \bar{x}_{j+2},$
$\alpha_3 \bar{x}_j = \bar{x}_{-j+3}$ for each $j \in \Z_m$ with
$m=4.$ So, $N_2 / N_0 \cong \Z_2 \times \Z_2$ and $N_1 = N_0.$ Then,
$\mathcal{A},$ $\mathcal{A}_\psi$ are homeomorphic to
$\mathcal{A}^{0, 3},$ $\mathcal{A}_\psi^{0, 3}$ for any $\psi \in
\mathcal{A},$ respectively.
\end{proposition}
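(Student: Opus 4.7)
The plan is to show that the evaluation map
\begin{equation*}
\mathrm{ev}: \mathcal{A} \to \mathcal{A}^{0,3}, \qquad \psi \mapsto \bigl(\psi(\bar{x}_0),\,\psi(\bar{x}_3)\bigr),
\end{equation*}
is a homeomorphism; the corresponding statement for $\mathcal{A}_\psi$ then follows immediately, since $\mathrm{ev}$ carries path components to path components and $\mathcal{A}_\psi^{0,3}$ is by definition $\mathrm{ev}(\mathcal{A}_\psi)$. Surjectivity of $\mathrm{ev}$ is built into the definition of $\mathcal{A}^{0,3}$, and continuity is automatic as $\mathrm{ev}$ is the restriction of a coordinate projection from $\prod_{j \in \Z_4} \Iso(F_{\bar{x}_j}, F_{\bar{x}_{j+1}})$.

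The substance lies in proving injectivity together with continuity of the inverse, and I would obtain both at once by writing $\psi(\bar{x}_1)$ and $\psi(\bar{x}_2)$ as explicit continuous expressions in $\psi(\bar{x}_0)$ and $\psi(\bar{x}_3)$. For this I apply Condition (2) of Lemma \ref{lemma: equivalent condition for psi} with $g = \alpha_2$, using that $\alpha_2 \bar{x}_j = \bar{x}_{j+2}$ swaps $\{\bar{x}_0, \bar{x}_2\}$ and $\{\bar{x}_1, \bar{x}_3\}$. Taking $j_1 = 0$ forces $j_2 = 2$ and $j_3 = 1$, producing $\psi(\bar{x}_0) = \alpha_2 \psi(\bar{x}_2) \alpha_2^{-1}$ on $F_{\bar{x}_0}$; taking $j_1 = 3$ forces $j_2 = 1$ and $j_3 = 0$, and reinterpreting the exponent $\psi^{-3}$ as $\psi^{1}$ via $\psi^4 = \id$ (Condition (1) of the same lemma) produces $\psi(\bar{x}_3) = \alpha_2 \psi(\bar{x}_1) \alpha_2^{-1}$ on $F_{\bar{x}_3}$. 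Rearranging yields
\begin{equation*}
\psi(\bar{x}_2) = \alpha_2^{-1}\psi(\bar{x}_0)\alpha_2, \qquad \psi(\bar{x}_1) = \alpha_2^{-1}\psi(\bar{x}_3)\alpha_2,
\end{equation*}
so $\psi$ is fully determined by $(\psi(\bar{x}_0), \psi(\bar{x}_3))$ and the inverse of $\mathrm{ev}$ is visibly continuous.

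The only real obstacle is the combinatorial bookkeeping of the Klein four action on the four-point orbit: the involution $\alpha_2$ has precisely the two orbits $\{\bar{x}_0, \bar{x}_2\}$ and $\{\bar{x}_1, \bar{x}_3\}$, which is exactly why one orbit representative is not sufficient (as it was in Propositions \ref{proposition: psi for cyclic}(2) and \ref{proposition: psi for dihedral}) and why two values $\psi(\bar{x}_0), \psi(\bar{x}_3)$ are needed. The equivariance constraints coming from $\alpha_1$ and $\alpha_3$ do not determine further values of $\psi$; they merely impose additional consistency relations on the pair $(\psi(\bar{x}_0), \psi(\bar{x}_3))$, which is how $\mathcal{A}^{0,3}$ sits inside $\mathcal{A}^0 \times \mathcal{A}^3$. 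These relations play no role in the homeomorphism argument itself, and the proof reduces to verifying the two displayed identities with correct modular arithmetic in $\Z_4$.
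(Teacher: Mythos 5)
Your proof is correct and follows essentially the same route as the paper: both reduce the claim to showing that $\psi$ is determined by the pair $\bigl(\psi(\bar{x}_0),\psi(\bar{x}_3)\bigr)$ via Condition (2) of Lemma \ref{lemma: equivalent condition for psi}. The only cosmetic difference is that you extract both relations from $g=\alpha_2$ (with a careful check of the exponent mod $4$), while the paper quotes one relation via $\alpha_3$ and one via $\alpha_2$; your derivation is sound and, if anything, slightly more explicit about surjectivity and continuity of the inverse.
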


\begin{proof}
It suffices to show that each $\psi$ in $\mathcal{A}$ is determined
by $\psi(\bar{x}_0),$ $\psi(\bar{x}_3).$ Substituting $g = \alpha_i$
in Lemma \ref{lemma: equivalent condition for psi}.(2), we obtain
\begin{equation*}
\psi ( \bar{x}_2 ) = \alpha_3 \psi ( \bar{x}_0 ) \alpha_3^{-1},
\quad \psi ( \bar{x}_1 ) = \alpha_2 \psi ( \bar{x}_3 )
\alpha_2^{-1},
\end{equation*}
so we obtain a proof. \qed
\end{proof}

We often need to restrict our arguments on $\mathcal{A}$ to $\{
\bar{x}_j, \bar{x}_{j^\prime} \}$ as follows: let
$\mathcal{A}_{j,j^\prime}$ with $j \ne j^\prime$ be the set of
equivariant pointwise clutching maps with respect to the $N_2(\{
\bar{x}_j, \bar{x}_{j^\prime} \})$-bundle $\big( \res_{N_2(\{
\bar{x}_j, \bar{x}_{j^\prime} \})}^{N_2} F \big) |_{ \{ \bar{x}_j,
\bar{x}_{j^\prime} \} }$ where $N_2(\{ \bar{x}_j, \bar{x}_{j^\prime}
\})$ is the subgroup preserving $\{ \bar{x}_j, \bar{x}_{j^\prime}
\}.$ Here, each $\psi$ in $\mathcal{A}_{j,j^\prime}$ is defined on
$\{ \bar{x}_j, \bar{x}_{j^\prime} \},$ and satisfies
\begin{equation*}
\psi(\bar{x}_j) \in \Iso(F_{\bar{x}_j}, F_{\bar{x}_{j^\prime}}) ~
\text{ and } ~ \psi(\bar{x}_{j^\prime}) \in
\Iso(F_{\bar{x}_{j^\prime}}, F_{\bar{x}_j}).
\end{equation*}
We need to know if the restricted bundle $\big( \res_{N_2(\{
\bar{x}_j, \bar{x}_{j^\prime} \})}^{N_2} F \big) |_{ \{ \bar{x}_j,
\bar{x}_{j^\prime} \} }$ satisfies Condition F1. or F2. If $F$
satisfies Condition F1., then
\begin{equation*}
\bar{\mathbf{x}} = \{ \bar{x}_j, \bar{x}_{j^\prime} \} ~ \text{ and
} ~ \big( \res_{N_2(\{ \bar{x}_j, \bar{x}_{j^\prime} \})}^{N_2} F
\big) |_{ \{ \bar{x}_j, \bar{x}_{j^\prime} \} } = F
\end{equation*}
so that the restricted bundle trivially satisfies Condition F1. And,
it is trivial that $\mathcal{A} = \mathcal{A}_{j,j^\prime}.$ If $F$
satisfies Condition F2., then $\big( \res_{N_2(\{ \bar{x}_j,
\bar{x}_{j^\prime} \})}^{N_2} F \big) |_{ \{ \bar{x}_j,
\bar{x}_{j^\prime} \} }$ satisfies Condition F2. because $N_2(\{
\bar{x}_j, \bar{x}_{j^\prime} \})$ acts transitively on $\{
\bar{x}_j, \bar{x}_{j^\prime} \}.$ We obtain a useful lemma on
$\mathcal{A}_{j,j^\prime}.$

\begin{lemma}
 \label{lemma: restricted pointwise clutching}
The map $\res_{j,j^\prime}: \mathcal{A} \rightarrow
\mathcal{A}_{j,j^\prime}, ~ \psi \mapsto \res_{j,j^\prime}(\psi)$ is
well-defined where
\begin{equation*}
\res_{j,j^\prime}(\psi)(\bar{x}_j) =
 \psi^{j^\prime-j}(\bar{x}_j), \quad \res_{j,j^\prime}(\psi)(\bar{x}_{j^\prime}) =
\psi^{j-j^\prime}(\bar{x}_{j^\prime}).
\end{equation*}
And, we have the isomorphism
\begin{equation*}
\res_{N_2(\{ \bar{x}_j, \bar{x}_{j^\prime} \})}^{N_2} (F/\psi) \cong
\big( \res_{N_2(\{ \bar{x}_j, \bar{x}_{j^\prime} \})}^{N_2} F \big)
|_{ \{ \bar{x}_j, \bar{x}_{j^\prime} \} } \Big/
\res_{j,j^\prime}(\psi)
\end{equation*}
for each $\psi \in \mathcal{A}.$
\end{lemma}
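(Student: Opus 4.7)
The plan is to reduce both assertions to the representation-extension Corollary proved earlier, with essentially no new computation needed. I will separate the two cases according to whether $F$ satisfies Condition F1 or F2.

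In the F1 case there is nothing to prove: the bundle already sits over the two-point set $\{\bar{x}_0, \bar{x}_1\}$ and $N_2 = N_2(\{\bar{x}_0,\bar{x}_1\})$, so the ``restriction'' is the identity and $\res_{0,1}(\psi) = \psi$ for trivial index reasons. The whole statement reduces to $F/\psi \cong F/\psi$.

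For the main F2 case, my plan is to apply Corollary \ref{corollary: equivalent condition for psi} with the following data. Set $N_2' = N_2(\{\bar{x}_j,\bar{x}_{j'}\})$ and $F' = \bigl(\res_{N_2'}^{N_2} F\bigr)\big|_{\{\bar{x}_j,\bar{x}_{j'}\}}$; since $N_2'$ acts transitively on $\{\bar{x}_j,\bar{x}_{j'}\}$, the bundle $F'$ satisfies Condition F2 for $N_2'$. As the sought-after $N_2'$-extension of $F_{\bar{x}_j}$ take $W = \res_{N_2'}^{N_2}(F/\psi)$, identified with $F_{\bar{x}_j}$ upon further restriction to $(N_2)_{\bar{x}_j}$ via $\imath_\psi$ of Lemma \ref{lemma: star action}; this is legitimate because $(N_2)_{\bar{x}_j}$ is exactly the isotropy subgroup of $\bar{x}_j$ inside $N_2'$. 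As the morphism $p$ in the Corollary take the restriction of $p_\psi\colon F\to F/\psi$ to fibers over $\{\bar{x}_j,\bar{x}_{j'}\}$; it is $N_2'$-equivariant by restricting the $N_2$-equivariance of $p_\psi$, and its restriction to each fiber is a nonequivariant isomorphism because $p_\psi\circ\imath_{\bar{x}_k} = \imath_\psi\circ (\text{translation by }\psi^{k})$ is bijective.

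The Corollary then produces some $\psi' \in \mathcal{A}_{j,j'}$ with $F'/\psi' \cong W$, and it remains to check $\psi' = \res_{j,j'}(\psi)$. By the formula in the Corollary, $\psi'(\bar{x}_j)$ is the unique element sending $u \in F_{\bar{x}_j}$ to the $v \in F_{\bar{x}_{j'}}$ with $p_\psi(u) = p_\psi(v)$ in $F/\psi$; by the defining identifications of $F/\psi$ this $v$ equals $\psi^{j'-j}(\bar{x}_j)\, u$. The analogous identification at $\bar{x}_{j'}$, combined with $\psi^m = \id$, gives $\psi'(\bar{x}_{j'}) = \psi^{m-(j'-j)}(\bar{x}_{j'}) = \psi^{j-j'}(\bar{x}_{j'})$. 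So $\psi' = \res_{j,j'}(\psi)$, which simultaneously establishes that $\res_{j,j'}$ lands in $\mathcal{A}_{j,j'}$ (well-definedness) and the displayed isomorphism of $N_2'$-representations.

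The only real obstacle is bookkeeping, namely verifying that the isotropy subgroup of $\bar{x}_j$ inside $N_2'$ really is $(N_2)_{\bar{x}_j}$ (so that the $\imath_\psi$ of Lemma \ref{lemma: star action} is exactly what the Corollary requires) and unwinding the Corollary's formula for $\psi'$ to match the cyclic-composition definition of $\res_{j,j'}(\psi)$; both of these are immediate from the definitions.
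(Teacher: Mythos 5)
Your proposal is correct and rests on the same underlying idea as the paper's (two-line) proof: the restriction of the quotient map $p_\psi$ to the fibers over $\{\bar{x}_j,\bar{x}_{j^\prime}\}$ simultaneously certifies that $\res_{j,j^\prime}(\psi)$ lies in $\mathcal{A}_{j,j^\prime}$ and induces the claimed isomorphism; you merely package this through Corollary \ref{corollary: equivalent condition for psi} where the paper cites Lemma \ref{lemma: equivalent condition for psi} directly. One small inaccuracy: your parenthetical claim that $(N_2)_{\bar{x}_j}$ \emph{equals} the isotropy subgroup of $\bar{x}_j$ inside $N_2(\{\bar{x}_j,\bar{x}_{j^\prime}\})$ is false in general (an element fixing $\bar{x}_j$ need not preserve the pair), but this is harmless: the extension property of $W=\res^{N_2}_{N_2(\{\bar{x}_j,\bar{x}_{j^\prime}\})}(F/\psi)$ only requires that $p_\psi|_{F_{\bar{x}_j}}$ be equivariant for the actual, possibly smaller, isotropy subgroup $(N_2)_{\bar{x}_j}\cap N_2(\{\bar{x}_j,\bar{x}_{j^\prime}\})$, which it is.
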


\begin{proof}
By Lemma \ref{lemma: equivalent condition for psi}, we can check
that $\res_{j,j^\prime}(\psi)$ is in $\mathcal{A}_{j,j^\prime}.$
And, the injection from $\big( \res_{N_2(\{ \bar{x}_j,
\bar{x}_{j^\prime} \})}^{N_2} F \big) |_{ \{ \bar{x}_j,
\bar{x}_{j^\prime} \} }$ to $F$ induces the isomorphism through
$p_{\res_{j,{j^\prime}}(\psi)}$ and $p_\psi.$ \qed
\end{proof}

Since any $\psi$ in $\mathcal{A}$ glues all fibers of $F$ to obtain
a single vector space $F/\psi,$ $\psi$ might be considered to glue
each pair of fibers of $F.$ That is, $\psi$ determines the function
$\bar{\psi}$ defined on $\bar{\mathbf{x}} \times \bar{\mathbf{x}} -
\Delta$ sending a pair $(\bar{x}, \bar{x}^\prime)$ to the element
$\bar{\psi}(\bar{x}, \bar{x}^\prime)$ in $\Iso(F_{\bar{x}},
F_{\bar{x}^\prime})$ such that each $u$ in $F_{\bar{x}}$ is
identified with $\bar{\psi}(\bar{x}, \bar{x}^\prime) u$ in
$F_{\bar{x}^\prime}$ by $\psi,$ i.e. $\bar{\psi}(\bar{x},
\bar{x}^\prime)$ satisfies $p_\psi (u) = p_\psi (
\bar{\psi}(\bar{x}, \bar{x}^\prime) u )$ where $\Delta$ is the
diagonal. Call $\bar{\psi}$ the \textit{saturation} of $\psi.$ Since
the index $j$ is not used in defining $\bar{\psi},$ it is often
convenient to use $\bar{\psi}$ instead of $\psi.$ Denote by
$\bar{\mathcal{A}}$ the set $\{ \bar{\psi} |~ \psi \in \mathcal{A}
\},$ and call it the \textit{saturation} of $\mathcal{A}.$ And,
denote $F / \psi, p_\psi$ also by $F / \bar{\psi}, p_{\bar{\psi}},$
respectively.

\section{Some lemmas on fundamental groups}
\label{section: lemmas on fundamental group}

In this section we prove three lemmas needed to calculate homotopy
of equivariant clutching maps. Two of them are just rewriting of
Schur's Lemma. The other is on relative homotopy.

\begin{lemma} \label{lemma: reduce to smaller matrix}
For $\chi \in \Irr(H),$ let $W$ be a $\chi$-isotypical
$H$-representation. For the natural inclusion $\imath: \Iso_H (W)
\rightarrow \Iso (W),$ the map
\begin{equation*}
\imath_* : \pi_1 ( \Iso_H (W) ) \rightarrow \pi_1 ( \Iso (W) )
\end{equation*}
is injective and equal to the multiplication by $\chi (id)$ up to
sign.
\end{lemma}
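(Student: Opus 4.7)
The plan is to reduce everything to an explicit computation with determinants via Schur's Lemma. Write $W \cong \ell U$ where $U$ is the irreducible $H$-representation with character $\chi$, and set $d = \chi(\id) = \dim U$. I would begin by fixing an $H$-isomorphism $W \cong \C^{\ell} \otimes U$; then Schur's Lemma identifies
\[
\Iso_H(W) \;\cong\; \GL(\ell, \C), \qquad A \longmapsto A \otimes \id_U,
\]
while $\Iso(W) \cong \GL(\ell d, \C)$ after a choice of basis of $U$. Under these identifications, the inclusion $\imath$ becomes the block map $A \mapsto A \otimes I_d$, i.e. the $\ell d \times \ell d$ matrix consisting of $d$ copies of $A$ suitably interleaved.

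Next I would invoke the classical fact that $\pi_1(\GL(n, \C)) \cong \Z$, with the isomorphism realized by $\det: \GL(n, \C) \to \C^\ast$ inducing an isomorphism on $\pi_1$ (since $\det$ is a fibration with simply connected fibre $\SL(n,\C)$). Under this identification, the effect of $\imath$ on $\pi_1$ is determined by the composition $\det \circ \imath$. A direct computation gives
\[
\det(A \otimes I_d) \;=\; \det(A)^d,
\]
so at the level of $\pi_1 \cong \Z$ the induced homomorphism $\imath_\ast$ is multiplication by $d = \chi(\id)$, up to the sign ambiguity coming from the choice of generators of $\pi_1(\GL(\ell,\C))$ and $\pi_1(\GL(\ell d, \C))$.

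Finally, since $d \ge 1$, multiplication by $\pm d$ is an injective homomorphism $\Z \to \Z$, which yields injectivity of $\imath_\ast$ and simultaneously the stated formula. There is essentially no obstacle: the only minor point is making the identification $W \cong \C^\ell \otimes U$ precise enough that $\imath$ becomes the tensor-with-identity map on the nose, so that the determinant computation $\det(A \otimes I_d) = \det(A)^d$ applies verbatim; once this is in place the proof is immediate.
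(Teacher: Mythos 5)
Your proof is correct and is exactly the argument the paper gestures at: the paper only writes out the explicit block matrix $A\otimes I_d$ for the example $W=2U_0$, $\chi(\id)=3$, and leaves the conclusion to the reader, whereas you carry it out in general via $\det(A\otimes I_d)=\det(A)^d$ and the fact that $\det$ induces an isomorphism on $\pi_1$ of $\GL(n,\C)$. No gaps; this is the same approach, just made complete.
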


\begin{proof}
An example is given instead of a detailed proof. Let $U_0$ be an
irreducible $H$-representation with the character $\chi.$ Put $W = 2
U_0.$ Assume that $\chi (id) = 3.$ By Schur's Lemma, $\Iso_H (W)
\cong \GL(2, \C)$ and $\imath$ is a map from $\GL(2, \C)$ to $\GL
(6, \C)$ such that
\begin{equation*}
\left(
                         \begin{array}{cc}
                           a & b \\
                           c & d \\
                         \end{array}
                       \right)
\mapsto \left(
\begin{array}{ccc|ccc}
    a & 0 & 0 & b & 0 & 0 \\
    0 & a & 0 & 0 & b & 0 \\
    0 & 0 & a & 0 & 0 & b \\
    \hline
    c & 0 & 0 & d & 0 & 0 \\
    0 & c & 0 & 0 & d & 0 \\
    0 & 0 & c & 0 & 0 & d \\
  \end{array}
\right).
\end{equation*}
Then, proof for this case is easily followed. \qed
\end{proof}

\begin{lemma} \label{lemma: reduce to smaller matrix cyclic extension}
For $\chi \in \Irr(H),$ let $N_2$ be a compact Lie group such that
$H \lhd N_2$ and $N_2 / H \cong \Z_m,$ and let $W$ be an
$N_2$-representation such that $\res_H^{N_2} W$ is
$\chi$-isotypical. For the natural inclusion $\imath: \Iso_{N_2} (W)
\rightarrow \Iso_H (W),$ the map $\imath_* : \pi_1 ( \Iso_{N_2} (W)
) \rightarrow \pi_1 ( \Iso_H (W) )$ is surjective.
\end{lemma}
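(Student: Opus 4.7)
The plan is to reduce the statement to a concrete assertion about block-diagonal inclusions of general linear groups, much in the spirit of the preceding Lemma \ref{lemma: reduce to smaller matrix}, but now using the representation-extension theory available because $N_2/H$ is cyclic.

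First, I would apply Schur's lemma on the $H$-side. Since $\res_H^{N_2} W$ is $\chi$-isotypical, $W \cong l U$ as $H$-representations, where $U$ is an irreducible $H$-representation with character $\chi$ and $l$ is the multiplicity. Hence $\Iso_H(W) \cong \GL(l,\C)$, whose fundamental group is $\Z$, generated by the determinant loop. Second, I would use the extension theory in Section \ref{section: representation extension}: because $N_2/H \cong \Z_m$ is cyclic, every $N_2$-irreducible whose $H$-restriction is a multiple of $\chi$ has the form $\bar{U}\otimes\Omega(k)$ for a fixed $N_2$-extension $\bar{U}$ of $U$ and a character $\Omega(k)$ of $N_2/H \cong \Z_m$ (this is Theorem \ref{theorem: extension by cyclic} together with Corollary \ref{corollary: expressed by extensions}). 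Decomposing
\begin{equation*}
W \;\cong\; \bigoplus_{k=0}^{m-1} l_k\bigl(\bar{U}\otimes\Omega(k)\bigr), \qquad \sum_{k} l_k = l,
\end{equation*}
Schur then yields $\Iso_{N_2}(W) \cong \prod_{k=0}^{m-1}\GL(l_k,\C)$ (with trivial factors when $l_k=0$).

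Third, I would identify the map $\imath$ under these isomorphisms. Since $\res_H\bigl(\bar{U}\otimes\Omega(k)\bigr) = U$, the summand $l_k(\bar{U}\otimes\Omega(k))$ contributes exactly $l_k$ copies of $U$ to the $H$-isotypical decomposition of $W$. Thus the $l$ copies of $U$ making up $\res_H W$ are partitioned into $m$ disjoint blocks of sizes $l_0,\ldots,l_{m-1}$, and an $N_2$-equivariant automorphism of $W$ is precisely an $H$-equivariant automorphism whose $\GL(l,\C)$-matrix is block-diagonal with blocks of those sizes. Therefore $\imath$ is, up to choice of basis, the standard block-diagonal inclusion
\begin{equation*}
\prod_{k=0}^{m-1} \GL(l_k,\C) \;\hookrightarrow\; \GL(l,\C).
\end{equation*}

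Finally, on $\pi_1$ this map is easy to compute: the determinant of a block-diagonal matrix is the product of the block determinants, so the determinant loop of each $\GL(l_k,\C)$ factor (with $l_k>0$) maps to a generator of $\pi_1(\GL(l,\C))\cong\Z$. Hence $\imath_*$ sends each standard generator of $\pi_1\bigl(\prod_k \GL(l_k,\C)\bigr) \cong \Z^{\#\{k\,:\,l_k>0\}}$ to $1\in\Z$, which is visibly surjective (the case $W=0$ being trivial). The only non-routine ingredient is the cyclic-extension classification invoked in the second step; everything else is a direct Schur-lemma computation, so I expect no real obstacle beyond citing the appendix correctly.
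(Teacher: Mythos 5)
Your proof is correct and follows essentially the same route as the paper's: decompose $W$ as $\bigoplus_{k} l_k(\bar{U}\otimes\Omega(k))$ via the cyclic-extension theory of the appendix, identify $\imath$ with the block-diagonal inclusion $\prod_k\GL(l_k,\C)\hookrightarrow\GL(l,\C)$ by Schur's Lemma, and read off surjectivity on $\pi_1$. The only small step the paper makes explicit that you leave implicit is that the character $\chi$ is fixed by $N_2$ (needed before invoking Theorem \ref{theorem: extension by cyclic}), which follows at once from the existence of the $N_2$-representation $W$ whose $H$-restriction is $\chi$-isotypical.
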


\begin{proof}
As in (\ref{equation: fixed character}), the character $\chi$ is
fixed by $N_2.$ Let $U$ be an irreducible $H$-representation whose
character is equal to $\chi,$ and let $\bar{U}$ be an
$N_2$-extension of $U$ whose existence is guaranteed by Theorem
\ref{theorem: extension by cyclic}. Then, $W$ can be decomposed as
$\oplus_{k \in \Z_m} l_k \bar{U}_k$ for some nonnegative integers
$l_k$'s by Corollary \ref{corollary: expressed by extensions} where
we denote by $\bar{U}_k$ the representation $(\bar{U} \otimes
\Omega(k)).$ Then, Schur's Lemma says that
\begin{align*}
\Iso_{N_2} ( W ) & \cong \Iso_{N_2} ( l_0 \bar{U}_0 ) \times \cdots \times \Iso_{N_2} ( l_{m-1} \bar{U}_{m-1} ), \\
& \cong \GL(l_0 ,\C) \times \cdots \times \GL(l_{m-1} ,\C), \\
\Iso_H (W) & \cong \GL(l, \C)
\end{align*}
where $l = \sum l_k.$ Since $\imath$ is the natural inclusion from
to $\GL(l_0 ,\C) \times \cdots \times \GL(l_{m-1} ,\C)$ to $\GL(l,
\C),$ we obtain a proof. \qed
\end{proof}

Here, recall a notation.
\begin{notation}
Let $X$ be a topological space. For two points $y_0$ and $y_1$ in
$X$ and a path $\gamma : [0,1] \rightarrow X$ such that $\gamma(0) =
y_0$ and $\gamma(1) = y_1,$ denote by $\overline{\gamma}$ the
function defined as
\begin{equation*}
\overline{\gamma} : \pi_1 (X, y_0) \longrightarrow \pi_1 (X, y_1),
\quad [\sigma] \mapsto [\gamma^{-1}.\sigma.\gamma].
\end{equation*}
\end{notation}

\begin{lemma} \label{lemma: relative homotopy}
Let $X$ be a path connected topological space with an abelian $\pi_1
(X).$ Let $A$ and $B$ be path connected subspaces of $X.$ Also, let
$\imath_1$ and $\imath_2$ denote inclusions from $A$ and $B$ to $X,$
respectively. Pick two points $y_0 \in A,$ $y_1 \in B,$ and a path
$\gamma : [0,1] \rightarrow X$ such that $\gamma (0) = y_0$ and
$\gamma (1) = y_1.$ Then, we have a one-to-one correspondence
\begin{align*}
\Pi : \pi_1 (X, y_1) \Big/ \Big\{ \overline{\gamma}
\big({\imath_1}_* \pi_1 (A, y_0) \big) +{\imath_2}_* \pi_1 (B, y_1)
\Big\} & \longrightarrow \Big[ [0,1], 0, 1 ;
X, A, B \Big], \\
[\sigma] & \longmapsto [\gamma.\sigma].
\end{align*}
\end{lemma}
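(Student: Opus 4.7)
The plan is to verify that $\Pi$ is well-defined on the quotient, then establish surjectivity and injectivity separately. The key geometric observation driving everything is an \emph{absorption lemma}: for any loop $\alpha$ in $A$ based at $y_0$, the concatenation $\alpha.\gamma$ is relatively homotopic to $\gamma$ (as maps $([0,1],0,1) \to (X,A,B)$), and symmetrically, for any loop $\beta$ in $B$ based at $y_1$, the path $\gamma.\beta$ is relatively homotopic to $\gamma$. I would prove this by the explicit reparametrization
\begin{equation*}
H(s,t) = \begin{cases} \alpha\bigl(2s/(1-t)\bigr) & s \in [0,(1-t)/2],\ t<1,\\ \gamma\bigl((2s-(1-t))/(1+t)\bigr) & s \in [(1-t)/2,1], \end{cases}
\end{equation*}
which at $t=0$ is $\alpha.\gamma$, at $t=1$ is $\gamma$, and whose left endpoint traces $\alpha$ inside $A$ while the right endpoint stays at $y_1 \in B$.

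For well-definedness of $\Pi$, if $\sigma_1$ and $\sigma_2$ are fixed-endpoint homotopic loops at $y_1$, then $\gamma.\sigma_1 \simeq \gamma.\sigma_2$ even rel fixed endpoints, so certainly in the relative set. To pass to the quotient, suppose $[\sigma_1] - [\sigma_2] = \overline{\gamma}({\imath_1}_*[\alpha]) + {\imath_2}_*[\beta]$ for some $[\alpha] \in \pi_1(A,y_0)$ and $[\beta] \in \pi_1(B,y_1)$. Using commutativity of $\pi_1(X,y_1)$, I may write $\gamma.\sigma_1 \simeq \gamma.(\gamma^{-1}.\alpha.\gamma).\beta.\sigma_2 \simeq \alpha.\gamma.\sigma_2.\beta$ as paths from $y_0$ to $y_1$ (fixed endpoints). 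Applying the absorption lemma to $\alpha$ on the left and $\beta$ on the right gives $\gamma.\sigma_1 \simeq \gamma.\sigma_2$ in the relative homotopy set.

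For surjectivity, given a path $\alpha:([0,1],0,1) \to (X,A,B)$, path-connectedness of $A$ and $B$ furnishes paths $\beta_A \subset A$ from $y_0$ to $\alpha(0)$ and $\beta_B \subset B$ from $\alpha(1)$ to $y_1$. Then $\sigma := \gamma^{-1}.\beta_A.\alpha.\beta_B$ is a loop at $y_1$, and $\gamma.\sigma \simeq \beta_A.\alpha.\beta_B$ with fixed endpoints; absorbing $\beta_A$ into $A$ and $\beta_B$ into $B$ via the same device as above yields $\Pi([\sigma]) = [\alpha]$.

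The main obstacle is injectivity, which I expect to be the heart of the argument. Suppose $H:[0,1]^2 \to X$ is a relative homotopy with $H(\cdot,0) = \gamma.\sigma_1$, $H(\cdot,1) = \gamma.\sigma_2$, $H(0,t) \in A$, $H(1,t) \in B$. Reading off the vertical edges defines loops $\alpha(t) = H(0,t) \in A$ at $y_0$ and $\beta(t) = H(1,t) \in B$ at $y_1$. The boundary circuit of the square, namely $\alpha^{-1}.(\gamma.\sigma_1).\beta.(\gamma.\sigma_2)^{-1}$, bounds the disk $H$ and is therefore nullhomotopic in $X$ (based at $y_0$). Translating this identity to $\pi_1(X,y_1)$ by conjugation with $\gamma$ and using that $\pi_1(X,y_1)$ is abelian yields
\begin{equation*}
[\sigma_1] - [\sigma_2] = \overline{\gamma}({\imath_1}_*[\alpha]) - {\imath_2}_*[\beta],
\end{equation*}
which lies in the subgroup $\overline{\gamma}({\imath_1}_*\pi_1(A,y_0)) + {\imath_2}_*\pi_1(B,y_1)$. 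Hence $[\sigma_1] = [\sigma_2]$ in the quotient, completing the proof.
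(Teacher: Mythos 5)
Your architecture --- an absorption lemma, well-definedness via commutativity of $\pi_1(X)$, surjectivity by connecting the endpoints of a given path into $y_0$ and $y_1$ inside $A$ and $B$, and injectivity by reading off the two vertical edges of the homotopy square --- is sound, and it supplies in full the ``tedious check'' that the paper dismisses in one sentence. The injectivity computation is correct: the boundary of the square gives $[\sigma_1]-[\sigma_2]=\overline{\gamma}({\imath_1}_*[\alpha])-{\imath_2}_*[\beta]$, and the minus sign is harmless since the denominator is a subgroup of the abelian group $\pi_1(X,y_1)$.

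The one genuine defect is the explicit formula for the absorption homotopy. As written, $H(0,t)=\alpha(0)=y_0$ for all $t<1$, so the left endpoint is \emph{fixed} rather than ``tracing $\alpha$ inside $A$'' as your prose claims; and as $t\to 1$ the whole loop $\alpha$ is traversed on the shrinking interval $[0,(1-t)/2]$, so $H$ is discontinuous at $(0,1)$ unless $\alpha$ is constant. Were that formula a genuine homotopy, it would show $\alpha.\gamma\simeq\gamma$ rel endpoints, i.e.\ that every class of $\pi_1(A,y_0)$ dies in $\pi_1(X,y_0)$, which is absurd. The repair is standard and matches your verbal description: delete initial segments of $\alpha$ instead of compressing it, e.g.
\begin{equation*}
H(s,t)=\left\{
  \begin{array}{ll}
    \alpha\big(t+2s(1-t)\big), & s\in[0,\tfrac 1 2],\\
    \gamma(2s-1), & s\in[\tfrac 1 2,1],
  \end{array}
\right.
\end{equation*}
whose left endpoint $H(0,t)=\alpha(t)$ moves along $\alpha$ inside $A$ and whose end stage is the constant path at $y_0$ followed by $\gamma$, which is $\gamma$ up to a fixed-endpoint reparametrization. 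Note also that in the surjectivity step you apply absorption to the \emph{paths} $\beta_A$ and $\beta_B$, not loops; the same corrected homotopy covers that case, since the moving endpoint stays inside $A$ (resp.\ $B$) throughout. With these repairs the proof is complete.
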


\begin{proof}
The only issue is well-definedness of $\Pi$ and $\Pi^{-1}$ which is
just a tedious check. \qed
\end{proof}

\section{Equivariant clutching maps on one-dimensional
fundamental domain}
 \label{section: equivariant clutching maps}

Assume that $\rho(G_\chi)=R$ for some finite $R$ in Table
\ref{table: introduction} such that $\pr(R) = \tetra, \octa,
\icosa.$ In this section, we find conditions on a preclutching map
$\Phi_{\hat{D}_R}$ in $C^0 (\hat{D}_R, V_B)$ to guarantee that
$\Phi_{\hat{D}_R}$ be the restriction of an equivariant clutching
map as promised in Section \ref{section: clutching construction}. By
using this, we show that $\Omega_{\hat{D}_R, (W_{d^i})_{i \in I}}$
is nonempty for each $(W_{d^i})_{i \in I^+} \in A_{G_\chi} (S^2,
\chi).$ For these, we define notations on equivariant pointwise
clutching maps with respect to the $(G_\chi)_x$-bundle $\big(
\res_{(G_\chi)_x}^{G_\chi} F_{V_B} \big)|_{|\pi|^{-1}(x)}$ for each
$\bar{x} \in | \lineL_R |$ and $x = |\pi|(\bar{x}),$ and prove some
lemmas on them. Then, we apply results of Section \ref{section:
pointwise clutching map} in dealing with $\Omega_{\hat{D}_R,
(W_{d^i})_{i \in I^+}}.$ Concrete calculation of homotopy of
equivariant clutching maps for each case is done in the next
section.

First, we define a set $\mathcal{A}_{\bar{x}}$ of equivariant
pointwise clutching maps for each $\bar{x}$ in $|\lineL_R|.$ For
each $\bar{x}$ in $|\lineL_R|$ and $x = |\pi|(\bar{x}),$ put
$\bar{\mathbf{x}} = |\pi|^{-1} (x) = \{ \bar{x}_j | j \in \Z_m \}$
for $m=j_R$ or 2 according to whether $\bar{x}$ is a vertex or not.
Then, let $\mathcal{A}_{\bar{x}}$ be the set of equivariant
pointwise clutching maps with respect to the $(G_\chi)_x$-bundle
$\big( \res_{(G_\chi)_x}^{G_\chi} F_{V_B} \big)
|_{\bar{\mathbf{x}}},$ i.e. $N_2 = (G_\chi)_x$ and $F = \big(
\res_{(G_\chi)_x}^{G_\chi} F_{V_B} \big) |_{\bar{\mathbf{x}}}$ in
notations of Section \ref{section: pointwise clutching map} (see
Example \ref{example: pointwise clutching}). Here, we need to
explain for codomain of maps in $\mathcal{A}_{\bar{x}}.$ For each
$\bar{x} \in |\lineL_R|$ and $\psi_{\bar{x}} \in
\mathcal{A}_{\bar{x}},$ $\psi_{\bar{x}}(\bar{x}_j)$ is in
\begin{equation*}
\Iso \Big( (F_{V_B})_{\bar{x}_j}, (F_{V_B})_{\bar{x}_{j+1}} \Big)
\end{equation*}
for $j \in \Z_{j_R}$ or $\Z_2.$ If $\bar{x}_j \in |\bar{f}|$ and
$\bar{x}_{j+1} \in |\bar{f}^\prime|$ for some $\bar{f},$
$\bar{f}^\prime,$ then $\psi_{\bar{x}}(\bar{x}_j)$ is henceforth
regarded as in $\Iso \big( V_{\bar{f}}, V_{\bar{f}^\prime} \big).$
This is justified because we have fixed the trivialization $\big(
\res_{(G_\chi)_{b(\bar{f})}}^{G_\chi} F_{V_B} \big)
\big|_{|\bar{f}|} = |\bar{f}| \times V_{\bar{f}}$ for each face
$\bar{f} \in \lineK_R.$ We define one more set
$\mathcal{A}_{\bar{e}}$ of equivariant pointwise clutching maps for
each edge $\hat{e}$ and its images $\bar{e} = p_{\lineL}(\hat{e}),$
$|e|= |\pi|(|\bar{e}|).$ For each $\hat{x}$ in $|\hat{e}|$ and
$\bar{x} = p_{|\lineL|} (\hat{x}),$ put
\begin{equation*}
\bar{x}_0^\prime = \bar{x}, \quad \bar{x}_1^\prime = p_{|\lineL|}
(|c|(\hat{x})), \quad \text{and} \quad \bar{\mathbf{x}}^\prime=\{
\bar{x}_j^\prime | j \in \Z_2 \}.
\end{equation*}
Consider the set $\mathcal{A}_{\hat{x}}$ of equivariant pointwise
clutching maps with respect to the $(G_\chi)_{|e|}$-bundle $\big(
\res_{(G_\chi)_{|e|}}^{G_\chi} F_{V_B} \big)
|_{\bar{\mathbf{x}}^\prime}$ where $(G_\chi)_{|e|}$ is the subgroup
of $G_\chi$ fixing $|e|,$ i.e. $N_2 = (G_\chi)_{|e|}$ and $F = \big(
\res_{(G_\chi)_{|e|}}^{G_\chi} F_{V_B} \big)
|_{\bar{\mathbf{x}}^\prime}$ in notations of Section \ref{section:
pointwise clutching map}. As for $\mathcal{A}_{\bar{x}},$ each map
$\psi_{\hat{x}}$ in $\mathcal{A}_{\hat{x}}$ is considered to satisfy
\begin{equation*}
\psi_{\hat{x}} (\bar{x}_0^\prime) \in \Iso(V_{\bar{f}},
V_{\bar{f}^\prime}) \quad \text{and} \quad \psi_{\hat{x}}
(\bar{x}_1^\prime) \in \Iso(V_{\bar{f}^\prime}, V_{\bar{f}})
\end{equation*}
when $\bar{x}_0^\prime \in |\bar{f}|$ and $\bar{x}_1^\prime \in
|\bar{f}^\prime|.$ Here, observe that $\mathcal{A}_{\hat{x}}$ is in
one-to-one correspondence with $\mathcal{A}_{\hat{y}}$ for any two
$\hat{x}, \hat{y}$ in $|\hat{e}|,$ i.e. an element $\psi_{\hat{x}}$
in $\mathcal{A}_{\hat{x}}$ and an element $\psi_{\hat{y}}$ in
$\mathcal{A}_{\hat{y}}$ are corresponded to each other when
$\psi_{\hat{x}} (\bar{x}_j^\prime) = \psi_{\hat{y}}
(\bar{y}_j^\prime)$ for $j \in \Z_2.$ This is because the
$(G_\chi)_{|e|}$-bundle $\big( \res_{(G_\chi)_{|e|}}^{G_\chi}
F_{V_B} \big) |_{\bar{\mathbf{x}}^\prime}$ is all isomorphic
regardless of $\hat{x} \in |\hat{e}|.$ It is very useful to identify
all $\mathcal{A}_{\hat{x}}$'s for $\hat{x} \in |\hat{e}|$ in this
way, so we denote the identified set by $\mathcal{A}_{\bar{e}}.$
That is, each element $\psi_{\bar{e}}$ in $\mathcal{A}_{\bar{e}}$ is
considered as contained in $\mathcal{A}_{\hat{x}}$ for any $\hat{x}
\in |\hat{e}|$ according to the context.

Next, we would define a $G_\chi$-action on saturations. First, we
define notations on saturations. For each $x= |\pi|(\bar{x})$ and
$\psi_{\bar{x}} \in \mathcal{A}_{\bar{x}},$ denote saturations of
$\mathcal{A}_{\bar{x}}$ and $\psi_{\bar{x}}$ by
$\bar{\mathcal{A}}_x$ and $\bar{\psi}_x,$ respectively. Since index
set is irrelevant in defining $\bar{\mathcal{A}}_x,$ the saturation
depends not on $\bar{x}$ but on $x.$ This is why we use the
subscript $x$ instead of $\bar{x}.$ For any $g \in G_\chi,$ the
function $g \cdot \bar{\psi}_x$ is contained in
$\bar{\mathcal{A}}_{g x}$ where $g \cdot \bar{\psi}_x$ is defined as
\begin{equation*}
(g \cdot \bar{\psi}_x) ( \bar{y}, \bar{y}^\prime ) = g \bar{\psi}_x
(g^{-1} \bar{y}, g^{-1} \bar{y}^\prime) g^{-1}
\end{equation*}
for any $\bar{y} \ne \bar{y}^\prime$ in $\pi^{-1}(g x).$ That is, we
obtain $g \bar{\mathcal{A}}_x = \bar{\mathcal{A}}_{gx}.$ Especially,
it is easily shown that $g \cdot \bar{\psi}_x = \bar{\psi}_x$ for
each $g \in (G_\chi)_x$ by equivariance of $\psi_{\bar{x}}.$ From
this, it is noted that if $g^\prime x = gx$ for some $g^\prime, g
\in G_\chi,$ then $g^\prime \cdot \bar{\psi}_x = g \cdot
\bar{\psi}_x$ because $g^\prime = g (g^{-1} g^\prime)$ with $g^{-1}
g^\prime \in (G_\chi)_x,$ i.e. $g \cdot \bar{\psi}_x$ is dependent
on $gx.$ We have defined a $G_\chi$-action on saturations. Since
$\mathcal{A}_{\bar{x}}$ and $\bar{\mathcal{A}}_x$ are in one-to-one
correspondence, $\mathcal{A}_{\bar{x}}$'s also deliver the
$G_\chi$-action induced from the $G_\chi$-action on
$\bar{\mathcal{A}}_x$'s, i.e. $g \cdot \psi_{\bar{x}} \in
\mathcal{A}_{g \bar{x}}$ for each $\psi_{\bar{x}} \in \mathcal{A}_{
\bar{x}}$ is defined and $\mathcal{A}_{g \bar{x}} = g
\mathcal{A}_{\bar{x}}.$ Here, we prove a useful lemma on this
action. Before it, we explain for the superscript $g,$ and state an
elementary fact.

\begin{definition}
\label{definition: conjugate representation} Let $K$ be a closed
subgroup of a compact Lie group $G.$ For a given element $g \in G$
and $W \in \Rep(K),$ the $g K g^{-1}$-representation $^g W$ is
defined to be the vector space $W$ with the new $g K g^{-1}$-action
\begin{equation*}
g K g^{-1} \times W \rightarrow W, \quad (k, u) \mapsto g^{-1} k g u
\end{equation*}
for $k \in g K g^{-1},$ $u \in W.$
\end{definition}

\begin{lemma}
 \label{lemma: elementary lemma on isotropy representation}
Let $G$ be a compact Lie group acting on a topological space $X.$
And, let $E$ be an equivariant vector bundle over $X.$ Then, $^g E_x
\cong E_{g x}$ for each $g \in G$ and $x \in X.$ Also,
\begin{equation*}
\res_{G_x \cap G_{x^\prime}}^{G_x} E_x \cong \res_{G_x \cap
G_{x^\prime}}^{G_{x^\prime}} E_{x^\prime}
\end{equation*}
for any two points $x, x^\prime$ in the same component of the fixed
set $X^{G_x \cap G_{x^\prime}}.$
\end{lemma}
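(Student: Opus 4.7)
The plan is to prove the two assertions separately by elementary equivariant bundle arguments.

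For the first assertion $^gE_x \cong E_{gx}$, I would exhibit the isomorphism explicitly. Define
\[
\phi : E_x \longrightarrow E_{gx}, \qquad u \mapsto g \cdot u,
\]
using the $G$-action on the total space $E$. Since $g$ acts on $E$ as a bundle automorphism covering the $G$-action on $X$, the map $\phi$ is a linear bijection on fibers. The content is to check that it intertwines the $gG_xg^{-1} = G_{gx}$ actions when the source is given the structure of $^gE_x$ as in Definition \ref{definition: conjugate representation}. Unpacking that definition, for $k \in gG_xg^{-1}$ the action on $^gE_x$ sends $u$ to $(g^{-1}kg)\cdot u$, so
\[
\phi\bigl(k \cdot_{{}^gE_x} u\bigr) = g \cdot (g^{-1}kg)\cdot u = (kg)\cdot u = k\cdot(g\cdot u) = k\cdot \phi(u),
\]
which gives the desired $G_{gx}$-isomorphism.

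For the second assertion, let $H = G_x \cap G_{x^\prime}$, so that both $x$ and $x^\prime$ lie in the fixed point set $X^H$. Restricting $E$ to $X^H$ yields an $H$-equivariant bundle on which $H$ acts trivially on the base, so each fiber $E_y$ for $y \in X^H$ carries an $H$-representation. The plan is to show that the isomorphism class of this $H$-representation is locally constant on $X^H$, and then conclude by path-connectedness of the component containing $x$ and $x^\prime$. Local constancy can be seen either via a slice argument around any $y_0 \in X^H$ (the slice theorem provides an $H$-equivariant trivialization of $E$ on a neighborhood of $y_0$ in $X^H$), or more concretely from continuity of the character $y \mapsto \chi_{E_y}(h)$ for each $h \in H$ together with the rigidity of finite-dimensional complex representations of the compact group $H$ (characters at a fixed rank take values in a locally discrete set of admissible characters). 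Either way, the representation type is constant on path components of $X^H$, and therefore $\res_H^{G_x} E_x \cong \res_H^{G_{x^\prime}} E_{x^\prime}$.

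The main technical point is local constancy of the fiberwise $H$-representation on $X^H$; once that is in hand, both statements are formal. Because the lemma is flagged as \emph{elementary} and is used only to justify well-definedness of the invariant $p_{\vect}$ in Definition \ref{definition: A_R}, I expect the written proof to be short and to invoke the slice theorem (or continuity-of-character) argument without detailed computation.
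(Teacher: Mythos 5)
Your proof is correct. The paper in fact gives no proof of this lemma at all -- it is stated as an elementary fact and immediately used to justify well-definedness of $p_{\vect}$ -- so your argument simply supplies the standard reasoning being invoked implicitly: the explicit intertwiner $u \mapsto g\cdot u$ for the first claim, and local constancy of the fiberwise $H$-representation on $X^{H}$ for the second. One small remark: since the lemma assumes only that $X$ is a topological space, the slice-theorem route is not available in general, so the continuity-of-character argument (integer-valued multiplicities $y \mapsto \langle \chi_{E_y}, \chi_i\rangle$ are continuous, hence locally constant, hence constant on connected components) is the one to use; it also handles ``component'' in the stated sense of connected component rather than path component.
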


By Lemma \ref{lemma: elementary lemma on isotropy representation}
and Lemma \ref{lemma: intersection of isotropy}, $p_\vect$ is
well-defined in cases when $\pr(R) = \tetra, \octa, \icosa.$

\begin{lemma} \label{lemma: conjugate pointwise clutching}
For each $\bar{x} \in |\lineL_R|,$ $x= |\pi|(\bar{x}),$ $g \in
G_\chi,$ $\psi_{\bar{x}} \in \mathcal{A}_{\bar{x}},$ we have an
$(G_\chi)_{gx}$-isomorphism
\begin{equation*}
\big( \res_{(G_\chi)_{gx}}^{G_\chi} F_{V_B} \big) |_{g
\bar{\mathbf{x}}}
 \Big/ g \cdot \bar{\psi}_x \cong ~ ^g \Big( ~
\big(\res_{(G_\chi)_x}^{G_\chi} F_{V_B} \big) |_{\bar{\mathbf{x}}}
\Big/ \bar{\psi}_x ~ \Big).
\end{equation*}
\end{lemma}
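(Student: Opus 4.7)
The plan is to write down an explicit $(G_\chi)_{gx}$-equivariant isomorphism induced by fiberwise multiplication by $g^{-1}$. Specifically, I would define
\begin{equation*}
\phi : \big(\res_{(G_\chi)_{gx}}^{G_\chi} F_{V_B}\big)\big|_{g\bar{\mathbf{x}}} \longrightarrow ~^g\Big(\big(\res_{(G_\chi)_x}^{G_\chi} F_{V_B}\big)\big|_{\bar{\mathbf{x}}}\big/\bar{\psi}_x\Big), \qquad u \longmapsto p_{\bar{\psi}_x}(g^{-1}u),
\end{equation*}
where $u\in (F_{V_B})_{g\bar{x}_j}$ so that $g^{-1}u\in (F_{V_B})_{\bar{x}_j}$. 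This is well-defined on each fiber because multiplication by $g^{-1}$ is a linear isomorphism $(F_{V_B})_{g\bar{x}_j}\to (F_{V_B})_{\bar{x}_j}$, and $p_{\bar{\psi}_x}$ is defined on the whole union of these fibers.

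Next I would verify equivariance. For $k\in (G_\chi)_{gx}=g(G_\chi)_x g^{-1}$, the action on the target, by Definition~\ref{definition: conjugate representation}, is $k\cdot w=(g^{-1}kg)w$ using the original $(G_\chi)_x$-action on the quotient $F_{V_B}|_{\bar{\mathbf{x}}}/\bar{\psi}_x$. Then
\begin{equation*}
\phi(ku)=p_{\bar{\psi}_x}(g^{-1}ku)=p_{\bar{\psi}_x}\bigl((g^{-1}kg)(g^{-1}u)\bigr)=(g^{-1}kg)\,p_{\bar{\psi}_x}(g^{-1}u)=k\cdot\phi(u),
\end{equation*}
where in the third equality I use that $g^{-1}kg\in (G_\chi)_x$ and that $p_{\bar{\psi}_x}$ is $(G_\chi)_x$-equivariant (Lemma~\ref{lemma: equivalent condition for psi}).

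Then I would check that $\phi$ respects the equivalence relation defining the left-hand quotient, so that it descends to a map $\bar{\phi}$ on $(F_{V_B}|_{g\bar{\mathbf{x}}})/(g\cdot\bar{\psi}_x)$. By the definition of the $G_\chi$-action on saturations given just above Lemma~\ref{lemma: conjugate pointwise clutching},
\begin{equation*}
(g\cdot\bar{\psi}_x)(g\bar{x}_j,g\bar{x}_{j'})(u)=g\,\bar{\psi}_x(\bar{x}_j,\bar{x}_{j'})(g^{-1}u),
\end{equation*}
so applying $\phi$ to this vector yields
\begin{equation*}
p_{\bar{\psi}_x}\!\bigl(\bar{\psi}_x(\bar{x}_j,\bar{x}_{j'})(g^{-1}u)\bigr)=p_{\bar{\psi}_x}(g^{-1}u)=\phi(u),
\end{equation*}
using the defining property of the saturation $\bar{\psi}_x$ that $p_{\bar{\psi}_x}$ identifies $v$ with $\bar{\psi}_x(\bar{x}_j,\bar{x}_{j'})(v)$.

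Finally, $\bar{\phi}$ is surjective because $\phi$ is (as $p_{\bar{\psi}_x}$ is and $g^{-1}$ is a bijection), and both source and target are finite dimensional of the same dimension (each is isomorphic to $(F_{V_B})_{\bar{x}_0}$ as a vector space), so $\bar{\phi}$ is the desired $(G_\chi)_{gx}$-isomorphism. There is no real obstacle; the only thing that requires care is keeping track of which group acts via which twisted action when chasing $k$, $g$, and $g^{-1}kg$ through the formulas.
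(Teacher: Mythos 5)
Your proof is correct, but it takes a different and more direct route than the paper. The paper's proof does not write down the isomorphism explicitly: it assembles an auxiliary $L$-bundle $\mathcal{F}$ over the whole orbit $Lx$ (where $L = \langle g, (G_\chi)_x\rangle$), whose fiber at $g^k x$ is $\bar{\mathcal{F}}|_{g^k\bar{\mathbf{x}}}\big/(g^k\cdot\bar{\psi}_x)$, proves that the formula $l u = P(l\bar u)$ gives a well-defined $L$-action making the collective quotient map $P$ equivariant, and then deduces the claimed isomorphism as an instance of Lemma \ref{lemma: elementary lemma on isotropy representation} ($^g E_x \cong E_{gx}$ for an equivariant bundle $E$). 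Your approach instead exhibits the isomorphism $u \mapsto p_{\bar{\psi}_x}(g^{-1}u)$ directly and verifies by hand that it descends through the relation $g\cdot\bar{\psi}_x$, is $(G_\chi)_{gx}$-equivariant for the twisted action of Definition \ref{definition: conjugate representation}, and is bijective by surjectivity plus equality of dimensions; the computations you perform (conjugating $k$ to $g^{-1}kg \in (G_\chi)_x$, and unwinding $(g\cdot\bar{\psi}_x)(g\bar x_j, g\bar x_{j'}) = g\,\bar{\psi}_x(\bar x_j,\bar x_{j'})\,g^{-1}$) are essentially the same algebra that the paper hides inside its well-definedness check for the $L$-action. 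What the paper's construction buys is slightly more than the lemma: a coherent equivariant bundle structure over the entire orbit, which packages the compatibility of all the quotients $\bar{\mathcal{F}}|_{g^k\bar{\mathbf{x}}}/(g^k\cdot\bar{\psi}_x)$ at once. What your argument buys is brevity and transparency for the stated isomorphism itself. The one point you implicitly rely on --- that $g\cdot\bar{\psi}_x$ is indeed the saturation of an element of $\mathcal{A}_{g\bar x}$, so that the left-hand quotient carries a $(G_\chi)_{gx}$-representation structure via Lemma \ref{lemma: equivalent condition for psi} --- is asserted in the paragraph preceding the lemma, so it is legitimate to use it.
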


\begin{proof}
Put $L= \langle g, (G_\chi)_x \rangle,$ and let $k_0 \in \N$ be the
smallest natural number satisfying $g^{k_0} \in (G_\chi)_x$ where
such a number exists because $R = \rho(G_\chi)$ is finite. To prove
this lemma by using Lemma \ref{lemma: elementary lemma on isotropy
representation}, we would construct an $L$-bundle $\mathcal{F}$ over
the orbit $L x = \{ x, gx, \cdots, g^{k_0-1}x \}.$ Put
$\bar{\mathcal{F}} = \big( \res_L^{G_\chi} F_{V_B} \big) |_{L
\bar{\mathbf{x}}}$ over $L \bar{\mathbf{x}},$ and consider the
nonequivariant bundle $\mathcal{F}$ over the orbit $L x$ whose fiber
$\mathcal{F}_{g^k x}$ at $g^k x$ is equal to $\bar{\mathcal{F}}
|_{g^k \bar{\mathbf{x}}} \big/ ( g^k \cdot \bar{\psi}_x )$ for $k =
0, \cdots, k_0-1.$ And, let $P : \bar{\mathcal{F}} \rightarrow
\mathcal{F}$ be the map such that the restriction of $P$ to
$\bar{\mathcal{F}} |_{g^k \bar{\mathbf{x}}}$ is equal to $p_{g^k
\cdot \bar{\psi}_x}$ where $p_{g^k \cdot \bar{\psi}_x} :
\bar{\mathcal{F}} |_{g^k \bar{\mathbf{x}}} \rightarrow
\bar{\mathcal{F}} |_{g^k \bar{\mathbf{x}}} \big/ ( g^k \cdot
\bar{\psi}_x )$ is the quotient map of (\ref{figure: diagram of
equivariant pointwise gluing}). Then, we would define an $L$-action
on $\mathcal{F}$ as $l u = P( l \bar{u} )$ for each $l \in L, u \in
\mathcal{F},$ and any $\bar{u} \in P^{-1}(u)$ so that $P$ becomes
$L$-equivariant. As long as this is well-defined, it is easily shown
that it becomes an action because it is defined by the $L$-action on
$\bar{\mathcal{F}}$ through $P.$ So, we would prove well-definedness
of the action. For this, it suffices to show $P(l \bar{u}) = P(l
\bar{u}^\prime)$ for each $l \in L$ and each $\bar{u},
\bar{u}^\prime$ in $\bar{\mathcal{F}}$ satisfying $P(\bar{u}) =
P(\bar{u}^\prime).$ If $\bar{u} \in (F_{V_B})_{g^k \bar{x}_j}$ and
$\bar{u}^\prime \in (F_{V_B})_{g^k \bar{x}_{j^\prime}}$ for some $j,
j^\prime,$ then $P(\bar{u}) = P(\bar{u}^\prime)$ is written as
\begin{equation}
\tag{*} (g^k \cdot \bar{\psi}_x) ( g^k \bar{x}_j, g^k
\bar{x}_{j^\prime} ) \bar{u} = \bar{u}^\prime.
\end{equation}
Note that $l \bar{u} \in (F_{V_B})_{l g^k \bar{x}_j}$ and $l
\bar{u}^\prime \in (F_{V_B})_{l g^k \bar{x}_{j^\prime}}.$ And, put
$l=g^{k^\prime} l^\prime$ with $l^\prime \in L_{g^k x}$ and some
integer $k^\prime$ so that $l g^k \bar{\mathbf{x}} = g^{k+k^\prime}
\bar{\mathbf{x}}$ because $l^\prime$ fixes $g^k \bar{\mathbf{x}}.$
Remembering that the restriction of $P$ to $\bar{\mathcal{F}} |_{l
g^k \bar{\mathbf{x}}} = \bar{\mathcal{F}} |_{g^{k+k^\prime}
\bar{\mathbf{x}}}$ is equal to $p_{g^{k+k^\prime} \cdot
\bar{\psi}_x},$ $P(l \bar{u}) = P(l \bar{u}^\prime)$ is shown as
\begin{align*}
& (g^{k+k^\prime} \cdot \bar{\psi}_x)
( lg^k \bar{x}_j, lg^k \bar{x}_{j^\prime} ) l \bar{u}   \\
=& g^{k^\prime} (g^k \cdot \bar{\psi}_x) ( g^{-k^\prime} lg^k
\bar{x}_j, g^{-k^\prime} lg^k \bar{x}_{j^\prime} )
g^{-k^\prime} l \bar{u}   \\
=& g^{k^\prime} l^\prime l^{\prime -1} (g^k \cdot \bar{\psi}_x) (
l^\prime g^k \bar{x}_j, l^\prime g^k \bar{x}_{j^\prime} )
l^\prime \bar{u}   \\
=& g^{k^\prime} l^\prime (l^{\prime -1} g^k \cdot \bar{\psi}_x) (
g^k \bar{x}_j, g^k \bar{x}_{j^\prime} )
\bar{u}   \\
=& l (g^k \cdot \bar{\psi}_x) ( g^k \bar{x}_j, g^k
\bar{x}_{j^\prime} )
\bar{u}   \\
=& l \bar{u}^\prime
\end{align*}
where we use (*) in the last line. So, we obtain well-definedness of
$L$-action on $\mathcal{F}.$ By definition, isotropy representations
$\mathcal{F}_x$ and $\mathcal{F}_{gx}$ are equal to representations
\begin{equation*}
\big(\res_{(G_\chi)_x}^{G_\chi} F_{V_B} \big) |_{\bar{\mathbf{x}}}
\big/ \bar{\psi}_x \quad \text{and} \quad \big(
\res_{(G_\chi)_{gx}}^{G_\chi} F_{V_B} \big) |_{g \bar{\mathbf{x}}}
 \Big/ g \cdot \bar{\psi}_x,
\end{equation*}
respectively. Then, the lemma follows from Lemma \ref{lemma:
elementary lemma on isotropy representation}. \qed
\end{proof}

To investigate $\Omega_{\hat{D}_R, (W_{d^i})_{i \in I^+}},$ we need
prove a basic lemma on relations between $\mathcal{A}_{\bar{e}^i}$
and $\mathcal{A}_{\bar{x}}$ for $\bar{x} = \bar{v}^i$ or
$\bar{v}^{i+1}.$ Also, we prove lemmas on evaluation of equivariant
pointwise clutching maps.

\begin{lemma}
 \label{lemma: inclusions of A_x's}
Put $\bar{\mathbf{v}}^k = |\pi|^{-1}(v^k) = \{ \bar{x}_j ~ | ~
\bar{x}_j = \bar{v}_j^k \text{ for } j \in \Z_{j_R} \}$ for $k=i,
i+1.$ And, put $\bar{\mathbf{v}}^{\prime i} = \{ \bar{v}_0^i,
\bar{v}_1^i \}$ and $\bar{\mathbf{v}}^{\prime i+1} = \{
\bar{v}_0^{i+1}, \bar{v}_{-1}^{i+1} \}$ so that $\bar{v}_1^i,$
$\bar{v}_{-1}^{i+1}$ $\in c(\bar{e}^i).$
\begin{enumerate}
  \item $\mathcal{A}_{\bar{x}} \subset \mathcal{A}_{\bar{e}^i}$
  for each interior $\bar{x}$ in $|\bar{e}^i|.$
  \item $\mathcal{A}_{\bar{e}^i} = \mathcal{A}_{\bar{x}}$
  for each interior $\bar{x} \ne b(\bar{e}^i)$ in $|\bar{e}^i|.$
  Moreover, $\mathcal{A}_{\bar{e}^i} = \mathcal{A}_{b(\bar{e}^i)}$
  if $(G_\chi)_{b(e^i)} = (G_\chi)_x$ for $x= |\pi|(\bar{x}).$
  \item $\mathcal{A}_{\bar{v}^i}^0 \subset
  \mathcal{A}_{\bar{e}^i}^0$ and $\mathcal{A}_{\bar{v}^{i+1}}^{j_R-1} \subset
  \mathcal{A}_{\bar{e}^i}^1.$
  \item For each $\psi_{\bar{v}^i}$ in $\mathcal{A}_{\bar{v}^i},$
  we have $\psi_{\bar{v}^i} (\bar{v}_0^i) = \psi_{\bar{e}^i} (\bar{v}_0^i)$
  for the unique $\psi_{\bar{e}^i} \in \mathcal{A}_{\bar{e}^i},$ and
  \begin{equation*}
  \res_{(G_\chi)_{|e^i|}}^{(G_\chi)_{v^i}}
  \Big\{ \big( \res_{(G_\chi)_{v^i}}^{G_\chi}
  F_{V_B} \big) |_{\bar{\mathbf{v}}^i}
  \Big/ \psi_{\bar{v}^i} \Big\}
  \cong \big( \res_{(G_\chi)_{|e^i|}}^{G_\chi}
  F_{V_B} \big) |_{\bar{\mathbf{v}}^{\prime i}} \Big/
  \psi_{\bar{e}^i}.
  \end{equation*}
  \item For each $\psi_{\bar{v}^{i+1}}$ in $\mathcal{A}_{\bar{v}^{i+1}},$
  we have $\psi_{\bar{v}^{i+1}} (\bar{v}_{-1}^{i+1}) =
  \psi_{\bar{e}^i} (\bar{v}_{-1}^{i+1})$
  for the unique $\psi_{\bar{e}^i} \in \mathcal{A}_{\bar{e}^i},$ and
  \begin{equation*}
  \res_{(G_\chi)_{|e^i|}}^{(G_\chi)_{v^{i+1}}}
  \Big\{ \big( \res_{(G_\chi)_{v^{i+1}}}^{G_\chi} F_{V_B} \big) |_{\bar{\mathbf{v}}^{i+1}}
  \Big/ \psi_{\bar{v}^{i+1}} \Big\}
  \cong \big( \res_{(G_\chi)_{|e^i|}}^{G_\chi} F_{V_B} \big)
  |_{\bar{\mathbf{v}}^{\prime i+1}} \Big/
  \psi_{\bar{e}^i}.
  \end{equation*}
\end{enumerate}
\end{lemma}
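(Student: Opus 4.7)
\medskip

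\noindent\textbf{Proof proposal.} My plan is to reduce parts (1)--(2) to direct comparisons of isotropy subgroups and parts (3)--(5) to a single application of Lemma~\ref{lemma: restricted pointwise clutching}, with the inclusions in (3) emerging as corollaries of the constructions in (4)--(5).

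For (1) and (2), observe first that for any interior $\bar{x}$ of $|\bar{e}^i|$ the two-element set $\bar{\mathbf{x}} = |\pi|^{-1}(x)$ used to define $\mathcal{A}_{\bar{x}}$ coincides, as a subset of $|\lineK_R|$, with the set $\bar{\mathbf{x}}^\prime$ used for $\mathcal{A}_{\bar{e}^i}$: the second preimage of $x$ under $|\pi|$ is precisely $p_{|\lineL|}(|c|(\hat{x}))$ by the defining property of $|c|$. Both sets therefore consist of pointwise preclutching maps with identical domain and codomain, differing only in the group for which one imposes equivariance (Condition~(2) of Lemma~\ref{lemma: equivalent condition for psi}). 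Because fixing $|e^i|$ pointwise forces fixing each point $x \in |e^i|$, we have $(G_\chi)_{|e^i|} \subset (G_\chi)_x$, and a $(G_\chi)_x$-equivariant pointwise clutching map is a fortiori $(G_\chi)_{|e^i|}$-equivariant, giving (1). For (2), any element of $G_\chi$ fixing a non-barycentric interior point of $|e^i|$ must fix the whole edge pointwise (a linear symmetry fixing a generic point of a segment fixes the segment), so $(G_\chi)_x = (G_\chi)_{|e^i|}$ and equality of the two sets follows; the "moreover" clause is the same identification at $b(\bar{e}^i)$.

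For (4), I apply Lemma~\ref{lemma: restricted pointwise clutching} with $N_2 = (G_\chi)_{v^i}$, the finite $N_2$-set $\bar{\mathbf{v}}^i$, and index pair $(j, j^\prime) = (0,1)$, producing from $\psi_{\bar{v}^i}$ the pointwise clutching map $\res_{0,1}(\psi_{\bar{v}^i})$ on $\{\bar{v}_0^i, \bar{v}_1^i\}$, equivariant under the pair-stabilizer $N_2(\{\bar{v}_0^i, \bar{v}_1^i\})$. The key geometric observation is that $(G_\chi)_{|e^i|} \subset N_2(\{\bar{v}_0^i, \bar{v}_1^i\})$: an element fixing $|e^i|$ pointwise preserves each individual lift of $v^i$ lying in the two faces adjacent along $e^i$, namely $\bar{v}_0^i \in \bar{f}^{-1}$ and $\bar{v}_1^i \in \bar{f}^0$ by the labeling convention $p_\lineL(c(\hat{v}_+)) = \bar{v}_1$. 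Restricting the equivariance from $N_2(\{\bar{v}_0^i, \bar{v}_1^i\})$ down to $(G_\chi)_{|e^i|}$ therefore yields an element $\psi_{\bar{e}^i} \in \mathcal{A}_{\bar{e}^i}$ satisfying $\psi_{\bar{e}^i}(\bar{v}_0^i) = \psi_{\bar{v}^i}^{1-0}(\bar{v}_0^i) = \psi_{\bar{v}^i}(\bar{v}_0^i)$; uniqueness is automatic because Condition~(1) of Lemma~\ref{lemma: equivalent condition for psi} forces $\psi_{\bar{e}^i}(\bar{v}_1^i) = \psi_{\bar{e}^i}(\bar{v}_0^i)^{-1}$, so $\psi_{\bar{e}^i}$ is determined by its value at $\bar{v}_0^i$. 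The claimed isomorphism is precisely the isomorphism furnished by Lemma~\ref{lemma: restricted pointwise clutching}, with both sides further restricted from the pair-stabilizer down to $(G_\chi)_{|e^i|}$. Part (5) is entirely analogous: take $N_2 = (G_\chi)_{v^{i+1}}$, use index pair $(0, j_R - 1)$ for the pair $\{\bar{v}_0^{i+1}, \bar{v}_{-1}^{i+1}\}$, and invoke $\psi^{j_R} = \id$ to obtain $\res_{0, j_R - 1}(\psi_{\bar{v}^{i+1}})(\bar{v}_{-1}^{i+1}) = \psi_{\bar{v}^{i+1}}^{1 - j_R}(\bar{v}_{-1}^{i+1}) = \psi_{\bar{v}^{i+1}}(\bar{v}_{-1}^{i+1})$. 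Part (3) now falls out by extracting the evaluation at $\bar{v}_0^i$ from (4) and at $\bar{v}_{-1}^{i+1}$ from (5).

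The only delicate point is verifying the stabilizer chain $(G_\chi)_{|e^i|} \subset N_2(\{\bar{v}_0^i, \bar{v}_1^i\}) \subset (G_\chi)_{v^i}$ (and its $v^{i+1}$-counterpart), which is really a bookkeeping statement about how the simplicial structure of $\lineK_R$ interacts with the $G_\chi$-action and the labeling conventions for lifts. Once one unpacks the definitions of $|c|$ and of the counterclockwise arrangement of faces around each vertex, no serious obstacle remains.
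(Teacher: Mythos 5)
Your proposal is correct and follows essentially the same route as the paper: (1)--(2) by comparing $(G_\chi)_{|e^i|}$ with $(G_\chi)_x$, and (3)--(5) by applying Lemma~\ref{lemma: restricted pointwise clutching} together with the containment of $(G_\chi)_{|e^i|}$ in the stabilizer of the pair $\bar{\mathbf{v}}^{\prime i}$ (resp.\ $\bar{\mathbf{v}}^{\prime i+1}$); the only difference is that you derive (3) as a corollary of (4)--(5), whereas the paper establishes (3) first and then reads off the first statements of (4)--(5). One small caution: an element of $(G_\chi)_{|e^i|}$ can swap the two faces adjacent along $e^i$ and hence swap $\bar{v}_0^i$ with $\bar{v}_1^i$, so it preserves the pair but not necessarily each individual lift --- the pair-stabilizer containment is all your argument actually uses, so this does not affect the proof.
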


\begin{proof}
(1) follows from $(G_\chi)_{|e^i|} \subset (G_\chi)_x$ when
$x=|\pi|(\bar{x}).$ (2) follows from $(G_\chi)_{|e^i|} =
(G_\chi)_x.$ Similarly, since $(G_\chi)_{|e^i|} \subset
G_\chi(\bar{\mathbf{v}}^{\prime i})$ and $(G_\chi)_{|e^i|} \subset
G_\chi(\bar{\mathbf{v}}^{\prime i+1}),$ (3) holds by Lemma
\ref{lemma: restricted pointwise clutching} where
$G_\chi(\bar{\mathbf{v}}^{\prime i})$ and
$G_\chi(\bar{\mathbf{v}}^{\prime i+1})$ are subgroups of $G_\chi$
preserving $\bar{\mathbf{v}}^{\prime i}$ and
$\bar{\mathbf{v}}^{\prime i+1},$ respectively. The first statement
of (4) follows by (3) and Lemma \ref{lemma: pointwise clutching for
m=2 nontransitive}. By Lemma \ref{lemma: restricted pointwise
clutching}, we have the second statement of (4). Similarly, (5) is
also obtained. \qed
\end{proof}

\begin{lemma} \label{lemma: injective for A_x 1}
Assume that $\pr(R) = \tetra,$ $\octa,$ $\icosa$ and $R \ne \tetra
\times Z.$ For each vertex $\bar{v}$ in $\lineK_R,$ the evaluation
map $\mathcal{A}_{\bar{v}} \rightarrow \mathcal{A}_{\bar{v}}^0,$
$\psi_{\bar{v}} \mapsto \psi_{\bar{v}}(\bar{v})$ is homeomorphic.
\end{lemma}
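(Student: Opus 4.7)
The plan is to split the proof into cases according to the structure of the isotropy group $R_v = \rho((G_\chi)_v)$ at $v = \pi(\bar{v}),$ using Table \ref{table: all about K_R}, and then to invoke Proposition \ref{proposition: psi for cyclic} or Proposition \ref{proposition: psi for dihedral} in each case.

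First, I would unwind definitions and reduce to a single vertex. Set $N_2 = (G_\chi)_v,$ $N_0 = H,$ $N_1 = (G_\chi)_{\bar{v}},$ so that $\mathcal{A}_{\bar{v}}$ is the set of equivariant pointwise clutching maps for the $N_2$-bundle $F = (\res_{N_2}^{G_\chi} F_{V_B})|_{\bar{\mathbf{v}}}$ over $\bar{\mathbf{v}} = \pi^{-1}(v) = \{ \bar{v}_j : j \in \Z_{j_R} \},$ with $\bar{v}_0 = \bar{v}.$ Because $H$ acts trivially on $S^2,$ the quotient $N_2/H$ is isomorphic to $R_v,$ which is conjugate to $R_{v^0}$ as read from Table \ref{table: all about K_R}. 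Since $G_\chi$ acts transitively on the vertices of $\lineK_R$ (by Lemma \ref{lemma: K_R for platonic} together with transitivity of $(G_\chi)_v$ on $\pi^{-1}(v)$), it suffices to treat $\bar{v} = \bar{v}^0.$

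For $R \in \{\tetra, \octa, \icosa\},$ Table \ref{table: all about K_R} gives $R_{v^0} \cong \Z_{j_R},$ generated by the rotation through $2\pi/j_R$ around the axis through $v.$ By the counterclockwise labeling convention used to define the $\bar{v}_j$'s, any lift $a_0 \in N_2$ of this generator satisfies $a_0 \bar{v}_j = \bar{v}_{j+1}.$ This places us exactly in the hypothesis of Proposition \ref{proposition: psi for cyclic}, and part (2) of that proposition yields the homeomorphism $\mathcal{A}_{\bar{v}} \cong \mathcal{A}_{\bar{v}}^0.$ For $R \in \{\langle \tetra, -o_0 \rangle,\, \octa \times Z,\, \icosa \times Z\},$ Table \ref{table: all about K_R} gives $R_{v^0} \cong \D_{j_R}.$ In addition to the rotation $a_0$ just described, there is a reflection in $R_v$ (lifted to some $b_0 \in N_2$) whose axis passes through $v$ and $b(e^0);$ geometrically this reflection fixes $\bar{e}^0$ and swaps the two faces on either side of it at $v,$ so by the counterclockwise labeling it acts on $\bar{\mathbf{v}}$ by $\bar{v}_j \mapsto \bar{v}_{-j+1}.$ This is exactly the hypothesis of Proposition \ref{proposition: psi for dihedral}, which again gives $\mathcal{A}_{\bar{v}} \cong \mathcal{A}_{\bar{v}}^0.$

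The excluded case $R = \tetra \times Z$ is precisely the remaining sub-case where $R_{v^0} = \langle -a_4^2, b \rangle \cong \Z_2 \times \Z_2$ with $j_R = 4;$ here the three nontrivial elements act on $\bar{\mathbf{v}}$ as the $\alpha_i$'s of Proposition \ref{proposition: psi for Z_2xZ_2}, so the correct restriction is to $\mathcal{A}_{\bar{v}}^{0,3}$ rather than $\mathcal{A}_{\bar{v}}^0,$ which explains the exclusion. The only nontrivial step in executing the plan is verifying that a suitably chosen reflection in $R_v$ really implements the permutation $j \mapsto -j+1$ under the counterclockwise labeling; this is a routine but non-automatic piece of spherical bookkeeping that must be done explicitly in order to invoke Proposition \ref{proposition: psi for dihedral}.
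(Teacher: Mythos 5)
Your proposal is correct and follows essentially the same route as the paper: the paper's proof likewise reads off $R_{v^0} \cong \Z_{j_R}$ or $\D_{j_R}$ from Table \ref{table: all about K_R} and then cites Proposition \ref{proposition: psi for cyclic}.(2) and Proposition \ref{proposition: psi for dihedral}. You merely make explicit the bookkeeping (transitivity on vertices, the action of the rotation and reflection on the labels $\bar{v}_j$, and the reason $\tetra \times Z$ is excluded) that the paper leaves implicit.
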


\begin{proof}
Put $v = |\pi|(\bar{v}).$ In these cases, $R_v \cong \Z_m$ or $\D_m$
with $m=j_R$ by Table \ref{table: all about K_R}, and proof is done
by Proposition \ref{proposition: psi for cyclic}.(2) and
\ref{proposition: psi for dihedral}. \qed
\end{proof}

\begin{lemma} \label{lemma: injective for A_x 2}
Assume that $R = \tetra \times Z.$ For each vertex $\bar{v}$ in
$\lineK_R,$ the evaluation map
\begin{equation*}
\mathcal{A}_{\bar{v}} \rightarrow \mathcal{A}_{\bar{v}}^0 \times
\mathcal{A}_{\bar{v}}^{-1}, ~ \psi_{\bar{v}} \mapsto \big( ~
\psi_{\bar{v}}(\bar{v}_0), ~ \psi_{\bar{v}}(\bar{v}_{-1}) ~ \big)
\end{equation*}
is homeomorphic.
\end{lemma}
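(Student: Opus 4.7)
The plan is to reduce to Proposition \ref{proposition: psi for Z_2xZ_2}. Since $R = \tetra \times Z$ gives $\complexK_R = \complexK_\octa$, we have $j_R = 4$; and by Table \ref{table: all about K_R}, at each vertex $v = |\pi|(\bar{v})$ the isotropy subgroup is $R_v = \langle -a_4^2, b \rangle \cong \Z_2 \times \Z_2$. Setting $N_2 = (G_\chi)_v$ and $N_0 = H = \ker \rho$, the $N_2$-action on $\bar{\mathbf{v}} = \pi^{-1}(v) = \{\bar{v}_j\}_{j \in \Z_4}$ factors through $N_2/N_0 \cong R_v$.

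Next I would check that, after labelling the $\bar{v}_j$'s in the counterclockwise order around $v$ as prescribed in Section \ref{section: clutching construction}, the three non-identity elements of $R_v$ act on $\bar{\mathbf{v}}$ in the pattern demanded by Proposition \ref{proposition: psi for Z_2xZ_2}. By the transitivity of $R$ on vertices of $\complexK_R$ (Lemma \ref{lemma: K_R for platonic}), it suffices to check this at $v^0 = (1,0,0)$, where the four faces meeting $v^0$ have the form $\{v^0, (0,\pm 1, 0), (0,0,\pm 1)\}$. A direct matrix computation with $-a_4^2 = \mathrm{diag}(1,1,-1)$, $b = \mathrm{diag}(1,-1,-1)$, and $-a_4^2 b = \mathrm{diag}(1,-1,1)$ shows that these three elements permute the four faces at $v^0$ respectively as $j \mapsto -j+3$, $j \mapsto j+2$, $j \mapsto -j+1$ in $\Z_4$. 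Hence with $\alpha_1 = -a_4^2 b$, $\alpha_2 = b$, $\alpha_3 = -a_4^2$ the hypotheses of Proposition \ref{proposition: psi for Z_2xZ_2} are verified; moreover $N_1 = (N_2)_{\bar{v}_0} = N_0$ since no non-identity element of $R_v$ fixes $\bar{f}_0$.

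Applying Proposition \ref{proposition: psi for Z_2xZ_2} then gives that the evaluation map $\mathcal{A}_{\bar{v}} \to \mathcal{A}_{\bar{v}}^{0,3}$, $\psi_{\bar{v}} \mapsto (\psi_{\bar{v}}(\bar{v}_0), \psi_{\bar{v}}(\bar{v}_3))$, is a homeomorphism. Since $j_R = 4$ forces $\bar{v}_{-1} = \bar{v}_{j_R-1} = \bar{v}_3$, this is exactly the map in the statement, so the lemma follows.

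The main obstacle is the bookkeeping in the middle step: verifying that the specific cyclic ordering convention used to label the $\bar{v}_j$'s is compatible with the permutation pattern required by Proposition \ref{proposition: psi for Z_2xZ_2} (with one element acting as $j \mapsto -j + 1$, one as $j \mapsto j+2$, and one as $j \mapsto -j+3$). Once that combinatorial check is carried out, the conclusion is immediate from the proposition.
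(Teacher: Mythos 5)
Your proof is correct and takes essentially the same route as the paper: the paper's own proof simply notes from Table \ref{table: all about K_R} that $R_v \cong \Z_2 \times \Z_2$ and invokes Proposition \ref{proposition: psi for Z_2xZ_2}. You additionally carry out the explicit check (via $-a_4^2$, $b$, $-a_4^2 b$ acting on the four faces at $v^0$) that the hypotheses of that proposition hold with $\bar{v}_{-1}=\bar{v}_3$, a verification the paper leaves implicit.
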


\begin{proof}
Put $v = |\pi|(\bar{v}).$ In these cases, $R_v \cong \Z_2 \times
\Z_2$ by Table \ref{table: all about K_R}, and proof is done by
Proposition \ref{proposition: psi for Z_2xZ_2}. \qed
\end{proof}

Now, we state conditions on a preclutching map $\Phi_{\hat{D}_R}$ in
$C^0 (\hat{D}_R, V_B)$ to guarantee that $\Phi_{\hat{D}_R}$ be the
restriction of an equivariant clutching map. When $R = \tetra \times
Z,$ pick an element $t_0 \in (G_\chi)_{b(f^{-1})}$ such that $t_0
v^0 = v^1.$ So, $t_0$ satisfies $t_0 \bar{v}_j^0 = \bar{v}_j^1$ and
$t_0 \bar{v}_{j,\pm}^0 = \bar{v}_{j,\pm}^1$ for $j \in \Z_4$ as
illustrated in Figure \ref{figure: simple form TxZ}. Also, we define
a terminology.
\begin{definition}
\label{definition: determined} For $\bar{x} \in |\lineL_R|,$ $x =
|\pi|(\bar{x}),$ $\psi_{\bar{x}} \in \mathcal{A}_{\bar{x}},$ $\Phi
\in C^0(|\hatL_R|, V_B),$ the map $\psi_{\bar{x}}$ or its saturation
$\bar{\psi}_x$ is called \textit{determined} by $\Phi$ if $\Phi$
satisfies the following condition:
\begin{equation*}
\bar{\psi}_x \Big( p_{|\lineL|}(\hat{x}), p_{|\lineL|}(|c|(\hat{x}))
\Big) = \Phi(\hat{x})
\end{equation*}
for each $\hat{x} \in (|\pi| \circ p_{|\lineL|})^{-1}(x).$ The
condition is concretely written as
\begin{enumerate}
\item if $\bar{x}$ is a vertex, then
\begin{equation*}
\psi_{\bar{x}} (\bar{x}_j) = \Phi (\hat{x}_{j,+}) \quad \text{and}
\quad \psi_{\bar{x}}^{-1} (\bar{x}_j) = \Phi (\hat{x}_{j,-})
\end{equation*}
for each $j \in \Z_{j_R},$
\item if $\bar{x}$ is not a vertex, then
\begin{equation*}
\psi_{\bar{x}} (\bar{x}_0) = \Phi (\hat{x}_{0, +}) \quad \text{and}
\quad \psi_{\bar{x}} (\bar{x}_1) = \Phi (|c|(\hat{x}_{0, +})).
\end{equation*}
\end{enumerate}
\end{definition}

\begin{theorem} \label{theorem: clutching condition}
Assume that $\pr(R) = \tetra,$ $\octa,$ $\icosa.$ Then, a
preclutching map $\Phi_{\hat{D}_R}$ in $C^0 (\hat{D}_R, V_B)$ is in
$\Omega_{\hat{D}_R, V_B}$ if and only if there exists the unique
$\psi_{\bar{x}} \in \mathcal{A}_{\bar{x}}$ for each $\bar{x} \in
\bar{D}_R$ and $x = |\pi|(\bar{x})$ satisfying
\begin{enumerate}
  \item[E2.] $\bar{\psi}_x \Big( p_{|\lineL|}(\hat{x}),
  p_{|\lineL|}(|c|(\hat{x})) \Big) = \Phi_{\hat{D}_R} (\hat{x})$
  for each $\hat{x} \in \hat{D}_R,$
  \item[E3.] for each $\bar{x}, \bar{x}^\prime \in \bar{D}_R$
  and their images $x = |\pi|(\bar{x}), x^\prime = |\pi|(\bar{x}^\prime),$
  if $x^\prime = gx$ for some $g \in G_\chi,$ then
  $\bar{\psi}_{x^\prime} = g \cdot \bar{\psi}_x.$
\end{enumerate}
The set $( \psi_{\bar{x}} )_{\bar{x} \in \bar{D}_R}$ is called
\textit{determined} by $\Phi_{\hat{D}_R}.$
\end{theorem}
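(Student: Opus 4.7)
The plan is to read the biconditional through the two characterizations already in hand: an element of $\Omega_{V_B}$ is precisely an element of $C^0(|\hatL_R|, V_B)$ satisfying N1, N2, E1 from Section \ref{section: clutching construction}, and an element of $\mathcal{A}_{\bar{x}}$ is an equivariant pointwise clutching map in the sense of Lemma \ref{lemma: equivalent condition for psi}. For the necessity direction, suppose $\Phi \in \Omega_{V_B}$ restricts to $\Phi_{\hat{D}_R}$. For each $\bar{x} \in \bar{D}_R$ I define $\psi_{\bar{x}}$ by the pointwise formulas of Example \ref{example: pointwise clutching}; this choice is forced by E2 and is therefore unique. To see $\psi_{\bar{x}} \in \mathcal{A}_{\bar{x}}$, I would verify the two conditions of Lemma \ref{lemma: equivalent condition for psi}: the cyclic identity $\psi_{\bar{x}}^{m} = \id$ comes from N2 when $\bar{x}$ is a vertex (with $m = j_R$) and from N1 when $\bar{x}$ is not a vertex (with $m = 2$), while the conjugation identity is the restriction of E1 to elements of $(G_\chi)_x$ that permute $\pi^{-1}(x) = \bar{\mathbf{x}}$. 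Condition E3 is then the translation of E1 to the level of saturations, mediated by Lemma \ref{lemma: conjugate pointwise clutching}.

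For the sufficiency direction I would construct $\Phi$ extending $\Phi_{\hat{D}_R}$ in two stages. First, extend from $\hat{D}_R$ to $\hat{\mathbf{D}}_R = (|\pi| \circ p_{|\lineL|})^{-1}(D_R)$: on $|c|(\hat{D}_R)$ set $\Phi(|c|(\hat{x})) = \Phi_{\hat{D}_R}(\hat{x})^{-1}$, and on each vertex fibre $(\pi \circ p_{|\lineL|})^{-1}(v)$ with $v$ a vertex of $D_R$ read off $\Phi$ from the prescribed saturation $\bar{\psi}_{\bar{v}}$. Second, transport $\Phi$ from $\hat{\mathbf{D}}_R$ to all of $|\hatL_R|$ by $G_\chi$-equivariance: for $\hat{x}^\prime \in |\hatL_R|$ choose $g \in G_\chi$ with $g \cdot (|\pi| \circ p_{|\lineL|})(\hat{x}^\prime) \in D_R$, which is possible since $G_\chi \cdot D_R$ covers $|\complexK_R^{(1)}|$ by Lemma \ref{lemma: D_R for platonic}, and set $\Phi(\hat{x}^\prime) = g^{-1} \Phi(g\hat{x}^\prime) g$. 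It then remains to verify continuity and the three conditions N1, N2, E1: N1 is built into the first stage and propagates through the second; E1 is built into the second stage; and N2 at an arbitrary vertex $v$ reduces, after $G_\chi$-conjugation if necessary, to the cyclic identity $\psi_{\bar{v}}^{j_R} = \id$ for $\bar{v} \in \bar{D}_R$ above $v$, which is part of the defining relations of $\mathcal{A}_{\bar{v}}$.

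The main obstacle is well-definedness of the reverse construction. Three prescriptions on $\hat{\mathbf{D}}_R$, namely the direct values of $\Phi_{\hat{D}_R}$, the inversion rule along $|c|$, and the values read off from the $\psi_{\bar{v}}$'s at vertex fibres, must agree on overlaps, in particular at the endpoints $\hat{d}^0, \hat{d}^1$ and at the vertex fibres meeting $\bar{D}_R$; this agreement is exactly what E2 combined with the defining relations of $\mathcal{A}_{\bar{x}}$ provides. Furthermore, the equivariance extension must be independent of the choice of $g \in G_\chi$ at points with nontrivial isotropy, and this independence is precisely the content of E3 via Lemma \ref{lemma: conjugate pointwise clutching}. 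Continuity of the final $\Phi$ follows from the same reconciliation together with continuity of $\Phi_{\hat{D}_R}$ on each edge of $\hat{D}_R$, and once these two compatibilities are in place the verification of N1, N2, E1 is routine.
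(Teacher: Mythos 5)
Your overall architecture coincides with the paper's: extract the $\psi_{\bar{x}}$ from a global $\Phi$ via the pointwise formulas and verify they lie in $\mathcal{A}_{\bar{x}}$ using N1, N2, E1; conversely, build $\Phi$ first on $\hat{\mathbf{D}}_R$ and then push it around by equivariance, with E3 guaranteeing independence of the choice of $g$ and with Lemma \ref{lemma: D_R for platonic} guaranteeing that the orbit of $D_R$ covers $|\complexK_R^{(1)}|$. That part of the proposal is sound and matches the paper's proof.

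There is, however, a genuine gap in your treatment of uniqueness, which is part of the statement being proved. You assert that the choice of $\psi_{\bar{x}}$ ``is forced by E2 and is therefore unique.'' For a vertex $\bar{v} \in \bar{D}_R$ over $v = |\pi|(\bar{v})$, the set $\hat{D}_R \cap (|\pi|\circ p_{|\lineL|})^{-1}(v)$ is a single point ($\hat{d}_+^0$ or $\hat{d}_-^1$), so Condition E2 prescribes only \emph{one} value of $\bar{\psi}_v$, whereas $\psi_{\bar{v}}$ consists of $j_R$ isomorphisms around the fibre $\pi^{-1}(v)$. That a single value determines the whole element of $\mathcal{A}_{\bar{v}}$ is exactly the content of Lemma \ref{lemma: injective for A_x 1}, which rests on Propositions \ref{proposition: psi for cyclic}.(2) and \ref{proposition: psi for dihedral}; you never invoke it. Worse, for $R = \tetra \times Z$ the evaluation at one point is \emph{not} injective ($R_v \cong \Z_2 \times \Z_2$, and by Proposition \ref{proposition: psi for Z_2xZ_2} one needs two values to pin down $\psi_{\bar{v}}$), so E2 alone genuinely fails to force uniqueness there. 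The paper recovers uniqueness in that case by combining E2 at $\bar{v}^0$ with E3 (via $t_0$ with $t_0 v^0 = v^1$) to produce the second required value at $\bar{v}^1$, and then applying Lemma \ref{lemma: injective for A_x 2}. Your proof needs this case distinction and these injectivity lemmas; without them the ``unique'' clause of the theorem is unproved, and your stated justification is false for $\tetra \times Z$.
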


\begin{proof}
For necessity, it suffices to construct a map $\Phi$ in
$\Omega_{V_B}$ such that $\Phi|_{\hat{D}_R} = \Phi_{\hat{D}_R}.$ For
this, we would define $\Phi$ on $\hat{\mathbf{D}}_R,$ and then
extend the domain of definition of $\Phi$ to the whole $|\hatL_R|.$
First, we define $\Phi$ on $\hat{\mathbf{D}}_R$ so that each
$\psi_{\bar{x}}$ is determined by $\Phi,$ i.e. each $\psi_{\bar{x}}$
and $\Phi$ satisfy Definition \ref{definition: determined}. Then,
Condition E2. says that $\Phi|_{\hat{D}_R} = \Phi_{\hat{D}_R}.$
Next, we define $\Phi(\hat{x}) = g^{-1} \Phi(g \hat{x}) g$ for each
$\hat{x} \in |\hatL_R|$ and some $g \in G_\chi$ such that $g
\hat{x}$ is in $\hat{\mathbf{D}}_R.$ We need prove well-definedness
of this. Assume that $\hat{y} = g \hat{x}$ and $\hat{y}^\prime =
g^\prime \hat{x}$ are in $\hat{\mathbf{D}}_R$ for two elements $g,
g^\prime$ in $G_\chi$ so that $\hat{y}^\prime = g^\prime g^{-1}
\hat{y}.$ And, let $y$ and $y^\prime$ be images of $\hat{y}$ and
$\hat{y}^\prime$ through $|\pi| \circ p_{|\lineL|},$ respectively.
Then, $y^\prime = g^\prime g^{-1} y.$ These give us
$\bar{\psi}_{y^\prime} = (g^\prime g^{-1}) \cdot \bar{\psi}_y$ by
Condition E3. From this, we obtain
\begin{align*}
g^{\prime -1} \Phi(\hat{y}^\prime) g^\prime &= g^{\prime -1}
\bar{\psi}_{y^\prime} \Big(
p_{|\lineL|}(\hat{y}^\prime), p_{|\lineL|}(|c|(\hat{y}^\prime)) \Big) g^\prime \\
&= g^{-1} \bar{\psi}_y \Big( p_{|\lineL|}(\hat{y}),
p_{|\lineL|}(|c|(\hat{y})) \Big) g \\
&= g^{-1} \Phi(\hat{y}) g
\end{align*}
where we use equivariance of $|c|.$ So, well-definedness is proved.
It is easily checked that $\Phi$ satisfies Condition N1., N2., E1.
Therefore, $\Phi$ is the wanted equivariant clutching map.

For sufficiency, assume that $\Phi_{\hat{D}_R} = \Phi|_{\hat{D}_R}$
for some $\Phi \in \Omega_{V_B}.$ Then, we should choose the unique
$\psi_{\bar{x}} \in \mathcal{A}_{\bar{x}}$ for each $\bar{x} \in
\bar{D}_R$ satisfying Condition E2. and E3. When we show that
$\psi_{\bar{x}}$'s satisfy Condition E3., the condition $x^\prime =
gx$ holds only if $x^\prime,$ $x$ are equal points or $x^\prime,$
$x$ are different vertices by definition of one-dimensional
fundamental domain. The second situation happens only in the case of
$R = \tetra \times Z$ by Table \ref{table: introduction}, and in
other cases $D_R$ contains only one vertex. So, proof for
sufficiency is different according to $R.$

First, assume that $R \ne \tetra \times Z.$ At each $\bar{x} \in
\bar{D}_R,$ the unique $\psi_{\bar{x}}$ in $\mathcal{A}_{\bar{x}}$
is determined by $\Phi$ because $\Phi$ satisfies Condition N1., N2.,
E1. Moreover, it can be checked that $\psi_{\bar{x}}$ is the unique
element satisfying Condition E2. for each $\bar{x}$ by Lemma
\ref{lemma: injective for A_x 1}. And, as we have seen in the above,
we do not need to consider Condition E3. in these cases.

Second, assume that $R = \tetra \times Z.$ At each $\bar{x} \in
\bar{D}_R,$ the unique $\psi_{\bar{x}}$ in $\mathcal{A}_{\bar{x}}$
is determined by $\Phi$ because $\Phi$ satisfies Condition N1., N2.,
E1. Easily, these $\psi_{\bar{x}}$'s satisfy Condition E2., E3. so
that it remains to show their uniqueness. For $\bar{x} \in
\bar{D}_R-\{ \bar{v}^0, \bar{v}^1 \},$ $\psi_{\bar{x}}$ is the
unique element satisfying Condition E2. by Lemma \ref{lemma:
pointwise clutching for m=2 nontransitive}. Condition E2. says that
\begin{equation}
\tag{*} \psi_{\bar{v}^0} (\bar{v}^0) = \Phi_{\hat{D}_R}
(\hat{v}_+^0), \quad \psi_{\bar{v}^1}^{-1} (\bar{v}^1) =
\Phi_{\hat{D}_R} (\hat{v}_-^1).
\end{equation}
And, Condition E3. says that $\bar{\psi}_{v^1} = t_0 \cdot
\bar{\psi}_{v^0},$ and from this it is obtained that
\begin{equation}
\tag{**} \psi_{\bar{v}^1} (\bar{v}^1) = t_0
\Phi_{\hat{D}_R}(\hat{v}_+^0) t_0^{-1}.
\end{equation}
Formulas (*), (**) say that two values $\psi_{\bar{v}^1}(\bar{v}^1)$
and $\psi_{\bar{v}^1}(\bar{v}_{-1}^1) = (\psi_{\bar{v}^1}^{-1}
(\bar{v}^1))^{-1}$ are determined by $\Phi_{\hat{D}_R}.$ And, this
means that $\psi_{\bar{v}^1}$ is unique by Lemma \ref{lemma:
injective for A_x 2}. By Condition E3., uniqueness of
$\psi_{\bar{v}^0}$ is also obtained. \qed
\end{proof}

\begin{remark}
This theorem holds even though we might omit the word `unique' in
the statement of the theorem because uniqueness is not used in the
proof of necessity. \qed
\end{remark}

By using Theorem \ref{theorem: clutching condition}, we would
describe $\Omega_{\hat{D}_R, V_B}$ through
$\mathcal{A}_{\bar{x}}$'s. Define the following set of equivariant
pointwise clutching maps on $\bar{d}^i$'s.

\begin{definition} \label{definition: barA_G}
Denote by $\bar{A}_{G_\chi} (S^2, V_B)$ the set
\begin{equation*}
\{ (\psi_{\bar{d}^i})_{i \in I} ~ | ~ \psi_{\bar{d}^i} \in
\mathcal{A}_{\bar{d}^i} \text{ and } \bar{\psi}_{d^1} = g \cdot
\bar{\psi}_{d^0} \text{ if } d^1 = g d^0 \text{ for some } g \in
G_\chi \}.
\end{equation*}
An element $(\psi_{\bar{d}^i})_{i \in I}$ in $\bar{A}_{G_\chi} (S^2,
V_B)$ is \textit{determined} by $\Phi_{\hat{D}_R} \in
\Omega_{\hat{D}_R, V_B}$ if $\psi_{\bar{d}^i}$'s and
$\Phi_{\hat{D}_R}$ satisfy Condition E2. and E3. of Theorem
\ref{theorem: clutching condition}. Also, a triple $(W_{d^i})_{i \in
I^+}$ in $A_{G_\chi} (S^2, \chi)$ is \textit{determined} by
$(\psi_{\bar{d}^i})_{i \in I}$ in $\bar{A}_{G_\chi} (S^2, V_B)$ if
$W_{d^{-1}} \cong V_{\bar{f}^{-1}}$ and $W_{d^i}$ is determined by
$\psi_{\bar{d}^i}$ with respect to $\big(
\res_{(G_\chi)_{d^i}}^{G_\chi} F_{V_B} \big)|_{|\pi|^{-1}(d^i)}$ for
each $i \in I.$
\end{definition}

By Theorem \ref{theorem: clutching condition}, we can see that for
each $\Phi_{\hat{D}_R} \in \Omega_{\hat{D}_R, V_B}$ there exists an
element $(\psi_{\bar{d}^i})_{i \in I}$ in $\bar{A}_{G_\chi} (S^2,
V_B)$ which is determined by $\Phi_{\hat{D}_R}.$ In fact, it can be
checked that this element is unique by the proof of Theorem
\ref{theorem: clutching condition}.

\begin{corollary} \label{corollary: Omega}
The set $\Omega_{\hat{D}_R, V_B}$ is equal to the set
\begin{align*}
\Big\{ ~ & \Phi_{\hat{D}_R} \in C^0 (\hat{D}_R, V_B) ~ \Big| ~
\Phi_{\hat{D}_R} (\hat{x}) \in \mathcal{A}_{\bar{e}^0}^0 \text{
for each } \hat{x} \in [\hat{d}_+^0, \hat{d}_-^1], \text{ and } \\
&  \Phi_{\hat{D}_R} (\hat{d}_+^0) = \psi_{\bar{d}^0} (\bar{d}^0), ~
\Phi_{\hat{D}_R} (\hat{d}_-^1) = \psi_{\bar{d}^1}^{-1} (\bar{d}^1)
\text{ for some } (\psi_{\bar{d}^i})_{i \in I} \in \bar{A}_{G_\chi}
(S^2, V_B) ~ \Big\}.
\end{align*}
\end{corollary}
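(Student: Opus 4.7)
The plan is to prove the two inclusions by reducing to Theorem \ref{theorem: clutching condition}, with the set $\bar{A}_{G_\chi}(S^2, V_B)$ serving as bookkeeping for the endpoint data of the family $(\psi_{\bar{x}})_{\bar{x} \in \bar{D}_R}$ produced by that theorem.

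For the inclusion $(\subset)$, let $\Phi_{\hat{D}_R} \in \Omega_{\hat{D}_R, V_B}$. Theorem \ref{theorem: clutching condition} produces a unique collection $(\psi_{\bar{x}})_{\bar{x} \in \bar{D}_R}$ with each $\psi_{\bar{x}} \in \mathcal{A}_{\bar{x}}$ satisfying conditions E2 and E3. Extracting the two endpoints gives $(\psi_{\bar{d}^i})_{i \in I}$, and E3 restricted to $\bar{d}^0, \bar{d}^1$ is exactly the compatibility condition in Definition \ref{definition: barA_G}, so this pair lies in $\bar{A}_{G_\chi}(S^2, V_B)$. For any $\hat{x} \in [\hat{d}_+^0, \hat{d}_-^1]$, let $\bar{x} = p_{|\lineL|}(\hat{x})$, which lies in $\bar{D}_R \cap |\bar{e}^0|$ or equals $\bar{v}^0$; Lemma \ref{lemma: inclusions of A_x's}.(1), together with the vertex inclusion $\mathcal{A}_{\bar{v}^0}^0 \subset \mathcal{A}_{\bar{e}^0}^0$ from part~(3), gives $\psi_{\bar{x}} \in \mathcal{A}_{\bar{e}^0}$ so that $\Phi_{\hat{D}_R}(\hat{x}) = \psi_{\bar{x}}(\bar{x}_0) \in \mathcal{A}_{\bar{e}^0}^0$ by Condition E2 and Definition \ref{definition: determined}. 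The endpoint identities $\Phi_{\hat{D}_R}(\hat{d}_+^0) = \psi_{\bar{d}^0}(\bar{d}^0)$ and $\Phi_{\hat{D}_R}(\hat{d}_-^1) = \psi_{\bar{d}^1}^{-1}(\bar{d}^1)$ are precisely E2 specialized at $\hat{d}_+^0$ and $\hat{d}_-^1$, using the two cases of Definition \ref{definition: determined}.

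For the inclusion $(\supset)$, suppose $\Phi_{\hat{D}_R}$ satisfies the displayed conditions, with witness $(\psi_{\bar{d}^i})_{i \in I} \in \bar{A}_{G_\chi}(S^2, V_B)$. The task is to construct $\psi_{\bar{x}} \in \mathcal{A}_{\bar{x}}$ for every $\bar{x} \in \bar{D}_R$ so that E2 and E3 hold, and then invoke Theorem \ref{theorem: clutching condition}. At the two endpoints, take $\psi_{\bar{d}^0}, \psi_{\bar{d}^1}$ from the witness; Condition E3 between $\bar{d}^0$ and $\bar{d}^1$ is built into the definition of $\bar{A}_{G_\chi}(S^2, V_B)$, and E2 at these points follows from the endpoint identities in the hypothesis. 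For interior $\bar{x} \in (\bar{d}^0, \bar{d}^1)$, the point $\bar{x}$ is a non-barycentric interior point of $|\bar{e}^0|$, so by Lemma \ref{lemma: inclusions of A_x's}.(2) one has $\mathcal{A}_{\bar{e}^0} = \mathcal{A}_{\bar{x}}$; applying Lemma \ref{lemma: pointwise clutching for m=2 nontransitive} to the two-element fiber over $\bar{x}$, the hypothesis $\Phi_{\hat{D}_R}(\hat{x}) \in \mathcal{A}_{\bar{e}^0}^0 = \mathcal{A}_{\bar{x}}^0$ determines a unique $\psi_{\bar{x}} \in \mathcal{A}_{\bar{x}}$ with $\psi_{\bar{x}}(\bar{x}_0) = \Phi_{\hat{D}_R}(\hat{x})$, and this is exactly E2.

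The only remaining verification is Condition E3 for pairs $\bar{x}, \bar{x}^\prime$ in $\bar{D}_R$ with $|\pi|(\bar{x}^\prime) = g|\pi|(\bar{x})$ for some $g \in G_\chi$. As noted in the proof of Theorem \ref{theorem: clutching condition}, the defining property of $\bar{D}_R$ forces this to occur only at equal points or between two distinct vertices (the latter only for $R = \tetra \times Z$); the former is vacuous, and the latter is handled by the compatibility condition on $(\psi_{\bar{d}^i})_{i \in I}$ built into $\bar{A}_{G_\chi}(S^2, V_B)$. Thus E3 holds, Theorem \ref{theorem: clutching condition} applies, and $\Phi_{\hat{D}_R}$ lies in $\Omega_{\hat{D}_R, V_B}$. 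The main technical care is in correctly identifying the $+$ and $-$ bookkeeping at the endpoints, which is routine given the parametrization fixed at the end of Section \ref{section: clutching construction}. \qed
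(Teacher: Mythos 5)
Your proof is correct and follows essentially the same route as the paper: both arguments reduce the corollary to Theorem \ref{theorem: clutching condition}, observe that Condition E3 is vacuous except between the $\bar{d}^i$'s (which is exactly the membership condition for $\bar{A}_{G_\chi}(S^2,V_B)$), and use Lemma \ref{lemma: inclusions of A_x's} to translate Condition E2 into the stated evaluation conditions in $\mathcal{A}_{\bar{e}^0}^0$. The only caveats are cosmetic: when $R=\tetra\times Z$ the barycenter $b(\bar{e}^0)$ is an interior point of $\bar{D}_R$, so you need the ``Moreover'' clause of Lemma \ref{lemma: inclusions of A_x's}.(2) rather than the non-barycentric case, and for interior points whose isotropy group swaps the two faces the bijectivity of evaluation comes from Proposition \ref{proposition: psi for cyclic}.(2) rather than Lemma \ref{lemma: pointwise clutching for m=2 nontransitive}.
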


\begin{proof}
To prove this corollary, we would rewrite Theorem \ref{theorem:
clutching condition} by using $\mathcal{A}_{\bar{e}^0}$ and
$\bar{A}_{G_\chi} (S^2, V_B).$ By Theorem \ref{theorem: clutching
condition}, a preclutching map $\Phi_{\hat{D}_R}$ is in
$\Omega_{\hat{D}_R, V_B}$ if and only if a set $\Psi = (
\psi_{\bar{x}} )_{\bar{x} \in \bar{D}_R}$ is determined by
$\Phi_{\hat{D}_R}.$ As we have seen in the proof of the theorem,
$gx=x^\prime$ with $x \ne x^\prime$ in Condition E3. is possible
only when $x$ and $x^\prime$ are $d^i$'s (of course, more precisely
when they are vertices). So, $\Psi$ is determined by
$\Phi_{\hat{D}_R}$ if and only if $(\psi_{\bar{d}^i})_{i \in I}$
satisfies Condition E2. and E3. and $( \psi_{\bar{x}} )_{\bar{x} \in
\bar{D}_R-\{ \bar{d}^i | i \in I \}}$ satisfies Condition E2. Here,
$(\psi_{\bar{d}^i})_{i \in I}$ satisfies Condition E2. if and only
if
\begin{equation}
\tag{*} \Phi_{\hat{D}_R} (\hat{d}_+^0) = \psi_{\bar{d}^0}
(\bar{d}^0) \quad \text{ and } \quad \Phi_{\hat{D}_R} (\hat{d}_-^1)
= \psi_{\bar{d}^1}^{-1} (\bar{d}^1).
\end{equation}
Lemma \ref{lemma: inclusions of A_x's}.(3) says that (*) implies
$\Phi_{\hat{D}_R} (\hat{x}) \in \mathcal{A}_{\bar{e}^i}^0$ for
$\hat{x} = \hat{d}_+^0, \hat{d}_-^1.$ So, (*) could be redundantly
rewritten as
\begin{equation}
\tag{**} \Phi_{\hat{D}_R} (\hat{d}_+^0) = \psi_{\bar{d}^0}
(\bar{d}^0), \quad \Phi_{\hat{D}_R} (\hat{d}_-^1) =
\psi_{\bar{d}^1}^{-1} (\bar{d}^1), \quad \text{and }
\Phi_{\hat{D}_R} (\hat{x}) \in \mathcal{A}_{\bar{e}^0}^0
\end{equation}
for $\hat{x} = \hat{d}_+^0, \hat{d}_-^1.$ And,
$(\psi_{\bar{d}^i})_{i \in I}$ satisfies Condition E3. if and only
if
\begin{equation}
\tag{***} (\psi_{\bar{d}^i})_{i \in I} ~ \in ~ \bar{A}_{G_\chi}
(S^2, V_B).
\end{equation}
Next, we deal with $\psi_{\bar{x}}$'s in $( \psi_{\bar{x}}
)_{\bar{x} \in \bar{D}_R-\{ \bar{d}^i | i \in I \}}.$ They satisfy
Condition E2. if and only if $\psi_{\bar{x}}(\bar{x}) =
\Phi_{\hat{D}_R} (\hat{x}_+)$ for each $\bar{x}$ in $\bar{x} \in
\bar{D}_R - \{ \bar{d}^i | i \in I \}.$ And, this is satisfied if
and only if $\Phi_{\hat{D}_R} (\hat{x}_+) \in
\mathcal{A}_{\bar{x}}^0 = \mathcal{A}_{\bar{e}^0}^0$ and we have
chosen $\psi_{\bar{x}}$'s such that $\psi_{\bar{x}}(\bar{x}) =
\Phi_{\hat{D}_R} (\hat{x}_+)$ for each $\bar{x}.$ In summary, three
conditions of this, (**), (***) are equivalent conditions for $\Psi$
to be determined by $\Phi_{\hat{D}_R}.$ Therefore, we obtain a
proof. \qed
\end{proof}

By using this corollary, we would show nonemptiness of
$\Omega_{\hat{D}_R, (W_{d^i})_{i \in I^+}}.$ For this, we need a
lemma.

\begin{lemma}
 \label{lemma: existence of barA}
For each $(W_{d^i})_{i \in I^+} \in A_{G_\chi} (S^2, \chi),$ if we
put
\begin{equation*}
V_B = G_\chi \times_{(G_\chi)_{d^{-1}}} W_{d^{-1}}, \quad F_{V_B} =
G_\chi \times_{(G_\chi)_{d^{-1}}} ( |\bar{f}^{-1}| \times W_{d^{-1}}
),
\end{equation*}
then each $\mathcal{A}_{\bar{x}}$ for $\bar{x} \in [\bar{d}^0,
\bar{d}^1]$ is nonempty. And, we can pick an element
$(\psi_{\bar{d}^i})_{i \in I}$ in $\bar{A}_{G_\chi} (S^2, V_B)$
which determines $(W_{d^i})_{i \in I^+}.$
\end{lemma}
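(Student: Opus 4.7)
The plan is to construct $(\psi_{\bar{d}^i})_{i \in I}$ in two steps (first at the endpoints, then enforcing the $G_\chi$-compatibility when applicable), and afterwards deduce nonemptiness of $\mathcal{A}_{\bar{x}}$ for interior $\bar{x}$ of $[\bar{d}^0,\bar{d}^1]$ from what has been produced on $\bar{e}^0$ via Lemma \ref{lemma: inclusions of A_x's}.

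First, I would establish nonemptiness at the endpoints. By construction of $F_{V_B}$ the fiber at $\bar{d}^i_0 \in |\bar{f}^{-1}|$ is $W_{d^{-1}}$, and by Lemma \ref{lemma: intersection of isotropy} the isotropy of $\bar{d}^i_0$ in $G_\chi$ equals $(G_\chi)_{d^{-1}} \cap (G_\chi)_{d^i}$. Hence $(F_{V_B})_{\bar{d}^i_0}$ is $\res^{(G_\chi)_{d^{-1}}} W_{d^{-1}}$ as a representation of this intersection, which by Condition (iii) of Definition \ref{definition: A_R} coincides with $\res^{(G_\chi)_{d^i}} W_{d^i}$. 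Thus $W_{d^i}$ is a $(G_\chi)_{d^i}$-extension of the $H$-fiber, and Theorem \ref{theorem: bijectivity with extensions} applied to $\bigl(\res_{(G_\chi)_{d^i}}^{G_\chi} F_{V_B}\bigr)\big|_{|\pi|^{-1}(d^i)}$ supplies some $\psi_{\bar{d}^i} \in \mathcal{A}_{\bar{d}^i}$ with $F/\psi_{\bar{d}^i} \cong W_{d^i}$; in particular $\mathcal{A}_{\bar{d}^i}$ is nonempty.

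Second, I would enforce the orbit-compatibility clause built into $\bar{A}_{G_\chi}(S^2,V_B)$. If no element of $G_\chi$ carries $d^0$ to $d^1$, the endpoint choices above already give an element of $\bar{A}_{G_\chi}(S^2,V_B)$ determining $(W_{d^i})_{i \in I^+}$. Otherwise, fix such a $g$, keep $\psi_{\bar{d}^0}$ as chosen, and \emph{define} $\bar{\psi}_{d^1} := g \cdot \bar{\psi}_{d^0}$. By Lemma \ref{lemma: conjugate pointwise clutching}, the resulting $\psi_{\bar{d}^1}$ lies in $\mathcal{A}_{\bar{d}^1}$ and satisfies $F/\psi_{\bar{d}^1} \cong {}^g(F/\psi_{\bar{d}^0}) \cong {}^g W_{d^0}$, which is isomorphic to $W_{d^1}$ by Condition (ii) of Definition \ref{definition: A_R}. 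Independence of the auxiliary $g$ follows from the identity $h \cdot \bar{\psi}_x = \bar{\psi}_x$ for $h \in (G_\chi)_x$ that was noted immediately after the $G_\chi$-action on saturations was introduced.

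Finally, for each interior $\bar{x}$ of $[\bar{d}^0,\bar{d}^1]$, inspection of Table \ref{table: all about K_R} for $\pr(R)=\tetra,\octa,\icosa$ shows that $\bar{x}$ lies in the open half-edge of $|\bar{e}^0|$ (or the full open edge in the $\tetra \times Z$ case), and that $(G_\chi)_x = (G_\chi)_{|e^0|}$ at every such point, including $b(\bar{e}^0)$ itself when it arises. Lemma \ref{lemma: inclusions of A_x's}.(2) then gives $\mathcal{A}_{\bar{x}} = \mathcal{A}_{\bar{e}^0}$, while Lemma \ref{lemma: inclusions of A_x's}.(3) places the nonempty set $\mathcal{A}_{\bar{d}^0}^0$ inside $\mathcal{A}_{\bar{e}^0}^0$, so $\mathcal{A}_{\bar{e}^0}$ is nonempty as well. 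The only genuinely delicate step is the middle one: forcing $\bar{\psi}_{d^1} = g \cdot \bar{\psi}_{d^0}$ must yield an honest element of $\mathcal{A}_{\bar{d}^1}$ that still determines $W_{d^1}$, and this is exactly what Condition (ii) of Definition \ref{definition: A_R} together with Lemma \ref{lemma: conjugate pointwise clutching} is designed to ensure.
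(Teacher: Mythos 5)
Your proof is correct and follows essentially the same route as the paper: identify the fiber $(F_{V_B})_{\bar{d}^i_0}$ via Lemma \ref{lemma: intersection of isotropy}, use Condition (iii) of Definition \ref{definition: A_R} to see $W_{d^i}$ as a $(G_\chi)_{d^i}$-extension and invoke Theorem \ref{theorem: bijectivity with extensions}, then transport $\psi_{\bar{d}^0}$ by $g$ using Lemma \ref{lemma: conjugate pointwise clutching} and Condition (ii), and finally propagate nonemptiness along the edge via Lemma \ref{lemma: inclusions of A_x's}. The only cosmetic slip is calling $(F_{V_B})_{\bar{d}^i_0}$ the ``$H$-fiber'' when the relevant group is $(G_\chi)_{d^{-1}} \cap (G_\chi)_{d^i}$, which your preceding sentence already makes clear.
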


\begin{proof}
For each $i \in I,$ put $\bar{\mathbf{x}} = |\pi|^{-1}(d^i) = \{
\bar{x}_j | \bar{x}_j = \bar{d}_j^i \text{ for } j \in \Z_m \}$ for
$m = j_R$ or 2. If we put $F_i = \big(
\res_{(G_\chi)_{d^i}}^{G_\chi} F_{V_B} \big) |_{\bar{\mathbf{x}}}$
and $N_2 = (G_\chi)_{d^i},$ then $N_1 = (G_\chi)_{\bar{d}^i}.$
Example \ref{example: pointwise clutching} says that $F_i$ satisfies
Condition F1. or F2. Definitions of $F_i$ and $F_{V_B}$ say that
\begin{equation*}
(F_i)_{\bar{x}_0} \cong \res_{(G_\chi)_{d^{-1}} \cap
(G_\chi)_{d^i}}^{(G_\chi)_{d^{-1}}} W_{d^{-1}}
\end{equation*}
because $(G_\chi)_{\bar{x}_0} = (G_\chi)_{[\bar{d}^{-1}, \bar{d}^i]}
= (G_\chi)_{[d^{-1}, d^i]}$ is equal to $(G_\chi)_{d^{-1}} \cap
(G_\chi)_{d^i}$ by Lemma \ref{lemma: intersection of isotropy}. This
implies
\begin{equation*}
(F_i)_{\bar{x}_0} \cong \res_{(G_\chi)_{d^{-1}} \cap
(G_\chi)_{d^i}}^{(G_\chi)_{d^i}} W_{d^i}
\end{equation*}
by Definition \ref{definition: A_R}.(4), i.e. $W_{d^i}$ is a
$(G_\chi)_{d^i}$-extension of $(F_i)_{\bar{x}_0}.$ So, Theorem
\ref{theorem: bijectivity with extensions} says that
$\mathcal{A}_{\bar{d}^i}$ is nonempty. Moreover, we obtain
nonemptiness of $\mathcal{A}_{\bar{x}}$ for $\bar{x} \in [\bar{d}^0,
\bar{d}^1]$ by Lemma \ref{lemma: inclusions of A_x's}. To prove the
second statement, pick an element $\psi_{\bar{d}^i}$ in
$\mathcal{A}_{\bar{d}^i}$ which determines $W_{d^i}.$ If $d^1 = g
\cdot d^0$ for some $g \in G_\chi,$ then the element $g \cdot
\bar{\psi}_{d^0}$ in $\bar{\mathcal{A}}_{\bar{d}^1}$ satisfies
\begin{equation*}
F_1 \big/ g \cdot \bar{\psi}_{d^0} \cong ~ ^g W_{d^0} \cong W_{d^1}
\end{equation*}
by Lemma \ref{lemma: conjugate pointwise clutching} and Definition
\ref{definition: A_R}.(2). So, we may assume that $\bar{\psi}_{d^1}
= g \cdot \bar{\psi}_{d^0}.$ And, $W_{d^{-1}} \cong
V_{\bar{f}^{-1}}$ by definition of $V_B.$ Then,
$(\psi_{\bar{d}^i})_{i \in I}$ is in $\bar{A}_{G_\chi} (S^2, V_B)$
which determines $(W_{d^i})_{i \in I^+}.$ Therefore, we obtain a
proof. \qed
\end{proof}

\begin{proposition} \label{proposition: nonempty omega}
For each $(W_{d^i})_{i \in I^+} \in A_{G_\chi} (S^2, \chi),$ the set
$\Omega_{\hat{D}_R, (W_{d^i})_{i \in I^+}}$ is nonempty.
\end{proposition}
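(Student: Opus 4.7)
The goal is to produce a single element of $\Omega_{\hat{D}_R, (W_{d^i})_{i \in I^+}}$, so by Corollary~\ref{corollary: Omega} it suffices to exhibit a continuous $\Phi_{\hat{D}_R}:\hat{D}_R \to \Iso(V_{\bar{f}^{-1}},V_{\bar{f}^0})$ taking values in $\mathcal{A}_{\bar{e}^0}^0$, together with an element $(\psi_{\bar{d}^i})_{i\in I} \in \bar{A}_{G_\chi}(S^2,V_B)$ determining $(W_{d^i})_{i \in I^+}$, such that the prescribed endpoint conditions $\Phi_{\hat{D}_R}(\hat{d}_+^0) = \psi_{\bar{d}^0}(\bar{d}^0)$ and $\Phi_{\hat{D}_R}(\hat{d}_-^1) = \psi_{\bar{d}^1}^{-1}(\bar{d}^1)$ hold. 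I would carry this out in three steps.

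First, I would invoke Lemma~\ref{lemma: existence of barA} to extract from $(W_{d^i})_{i \in I^+}$ a distinguished $(\psi_{\bar{d}^i})_{i\in I} \in \bar{A}_{G_\chi}(S^2,V_B)$ that determines $(W_{d^i})_{i \in I^+}$, together with nonemptiness of $\mathcal{A}_{\bar{x}}$ for every $\bar{x}\in\bar{D}_R$. Next, using Lemma~\ref{lemma: inclusions of A_x's}.(3) combined with the restriction procedure $\res_{j,j'}$ of Lemma~\ref{lemma: restricted pointwise clutching} applied to the pair of points at each end of $\bar{e}^0$, I would verify that both endpoint values $\psi_{\bar{d}^0}(\bar{d}^0)$ and $\psi_{\bar{d}^1}^{-1}(\bar{d}^1)$ lie inside $\mathcal{A}_{\bar{e}^0}^0$; that is, each arises as the zeroth evaluation $\psi_{\bar{e}^0}(\bar{v}_0^0)$ of some $\psi_{\bar{e}^0} \in \mathcal{A}_{\bar{e}^0}$.

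The crux is to show that these two endpoint elements of $\mathcal{A}_{\bar{e}^0}^0$ lie in the same path component, so that a continuous interpolation through $\mathcal{A}_{\bar{e}^0}^0$ exists. By Theorem~\ref{theorem: bijectivity with extensions}, the path components of $\mathcal{A}_{\bar{e}^0}$ are in bijection with $(G_\chi)_{|e^0|}$-extensions of the fiber of $F_{V_B}$ at a single preimage point of an interior point of $|e^0|$. The extension associated to the endpoint coming from $\bar{d}^0$ is, by Lemma~\ref{lemma: inclusions of A_x's}.(4), isomorphic to $\res_{(G_\chi)_{|e^0|}}^{(G_\chi)_{d^0}} W_{d^0}$; similarly (using Lemma~\ref{lemma: inclusions of A_x's}.(2) or its analogue, since $(G_\chi)_{|e^0|} \subset (G_\chi)_{d^1}$) the endpoint from $\bar{d}^1$ induces $\res_{(G_\chi)_{|e^0|}}^{(G_\chi)_{d^1}} W_{d^1}$. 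These two restrictions are isomorphic as $(G_\chi)_{|e^0|}$-representations because $(G_\chi)_{|e^0|} \subset (G_\chi)_{d^0} \cap (G_\chi)_{d^1}$ (Lemma~\ref{lemma: intersection of isotropy}) and Definition~\ref{definition: A_R}.(4) gives $\res_{(G_\chi)_{d^0} \cap (G_\chi)_{d^1}}^{(G_\chi)_{d^0}} W_{d^0} \cong \res_{(G_\chi)_{d^0} \cap (G_\chi)_{d^1}}^{(G_\chi)_{d^1}} W_{d^1}$. Hence both endpoints lie in the same path component of $\mathcal{A}_{\bar{e}^0}$; projecting a connecting path in $\mathcal{A}_{\bar{e}^0}$ onto its zeroth evaluation yields the required continuous path in $\mathcal{A}_{\bar{e}^0}^0$ from $\psi_{\bar{d}^0}(\bar{d}^0)$ to $\psi_{\bar{d}^1}^{-1}(\bar{d}^1)$. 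Parameterizing this path by $\hat{D}_R$ defines $\Phi_{\hat{D}_R}$, and Corollary~\ref{corollary: Omega} places it in $\Omega_{\hat{D}_R,V_B}$; since it realizes $(\psi_{\bar{d}^i})_{i\in I}$ and hence $(W_{d^i})_{i \in I^+}$, it lies in $\Omega_{\hat{D}_R,(W_{d^i})_{i \in I^+}}$.

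The main obstacle I anticipate is bookkeeping at the endpoints: carefully matching the chosen pointwise clutching maps $\psi_{\bar{d}^i}$ with the specific elements of $\mathcal{A}_{\bar{e}^0}^0$ supplied by Lemma~\ref{lemma: inclusions of A_x's}, and tracking the role of the subscript $\pm$ convention so that the endpoint values genuinely coincide (not merely agree up to the $|c|$-involution). In the exceptional case $R = \tetra \times Z$, an additional compatibility coming from the element $t_0$ (as in the proof of Theorem~\ref{theorem: clutching condition}) must be respected when defining $\psi_{\bar{d}^i}$, but the chosen $(\psi_{\bar{d}^i})_{i\in I} \in \bar{A}_{G_\chi}(S^2,V_B)$ already satisfies it by construction, so no extra work is needed there.
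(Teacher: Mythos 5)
Your proposal is correct and follows essentially the same route as the paper: choose $(\psi_{\bar{d}^i})_{i\in I}$ via Lemma \ref{lemma: existence of barA}, use Lemma \ref{lemma: inclusions of A_x's}, Lemma \ref{lemma: intersection of isotropy}, Definition \ref{definition: A_R} and Theorem \ref{theorem: bijectivity with extensions} to place both endpoint values in a single path component $(\mathcal{A}_{\bar{e}^0})_{\psi_{\bar{e}^0}}^0$, and then interpolate continuously to get an element of the set described in Corollary \ref{corollary: Omega}. The only cosmetic slips are internal reference labels (e.g.\ citing part (2) instead of (5) of Lemma \ref{lemma: inclusions of A_x's} for the $\bar{d}^1$ endpoint), which do not affect the argument.
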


\begin{proof}
Put $V_B = G_\chi \times_{(G_\chi)_{d^{-1}}} W_{d^{-1}}$ so that
$\Omega_{\hat{D}_R, (W_{d^i})_{i \in I^+}}$ is contained in
$\Omega_{\hat{D}_R, V_B}.$ First, we would describe
$\Omega_{\hat{D}_R, (W_{d^i})_{i \in I^+}}$ by using Corollary
\ref{corollary: Omega}. By Lemma \ref{lemma: existence of barA}, we
can pick an element $(\psi_{\bar{d}^i})_{i \in I}$ in
$\bar{A}_{G_\chi} (S^2, V_B)$ which determines $(W_{d^i})_{i \in
I^+}.$ By Theorem \ref{theorem: clutching condition} and Definition
\ref{definition: barA_G}, each element $\Phi_{\hat{D}_R}$ in
$\Omega_{\hat{D}_R, (W_{d^i})_{i \in I^+}}$ satisfies
\begin{equation*}
\Phi_{\hat{D}_R} (\hat{d}_+^0) = \psi_{\bar{d}^0}^\prime
(\bar{d}^0), ~ \Phi_{\hat{D}_R} (\hat{d}_-^1) =
\psi_{\bar{d}^1}^{\prime -1} (\bar{d}^1)
\end{equation*}
for some $(\psi_{\bar{d}^i}^\prime)_{i \in I} \in \bar{A}_{G_\chi}
(S^2, V_B),$ i.e. $(\psi_{\bar{d}^i}^\prime)_{i \in I}$ is
determined by $\Phi_{\hat{D}_R}.$ Easily, $\psi_{\bar{d}^i}^\prime
\in (\mathcal{A}_{\bar{d}^i})_{\psi_{\bar{d}^i}}$ for $i \in I$
because $(\psi_{\bar{d}^i}^\prime)_{i \in I}$ should determine
$(W_{d^i})_{i \in I^+}.$ Moreover, Lemma \ref{lemma: inclusions of
A_x's} says that both $\psi_{\bar{d}^0}^\prime (\bar{d}^0)$ and
$\psi_{\bar{d}^1}^{\prime -1} (\bar{d}^1)$ are in
$(\mathcal{A}_{\bar{e}^0})^0$ which determine
$\res_{(G_\chi)_{|e^0|}}^{(G_\chi)_{d^0}} W_{d^0}$ and
$\res_{(G_\chi)_{|e^0|}}^{(G_\chi)_{d^1}} W_{d^1},$ respectively.
Since $(G_\chi)_{|e^0|} = (G_\chi)_{d^0} \cap (G_\chi)_{d^1}$ by
Lemma \ref{lemma: intersection of isotropy} and these two
representations are isomorphic by definition of $A_{G_\chi} (S^2,
\chi),$ Theorem \ref{theorem: bijectivity with extensions} says that
$\psi_{\bar{d}^0}^\prime (\bar{d}^0)$ and $\psi_{\bar{d}^1}^{\prime
-1} (\bar{d}^1)$ ( of course, also $\psi_{\bar{d}^0} (\bar{d}^0)$
and $\psi_{\bar{d}^1}^{-1} (\bar{d}^1)$ ) are in the same component
$(\mathcal{A}_{\bar{e}^0})_{\psi_{\bar{e}^0}}^0$ of
$(\mathcal{A}_{\bar{e}^0})^0$ for some $\psi_{\bar{e}^0} \in
\mathcal{A}_{\bar{e}^0}.$ Since $\psi_{\bar{d}^0} (\bar{d}^0)$ and
$\psi_{\bar{d}^1} (\bar{d}^1)$ do exist, such a $\psi_{\bar{e}^0}$
exists and $(\mathcal{A}_{\bar{e}^0})_{\psi_{\bar{e}^0}}$ is
nonempty. So, $\Omega_{\hat{D}_R, (W_{d^i})_{i \in I^+}}$ is
expressed as
\begin{align}
\label{equation: Omega} \Big\{ ~ & \Phi_{\hat{D}_R} \in C^0
(\hat{D}_R, V_B) ~ \Big| ~ \Phi_{\hat{D}_R} (\hat{x}) \in
(\mathcal{A}_{\bar{e}^0})_{\psi_{\bar{e}^0}}^0 \text{
for each } \hat{x} \in [\hat{d}_+^0, \hat{d}_-^1], \text{ and } \\
\notag &  \Phi_{\hat{D}_R} (\hat{d}_+^0) = \psi_{\bar{d}^0}^\prime
(\bar{d}^0), ~ \Phi_{\hat{D}_R} (\hat{d}_-^1) =
\psi_{\bar{d}^1}^{\prime -1} (\bar{d}^1) \text{ for some }
(\psi_{\bar{d}^i}^\prime)_{i \in I} \in \bar{A}_{G_\chi}
(S^2, V_B) \\
\notag & \text{ such that } \psi_{\bar{d}^i}^\prime \in
(\mathcal{A}_{\bar{d}^i})_{\psi_{\bar{d}^i}} \text{ for } i \in I ~
\Big\}.
\end{align}
However, we can construct a continuous function
$\Phi_{\hat{D}_R}^\prime : [\hat{d}_+^0, \hat{d}_-^1] \rightarrow
(\mathcal{A}_{\bar{e}^0})_{\psi_{\bar{e}^0}}^0 \subset \Iso
(V_{\bar{f}^{-1}}, V_{\bar{f}^0})$ such that
$\Phi_{\hat{D}_R}^\prime (\hat{d}_+^0) = \psi_{\bar{d}^0}
(\bar{d}^0)$ and $\Phi_{\hat{D}_R}^\prime (\hat{d}_-^1) =
\psi_{\bar{d}^1}^{-1} (\bar{d}^1)$ because $\psi_{\bar{d}^0}
(\bar{d}^0)$ and $\psi_{\bar{d}^1}^{-1} (\bar{d}^1)$ are in the
nonempty path connected
$(\mathcal{A}_{\bar{e}^0})_{\psi_{\bar{e}^0}}^0.$ Since
$\Phi_{\hat{D}_R}^\prime$ is contained in the set (\ref{equation:
Omega}), we obtain a proof. \qed
\end{proof}

\section{Proof for cases when $\pr(\rho(G_\chi)) = \tetra, \octa, \icosa$}
 \label{section: platonic case}

Now, we are ready to calculate homotopy of equivariant clutching
maps.

\begin{proposition} \label{proposition: platonic nonorientable case}
Assume that $\pr(R) = \tetra,$ $\octa,$ $\icosa$ and $R \ne \tetra,$
$\octa,$ $\icosa.$ Then, Theorem \ref{main: only by isotropy} holds
for these $R$'s.
\end{proposition}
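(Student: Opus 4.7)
The plan is to invoke Proposition \ref{proposition: proposition for isomorphism}.(2), which reduces the claim to showing that $\pi_0(\Omega_{\hat{D}_R, (W_{d^i})_{i \in I^+}})$ is a singleton for every triple $(W_{d^i})_{i \in I^+} \in A_{G_\chi}(S^2, \chi)$. Nonemptiness is already supplied by Proposition \ref{proposition: nonempty omega}, so only path-connectedness remains. The four groups to handle are $\langle \tetra, -o_0 \rangle$, $\tetra \times Z$, $\octa \times Z$, and $\icosa \times Z$; in each of these $R_{d^{-1}}$, $R_{d^0}$, $R_{d^1}$ acquire an additional $\Z_2$-factor compared to the pure rotation cases $\tetra, \octa, \icosa$, and this extra symmetry is precisely what I will exploit.

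Using the explicit description from equation (\ref{equation: Omega}) in the proof of Proposition \ref{proposition: nonempty omega}, I view $\Omega_{\hat{D}_R, (W_{d^i})_{i \in I^+}}$ as the space of continuous paths from the interval $\hat{D}_R$ into the ambient space $X = (\mathcal{A}_{\bar{e}^0})_{\psi_{\bar{e}^0}}^0$, with the two endpoints constrained to lie in
\begin{align*}
A &= \bigl\{\, \psi_{\bar{d}^0}^\prime(\bar{d}^0) \,\bigm|\, \psi_{\bar{d}^0}^\prime \in (\mathcal{A}_{\bar{d}^0})_{\psi_{\bar{d}^0}} \,\bigr\}, \\
B &= \bigl\{\, \psi_{\bar{d}^1}^{\prime\,-1}(\bar{d}^1) \,\bigm|\, \psi_{\bar{d}^1}^\prime \in (\mathcal{A}_{\bar{d}^1})_{\psi_{\bar{d}^1}} \,\bigr\},
\end{align*}
both regarded as subspaces of $X$ via Lemma \ref{lemma: inclusions of A_x's}. (When $d^1 = g d^0$ for some $g \in G_\chi$, the membership condition in $\bar{A}_{G_\chi}(S^2, V_B)$ couples $B$ to $A$, but this only shrinks the class of admissible endpoints; the ambient $X$ is unchanged.) By Lemma \ref{lemma: topology of mathcal A} each of $X$, $A$, $B$ is homeomorphic to a quotient $\Iso_{N_1}(\,\cdot\,)/\Iso_{N_2}(\,\cdot\,)$ of products of general linear groups, hence has abelian fundamental group. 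Lemma \ref{lemma: relative homotopy} therefore identifies
\begin{equation*}
\pi_0 \bigl( \Omega_{\hat{D}_R, (W_{d^i})_{i \in I^+}} \bigr) \;\cong\; \pi_1(X) \big/ \bigl\{ \overline{\gamma}(\imath_{1*}\pi_1(A)) + \imath_{2*}\pi_1(B) \bigr\},
\end{equation*}
so the goal becomes to verify that $\imath_{1*}$ and $\imath_{2*}$ jointly surject onto $\pi_1(X)$.

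To do this I compute the three fundamental groups via Lemmas \ref{lemma: reduce to smaller matrix} and \ref{lemma: reduce to smaller matrix cyclic extension} applied to $F/\psi_{\bar{e}^0}$, $F/\psi_{\bar{d}^0}$, $F/\psi_{\bar{d}^1}$. Using Table \ref{table: all about K_R}, the isotropy sequences $(G_\chi)_{|e^0|} \lhd (G_\chi)_{d^i}$ have quotient $\Z_2$ or $\Z_2 \times \Z_2$ in the four cases at hand, so Theorem \ref{theorem: extension by cyclic} and Corollary \ref{corollary: expressed by extensions} decompose $\res_{(G_\chi)_{|e^0|}}^{(G_\chi)_{d^i}} W_{d^i}$ into pieces indexed by characters of these small quotients; this makes the map $\imath_{k*}$ a standard block-diagonal inclusion of general linear factors, whose image in $\pi_1(X) \cong \Z$ is a subgroup of known index. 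The main obstacle is then a numerical one: I must verify that the two step-sizes contributed by $A$ and $B$ in $\pi_1(X)$ have greatest common divisor equal to the generator, which amounts to a gcd identity among the orders $|R_{d^0}|$, $|R_{d^1}|$, $|R_{b(e^0)}|$ that I will check case-by-case from Table \ref{table: all about K_R}. The extra $Z$-symmetry in each of the four enlarged groups is exactly what forces this gcd to be $1$ (in contrast to $\tetra, \octa, \icosa$ themselves, where a nontrivial residue persists and accounts for the Chern-class factor in Theorem \ref{main: by isotropy and chern}). Once the gcd identity is established in each case, the quotient collapses and path-connectedness of $\Omega_{\hat{D}_R, (W_{d^i})_{i \in I^+}}$ follows, completing the proof.
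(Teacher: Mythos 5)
Your reduction to Proposition \ref{proposition: proposition for isomorphism}.(2) and your identification of the ambient space $X = (\mathcal{A}_{\bar{e}^0})_{\psi_{\bar{e}^0}}^0$ match the paper's strategy, but the heart of your argument rests on a miscomputation of $\pi_1(X)$. For the four groups in question ($\langle \tetra, -o_0 \rangle$, $\tetra \times Z$, $\octa \times Z$, $\icosa \times Z$), Table \ref{table: all about K_R} shows $R_x \cong \Z_2$ for interior points $x$ of $D_R$: the edge-stabilizer $(G_\chi)_{|e^0|}$ strictly contains $H$, with quotient $\Z_2$ swapping the two faces adjacent to $e^0$. Hence $\mathcal{A}_{\bar{e}^0}$ falls under Condition F2 (not F1), and by Lemma \ref{lemma: topology of mathcal A} together with Proposition \ref{proposition: psi for cyclic}.(3) each path component of $\mathcal{A}_{\bar{e}^0}$ is homeomorphic to $\GL(l,\C)/\bigl(\GL(l_0,\C)\times\GL(l_1,\C)\bigr)$, which is \emph{simply connected}. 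Your assertion that $\pi_1(X) \cong \Z$ (and the ensuing plan to show that the images of $\pi_1(A)$ and $\pi_1(B)$ generate a subgroup of gcd-index one) describes the situation for $R = \tetra, \octa, \icosa$, where $(G_\chi)_{|e^0|} = H$ and $X = \Iso_H(V_{\bar{f}^{-1}}, V_{\bar{f}^0})$ — precisely the cases excluded here. The "extra $Z$-symmetry" does not enlarge the endpoint subgroups inside a fixed $\Z$; it kills $\pi_1(X)$ outright, which is why these cases land in Theorem \ref{main: only by isotropy} with no Chern-class invariant. As written, your "main obstacle" is a phantom, and the gcd identity you defer to a case check is never established, so the proof is incomplete.

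There is a second, smaller gap: when $d^1 = g d^0$ (which occurs for $R = \tetra \times Z$), the admissible clutching maps have \emph{coupled} endpoints via $\bar{\psi}_{d^1} = g \cdot \bar{\psi}_{d^0}$, so $\Omega_{\hat{D}_R, (W_{d^i})_{i \in I^+}}$ is not the full relative path space $\bigl[[0,1],0,1;X,A,B\bigr]$ and Lemma \ref{lemma: relative homotopy} does not apply verbatim. With your claimed $\pi_1(X)\cong\Z$ this would be fatal; the paper circumvents it by first constructing an explicit homotopy (using $\gamma^1 = t_0\cdot\gamma^0$) that moves both endpoint data simultaneously to a fixed element of $\bar{A}_{G_\chi}(S^2,V_B)$, and only then invoking simple connectedness of $(\mathcal{A}_{\bar{e}^0})_{\psi_{\bar{e}^0}}$ to connect any two clutching maps with identical endpoints. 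If you correct the computation of $\pi_1(X)$ and handle the endpoint coupling as above, your argument collapses onto the paper's proof.
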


\begin{proof}
We prove the proposition only for the case of $\rho(G_\chi)= \tetra
\times Z.$ Proof for other cases are similar. We would show that
$\pi_0 ( \Omega_{\hat{D}_R, (W_{d^i})_{i \in I^+}} )$ is one point
set for each $(W_{d^i})_{i \in I^+}$ $\in A_{G_\chi} (S^2, \chi)$ by
Proposition \ref{proposition: proposition for isomorphism}.(2). Put
\begin{equation*}
V_B = G_\chi \times_{(G_\chi)_{d^{-1}}} W_{d^{-1}}, \quad F_{V_B} =
G_\chi \times_{(G_\chi)_{d^{-1}}} ( |\bar{f}^{-1}| \times W_{d^{-1}}
)
\end{equation*}
for each $(W_{d^i})_{i \in I^+}$ $\in A_{G_\chi} (S^2, \chi).$

By Proposition \ref{proposition: nonempty omega}, we know that
$\pi_0 ( \Omega_{\hat{D}_R, (W_{d^i})_{i \in I^+}} )$ is nontrivial.
For two arbitrary $\Phi_{\hat{D}_R}$ and $\Phi_{\hat{D}_R}^\prime$
in $\Omega_{\hat{D}_R, (W_{d^i})_{i \in I^+}},$ let
$(\psi_{\bar{d}^i})_{i \in I}$ and $(\psi_{\bar{d}^i}^\prime)_{i \in
I}$ in $\bar{A}_{G_\chi} (S^2, V_B)$ be two elements determined by
$\Phi_{\hat{D}_R}$ and $\Phi_{\hat{D}_R}^\prime,$ respectively. We
would construct a homotopy connecting $\Phi_{\hat{D}_R}$ and
$\Phi_{\hat{D}_R}^\prime$ in $\Omega_{\hat{D}_R, (W_{d^i})_{i \in
I^+}}.$ First, we show that we may assume that
$(\psi_{\bar{d}^i}^\prime)_{i \in I} = (\psi_{\bar{d}^i})_{i \in
I}.$ Since $\psi_{\bar{d}^i}$ and $\psi_{\bar{d}^i}^\prime$ for $i
\in I$ determine the same representation $W_{d^i},$ these two are in
the same path component of $\mathcal{A}_{\bar{d}^i}$ by Theorem
\ref{theorem: bijectivity with extensions}. Take paths $\gamma^i :
[0,1] \rightarrow \mathcal{A}_{\bar{d}^i}$ for $i \in I$ such that
$\gamma^i(0) = \psi_{\bar{d}^i}^\prime$ and $\gamma^i(1) =
\psi_{\bar{d}^i}.$ In the case of $R=\tetra \times Z,$ we have $d^1
= t_0 \cdot d^0,$ and $t_0 \cdot \gamma^0$ satisfies $(t_0 \cdot
\gamma^0)(0) = \psi_{\bar{d}^1}^\prime$ and $(t_0 \cdot \gamma^0)(1)
= \psi_{\bar{d}^1}$ by Definition \ref{definition: barA_G}. So, we
may also assume that $\gamma^1 = t_0 \cdot \gamma^0.$ Recall that
the parametrization on $|\hat{e}^0| = [\hat{v}_+^0, \hat{v}_-^1]$ by
$s \in [0,1]$ satisfies $\hat{v}_+^0 = 0,$ $b(\hat{e}^0) = 1/2,$
$\hat{v}_-^1 = 1.$ We construct a homotopy $L (s,t) : [\hat{d}_+^i,
\hat{d}_-^{i+1}] \times [0,1] \rightarrow \mathcal{A}_{\bar{e}^i}^0$
as
\begin{equation*}
  \begin{array}{ll}
L (s,t) = \gamma^i \Big( (1-3s) t \Big) (\bar{d}^i) &
\text{for } s \in [0, \frac 1 3], \\
L (s,t) = \varphi^{\prime i} (3s-1) & \text{for } s \in [ \frac 1
3, \frac 2 3 ], \\
L (s,t) = \gamma^{i+1} \Big( (3s-2)t \Big)^{-1} (\bar{d}^{i+1})
& \text{for } s \in [\frac 2 3, 1]. \\
  \end{array}
\end{equation*}
Then, $L_t$ for each $t \in [0,1]$ is in $\Omega_{\hat{D}_R,
(W_{d^i})_{i \in I^+}}$ by Corollary \ref{corollary: Omega}, and $L$
connects $\Phi_{\hat{D}_R}^\prime$ with $L_1$ in $\Omega_{\hat{D}_R,
(W_{d^i})_{i \in I^+}}$ which determines $(\psi_{\bar{d}^i})_{i \in
I}.$ So, if we put $\Phi_{\hat{D}_R}^\prime = L_1,$ then we may
assume that $\Phi_{\hat{D}_R}$ and $\Phi_{\hat{D}_R}^\prime$
determine the same element $(\psi_{\bar{d}^i})_{i \in I}$ in
$\bar{A}_{G_\chi} (S^2, V_B).$

Now, we construct a homotopy between $\Phi_{\hat{D}_R}$ and
$\Phi_{\hat{D}_R}^\prime.$ As in the proof of Proposition
\ref{proposition: nonempty omega}, $\psi_{\bar{d}^0} (\bar{d}^0)$
and $\psi_{\bar{d}^1}^{-1} (\bar{d}^1)$ are in the same component
$(\mathcal{A}_{\bar{e}^0})_{\psi_{\bar{e}^0}}^0$ for some element
$\psi_{\bar{e}^0}$ in $\mathcal{A}_{\bar{e}^0},$ and
$\Phi_{\hat{D}_R},$ $\Phi_{\hat{D}_R}^\prime$ have values in
$(\mathcal{A}_{\bar{e}^0})_{\psi_{\bar{e}^0}}^0.$ Here, note that
$(\mathcal{A}_{\bar{e}^0})_{\psi_{\bar{e}^0}}$ is simply connected
by Lemma \ref{lemma: isotropy for platonic} and Proposition
\ref{proposition: psi for cyclic}.(3). By simply connectedness, we
can obtain a homotopy $L^\prime (s,t) : [\hat{d}_+^0, \hat{d}_-^1]
\times [0,1] \rightarrow \mathcal{A}_{\bar{e}^0}^0$ as
\begin{equation*}
\begin{array}{lll}
L^\prime (s,t) \in (\mathcal{A}_{\bar{e}^0})_{\psi_{\bar{e}^0}}^0,
\quad & L^\prime (0,t) = \psi_{\bar{d}^0} (\bar{d}^0), \quad &
L^\prime (1,t) =
\psi_{\bar{d}^1}^{-1} (\bar{d}^1), \\
L^\prime (s,0) = \Phi_{\hat{D}_R} (s), \quad & L^\prime (s,1) =
\Phi_{\hat{D}_R}^\prime (s) \quad &
\end{array}
\end{equation*}
for $s,t \in [0,1].$ Then, $L^\prime$ connects $\Phi_{\hat{D}_R}$
and $\Phi_{\hat{D}_R}^\prime$ in $\Omega_{\hat{D}_R, (W_{d^i})_{i
\in I^+}}$ by Corollary \ref{corollary: Omega}. Therefore, we obtain
a proof. Here, we remark that simply connectedness is critical in
obtaining $L^\prime.$ \qed
\end{proof}

\begin{proposition} \label{proposition: platonic orientable case}
Assume that $R$ is equal to one of $\tetra,$ $\octa,$ $\icosa.$
Then, Theorem \ref{main: by isotropy and chern} holds for these
cases.
\end{proposition}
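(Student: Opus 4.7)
The plan is to apply Proposition \ref{proposition: proposition for isomorphism}.(1). Nonemptiness of $\pi_0(\Omega_{\hat{D}_R,(W_{d^i})_{i\in I^+}})$ is Proposition \ref{proposition: nonempty omega}, so the task reduces to proving that $c_1$ is injective on this $\pi_0$ and identifying its image with $\chi(\id)(l_R\Z+k_0)$. Following the opening of the proof of Proposition \ref{proposition: platonic nonorientable case}, I would first homotope two representatives in $\Omega_{\hat{D}_R,(W_{d^i})_{i\in I^+}}$ so that they determine the same $(\psi_{\bar{d}^i})_{i\in I}\in\bar{A}_{G_\chi}(S^2,V_B)$, using path-connectedness of each $(\mathcal{A}_{\bar{d}^i})_{\psi_{\bar{d}^i}}$. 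This reduces the problem to analysing paths in $X:=\mathcal{A}_{\bar{e}^0}^0$ with endpoints lying in the orbit submanifolds $A,B\subset X$ corresponding to $W_{d^0},W_{d^1}$.

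The decisive difference from the nonorientable case is the pointwise isotropy structure: for $R\in\{\tetra,\octa,\icosa\}$, no non-kernel element of $G_\chi$ fixes the interior of $|e^0|$ pointwise, so $(G_\chi)_{|e^0|}=H$ and $X\cong\Iso_H(V_{\bar{f}^{-1}},V_{\bar{f}^0})\cong\GL(l,\C)$, where $l$ is the multiplicity of $\chi$ in $V_{\bar{f}^{-1}}$. Consequently $\pi_1(X)\cong\Z$, unlike the simply connected situation of Proposition \ref{proposition: platonic nonorientable case}. Lemma \ref{lemma: topology of mathcal A} together with Lemma \ref{lemma: reduce to smaller matrix cyclic extension}, applied via the cyclic quotients $(G_\chi)_{v^0}/H\cong\Z_{j_R}$ and $(G_\chi)_{b(e^0)}/H\cong\Z_2$, then yields $\pi_1(A)=\pi_1(B)=0$, so both inclusions into $X$ induce the zero map on $\pi_1$. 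Applying Lemma \ref{lemma: relative homotopy} gives $\pi_0(\Omega_{\hat{D}_R,(W_{d^i})_{i\in I^+}})\cong\pi_1(X)/(0+0)\cong\Z$.

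Finally, for the Chern-class step, I would pick a generator $\gamma$ of $\pi_1(X)\cong\Z$ (a single determinant loop inside $\GL(l,\C)$) and compare the bundles associated with its successive translates in $\pi_0(\Omega)$. By Lemma \ref{lemma: reduce to smaller matrix}, the inclusion $\Iso_H\hookrightarrow\Iso$ sends $\gamma$ to a loop of winding $\chi(\id)$ in the ambient $\Iso(V_{\bar{f}^{-1}},V_{\bar{f}^0})$, while the equivariant extension of $\gamma$ duplicates this winding on every one of the $l_R$ translates of $\hat{D}_R$ in the one-skeleton $|\complexK_R^{(1)}|$. Summing via the usual clutching-construction formula for $c_1$ then produces an increment of $l_R\chi(\id)$ per unit of $\pi_0$, so $c_1(p_\vect^{-1}(\mathbf{W}))=\chi(\id)(l_R\Z+k_0)$ is in bijection with $\pi_0(\Omega)\cong\Z$, injectivity of $c_1$ follows, and the offset $k_0$ (depending on $\mathbf{W}$) is pinned down in Section \ref{section: reduction to effective}. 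The principal obstacle is this last bookkeeping, namely tracking how a generator of $\pi_1(X)$ becomes an equivariant clutching map on $|\hatL_R|$ and extracting the exact coefficient $l_R\chi(\id)$ from the interaction between the $H$-isotypical dimension count given by Schur and the $G_\chi$-replication of the local winding across the one-skeleton.
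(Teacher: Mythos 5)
Your proposal is correct and follows essentially the same route as the paper: reduce via Proposition \ref{proposition: proposition for isomorphism}.(1) and Corollary \ref{corollary: Omega} to a relative homotopy computation in $\mathcal{A}_{\bar{e}^0}^0\cong\Iso_H(V_{\bar{f}^{-1}},V_{\bar{f}^0})\cong\GL(l,\C)$ with simply connected endpoint components at $\bar{v}^0$ and $b(\bar{e}^0)$ (the paper packages your fibration argument as Proposition \ref{proposition: psi for cyclic}.(3)), apply Lemma \ref{lemma: relative homotopy} to get $\pi_0\cong\Z$, and then propagate one generator of $\pi_1$ equivariantly over all $l_R$ translates of $\hat{D}_R$ to obtain the Chern class increment $l_R\chi(\id)$ via Lemma \ref{lemma: reduce to smaller matrix}. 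The only cosmetic difference is that the paper organizes the Chern-class count two half-edges at a time (per pair $\{\hat{e},c(\hat{e})\}$) and notes explicitly that the orientation-preserving translates keep all contributions of the same sign.
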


\begin{proof}
When $\rho(G_\chi)$ is given, put
\begin{equation*}
V_B = G_\chi \times_{(G_\chi)_{d^{-1}}} W_{d^{-1}}, \quad F_{V_B} =
G_\chi \times_{(G_\chi)_{d^{-1}}} ( |\bar{f}^{-1}| \times W_{d^{-1}}
)
\end{equation*}
for each $(W_{d^i})_{i \in I^+}$ $\in A_{G_\chi} (S^2, \chi).$ We
can pick an element $(\psi_{\bar{d}^i})_{i \in I}$ in
$\bar{A}_{G_\chi} (S^2, V_B)$ which determines $(W_{d^i})_{i \in
I^+}$ by Lemma \ref{lemma: existence of barA}. In these cases, we
have
\begin{equation*}
\begin{array}{ll}
\bar{D}_R = [\bar{v}^0, b(\bar{e}^0)], & (G_\chi)_{v^0}/H \cong
\Z_{j_R},
\\
(G_\chi)_{b(e^0)}/H \cong \Z_2, & (G_\chi)_x/H = \langle \id \rangle
\end{array}
\end{equation*}
by Table \ref{table: all about K_R}. By using these, path components
of $\mathcal{A}_{\bar{v}^0}, ~ \mathcal{A}_{b(\bar{e}^0)}$ are
simply connected by Proposition \ref{proposition: psi for
cyclic}.(3), and we have $\mathcal{A}_{\bar{e}^0}^0 = \Iso_H
(V_{\bar{f}^{-1}}, V_{\bar{f}^0})$ by Lemma \ref{lemma: pointwise
clutching for m=2 nontransitive}. Here, $\mathcal{A}_{\bar{v}^0},$
$\mathcal{A}_{b(\bar{e}^0)},$ $\mathcal{A}_{\bar{e}^0}$ are all
nonempty by Lemma \ref{lemma: existence of barA}. Since $d^1 \ne g
d^0$ for any $g \in G_\chi,$ $\psi_{\bar{d}^0}^\prime$ and
$\psi_{\bar{d}^1}^\prime$ in the set (\ref{equation: Omega}) have no
relation. So, the set (\ref{equation: Omega}) is rewritten as
\begin{align*}
\Big\{ ~ \Phi_{\hat{D}_R} \in C^0 (\hat{D}_R, V_B) ~ \Big| ~ &
\Phi_{\hat{D}_R} (\hat{x}) \in \Iso_H (V_{\bar{f}^{-1}},
V_{\bar{f}^0}) \text{ for each }
\hat{x} \in [\hat{v}_+^0, b(\hat{e}^0)],  \\
& \text{ and } \Phi_{\hat{D}_R} (\hat{v}_+^0) \in
(\mathcal{A}_{\bar{v}^0})_{\psi_{\bar{v}^0}}^0, ~ \Phi_{\hat{D}_R}
\big( b(\hat{e}^0) \big) \in
(\mathcal{A}_{b(\bar{e}^0)})_{\psi_{b(\bar{e}^0)}}^0 \Big\}.
\end{align*}
Since path components
$(\mathcal{A}_{\bar{v}^0})_{\psi_{\bar{v}^0}}^0$ and
$(\mathcal{A}_{b(\bar{e}^0)})_{\psi_{b(\bar{e}^0)}}^0$ are simply
connected, the homotopy $\pi_0 (\Omega_{\hat{D}_R, (W_{d^i})_{i \in
I^+}} )$ is in one-to-one correspondence with the homotopy $\pi_1
\Big( \Iso_H (V_{\bar{f}^{-1}}, V_{\bar{f}^0}) \Big)$ by Lemma
\ref{lemma: relative homotopy}, and the injection $\imath_0$ from
$\Omega_0$ to $\Omega_{\hat{D}_R, (W_{d^i})_{i \in I^+}}$ induces
the bijection
\begin{equation*}
\pi_0 (\imath_0) : \pi_0 (\Omega_0) \rightarrow \pi_0
(\Omega_{\hat{D}_R, (W_{d^i})_{i \in I^+}})
\end{equation*}
where $\Omega_0$ is defined as
\begin{align*}
\Big\{ ~ \Phi_{\hat{D}_R} \in C^0 (\hat{D}_R, V_B) ~ \Big| ~ &
\Phi_{\hat{D}_R} (\hat{x}) \in \Iso_H (V_{\bar{f}^{-1}},
V_{\bar{f}^0}) \text{ for each }
\hat{x} \in [\hat{v}_+^0, b(\hat{e}^0)],  \\
& \text{ and } \Phi_{\hat{D}_R} (\hat{v}_+^0) = \psi_{\bar{v}^0}
(\bar{v}^0), ~ \Phi_{\hat{D}_R} \big( b(\hat{e}^0) \big) =
\psi_{b(\bar{e}^0)} (b(\bar{e}^0)) \Big\}.
\end{align*}

Now, we would show that Chern classes of equivariant vector bundles
determined by different classes in $\pi_0 \big( \Omega_{\hat{D}_R,
(W_{d^i})_{i \in I^+}} \big)$ with respect to $V_B$ are all
different by calculating Chern class. For this, we use the
parametrization on $\lineL_R$ introduced in Section \ref{section:
clutching construction}. For notational simplicity, put $* =
\psi_{\bar{v}^0} (\bar{v}^0)$ and $*^\prime = \psi_{b(\bar{e}^0)}
(b(\bar{e}^0)).$ Let $\gamma_0 : [0, 1/2] \rightarrow \Iso_H
(V_{\bar{f}^{-1}}, V_{\bar{f}^0})$ be an element of the set
$\Omega_0$ such that $\gamma_0 (0) =
*$ and $\gamma_0 (1/2) =
*^\prime.$ Also, let $\sigma_0 :
[0, 1/2] \rightarrow \Iso_H (V_{\bar{f}^{-1}}, V_{\bar{f}^0})$ be a
loop such that $\sigma_0 (0) = \sigma_0 (1/2) = *$ and $[\sigma_0]$
is a generator of $\pi_1 (\Iso_H (V_{\bar{f}^{-1}}, V_{\bar{f}^0}),
* ).$ Here, we introduce some notations.

\begin{notation}
For some $H$-representations $W_1$ and $W_2,$ let $X_1$ and $X_2$ be
two spaces $\Iso_H (W_1, W_2)$ and $\Iso_H (W_2, W_1),$
respectively. For paths $\delta_1, \delta_2 : [a,b] \rightarrow X_1$
satisfying $\delta_1 (b) = \delta_2 (a),$ denote by $\delta_1 .
\delta_2 : [a,b] \rightarrow X_1$ the path defined by
\begin{equation*}
(\delta_1 . \delta_2) (t) = \left\{
  \begin{array}{ll}
    \delta_1 (a+2(t-a)),              & t \in [a, \frac {a+b} 2], \\
    \delta_2 (a+2(t- \frac {a+b} 2)), & t \in [\frac {a+b} 2, b].
  \end{array}
\right.
\end{equation*}
For paths $\delta_3 : [a,b] \rightarrow X_1$ and $\delta_4 : [b,c]
\rightarrow X_1$ satisfying $\delta_3 (b) = \delta_4 (b),$ denote by
$\delta_1 \vee \delta_2 : [a,c] \rightarrow X_1$ the path defined by
\begin{equation*}
(\delta_3 \vee \delta_4) (t) = \left\{
  \begin{array}{ll}
    \delta_3 (t), & t \in [a, b], \\
    \delta_4 (t), & t \in [b, c].
  \end{array}
\right.
\end{equation*}
Also, for a path $\delta : [a, b] \rightarrow X_1$ denote by
$\delta^* : [1-b,1-a] \rightarrow X_2$ the path $\delta^* (t) =
\delta (1-t)^{-1}.$
\end{notation}

Note that $\sigma_0.\gamma_0 \in \Omega_0 \subset \Omega_{\hat{D}_R,
(W_{d^i})_{i \in I^+}}.$ We would show that the difference between
$c_1 \big( F_{V_B} / \sigma_0.\gamma_0 \big)$ and $c_1 \big( F_{V_B}
/ \gamma_0 \big)$ is $12 \chi(\id),$ $24 \chi(\id),$ $60 \chi(\id)$
according to $R = \tetra,$ $\octa,$ $\icosa,$ respectively. Here,
$l_R = 12,$ $24,$ $60$ because the number of edges of
$\complexK_\tetra,$ $\complexK_\octa,$ $\complexK_\icosa$ is $6,$
$12,$ $30$ and $D_R$ is a half of an edge at each case,
respectively. For the calculation, we need to describe the
equivariant clutching maps precisely. Let $\Phi = \cup ~
\varphi_{\hat{e}}$ in $\Omega_{\hat{D}_R, (W_{d^i})_{i \in I^+}}$ be
the extension of $\gamma_0,$ i.e. $\varphi_{\hat{e}^0}(t) = \gamma_0
(t)$ for $t \in [0, 1/2].$ By Condition N1., $\varphi_{c(\hat{e}^0)}
(t) = \gamma_0 (1-t)^{-1}$ for $t \in [1/2, 1].$ Pick an element
$c_0$ of $(G_\chi)_{b(e^0)}$ such that $c_0 b(\bar{e}^0) = b \big(
c(\bar{e}^0) \big).$ By equivariance of $\Phi,$
\begin{align*}
\varphi_{c(\hat{e}^0)}(t) &= \Phi(t)  \\
                      &= c_0 \Phi (c_0^{-1}t) c_0^{-1} \\
                      &= c_0 \varphi_{\hat{e}^0} (c_0^{-1}t) c_0^{-1} \\
                      &= c_0 \gamma_0 (t) c_0^{-1}
\end{align*}
for $t \in [0, 1/2].$ Again by Condition N1.,
\begin{align*}
\varphi_{\hat{e}^0}(t) &= \varphi_{c(\hat{e}^0)}(1-t)^{-1} \\
                      &= c_0 \gamma_0 (1-t)^{-1} c_0^{-1}
\end{align*}
for $t \in [1/2, 1].$ This gives $\varphi_{\hat{e}^0} = \gamma_0
\vee c_0 \gamma_0^* c_0^{-1}$ and $\varphi_{c(\hat{e}^0)} = c_0
\gamma_0 c_0^{-1} \vee \gamma_0^*.$ Let $\Phi^\prime = \cup
\varphi_{\hat{e}}^\prime$ be the extension of $\sigma_0.\gamma_0.$
Then,
\begin{align*}
\varphi_{\hat{e}^0}^\prime &= \sigma_0.\gamma_0 \vee c_0
(\gamma_0^*.\sigma_0^*) c_0^{-1} \hbox{ and} \\
\varphi_{c(\hat{e}^0)}^\prime &= c_0 \sigma_0.\gamma_0 c_0^{-1} \vee
\gamma_0^*.\sigma_0^*
\end{align*}
by using $(\sigma_0.\gamma_0)^* = \gamma_0^*.\sigma_0^*.$ Replacing
only $\varphi_{\hat{e}}$ and $\varphi_{c(\hat{e}^0)}$ of $\Phi$ with
$\varphi_{\hat{e}}^\prime$ and $\varphi_{c(\hat{e}^0)}^\prime,$ we
obtain a new nonequivariant clutching map, say $\Phi_1.$ Here,
$\Phi_1$ becomes an nonequivariant clutching map because $\Phi_1 =
\Phi$ on vertices of $\lineL_R.$ The difference $c_1 \big( F_{V_B} /
\Phi_1 \big) -c_1 \big( F_{V_B} / \Phi \big)$ is equal to $\pm 2
\chi( \id )$ ( say $+2 \chi( \id )$ ) by Lemma \ref{lemma: reduce to
smaller matrix} because $\varphi_{\hat{e}^0}^\prime$ contains two
generators $\sigma_0$ and $c_0 \sigma_0^* c_0^{-1}.$ We would repeat
this argument for other edges. Pick edges $\hat{e}$ and $c(\hat{e})$
such that $\{ \hat{e}, c(\hat{e}) \} \ne \{ \hat{e}^0, c(\hat{e}^0)
\}.$ Then, there exists the unique $g_0 \in G_\chi$ up to $H$ such
that $g_0 \hat{e}^0 = \hat{e}.$ By equivariance,
\begin{align*}
\varphi_{\hat{e}} &= g_0 \cdot \varphi_{\hat{e}^0} \\
                  &= g_0 \cdot (\gamma_0 \vee c_0 \gamma_0^* c_0^{-1}) \\
                  &= g_0 \gamma_0 g_0^{-1} \vee g_0 c_0 \gamma_0^* c_0^{-1}
                   g_0^{-1},
\end{align*}
and
\begin{equation*}
\varphi_{\hat{e}}^\prime = g_0 \sigma_0.\gamma_0 g_0^{-1} \vee g_0
c_0 \gamma_0^*.\sigma_0^* c_0^{-1} g_0^{-1}.
\end{equation*}
Replacing only $\varphi_{\hat{e}}$ and $\varphi_{c(\hat{e}^0)}$ of
$\Phi_1$ with $\varphi_{\hat{e}}^\prime$ and
$\varphi_{c(\hat{e}^0)}^\prime,$ we obtain a new nonequivariant
clutching map, say $\Phi_2,$ is obtained, and the difference $c_1
\big( F_{V_B} / \Phi_2 \big) -c_1 \big( F_{V_B} / \Phi_1 \big)$ is
also equal to $2 \chi( \id )$ because $g_0$ preserves the
orientation. In this way, if we replace all $\varphi_{\hat{e}}$ of
$\Phi$ with $\varphi_{\hat{e}}^\prime$ to obtain $\Phi^\prime,$ then
the difference $c_1 \big( F_{V_B} / \Phi^\prime \big) -c_1 \big(
F_{V_B} / \Phi \big)$ is equal to $l_R \chi( \id ).$ Proposition
\ref{proposition: proposition for isomorphism} and these
calculations prove the proposition. \qed
\end{proof}

\section{Equivariant clutching maps when $\pr(\rho(G_\chi)) = \Z_n,$ $\D_n$}
\label{section: clutching construction cyclic dihedral}

Assume that $\rho(G_\chi)=R$ for some finite $R$ in Table
\ref{table: introduction} such that $\pr(R) = \Z_n,$ $\D_n.$ We
redefine notations introduced for cases of $\pr(R) = \tetra,$
$\octa,$ $\icosa$ to be in accordance with these cases, and mimic
what we have done in Section \ref{section: clutching construction},
\ref{section: relation}, \ref{section: equivariant clutching maps}
with those notations. In our treatment of cases when $R = \Z_n,
\langle a_n, -b \rangle,$ there are some differences with other
cases which are caused by existence of fixed points.

\begin{figure}[ht!]
\begin{center}
\begin{pspicture}(-5,-3.5)(4.5,3.7)\footnotesize

\psline[linewidth=0.5pt](-4,3.5)(-6,0.5)(-3,0.5)(-2,1.5)(-4,3.5)
\psline[linewidth=0.5pt](-3,0.5)(-4,3.5)
\psline[linewidth=0.5pt,linestyle=dotted](-5,1.5)(-2,1.5)
\psline[linewidth=0.5pt,linestyle=dotted](-4,3.5)(-5,1.5)(-6,0.5)

\uput[dl](-5.9,0.6){$\bar{v}_N^0$}
\uput[dr](-3.1,0.6){$\bar{v}_N^1$} \uput[r](-2.1,1.5){$\bar{v}_N^2$}
\uput[u](-4,3.5){$N$}

\uput[d](-4.4,0.6){$\bar{e}_N^0$} \uput[dr](-2.6,1.1){$\bar{e}_N^1$}

\uput[dl](-2,3.5){$\lineK_{R,N}$}

\psline[linewidth=0.5pt](-6,-1.5)(-3,-1.5)(-2,-0.5)(-5,-0.5)(-6,-1.5)
\psline[linewidth=0.5pt](-3,-1.5)(-4,-3.5)(-2,-0.5)
\psline[linewidth=0.5pt](-6,-1.5)(-4,-3.5)
\psline[linewidth=0.5pt,linestyle=dotted](-5,-0.5)(-4,-3.5)

\uput[dl](-5.9,-1.4){$\bar{v}_S^0$}
\uput[dr](-3.1,-1.4){$\bar{v}_S^1$}
\uput[r](-2.1,-0.4){$\bar{v}_S^2$}
\uput[l](-4.9,-0.4){$\bar{v}_S^3$} \uput[d](-4,-3.5){$S$}

\uput[d](-4.4,-1.4){$\bar{e}_S^0$} \uput[l](-2.5,-1){$\bar{e}_S^1$}
\uput[u](-3.5,-0.6){$\bar{e}_S^2$} \uput[l](-5.5,-1){$\bar{e}_S^3$}

\uput[dl](-2,-2.5){$\lineK_{R,S}$}

\psline[linearc=1]{->}(-0.8, -0.5)(0.8, -0.5)

\uput[u](0, -0.5){$\pi$}

\psline[linewidth=0.5pt](4,2.5)(2,-0.5)(4,-2.5)(6,0.5)(4,2.5)
\psline[linewidth=0.5pt](2,-0.5)(5,-0.5)(6,0.5)
\psline[linewidth=0.5pt](4,2.5)(5,-0.5)(4,-2.5)
\psline[linewidth=0.5pt, linestyle=dotted](2,-0.5)(3,0.5)(6,0.5)
\psline[linewidth=0.5pt, linestyle=dotted](4,2.5)(3,0.5)(4,-2.5)

\uput[l](2.1,-0.5){$v^0$} \uput[ul](5,-0.5){$v^1$}
\uput[ur](5.9,0.5){$v^2$} \uput[d](4,-2.5){$S$} \uput[u](4,2.5){$N$}

\uput[d](3.6,-0.4){$e^0$} \uput[l](5.7,0.1){$e^1$}
\end{pspicture}
\end{center}
\caption{\label{figure: simple form K_4} $\pi : \lineK_R \rightarrow
\complexK_R$ when $\complexK_R = \complexK_4$}
\end{figure}

First, we rewrite Section \ref{section: clutching construction}. In
these cases, $\complexK_R = \complexK_{m_R}$ with $m_R = n/2,$ $n,$
$2n$ by Table \ref{table: introduction}. Denote by $\complexK_{R,
S}$ the lower simplicial subcomplex of $\complexK_R$ whose
underlying space $|\complexK_{R, S}|$ is equal to $|\complexK_R|
\cap \{ (x,y,z) \in \R^3 ~|~ z \le 0 \},$ and by $\complexK_{R, N}$
the upper part. Let $\complexL_R$ be the subcomplex $\complexK_{R,
S} \cap \complexK_{R, N}$ of $\complexK_R$ lying on the equator
$z=0.$ Put $B = \{ S, N \} \subset |\complexK_R|$ on which $R$ (and
$G_\chi$) acts. Since each $| \complexK_{R, q} |$ for $q \in B$ has
a simple equivariant structure for $R_q,$ we consider $\complexK_R$
as the union of two pieces $\complexK_{R, q}$ for $q \in B,$ not of
all faces of $\complexK_R.$ Denote by $\lineK_R$ the disjoint union
$\amalg_{q \in B} ~ \complexK_{R, q},$ and denote by $\lineK_{R, q}$
the subcomplex $\complexK_{R, q}$ of $\lineK_R$ so that $|\lineK_R|
= \amalg_{q \in B} ~ |\lineK_{R, q}|.$ We denote simply by $\pi$ and
$|\pi|$ natural quotient maps from $\lineK_R$ and $|\lineK_R|$ to
$\complexK_R$ and $|\complexK_R|,$ respectively. Let $\lineL_R =
\pi^{-1}(\complexL_R),$ and let $\lineL_{R, q}$ be the subcomplex
$\lineL_R \cap \lineK_{R, q}$ of $\lineK_R$ for $q \in B.$ The
subset $|\pi|^{-1} (B)$ in $|\lineK_R|$ is often confused with $B$
in $|\complexK_R|.$ The injection from $\lineL_R$ to $\lineK_R$ and
its underlying space map are denoted by $\imath_{\lineL}$ and
$\imath_{|\lineL|},$ respectively. The $G_\chi$-actions on
$\complexK_R,$ $|\complexK_R|$ induce $G_\chi$-actions on
$\complexL_R,$ $\lineK_R,$ $\lineL_R,$ and their underlying spaces.
We do not need define $\hatL_R.$ Here, we introduce the following
notations:
\begin{align*}
\bar{v}_q^i                  &= \lineL_{R, q} \cap \pi^{-1} ( v^i ),  \\
\bar{e}_q^i                  &= \lineL_{R, q} \cap \pi^{-1} ( e^i ),  \\
\bar{D}_R                    &= |\lineL_{R, S}| \cap |\pi|^{-1} ( D_R ), \\
\bar{\mathbf{D}}_R           &= |\pi|^{-1} ( D_R ), \\
\bar{d}^{i^\prime}         &=  |\lineL_{R, S}| \cap |\pi|^{-1} ( d^{i^\prime} ) \\
\end{align*}
for $q \in B,$ $i \in \Z_{m_R},$ $i^\prime \in I.$ Some notations
are illustrated in Figure \ref{figure: simple form K_4}. Also, let
$c : \lineL_R \rightarrow \lineL_R$ be the simplicial map whose
underlying space map $|c| : |\lineL_R| \rightarrow |\lineL_R|$
satisfies
\begin{equation*}
\bar{x} \ne |c|(\bar{x}) \quad \text{and} \quad |\pi| (\bar{x}) =
|\pi| \big( |c|(\bar{x}) \big)
\end{equation*}
for each $\bar{x} \in |\lineL_R|.$ And, put $\bar{x}_0 = \bar{x}$
and $\bar{x}_1 = |c|(\bar{x})$ for each $\bar{x} \in |\lineL_R|.$
Now, we describe an equivariant vector bundle over $|\complexK_R|$
as equivariant clutching construction by using an equivariant vector
bundle over $|\lineK_R|.$ Pick a $G_\chi$-vector bundle $V_B$ over
$B$ such that $(\res_H^{G_\chi} V_B)|_q$ is $\chi$-isotypical at
each $q$ in $B.$ We denote by $V_q$ the isotropy representations of
$V_B$ at $q$ for each $q \in B.$ Then, we have
\begin{equation}
\label{equation: V_B cyclic dihedral} V_B \cong G_\chi
\times_{(G_\chi)_S} V_S \quad \text{ and } \quad \res_H^{(G_\chi)_S}
V_S \cong \res_H^{(G_\chi)_N} V_N
\end{equation}
when $R \ne \Z_n,$ $\langle a_n, -b \rangle.$ Define $\Vect_{G_\chi}
(|\complexK_R|, \chi)_{V_B}$ and $\Vect_{G_\chi} (|\lineK_R|,
\chi)_{V_B}$ as before. We observe that $\Vect_{G_\chi} (|\lineK_R|,
\chi)_{V_B}$ has the unique element $[F_{V_B}]$ for the bundle
\begin{equation*}
F_{V_B} = \left\{
  \begin{array}{ll}
    \amalg_{q \in B} ~ | \lineK_{R, q} | \times V_q \quad
    & \hbox{if } R = \Z_n, ~ \langle a_n, -b \rangle, \\
    G_\chi \times_{(G_\chi)_S} (| \lineK_{R, S} |
    \times V_S) \quad
    & \hbox{if } R \ne \Z_n, ~ \langle a_n, -b \rangle.
  \end{array}
\right.
\end{equation*}
Henceforward, we use trivializations
\begin{equation}
\label{equation: trivialization for cyclic dihedral}
\begin{array}{ll}
| \lineK_{R, q} | \times V_q & \quad \text{ for } \big(
\res_{(G_\chi)_q}^{G_\chi} F_{V_B} \big)
\big|_{| \lineK_{R, q} |}, \\
|\bar{e}_q^i| \times \res_{(G_\chi)_{b(\bar{e}_q^i)}}^{(G_\chi)_q}
V_q & \quad \text{ for } \big(
\res_{(G_\chi)_{b(\bar{e}_q^i)}}^{G_\chi} F_{V_B} \big)
\big|_{|\bar{e}_q^i|}
\end{array}
\end{equation}
for each $q$ and $i.$ Then, each $E$ in $\Vect_{G_\chi} (
|\complexK_R|, \chi)_{V_B}$ can be constructed by gluing $F_{V_B}
\cong |\pi|^* E$ along $|\lineL_{R, q}|$'s through
\begin{equation*}
|\lineL_{R, q}| \times V_q \rightarrow |\lineL_{R, q^\prime}| \times
V_{q^\prime}, ~ (\bar{x}, u) \mapsto ( |c|(\bar{x}), \varphi_q
(\bar{x}) u )
\end{equation*}
via continuous maps
\begin{equation*}
\varphi_q : |\lineL_{R, q}| \rightarrow \Iso(V_q, V_{q^\prime})
\end{equation*}
for $\bar{x} \in |\lineL_{R, q}|,$ $u \in V_q,$ $B = \{ q, q^\prime
\}$ as we have done in Section \ref{section: clutching
construction}. The union $\Phi = \cup_{q \in B} ~ \varphi_q$ is
called an equivariant clutching map of $E$ with respect to $V_B.$
This construction of $E$ is denoted by $F_{V_B} / \Phi.$ The map
$\Phi$ is defined on $|\lineL_R|,$ and we also regard $\Phi$ as a
map
\begin{equation*}
\imath_{|\lineL|}^* F_{V_B} \rightarrow \imath_{|\lineL|}^* F_{V_B},
\quad(\bar{x}, u) \mapsto \big( |c|(\bar{x}), \Phi (\bar{x}) u \big)
\end{equation*}
by using trivialization (\ref{equation: trivialization for cyclic
dihedral}) for each $q \in B,$ $(\bar{x}, u) \in |\lineL_{R, q}|
\times V_q.$ Also, $\Phi$ should be equivariant. An equivariant
clutching map of some bundle in $\Vect_{G_\chi} (|\complexK_R|,
\chi)_{V_B}$ with respect to $V_B$ is called simply an equivariant
clutching map with respect to $V_B,$ and let $\Omega_{V_B}$ be the
set of all equivariant clutching maps with respect to $V_B.$ And, if
we define $\Omega_{\bar{D}_R, V_B}$ as the set
\begin{equation*}
\{~ \Phi |_{\bar{D}_R} ~ | ~ \Phi \in \Omega_{V_B} ~\},
\end{equation*}
then the restriction map $\Omega_{V_B} \rightarrow
\Omega_{\bar{D}_R, V_B}$ is bijective, and $\pi_0 (\Omega_{V_B})
\cong \pi_0 (\Omega_{\bar{D}_R, V_B}).$ We call an equivariant
clutching map $\Phi$ the \textit{extension} of $\Phi|_{\bar{D}_R}.$
And, denote also by $F_{V_B} / ~ \Phi |_{\bar{D}_R}$ the bundle
$F_{V_B} / \Phi.$ Let $C^0 (|\lineL_R|, V_B)$ be the set of
functions $\Phi$ defined on $|\lineL_R|$ satisfying
$\Phi|_{|\lineL_{R, q}|}(\bar{x}) \in \Iso(V_q, V_{q^\prime})$ for
$\bar{x} \in |\lineL_{R, q}|$ and $B=\{ q, q^\prime \}.$ We can
define the quotient $F_{V_B} / \Phi$ for any $\Phi$ in $C^0
(|\lineL_R|, V_B).$ And, let $C^0 (\bar{D}_R, V_B)$ be the set
\begin{equation*}
\{~ \Phi |_{\bar{D}_R} ~ | ~ \Phi \in C^0 ( |\lineL_R|, V_B ) ~\}.
\end{equation*}
A function $\Phi$ in $C^0 (|\lineL_R|, V_B)$ or a function
$\Phi_{\bar{D}_R}$ in $C^0 (\bar{D}_R, V_B)$ is called a
\textit{preclutching map} with respect to $V_B.$ A preclutching map
$\Phi$ in $C^0 (|\lineL_R|, V_B)$ is an equivariant clutching map
with respect to $V_B$ if and only if it satisfies the following
conditions:
\begin{itemize}
  \item[N1$^\prime.$] $\Phi(|c|(\bar{x})) = \Phi(\bar{x})^{-1}$
  for each $\bar{x} \in |\lineL_R|,$
  \item[E1$^\prime.$] $\Phi(g\bar{x}) = g \Phi(\bar{x}) g^{-1}$
  for each $\bar{x} \in |\lineL_R|, g \in G_\chi.$
\end{itemize}
More precisely, if $\Phi$ satisfies Condition N1$^\prime.$, then the
quotient $F_{V_B} / \Phi$ becomes a nonequivariant vector bundle.
And, if $\Phi$ also satisfies Condition E1$^\prime.$, then $F_{V_B}
/ \Phi$ becomes an equivariant vector bundle, i.e. $\Phi$ is an
equivariant clutching map with respect to $V_B.$

Second, we rewrite Section \ref{section: relation}. We obtain Lemma
\ref{lemma: homotopy gives isomorphism} and \ref{lemma: equivalent
condition for isomorphism} by replacing $p_{|\lineL|}$ with
$\imath_{|\lineL|}.$ Define $\imath_\Omega,$ $p_\Omega,$ $p_{\pi_0}$
as before by replacing $\Omega_{\hat{D}_R, V_B}$ with
$\Omega_{\bar{D}_R, V_B}.$ We decompose $\Omega_{\bar{D}_R, V_B}.$
When $R = \Z_n,$ $\langle a_n, -b \rangle,$ denote a pair $(W_S,
W_N)$ in $A_{G_\chi} (S^2, \chi)$ by $(W_q)_{q \in B}.$ And, pick
the $G_\chi$-vector bundle $V_B$ over $B$ such that $V_B|_q = W_q$
for each $(W_q)_{q \in B} \in A_{G_\chi} (S^2, \chi)$ and $q \in B.$
Define $\Omega_{\bar{D}_R, (W_q)_{q \in B}} = (p_\Omega \circ
p_{\pi_0})^{-1} \big( (W_q)_{q \in B} \big).$ Then, we obtain
$\Omega_{\bar{D}_R, V_B} = \Omega_{\bar{D}_R, (W_q)_{q \in B}}$
because $V_B$ and $(W_q)_{q \in B}$ can be regarded as the same
$G_\chi$-bundle over $B.$ When $R \ne \Z_n,$ $\langle a_n, -b
\rangle,$ put
\begin{equation*}
V_B = G_\chi \times_{(G_\chi)_{d^{-1}}} W_{d^{-1}}, \quad F_{V_B} =
G_\chi \times_{(G_\chi)_{d^{-1}}} ( |\bar{f}^{-1}| \times W_{d^{-1}}
)
\end{equation*}
for each $(W_{d^i})_{i \in I^+} \in A_{G_\chi} (S^2, \chi).$ Then,
we obtain the decomposition of $\Omega_{\bar{D}_R, V_B}$ by
replacing $\Omega_{\hat{D}_R, (W_{d^i})_{i \in I^+}}$ of
(\ref{equation: decomposition of Omega}) with $\Omega_{\bar{D}_R,
(W_{d^i})_{i \in I^+}}.$ Proposition \ref{proposition: proposition
for isomorphism} holds if
\begin{enumerate}
  \item we replace $(W_{d^i})_{i \in I^+},$ $\Omega_{\hat{D}_R, (W_{d^i})_{i \in I^+}}$
  with $(W_q)_{q \in B},$ $\Omega_{\bar{D}_R, (W_q)_{q \in B}}$
  when $R = \Z_n,$ $\langle a_n, -b \rangle,$ respectively.
  \item we replace $\Omega_{\hat{D}_R, (W_{d^i})_{i \in I^+}}$ with
  $\Omega_{\bar{D}_R, (W_{d^i})_{i \in I^+}}$
  when $R \ne \Z_n,$ $\langle a_n, -b \rangle.$
\end{enumerate}

Third, we rewrite Section \ref{section: equivariant clutching maps}.
To begin with, we define some sets of equivariant pointwise
clutching maps. For each $\bar{x}$ in $|\lineL_R|$ and $x =
|\pi|(\bar{x}),$ put $\bar{\mathbf{x}} = |\pi|^{-1} (x) = \{
\bar{x}_j | j \in \Z_2 \},$ and let $\mathcal{A}_{\bar{x}}$ be the
set of equivariant pointwise clutching maps with respect to the
$(G_\chi)_x$-bundle $\big( \res_{(G_\chi)_x}^{G_\chi} F_{V_B} \big)
|_{\bar{\mathbf{x}}}.$ Also for each edge $\bar{e}$ in $\lineL_R,$
$\pi(\bar{e}) = e,$ $\bar{x}$ in $|\bar{e}|,$ put $\bar{\mathbf{x}}
= |\pi|^{-1} (x) = \{ \bar{x}_j | j \in \Z_2 \},$ and let
$\mathcal{A}_{\bar{e}}$ be the set of equivariant pointwise
clutching maps with respect to the $(G_\chi)_{|e|}$-bundle $\big(
\res_{(G_\chi)_{|e|}}^{G_\chi} F_{V_B} \big) |_{\bar{\mathbf{x}}}$
where $\mathcal{A}_{\bar{e}}$'s for different $\bar{x}$'s are
identified as in Section \ref{section: equivariant clutching maps}.
We can define saturations on $\mathcal{A}_{\bar{x}}$ and elements of
it, and also group actions on saturations. Evaluation maps
$\mathcal{A}_{\bar{x}} \rightarrow \mathcal{A}_{\bar{x}}^0$ and
$\mathcal{A}_{\bar{e}} \rightarrow \mathcal{A}_{\bar{e}}^0$ are
bijective for any $\bar{x} \in |\lineL_R|,$ $\bar{e} \in \lineL_R.$
With these, we can rewrite Theorem \ref{theorem: clutching
condition}.

\begin{theorem} \label{theorem: clutching condition cyclic dihedral}
Assume that $\pr(R) = \Z_n, \D_n.$ Then, a preclutching map
$\Phi_{\bar{D}_R}$ in $C^0 (\bar{D}_R, V_B)$ is in
$\Omega_{\bar{D}_R, V_B}$ if and only if there exists the unique
$\psi_{\bar{x}} \in \mathcal{A}_{\bar{x}}$ for each $\bar{x} \in
\bar{D}_R$ satisfying the following conditions:
\begin{enumerate}
  \item[E2$^\prime.$] $\psi_{\bar{x}} (\bar{x}) = \Phi_{\bar{D}_R} (\bar{x})$
  for each $\bar{x} \in \bar{D}_R,$
  \item[E3$^\prime.$] for each $\bar{x}, \bar{x}^\prime \in \bar{D}_R$
  and $x = |\pi|(\bar{x}), x^\prime = |\pi|(\bar{x}^\prime),$
  if $x^\prime = gx$ for some $g \in G_\chi,$ then
  $\bar{\psi}_{x^\prime} = g \cdot \bar{\psi}_x.$
\end{enumerate}
The set $( \psi_{\bar{x}} )_{\bar{x} \in \bar{D}_R}$ is called
\textit{determined} by $\Phi_{\bar{D}_R}.$
\end{theorem}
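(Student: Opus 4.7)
The plan is to mirror the proof of Theorem \ref{theorem: clutching condition} with the modifications dictated by the simpler combinatorial structure on $\lineK_R$ (each point of $|\lineL_R|$ has exactly two preimages in itself under $|c|$ rather than $j_R$, so there is no analog of the vertex-closure condition N2). The two directions will be established separately, and the key tools are the pointwise framework of Section \ref{section: pointwise clutching map}, the bijectivity of the evaluation maps $\mathcal{A}_{\bar{x}} \to \mathcal{A}_{\bar{x}}^0$ available in the cyclic/dihedral setting, and the minimality of $D_R$ established in Lemma \ref{lemma: D_R for cyclic dihedral}.

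For necessity, I start with $\Phi_{\bar{D}_R} \in \Omega_{\bar{D}_R, V_B}$ and its unique extension $\Phi \in \Omega_{V_B}$. For each $\bar{x} \in \bar{D}_R$ with $x = |\pi|(\bar{x})$ and $\bar{\mathbf{x}} = \{\bar{x}_0, \bar{x}_1\}$, define a candidate $\psi_{\bar{x}}$ by $\psi_{\bar{x}}(\bar{x}_j) = \Phi(\bar{x}_j)$ for $j \in \Z_2$. Condition N1$^\prime$ immediately gives $\psi_{\bar{x}}^2 = \id$, which is condition (1) of Lemma \ref{lemma: equivalent condition for psi}, while Condition E1$^\prime$ applied to any $g \in (G_\chi)_x$ yields condition (2) of Lemma \ref{lemma: equivalent condition for psi}. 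So $\psi_{\bar{x}} \in \mathcal{A}_{\bar{x}}$, and E2$^\prime$ is built into the definition. For uniqueness, since each $\mathcal{A}_{\bar{x}}$ has $m=2$, Lemma \ref{lemma: pointwise clutching for m=2 nontransitive} (when $\bar{x}$ is fixed by $G_\chi$) or Proposition \ref{proposition: psi for cyclic}.(2)/\ref{proposition: psi for dihedral} (otherwise) says $\psi_{\bar{x}}$ is determined by its single value $\psi_{\bar{x}}(\bar{x}_0) = \Phi_{\bar{D}_R}(\bar{x})$. Finally, E3$^\prime$ is a direct consequence of the equivariance E1$^\prime$ of $\Phi$ together with the definition of the $G_\chi$-action on saturations.

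For sufficiency, I build $\Phi$ from the data $(\psi_{\bar{x}})_{\bar{x} \in \bar{D}_R}$. First define $\Phi$ on $\bar{\mathbf{D}}_R = |\pi|^{-1}(D_R)$ by $\Phi(\bar{x}_j) = \psi_{\bar{x}}(\bar{x}_j)$ for $j \in \Z_2$; in particular $\Phi|_{\bar{D}_R} = \Phi_{\bar{D}_R}$ by E2$^\prime$. Then extend to $|\lineL_R|$ by the rule $\Phi(\bar{y}) = g^{-1} \Phi(g\bar{y}) g$ choosing some $g \in G_\chi$ such that $g\bar{y} \in \bar{\mathbf{D}}_R$, whose existence is guaranteed by Lemma \ref{lemma: D_R for cyclic dihedral}. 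The verification of well-definedness reduces, via equivariance of the saturations, to checking that whenever $g_1 \bar{y} = \bar{x}$ and $g_2 \bar{y} = \bar{x}^\prime$ with $\bar{x}, \bar{x}^\prime \in \bar{\mathbf{D}}_R$, one has $g_1^{-1} \Phi(\bar{x}) g_1 = g_2^{-1} \Phi(\bar{x}^\prime) g_2$; writing $g = g_2 g_1^{-1}$ this becomes $\Phi(\bar{x}^\prime) = g \Phi(\bar{x}) g^{-1}$, which follows from $\bar{\psi}_{x^\prime} = g \cdot \bar{\psi}_x$ (Condition E3$^\prime$ when the coincidence occurs at endpoints $\bar{d}^0, \bar{d}^1$) or from the fact that $\psi_{\bar{x}}$ is $(G_\chi)_x$-invariant under the saturation action (in the interior of $D_R$, where minimality of $D_R$ forces $g \in (G_\chi)_x$). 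Once $\Phi$ is well-defined on $|\lineL_R|$, Condition N1$^\prime$ follows pointwise from Lemma \ref{lemma: equivalent condition for psi}.(1) for each $\psi_{\bar{x}}$, and E1$^\prime$ is built into the construction.

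The main obstacle I anticipate is the bookkeeping in the sufficiency part for the two exceptional groups $R = \Z_n$ and $R = \langle a_n, -b \rangle$: for these, $R$ does not act transitively on the barycenters $\{b(e^i)\}$, the fundamental domain is either $|e^0|$ or $[b(e^0), v^1] \cup [v^1, b(e^1)]$, and there are two fixed points $S, N$ in the base, so the decomposition $V_B = V_S \sqcup V_N$ has independent fibers. In these cases there is no nontrivial $g$ with $g d^0 = d^1$, so Condition E3$^\prime$ is vacuous at the endpoints and the only constraint on the global extension comes from the interior compatibility, which I will handle by running the same well-definedness argument splitting into the two sub-paths joining $v^1$. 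The remaining cases, where $D_R = [v^0, b(e^0)]$ and $g \cdot d^0 \ne d^1$ for any $g$, are entirely parallel to the platonic case $R \ne \tetra \times Z$ in Theorem \ref{theorem: clutching condition}, so uniqueness and extension are both immediate.
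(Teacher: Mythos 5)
Your main argument (paragraphs two and three) is correct and is exactly the adaptation the paper intends: the paper gives no separate proof of this theorem, presenting it as a rewriting of Theorem \ref{theorem: clutching condition}, and your version carries out that rewriting faithfully --- necessity via N1$'$/E1$'$ feeding conditions (1), (2) of Lemma \ref{lemma: equivalent condition for psi}, uniqueness from bijectivity of evaluation at $m=2$, and sufficiency by equivariant extension from $\bar{\mathbf{D}}_R$ with well-definedness split between E3$'$ at the endpoints and $(G_\chi)_x$-invariance of saturations in the interior (using the minimality statement of Lemma \ref{lemma: D_R for cyclic dihedral}).

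However, your final paragraph contains a factual error about the case $R=\Z_n$: there $D_R=|e^0|$, so $d^0=v^0$, $d^1=v^1$, and $a_n d^0=d^1$. Hence E3$'$ at the endpoints is \emph{not} vacuous for $\Z_n$; it is precisely the wrap-around constraint $\Phi_{\bar D_R}(\bar d^1)=g_0\Phi_{\bar D_R}(\bar d^0)g_0^{-1}$ that makes the $a_n$-translates of $\Phi|_{\bar{\mathbf{D}}_R}$ agree at the vertices $v^i$ (this is exactly how $\Omega_{\bar D_R,V_B}$ is later described in the proof of Proposition \ref{proposition: cyclic case}). Your claim is correct for $\langle a_n,-b\rangle$ with odd $n$, where $d^0=b(e^0)$ and $d^1=b(e^1)$ lie in different orbits, but not for $\Z_n$. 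Since your paragraph three already invokes E3$'$ at the endpoints in general, the proof survives; just delete the assertion that the two exceptional groups both have $d^0,d^1$ in distinct orbits, and treat $\Z_n$ by the endpoint clause of your well-definedness argument rather than by ``interior compatibility only.''
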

In cases when $R = \Z_n,$ $\langle a_n, -b \rangle,$ Theorem
\ref{theorem: clutching condition cyclic dihedral} is sufficient to
calculate the homotopy $\pi_0 ( \Omega_{\bar{D}_R, V_B} ),$ but in
other cases we need more. In the remaining of this section, assume
that $R \ne \Z_n,$ $\langle a_n, -b \rangle.$ With exactly the same
definition of $\bar{A}_{G_\chi} (S^2, V_B),$ we can rewrite
Corollary \ref{corollary: Omega}.

\begin{corollary} \label{corollary: Omega cyclic dihedral}
The set $\Omega_{\bar{D}_R, V_B}$ is equal to the set
\begin{align*}
\Big\{ & \Phi_{\bar{D}_R} \in C^0 (\bar{D}_R, V_B) \Big| ~
\Phi_{\bar{D}_R} (\bar{x}) \in \mathcal{A}_{\bar{e}}^0 \text{ for
each } \bar{x} \text{ and any } \bar{e} \text{ such that } \bar{x}
\in |\bar{e}|,
\text{ and } \\
&  \Phi_{\bar{D}_R} (\bar{d}^0) = \psi_{\bar{d}^0} (\bar{d}^0), ~
\Phi_{\bar{D}_R} (\bar{d}^1) = \psi_{\bar{d}^1} (\bar{d}^1) \text{
for some } (\psi_{\bar{d}^i})_{i \in I} \in \bar{A}_{G_\chi} (S^2,
V_B) ~ \Big\}.
\end{align*}
\end{corollary}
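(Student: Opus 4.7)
The plan is to translate Theorem \ref{theorem: clutching condition cyclic dihedral} into the edge-valued language of the statement, in direct parallel to the proof of Corollary \ref{corollary: Omega}. By that theorem, membership of $\Phi_{\bar{D}_R}$ in $\Omega_{\bar{D}_R, V_B}$ is equivalent to the existence of a (unique) family $(\psi_{\bar{x}})_{\bar{x} \in \bar{D}_R}$ with $\psi_{\bar{x}} \in \mathcal{A}_{\bar{x}}$ satisfying Conditions E2$^\prime$ and E3$^\prime$. I would first observe that Condition E3$^\prime$ has content only when two distinct points of $D_R$ are $G_\chi$-related, and by Lemma \ref{lemma: D_R for cyclic dihedral} this can occur only at the boundary points $d^0, d^1$ of $D_R$. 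Hence E3$^\prime$ for the full family is equivalent to requiring $(\psi_{\bar{d}^i})_{i \in I} \in \bar{A}_{G_\chi}(S^2, V_B)$.

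Next I would handle Condition E2$^\prime$. For a non-vertex $\bar{x}$ interior to some $|\bar{e}|$ the isotropy subgroups satisfy $(G_\chi)_{|e|} = (G_\chi)_{x}$, so $\mathcal{A}_{\bar{x}}$ coincides with $\mathcal{A}_{\bar{e}}$ (the analogue of Lemma \ref{lemma: inclusions of A_x's}(2) in the cyclic/dihedral setting); by bijectivity of the evaluation map $\mathcal{A}_{\bar{e}} \to \mathcal{A}_{\bar{e}}^0$ noted in Section \ref{section: clutching construction cyclic dihedral}, the existence of a $\psi_{\bar{x}}$ with $\psi_{\bar{x}}(\bar{x}) = \Phi_{\bar{D}_R}(\bar{x})$ is then equivalent to $\Phi_{\bar{D}_R}(\bar{x}) \in \mathcal{A}_{\bar{e}}^0$. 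At the endpoints $\bar{d}^i$, Condition E2$^\prime$ reads $\Phi_{\bar{D}_R}(\bar{d}^i) = \psi_{\bar{d}^i}(\bar{d}^i)$; using the inclusion $\mathcal{A}_{\bar{v}}^0 \subset \mathcal{A}_{\bar{e}}^0$ (the analogue of Lemma \ref{lemma: inclusions of A_x's}(3), when $\bar{d}^i$ happens to be a vertex $\bar{v}$), the endpoint values automatically lie in $\mathcal{A}_{\bar{e}}^0$, so appending the requirement $\Phi_{\bar{D}_R}(\bar{x}) \in \mathcal{A}_{\bar{e}}^0$ at \emph{every} $\bar{x} \in \bar{D}_R$ is consistent and non-restrictive at the endpoints. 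Assembling these equivalences yields the description in the corollary.

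The only item requiring a little care is the book-keeping around endpoints $\bar{d}^i$ that are \emph{not} vertices; this occurs for instance when $D_R = [b(e^0), v^1] \cup [v^1, b(e^1)]$, but that geometry arises only for $R = \langle a_n, -b \rangle$ with odd $n$ and $R = \langle -a_n, -b \rangle$ with odd $n/2$, both of which are excluded from the current discussion since we are assuming $R \ne \Z_n, \langle a_n, -b \rangle$. For the remaining $R$'s, either $\bar{d}^i$ is a vertex (and the analogue of Lemma \ref{lemma: inclusions of A_x's}(3) applies) or $\bar{d}^i$ is the barycentre of an edge (where one simply has $\mathcal{A}_{\bar{d}^i} = \mathcal{A}_{\bar{e}}$ and no adjustment is needed). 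With these case-distinctions in hand, the remainder of the argument is the same routine bookkeeping as in the proof of Corollary \ref{corollary: Omega}, and no substantive obstacle arises.
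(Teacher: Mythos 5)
Your overall route is the intended one: the paper gives no separate proof of this corollary, and it is meant to be obtained from Theorem \ref{theorem: clutching condition cyclic dihedral} exactly as Corollary \ref{corollary: Omega} is obtained from Theorem \ref{theorem: clutching condition}; your reduction of Condition E3$^\prime$ to the pair $d^0, d^1$ via Lemma \ref{lemma: D_R for cyclic dihedral} is also fine. The genuine gap is in your treatment of Condition E2$^\prime$ at interior points. You assert that $(G_\chi)_{|e|} = (G_\chi)_x$ for every non-vertex $\bar{x}$ interior to $|\bar{e}|$, but Table \ref{table: isotropy for cyclic dihedral} shows this fails at barycenters: $R_{b(e^0)}$ strictly contains $R_x$ for several $R$'s. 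In most of those cases $b(e^0)$ is an endpoint $d^i$ of $D_R$ and is caught by the endpoint clause, but for $R = \langle -a_n, b\rangle$ with odd $n/2$ one has $D_R = |e^0|$, so $b(e^0)$ is an \emph{interior} point of $D_R$ with $R_{b(e^0)} = \langle -a_n^{n/2}, a_n^2 b\rangle \supsetneq \langle -a_n^{n/2}\rangle = R_{|e^0|} = R_x.$ There Condition E2$^\prime$ requires $\Phi_{\bar{D}_R}(b(\bar{e}_S^0)) \in \mathcal{A}_{b(\bar{e}_S^0)}^0,$ which by Proposition \ref{proposition: psi for cyclic} is cut out inside $\mathcal{A}_{\bar{e}_S^0}^0$ by the extra requirement of equivariance for the preimage of $R_{b(e^0)} \cap R_S = \langle -a_n^{n/2+2}b\rangle \supsetneq H$; this is generically a proper subset. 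So the equivalence you claim at that point is false, and your argument neither establishes the displayed description for that $R$ nor flags that the condition there really has to be $\Phi_{\bar{D}_R}(\bar{x}) \in \mathcal{A}_{\bar{x}}^0$ rather than $\mathcal{A}_{\bar{e}}^0$.

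A second error sits in your endpoint case analysis. The group $\langle -a_n, -b\rangle$ with odd $n/2$ is \emph{not} excluded by the hypothesis $R \ne \Z_n, \langle a_n, -b\rangle$ (it is a different subgroup of $\orthogonal(3)$), and for it $D_R = [b(e^0), v^1] \cup [v^1, b(e^1)]$: both endpoints are barycenters and the vertex $v^1$ lies in the interior of $D_R$. So your claim that non-vertex endpoints arise only in excluded cases is wrong, as is the assertion that a barycenter endpoint satisfies $\mathcal{A}_{\bar{d}^i} = \mathcal{A}_{\bar{e}}$ (again $R_{b(e^0)} \supsetneq R_{|e^0|}$, e.g.\ for $\D_n \times Z$ with even $n$). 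What actually makes the endpoints work is that the second clause of the corollary refers to $\mathcal{A}_{\bar{d}^i}$ directly, so one only needs the inclusion $\mathcal{A}_{\bar{d}^i}^0 \subset \mathcal{A}_{\bar{e}}^0$ furnished by Lemma \ref{lemma: restricted pointwise clutching}, not the equality you invoke; and the interior vertex $v^1$ is harmless because there $R_{v^1} = \langle -a_n^{n/2}\rangle = R_{|e^0|}.$ This bookkeeping has to be done case by case from Table \ref{table: all about K_R}, not from the blanket identities your proposal relies on.
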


By using this, we would show nonemptiness of $\Omega_{\bar{D}_R,
(W_{d^i})_{i \in I^+}}.$ For this, we need a lemma.

\begin{lemma}
 \label{lemma: existence of barA cyclic dihedral}
For each $(W_{d^i})_{i \in I^+} \in A_{G_\chi} (S^2, \chi),$ if we
put
\begin{equation*}
V_B = G_\chi \times_{(G_\chi)_{d^{-1}}} W_{d^{-1}}, \quad F_{V_B} =
G_\chi \times_{(G_\chi)_{d^{-1}}} ( |\lineK_{R, S}| \times
W_{d^{-1}} ),
\end{equation*}
then each $\mathcal{A}_{\bar{x}}$ for $\bar{x} \in [\bar{d}^0,
\bar{d}^1]$ is nonempty. And, we can pick an element
$(\psi_{\bar{d}^i})_{i \in I}$ in $\bar{A}_{G_\chi} (S^2, V_B)$
which determines $(W_{d^i})_{i \in I^+}.$
\end{lemma}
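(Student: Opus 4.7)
The plan is to mirror the proof of Lemma \ref{lemma: existence of barA} from the platonic setting, adapting it to the fact that here $F_{V_B}$ is induced from $|\lineK_{R,S}| \times W_{d^{-1}}$ over $(G_\chi)_{d^{-1}}$ rather than from a single face over $(G_\chi)_{b(\bar f^{-1})}$. For each $\bar{x}$ in $[\bar{d}^0, \bar{d}^1]$ and $x=|\pi|(\bar{x})$, I put $N_2 = (G_\chi)_x$ and $F = (\res_{(G_\chi)_x}^{G_\chi} F_{V_B})|_{|\pi|^{-1}(x)}$, which satisfies Condition F1 or F2 by Example \ref{example: pointwise clutching} (still available in this setting after the minor redefinitions of Section \ref{section: clutching construction cyclic dihedral}), so the whole machinery of Section \ref{section: pointwise clutching map} applies.

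The key computation is to identify the fiber $F_{\bar{x}_0}$ at $\bar{x}_0 = \bar{x} \in \lineL_{R,S}$. By construction of $F_{V_B}$, this fiber is canonically $W_{d^{-1}}$ restricted to the isotropy $(G_\chi)_{\bar{x}}$. Since $\bar{x}$ lies on the image of $\lineK_{R,S}$ above the arc $[d^{-1},x]$ in $|\complexK_R|$, Lemma \ref{lemma: intersection of isotropy} — which is precisely where the hypothesis $R \ne \Z_n, \langle a_n,-b\rangle$ is used and which applies here because the present section excludes those two groups — yields
\begin{equation*}
(G_\chi)_{\bar{x}} = (G_\chi)_{[d^{-1}, x]} = (G_\chi)_{d^{-1}} \cap (G_\chi)_x,
\end{equation*}
so $F_{\bar{x}_0} \cong \res_{(G_\chi)_{d^{-1}} \cap (G_\chi)_x}^{(G_\chi)_{d^{-1}}} W_{d^{-1}}$. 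At the endpoints $x = d^i$ for $i \in I$, Definition \ref{definition: A_R}(iii) (applied to the pair $d^{-1}, d^i$) supplies
\begin{equation*}
\res_{(G_\chi)_{d^{-1}} \cap (G_\chi)_{d^i}}^{(G_\chi)_{d^{-1}}} W_{d^{-1}} \cong \res_{(G_\chi)_{d^{-1}} \cap (G_\chi)_{d^i}}^{(G_\chi)_{d^i}} W_{d^i},
\end{equation*}
which exhibits $W_{d^i}$ as a $(G_\chi)_{d^i}$-extension of $F_{\bar{d}^i}$. Theorem \ref{theorem: bijectivity with extensions} then produces an element $\psi_{\bar{d}^i} \in \mathcal{A}_{\bar{d}^i}$ determining $W_{d^i}$, proving nonemptiness at the endpoints. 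For interior $\bar{x}$, nonemptiness follows from the cyclic/dihedral analog of Lemma \ref{lemma: inclusions of A_x's}, since restricting $\psi_{\bar{d}^i}$ to the smaller isotropy subgroup along the arc gives an element of $\mathcal{A}_{\bar{x}}$.

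For the second assertion, I take the pair $(\psi_{\bar{d}^0}, \psi_{\bar{d}^1})$ chosen above. If $d^1 = g d^0$ for some $g \in G_\chi$, Lemma \ref{lemma: conjugate pointwise clutching} gives
\begin{equation*}
(\res_{(G_\chi)_{d^1}}^{G_\chi} F_{V_B})|_{|\pi|^{-1}(d^1)} \big/ (g \cdot \bar{\psi}_{d^0}) \cong{}^{g} W_{d^0},
\end{equation*}
and Definition \ref{definition: A_R}(ii) tells us ${}^g W_{d^0} \cong W_{d^1}$; thus by Theorem \ref{theorem: bijectivity with extensions} we may replace the chosen $\psi_{\bar{d}^1}$ by the (unique up to homotopy) preimage of $g \cdot \bar{\psi}_{d^0}$ under the saturation map, thereby arranging $\bar{\psi}_{d^1} = g \cdot \bar{\psi}_{d^0}$. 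Since $V_{\bar{f}^{-1}} = W_{d^{-1}}$ tautologically from the definition of $V_B$, the pair $(\psi_{\bar{d}^i})_{i \in I}$ then lies in $\bar{A}_{G_\chi}(S^2, V_B)$ and determines $(W_{d^i})_{i \in I^+}$. The main obstacle is the isotropy identification in the second paragraph — everything else is a routine transcription of the platonic argument — and this is precisely what the exclusion of $\Z_n, \langle a_n,-b\rangle$ buys us through Lemma \ref{lemma: intersection of isotropy}.
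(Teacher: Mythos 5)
Your overall architecture (identify the fiber $F_{\bar{x}_0}$ via Lemma \ref{lemma: intersection of isotropy}, read off the extension property from Definition \ref{definition: A_R}, and fix up $\psi_{\bar{d}^1}$ by conjugation using Lemma \ref{lemma: conjugate pointwise clutching}) matches the paper's, but there is a genuine gap at the nonemptiness step: you invoke Theorem \ref{theorem: bijectivity with extensions} at both endpoints $d^0, d^1$ uniformly, and that theorem is stated and proved only under Condition F2, i.e.\ only when $(G_\chi)_{d^i}$ acts \emph{transitively} on the two points of $|\pi|^{-1}(d^i)$. You do note that $F$ satisfies ``Condition F1 or F2,'' but then proceed as if the F2 machinery always applies. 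It does not: by Table \ref{table: all about K_R}, for $R = \Z_n \times Z$ with odd $n$ and for $\langle -a_n \rangle$ with even $n/2$ one has $(G_\chi)_{d^i} = H$, which fixes both of $\bar{d}_0^i, \bar{d}_1^i$, so \emph{both} $F_0$ and $F_1$ satisfy only Condition F1 and your argument has nothing to start from. (Even outside these two cases a single endpoint can be of type F1 --- e.g.\ $d^1 = b(e^0)$ for $\langle -a_n, b\rangle$ with even $n/2$, whose isotropy group fixes $S$ and $N$.)

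In the F1 situation the correct tool is Lemma \ref{lemma: pointwise clutching for m=2 nontransitive}: $\mathcal{A}_{\bar{d}^i} \cong \Iso_{(G_\chi)_{d^i}}\big( F_{\bar{x}_0}, F_{\bar{x}_1} \big)$, which is nonempty if and only if the two fibers are isomorphic as $(G_\chi)_{d^i}$-representations. Your appeal to Definition \ref{definition: A_R}(iii) only relates $F_{\bar{x}_0}$ (a restriction of $W_{d^{-1}} = V_S$) to $W_{d^i}$; it says nothing about $F_{\bar{x}_1}$, which is a restriction of $V_N = {}^g W_{d^{-1}}$ and is not one of the data $W_{d^{-1}}, W_{d^0}, W_{d^1}$. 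The missing ingredient is the isomorphism $\res_H^{(G_\chi)_S} V_S \cong \res_H^{(G_\chi)_N} V_N$ of (\ref{equation: V_B cyclic dihedral}), which holds because both sides are $\chi$-isotypical of the same dimension ($G_\chi$ fixes $\chi$); this is exactly what the paper uses to settle the two exceptional cases. So your proof needs the case split: where some $F_i$ satisfies F2, argue as you did; where it satisfies only F1, replace Theorem \ref{theorem: bijectivity with extensions} by Lemma \ref{lemma: pointwise clutching for m=2 nontransitive} together with (\ref{equation: V_B cyclic dihedral}).
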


\begin{proof}
For each $i \in I,$ put $\bar{\mathbf{x}} = |\pi|^{-1}(d^i) = \{
\bar{x}_j | \bar{x}_j = \bar{d}_j^i \text{ for } j \in \Z_2 \}.$ If
we put $F_i = \big( \res_{(G_\chi)_{d^i}}^{G_\chi} F_{V_B} \big)
|_{\bar{\mathbf{x}}}$ and $N_2 = (G_\chi)_{d^i},$ then $N_1 =
(G_\chi)_{\bar{d}^i}.$ By Table \ref{table: all about K_R} and
(\ref{equation: V_B cyclic dihedral}), at least one of $F_i$'s
satisfies Condition F2. except two cases $R = \Z_n \times Z$ with
odd $n,$ $\langle -a_n \rangle$ with even $n/2.$ Then, we obtain
nonemptiness as in Lemma \ref{lemma: existence of barA} in these
cases. In the remaining two cases, $(G_\chi)_{\bar{x}} = H$ for each
$\bar{x} \in [\bar{d}^0, \bar{d}^1]$ by Table \ref{table: all about
K_R}, and $F_i$'s satisfy Condition F1. because $\res_H^{(G_\chi)_S}
V_S \cong \res_H^{(G_\chi)_N} V_N$ by (\ref{equation: V_B cyclic
dihedral}). So, we obtain nonemptiness by Lemma \ref{lemma:
pointwise clutching for m=2 nontransitive}. The second statement is
proved as in Lemma \ref{lemma: existence of barA}. \qed
\end{proof}

By using these, we obtain nonemptiness of $\Omega_{\bar{D}_R,
(W_{d^i})_{i \in I^+}}.$

\begin{proposition}
 \label{proposition: nonempty omega cyclic dihedral}
For each $(W_{d^i})_{i \in I^+} \in A_{G_\chi} (S^2, \chi),$ the set
$\Omega_{\bar{D}_R, (W_{d^i})_{i \in I^+}}$ is nonempty.
\end{proposition}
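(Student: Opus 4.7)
The plan is to imitate the proof of Proposition \ref{proposition: nonempty omega} with the cyclic/dihedral notations set up in Section \ref{section: clutching construction cyclic dihedral}. First I would invoke Lemma \ref{lemma: existence of barA cyclic dihedral} to fix an element $(\psi_{\bar{d}^i})_{i \in I}$ in $\bar{A}_{G_\chi}(S^2, V_B)$ that determines $(W_{d^i})_{i \in I^+}$; this gives me the required boundary data at $\bar{d}^0$ and $\bar{d}^1$. It then remains to exhibit a preclutching map $\Phi_{\bar{D}_R}$ on $\bar{D}_R$ whose value at $\bar{d}^i$ equals $\psi_{\bar{d}^i}(\bar{d}^i)$ and which satisfies either Theorem \ref{theorem: clutching condition cyclic dihedral} or Corollary \ref{corollary: Omega cyclic dihedral}.

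For $R = \Z_n$ or $\langle a_n, -b\rangle$, I would appeal directly to Theorem \ref{theorem: clutching condition cyclic dihedral}. By Table \ref{table: all about K_R} every interior point $\bar{x}$ of $\bar{D}_R$ has $(G_\chi)_x = H$, so $\mathcal{A}_{\bar{x}}^0$ contains $\Iso_H(V_S, V_N)$, which by Schur's lemma is a product of general linear groups and is therefore path-connected. Choosing a continuous path in $\Iso_H(V_S, V_N)$ from $\psi_{\bar{d}^0}(\bar{d}^0)$ to $\psi_{\bar{d}^1}(\bar{d}^1)$ (possibly inserting at $v^0$ or the boundary fixed data at the pole as prescribed by Lemma \ref{lemma: existence of barA cyclic dihedral}) then gives a $\Phi_{\bar{D}_R}$ satisfying E2$'$ and E3$'$ of Theorem \ref{theorem: clutching condition cyclic dihedral}.

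For the remaining cases I would use Corollary \ref{corollary: Omega cyclic dihedral}. Exactly as in the proof of Proposition \ref{proposition: nonempty omega}, Lemma \ref{lemma: inclusions of A_x's} (restated for the cyclic/dihedral set-up) together with Lemma \ref{lemma: intersection of isotropy} shows that when $\bar{e}$ is an edge of $\lineL_{R,S}$ with $\bar{D}_R \subset |\bar{e}|$, the points $\psi_{\bar{d}^0}(\bar{d}^0)$ and $\psi_{\bar{d}^1}(\bar{d}^1)$ both lie in the same path component $(\mathcal{A}_{\bar{e}})_{\psi_{\bar{e}}}^0$ of $\mathcal{A}_{\bar{e}}^0$, because $(G_\chi)_{|e|} = (G_\chi)_{d^0}\cap (G_\chi)_{d^1}$ and the two $(G_\chi)_{|e|}$-representations to which they reduce agree by Definition \ref{definition: A_R}(iii) and Theorem \ref{theorem: bijectivity with extensions}. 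Picking any path in that component yields the desired $\Phi_{\bar{D}_R}$.

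The only slightly delicate case is when $D_R$ crosses a vertex, namely $D_R = [b(e^0), v^1] \cup [v^1, b(e^1)]$, so that $\bar{D}_R$ traverses two edges of $\lineL_R$ joined at $\bar{v}^1$. The main obstacle is to match the two edge-paths continuously at $\bar{v}^1$: I would first pick any $\psi_{\bar{v}^1} \in \mathcal{A}_{\bar{v}^1}$ (nonempty by Lemma \ref{lemma: existence of barA cyclic dihedral}), then use Lemma \ref{lemma: inclusions of A_x's} to see that its image $\psi_{\bar{v}^1}(\bar{v}^1)$ lies simultaneously in both $\mathcal{A}_{\bar{e}^0}^0$ and $\mathcal{A}_{\bar{e}^1}^0$ in the correct components, and finally build the two edge-paths separately, connecting $\psi_{\bar{d}^0}(\bar{d}^0)$ to $\psi_{\bar{v}^1}(\bar{v}^1)$ and $\psi_{\bar{v}^1}(\bar{v}^1)$ to $\psi_{\bar{d}^1}(\bar{d}^1)$ inside the respective path components. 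Concatenation produces the required $\Phi_{\bar{D}_R} \in \Omega_{\bar{D}_R, (W_{d^i})_{i \in I^+}}$.
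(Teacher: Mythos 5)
Your argument is correct and is essentially the route the paper intends: the paper writes no proof at all, simply asserting the claim right after Corollary \ref{corollary: Omega cyclic dihedral} and Lemma \ref{lemma: existence of barA cyclic dihedral}, and your transcription of the argument of Proposition \ref{proposition: nonempty omega} --- fix $(\psi_{\bar{d}^i})_{i \in I}$ by Lemma \ref{lemma: existence of barA cyclic dihedral}, use Definition \ref{definition: A_R}(iii) together with Lemma \ref{lemma: intersection of isotropy} and Theorem \ref{theorem: bijectivity with extensions} to place $\psi_{\bar{d}^0}(\bar{d}^0)$ and $\psi_{\bar{d}^1}(\bar{d}^1)$ in one path component of $\mathcal{A}_{\bar{e}}^0$, and join them by a path --- is exactly what is needed, including the concatenation at $\bar{v}^1$ when $D_R = [b(e^0), v^1] \cup [v^1, b(e^1)]$ (where, for the only group in scope with that $D_R$, namely $\langle -a_n, -b \rangle$ with odd $n/2$, one even has $(G_\chi)_{v^1} = (G_\chi)_{|e^0|} = (G_\chi)_{|e^1|}$, so the three relevant values live in a single $\mathcal{A}_{\bar{e}}^0$). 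One remark on scope: your second paragraph is vacuous here, since the proposition, the notation $\Omega_{\bar{D}_R, (W_{d^i})_{i \in I^+}}$, and Lemma \ref{lemma: existence of barA cyclic dihedral} all live under the standing assumption $R \ne \Z_n, \langle a_n, -b \rangle$ made just before Corollary \ref{corollary: Omega cyclic dihedral}; for those two excluded groups $A_{G_\chi}(S^2, \chi)$ consists of pairs $(W_q)_{q \in B}$ rather than triples, and nonemptiness there is handled directly in Propositions \ref{proposition: cyclic case} and \ref{proposition: <a_n, -b> case}.
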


\section{Proof for cases when $\pr(\rho(G_\chi)) = \Z_n,$ $\D_n$}
\label{section: cyclic dihedral cases}

In this section, we calculate homotopy of equivariant clutching maps
in cases when $\pr(R) = \Z_n,$ $\D_n.$ We parameterize each edge
$|\bar{e}_S^i|$ in $|\lineK_{R, S}|$ for $0 \le i \le m_R-1$ by the
interval $[i, i+1]$ linearly to satisfy $\bar{v}_S^i \mapsto i$ when
$m_R \ge 3.$ The vertex $\bar{v}_S^0$ is parameterized by 0 or $m_R$
according to context. First, we deal with cases when $R = \Z_n,$
$\langle a_n, -b \rangle.$ Similarly to Lemma \ref{lemma: elementary
lemma on isotropy representation}, we easily obtain the following
lemma:
\begin{lemma}
 \label{lemma: restricted isotropy representation}
Assume that $R = \Z_n,$ $\langle a_n, -b \rangle.$ Then,
\begin{equation*}
\res_{(G_\chi)_x}^{G_\chi} E_S \cong \res_{(G_\chi)_x}^{G_\chi} E_N
\end{equation*}
for each $E$ in $\Vect_{G_\chi} (|\complexK_R|, \chi)$ and $x \in
|\complexL_R|.$
\end{lemma}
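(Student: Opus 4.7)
The plan is to apply Lemma~\ref{lemma: elementary lemma on isotropy representation} twice, using $x$ as a bridge between $S$ and $N$. The key point is that for the two groups in question, both poles are fixed by all of $G_\chi$, and the fixed set of the isotropy at any equatorial point is a connected set containing the poles.

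First I would observe that for $R = \Z_n$ or $R = \langle a_n, -b \rangle$, every element of $R$ preserves the $z$-axis setwise, so in particular fixes both $S$ and $N$. Consequently $(G_\chi)_S = (G_\chi)_N = G_\chi$, and therefore $(G_\chi)_S \cap (G_\chi)_x = (G_\chi)_N \cap (G_\chi)_x = (G_\chi)_x$ for every $x \in |\complexL_R|$. This reduces the problem to producing, inside the fixed set of $(G_\chi)_x$, a connected subset containing all three of $S$, $x$, $N$.

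Next I would identify the fixed set $|\complexK_R|^{(G_\chi)_x} = |\complexK_R|^{R_x}$ by inspecting the elements of $R$. Every nontrivial element is either a rotation $a_n^k$ around the $z$-axis, which fixes no point of the equator when $k \not\equiv 0$, or a reflection of the form $a_n^k(-b)$, whose fixed plane contains the $z$-axis (a short matrix check, since the $(3,3)$-entry of $-a_n^k b$ is $+1$). Thus $R_x$ is either trivial or cyclic of order two generated by such a reflection---any two distinct reflections would compose to a nontrivial rotation around the $z$-axis, contradicting $x$ on the equator. In the trivial case $|\complexK_R|^{R_x} = |\complexK_R|$, and in the reflection case $|\complexK_R|^{R_x}$ is the piecewise-linear great circle cut out by the reflection plane; in either case it is connected and contains $S$, $x$, $N$.

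Finally, Lemma~\ref{lemma: elementary lemma on isotropy representation} applied to the pair $(S, x)$ gives $\res_{(G_\chi)_x}^{G_\chi} E_S \cong E_x$, and applied to $(x, N)$ gives $E_x \cong \res_{(G_\chi)_x}^{G_\chi} E_N$; concatenating yields the stated isomorphism. The only point that requires any care is the short verification that two distinct reflections $a_n^k(-b)$ cannot simultaneously fix a common equatorial point, which is what keeps $R_x$ of order at most two and keeps the fixed set connected. Everything else is immediate from the structure of $R \subset \orthogonal(3)$ and the cited lemma.
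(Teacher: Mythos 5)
Your proof is correct and follows essentially the same route as the paper's: the paper's one-line argument is that the great circle through $N,$ $S,$ $x$ is fixed by $(G_\chi)_x$ because $G_\chi$ fixes $B,$ and then Lemma \ref{lemma: elementary lemma on isotropy representation} applies. You have merely spelled out the verification that $R_x$ is trivial or generated by a single reflection whose fixed great circle contains both poles, which is the content the paper leaves implicit.
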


\begin{proof}
In these cases, the great circle passing through $N, S, x$ is fixed
by $(G_\chi)_x$ because $G_\chi$ fixes $B.$ From this, we obtain a
proof by Lemma \ref{lemma: elementary lemma on isotropy
representation}. \qed
\end{proof}

This lemma says that $p_\vect$ is well-defined in cases when $R =
\Z_n,$ $\langle a_n, -b \rangle.$ In other cases, $p_\vect$ is
well-defined by Lemma \ref{lemma: elementary lemma on isotropy
representation} and Lemma \ref{lemma: intersection of isotropy}. For
a path $\gamma$ defined on $[0,1],$ define a path $\gamma_{-i}$ on
$[i, i+1]$ as $\gamma_{-i} (t) = \gamma (t-i)$ for $i \in \Z.$

\begin{proposition}
\label{proposition: cyclic case}
 Assume that $R = \Z_n.$
Then, Theorem \ref{main: by isotropy and chern} holds for the case.
\end{proposition}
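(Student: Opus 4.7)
By the cyclic--dihedral analog of Proposition \ref{proposition: proposition for isomorphism}(1) (the list of rewrites in Section \ref{section: clutching construction cyclic dihedral}), it suffices to verify, for each pair $(W_q)_{q\in B}$ in $A_{G_\chi}(S^2,\chi)$, that $\pi_0\bigl(\Omega_{\bar{D}_R,(W_q)_{q\in B}}\bigr)$ is nonempty, that the map $[\Phi_{\bar{D}_R}]\mapsto c_1(F_{V_B}/\Phi_{\bar{D}_R})$ is injective on it, and that its image is $\chi(\id)(l_R\Z+k_0)$ for some $k_0$ depending on $(W_q)_{q\in B}$. Nonemptiness is Proposition \ref{proposition: nonempty omega cyclic dihedral}.

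The first step is to identify $\Omega_{\bar{D}_R,(W_q)_{q\in B}}$ explicitly. Since $R=\Z_n$ fixes $B=\{S,N\}$ pointwise and acts (essentially) freely on the equator, Table \ref{table: isotropy for cyclic dihedral} gives $(G_\chi)_x=H$ for every equatorial $x$, so every equivariant pointwise clutching map at such $x$ is governed by $\Iso_H(W_S,W_N)$. Using Theorem \ref{theorem: clutching condition cyclic dihedral}, an element of $\Omega_{\bar{D}_R,(W_q)_{q\in B}}$ is a continuous $\gamma:\bar{D}_R\to X$, where $X:=\Iso_H(W_S,W_N)$, subject to the twist condition
\[
\gamma(\bar{d}^1)=a_\ell\,\gamma(\bar{d}^0)\,a_\ell^{-1},
\]
with $a_\ell$ a representative of a generator of $R/R(D_R)\cong\Z_{l_R}$. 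The twist is forced by equivariance: the adjacent $R$-translate $a_\ell D_R$ meets $D_R$ at the endpoint $\bar{d}^1=a_\ell\bar{d}^0$, and continuity of the full equivariant clutching map there produces the stated relation.

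Set $T:X\to X$, $\varphi\mapsto a_\ell\varphi a_\ell^{-1}$. By Schur's Lemma, $X\cong\GL(l,\C)$ for the common multiplicity $l$ of $\chi$ in $W_S$ and $W_N$ (equal by Definition \ref{definition: A_R}(1)), so $\pi_1(X)\cong\Z$. I would compute $\pi_0$ of the twisted path space
\[
\Omega_TX=\bigl\{\gamma:\bar{D}_R\to X\bigm|\gamma(\bar{d}^1)=T(\gamma(\bar{d}^0))\bigr\}
\]
via the Serre fibration $\Omega_TX\to X$, $\gamma\mapsto\gamma(\bar{d}^0)$, obtained as the pullback of the evaluation fibration $(\mathrm{ev}_0,\mathrm{ev}_1):X^{\bar{D}_R}\to X\times X$ along $x\mapsto(x,Tx)$; its fiber is the space of paths in $X$ from $A$ to $TA$, homotopy equivalent to $\Omega X$. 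The associated long exact sequence yields $\pi_0(\Omega_TX)\cong\pi_1(X)/\im(T_*-\id)$, and since $T$ is inner conjugation on a $\GL(l,\C)$-torsor, $T_*=\id$, so $\pi_0(\Omega_{\bar{D}_R,(W_q)_{q\in B}})\cong\Z$.

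Finally, for Chern classes: given $\gamma$ of class $k\in\Z$, the equivariant extension $\Phi$ to $|\lineL_{R,S}|$ satisfies $\Phi=a_\ell^i\gamma a_\ell^{-i}$ on the $i$-th translate $a_\ell^iD_R$ for $0\le i<l_R$. Because $\det$ is conjugation-invariant, the winding number of $\det\Phi$ as a loop in $\C^*$ (which equals $c_1(F_{V_B}/\Phi)$) is $l_R$ times the winding of the loop $\det\gamma:\bar{D}_R\to\C^*$, the latter being genuinely closed by the twist relation. Lemma \ref{lemma: reduce to smaller matrix} then identifies the generator of $\pi_1(X)$ with $\chi(\id)$ inside $\pi_1(\Iso(W_S,W_N))\cong\Z$, so different classes in $\pi_0(\Omega_{\bar{D}_R,(W_q)_{q\in B}})$ produce different Chern classes and the image is $\chi(\id)(l_R\Z+k_0)$ for a suitable $k_0$. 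Invoking the cyclic--dihedral analog of Proposition \ref{proposition: proposition for isomorphism}(1) completes the argument. The main technical obstacle will be handling the degenerate cases $n=1,2$, in which $\complexK_R=\complexK_1,\complexK_2$ and $D_R=|e^0|$ consists of four or two sub-edges: there the twist formula degenerates (trivially so for $n=1$), but once Table \ref{table: isotropy for cyclic dihedral} is used to read off $a_\ell$ and $l_R$, the fibration-theoretic and Chern class computations above are uniform in $n$.
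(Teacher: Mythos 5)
Your proposal is correct and follows essentially the same route as the paper: identify $\Omega_{\bar{D}_R,(W_q)_{q\in B}}$ with paths in $\Iso_H(V_S,V_N)$ subject to the conjugation twist $\Phi(\bar{d}^1)=g_0\Phi(\bar{d}^0)g_0^{-1}$, deduce $\pi_0\cong\Z$ from $\pi_1(\Iso_H(V_S,V_N))\cong\Z$, and show each generator shifts $c_1$ by $l_R\chi(\id)=n\chi(\id)$ before invoking the cyclic--dihedral analog of Proposition \ref{proposition: proposition for isomorphism}.(1). Your two local variants (the twisted-path-space fibration in place of endpoint normalization plus Lemma \ref{lemma: relative homotopy}, and the winding number of $\det\Phi$ in place of counting conjugated copies of the generator $\sigma_0$ in the equivariant extension) are equivalent to what the paper does and introduce no gap.
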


\begin{proof}
For simplicity, we prove the proposition only when $m_R = n \ge 3.$
Other cases are similarly proved. For each $(W_q)_{q \in B}$ in
$A_{G_\chi} ( S^2, \chi ),$ pick the $G_\chi$-bundle $V_B$ over $B$
such that $V_B|_q = W_q$ for $q \in B.$ Proof is similar to
Proposition \ref{proposition: platonic orientable case}.

In this case, $(G_\chi)_x = H$ for each point $x$ in $|\complexL_R|$
by Table \ref{table: all about K_R} so that $\mathcal{A}_{\bar{x}} =
\mathcal{A}_{\bar{e}}$ and $\mathcal{A}_{\bar{x}}^0 = \Iso_H ( V_S,
V_N )$ by Lemma \ref{lemma: pointwise clutching for m=2
nontransitive} for each $\bar{x} \in |\lineL_{R, S}|$ and $\bar{e}
\in \lineL_{R, S}$ which are nonempty by Lemma \ref{lemma: existence
of barA cyclic dihedral}. Let $g_0$ be an element in $G_\chi$ such
that $\rho(g_0) = a_n,$ i.e. $g_0 d^0 = d^1.$ Similarly to Corollary
\ref{corollary: Omega cyclic dihedral}, $\Omega_{\bar{D}_R, (W_q)_{q
\in B}} = \Omega_{\bar{D}_R, V_B}$ is equal to
\begin{align*}
\Big\{ \Phi_{\bar{D}_R} \in C^0 ( \bar{D}_R, V_B ) ~ | ~
\Phi_{\bar{D}_R} ( \bar{x} ) \in \Iso_H (V_S, V_N) \text{ for each }
\bar{x} \in \bar{D}_R, \\
\text{ and } \Phi_{\bar{D}_R} (\bar{d}^1) = g_0 \Phi_{\bar{D}_R}
(\bar{d}^0) g_0^{-1} \Big\}
\end{align*}
by Theorem \ref{theorem: clutching condition cyclic dihedral}. Pick
a point $*$ in $\Iso_H (V_S, V_N),$ and put $*^\prime = g_0
* g_0^{-1}.$ Since $\Iso_H (V_S, V_N)$ is
path-connected, we may assume that each element $\Phi_{\bar{D}_R}$
in $\Omega_{\bar{D}_R, V_B}$ has $*,
*^\prime$ at $\bar{d}^0, \bar{d}^1,$ respectively. Then, it is easy that $\pi_0
( \Omega_{\bar{D}_R, V_B}) \cong \Z$ by Lemma \ref{lemma: relative
homotopy}.

Let $\gamma_0 : \bar{D}_R = [0,1] \rightarrow \Iso_H (V_S, V_N)$ be
an arbitrary element of $\Omega_{\bar{D}_R, V_B}$ such that
$\gamma_0 (0) = *,$ $\gamma_0 (1) = *^\prime,$ and let $\sigma_0 :
[0,1] \rightarrow \Iso_H (V_S, V_N)$ be a loop such that $\sigma_0
(0) = \sigma_0 (1) = *$ and $[\sigma_0]$ is a generator of $\pi_1
\big( \Iso_H (V_S, V_N), * \big).$ So, $\sigma_0.\gamma_0$ is
contained in $\Omega_{\bar{D}_R, V_B}.$ We would show that the
difference $c_1 \big( F_{V_B} / \sigma_0.\gamma_0 \big) - c_1 \big(
F_{V_B} / \gamma_0 \big)$ is equal to $n \chi(id)$ up to sign where
$l_R = n.$ For this, we need to describe the equivariant clutching
maps precisely. Let $\Phi = \cup_{ q \in B } ~ \varphi_q$ be the
extension of $\gamma_0,$ i.e. $\varphi_S (t) = \gamma_0 (t)$ for $t
\in [0, 1].$ By equivariance of $\Phi,$
\begin{align*}
\varphi_S(t) &= g_0^i \varphi_S(t-i) g_0^{-i} \\
                  &= g_0^i \gamma_0(t-i) g_0^{-i} \\
                  &= g_0^i (\gamma_0)_{-i}(t) g_0^{-i}
\end{align*}
for $0 \le i \le n-1$ and $t \in [i, i+1].$ That is,
\begin{equation*}
\varphi_S = \gamma_0 \vee \big( g_0 (\gamma_0)_{-1} g_0^{-1} \big)
\vee \cdots \vee \big( g_0^{n-1} (\gamma_0)_{-n+1} g_0^{-n+1} \big).
\end{equation*}
And, let $\Phi^\prime = \cup_{ q \in B } ~ \varphi_q^\prime$ be the
extension of $\sigma_0.\gamma_0.$ Then,
\begin{equation}
\label{equation: phi prime} \varphi_S^\prime = (\sigma_0.\gamma_0)
\vee \big( g_0 (\sigma_0.\gamma_0)_{-1} g_0^{-1} \big) \vee \cdots
\vee \big( g_0^{n-1} (\sigma_0.\gamma_0)_{-n+1} g_0^{-n+1} \big).
\end{equation}
Here, $g_0^i (\sigma_0.\gamma_0)_{-i} g_0^{-i}$ for $0 \le i \le
n-1$ is equal to
\begin{equation*}
g_0^i (\sigma_0)_{-i} ~ g_0^{-i} ~.~ g_0^i (\gamma_0)_{-i} ~
g_0^{-i},
\end{equation*}
and we can move $g_0^i (\sigma_0)_{-i} ~ g_0^{-i}$'s in
(\ref{equation: phi prime}) (nonequivariantly) homotopically to $*.$
For example,
\begin{align*}
& [ (\sigma_0.\gamma_0) \vee \big( g_0 (\sigma_0)_{-1} ~ g_0^{-1}
~.~ g_0 (\gamma_0)_{-1} ~ g_0^{-1}
\big) ] \\
= & [ (\sigma_0.\gamma_0. g_0 \sigma_0 g_0^{-1} ) \vee \big( g_0
(\gamma_0)_{-1} ~  g_0^{-1} \big) ] \\
= & [ \big( \sigma_0. \bar{\gamma}_0^{-1} ( g_0 \sigma_0 g_0^{-1} )
. \gamma_0 \big) \vee \big( g_0 (\gamma_0)_{-1} ~ g_0^{-1} \big) ] \\
= & [ ( \sigma_0. \sigma_0 . \gamma_0 ) \vee \big( g_0
(\gamma_0)_{-1} ~ g_0^{-1} \big) ].
\end{align*}
Since $\varphi_S^\prime$ contains $n$ $\sigma_0$'s, difference
between Chern classes is $n \chi(id)$ up to sign by Lemma
\ref{lemma: reduce to smaller matrix}. And, we can conclude that
$c_1$ on $\Omega_{\bar{D}_R, V_B}$ is injective and its image is
equal to the set $\{ \chi( \id ) ( nk + k_0 ) ~ | ~ k \in \Z \}$
where $k_0$ is dependent on the pair. Therefore, we obtain a proof
by Proposition \ref{proposition: proposition for isomorphism}.(1).
\qed
\end{proof}

\begin{proposition}
\label{proposition: <a_n, -b> case}
 Assume that
$R = \langle a_n, -b \rangle.$ Then, Theorem \ref{main: only by
isotropy} holds for the case.
\end{proposition}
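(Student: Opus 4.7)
My plan is to verify Proposition \ref{proposition: proposition for isomorphism}(2), in the form adapted for $R\in\{\Z_n, \langle a_n,-b\rangle\}$ at the end of Section \ref{section: clutching construction cyclic dihedral}: it will suffice to show that $\pi_0(\Omega_{\bar{D}_R,(W_q)_{q\in B}})$ is a singleton for every $(W_q)_{q\in B}\in A_{G_\chi}(S^2,\chi)$. The argument will parallel Proposition \ref{proposition: cyclic case} but with a crucially different conclusion: the extra reflection in $R=\langle a_n,-b\rangle$ supplies cyclic-of-order-$2$ isotropy at each endpoint of $\bar{D}_R$, and this will kill the entire $\pi_1$-ambiguity that survives in the $\Z_n$ case.

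Fix such a pair and let $V_B$ realize it, with $V_S=W_S$ and $V_N=W_N$. Because $R=\langle a_n,-b\rangle$ fixes both $S$ and $N$, every isotropy $(G_\chi)_x$ with $x$ on the equator also fixes $B$; hence the pointwise bundle $(\res_{(G_\chi)_x}^{G_\chi}F_{V_B})|_{|\pi|^{-1}(x)}$ satisfies Condition F1 of Section \ref{section: pointwise clutching map}, and Lemma \ref{lemma: pointwise clutching for m=2 nontransitive} gives $\mathcal{A}_{\bar{x}}^0 = \Iso_{(G_\chi)_x}(V_S,V_N)$. Inspection of Table \ref{table: isotropy for cyclic dihedral} for both parities of $n$ shows that $(G_\chi)_x/H$ is trivial at every interior point of $\bar{D}_R$ and cyclic of order two at each endpoint $\bar{d}^i$; moreover, $d^0$ and $d^1$ lie in distinct $G_\chi$-orbits in both cases (a vertex versus an edge-barycenter when $n$ is even, and the two orbits of edge-barycenters when $n$ is odd as in the remark after Lemma \ref{lemma: isotropy at b(e^0), x for cyclic dihedral}), so Condition E3$'$ of Theorem \ref{theorem: clutching condition cyclic dihedral} imposes no coupling between the two endpoint values. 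Combining these observations will identify $\Omega_{\bar{D}_R,V_B}$ with the relative mapping space
\begin{equation*}
\{\, \Phi\colon \bar{D}_R\to \Iso_H(V_S,V_N) \text{ continuous} \mid \Phi(\bar{d}^i)\in \Iso_{(G_\chi)_{d^i}}(V_S,V_N) \text{ for } i\in I \,\}.
\end{equation*}

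I will then apply Lemma \ref{lemma: relative homotopy} to this triple. By Definition \ref{definition: A_R}(1), $V_S\cong V_N$ as $(G_\chi)_{d^i}$-representations, so fixing any equivariant isomorphism $\alpha\colon V_S\to V_N$ yields compatible homeomorphisms $\Iso_?(V_S,V_N)\cong \Iso_?(V_S)$ via $f\mapsto\alpha^{-1}\circ f$ for both $?=H$ and $?=(G_\chi)_{d^i}$. This reduces the required $\pi_1$-surjectivity of each inclusion $\Iso_{(G_\chi)_{d^i}}(V_S,V_N)\hookrightarrow\Iso_H(V_S,V_N)$ to the inclusion $\Iso_{(G_\chi)_{d^i}}(V_S)\hookrightarrow\Iso_H(V_S)$, which is covered by Lemma \ref{lemma: reduce to smaller matrix cyclic extension} since $(G_\chi)_{d^i}/H$ is cyclic. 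By Schur's Lemma all three spaces appearing in Lemma \ref{lemma: relative homotopy} are nonempty and path-connected, and $\pi_1(\Iso_H(V_S,V_N))$ is abelian; since one of the boundary inclusions already surjects onto this $\pi_1$, the quotient in Lemma \ref{lemma: relative homotopy} is trivial and $\pi_0(\Omega_{\bar{D}_R,V_B})=\{*\}$.

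The main technical hurdle will be the torsor reduction that upgrades Lemma \ref{lemma: reduce to smaller matrix cyclic extension} from $\Iso_{N_2}(W)$ to the asymmetric setting $\Iso_{N_2}(V_S,V_N)$, but this should be short once $\alpha$ is fixed. The small cases $m_R\in\{1,2\}$ will require no new ideas and will be handled via the $\complexK_1,\complexK_2$ conventions established in Section \ref{section: simplicial complex}.
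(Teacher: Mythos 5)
Your proposal is correct and follows essentially the same route as the paper: both identify $\Omega_{\bar{D}_R,V_B}$ with the relative mapping space of paths in $\Iso_H(V_S,V_N)$ whose endpoints lie in $\Iso_{(G_\chi)_{d^i}}(V_S,V_N)$ (via Theorem \ref{theorem: clutching condition cyclic dihedral} and Lemma \ref{lemma: pointwise clutching for m=2 nontransitive}), and then conclude by Lemma \ref{lemma: relative homotopy} together with the $\pi_1$-surjectivity of Lemma \ref{lemma: reduce to smaller matrix cyclic extension}. The extra details you supply (the torsor identification via a fixed equivariant $\alpha$, the non-coupling of $d^0,d^1$, and the small-$m_R$ cases) are points the paper leaves implicit, not a different argument.
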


\begin{proof}
For simplicity, we prove the proposition only when $m_R \ge 3.$
Other cases are similarly proved. For each $(W_q)_{q \in B}$ in
$A_{G_\chi} ( S^2, \chi ),$ pick the $G_\chi$-bundle $V_B$ over $B$
such that $V_B|_q = W_q$ for $q \in B.$

In this case, $(G_\chi)_x = H$ and $\mathcal{A}_{\bar{x}}^0 = \Iso_H
( V_S, V_N )$ for each interior $\bar{x} \in \bar{D}_R$ and its
image $x = |\pi|(\bar{x})$ by Table \ref{table: all about K_R} and
Lemma \ref{lemma: pointwise clutching for m=2 nontransitive}. Here,
$\Iso_H ( V_S, V_N )$ is nonempty by Lemma \ref{lemma: existence of
barA cyclic dihedral}. Similarly, $\mathcal{A}_{\bar{x}}^0 =
\Iso_{(G_\chi)_{x}} ( V_S, V_N )$ is nonempty for $\bar{x} =
\bar{d}^0, \bar{d}^1$ and its image $x = |\pi|(\bar{x}).$ Similarly
to Corollary \ref{corollary: Omega cyclic dihedral},
$\Omega_{\bar{D}_R, (W_q)_{q \in B}} = \Omega_{\bar{D}_R, V_B}$ is
equal to
\begin{align*}
\Big\{ \Phi_{\bar{D}_R} \in C^0 ( \bar{D}_R, V_B ) ~ | ~
\Phi_{\bar{D}_R} ( \bar{x} ) \in \Iso_H (V_S, V_N) \text{ for each }
\bar{x} \in \bar{D}_R, \\
\text{ and } \Phi_{\bar{D}_R} (\bar{x}) \in \mathcal{A}_{\bar{x}}^0
\text{ for } \bar{x} = \bar{d}^0, \bar{d}^1 \Big\}
\end{align*}
by Theorem \ref{theorem: clutching condition cyclic dihedral}. Then,
we obtain a proof by Lemma \ref{lemma: relative homotopy} because
the inclusion from $\Iso_{(G_\chi)_x} ( V_S, V_N )$ to $\Iso_H (
V_S, V_N )$ for $x = d^0, d^1$ induces surjection in the level of
fundamental groups by Lemma \ref{lemma: reduce to smaller matrix
cyclic extension}. \qed
\end{proof}

\begin{proposition}
\label{proposition: D_n x Z with even n case} Assume that $R$ is
equal to one of the following:
\begin{center}
\begin{tabular}{lll}
$\D_n \times Z,$ even $n,$ \qquad    & \qquad $\langle -a_n, -b \rangle,$ odd $n/2,$ \qquad    & \qquad $\langle -a_n, b \rangle,$ odd $n/2,$  \\
$\Z_n \times Z,$ even $n,$ \qquad    & \qquad $\langle -a_n \rangle,$ odd $n/2.$     \qquad    &    \\
\end{tabular}
\end{center}
Then, Theorem \ref{main: only by isotropy} holds for these cases.
\end{proposition}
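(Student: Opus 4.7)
The plan is to apply Proposition \ref{proposition: proposition for isomorphism}.(2), so it suffices to show that $\pi_0(\Omega_{\bar{D}_R, (W_{d^i})_{i \in I^+}})$ is a singleton for each $(W_{d^i})_{i \in I^+} \in A_{G_\chi}(S^2, \chi)$. Nonemptiness comes from Proposition \ref{proposition: nonempty omega cyclic dihedral}, so only path-connectedness remains. Throughout, take $V_B$ and $F_{V_B}$ as in Lemma \ref{lemma: existence of barA cyclic dihedral}. The argument will closely parallel that of Proposition \ref{proposition: platonic nonorientable case}.

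Given two maps $\Phi_{\bar{D}_R}, \Phi'_{\bar{D}_R}$ in $\Omega_{\bar{D}_R, (W_{d^i})_{i \in I^+}}$, let $(\psi_{\bar{d}^i})_{i \in I}$ and $(\psi'_{\bar{d}^i})_{i \in I}$ be the elements of $\bar{A}_{G_\chi}(S^2, V_B)$ they determine via Theorem \ref{theorem: clutching condition cyclic dihedral}. The first step is to perform a small homotopy near $\bar{d}^0$ and $\bar{d}^1$ so that $(\psi'_{\bar{d}^i})_{i \in I} = (\psi_{\bar{d}^i})_{i \in I}$. Since $\psi_{\bar{d}^i}$ and $\psi'_{\bar{d}^i}$ both determine $W_{d^i}$, Theorem \ref{theorem: bijectivity with extensions} supplies a path $\gamma^i$ connecting them in $\mathcal{A}_{\bar{d}^i}$; whenever there exists $g \in G_\chi$ with $g d^0 = d^1$, choose $\gamma^1 = g \cdot \gamma^0$ so that Condition E3$'$ is preserved. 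Using the parametrization on the edges of $\bar{D}_R$, build a homotopy $L(s,t)$ near the endpoints exactly as in the $\tetra \times Z$ case of Proposition \ref{proposition: platonic nonorientable case}, staying inside $\Omega_{\bar{D}_R, (W_{d^i})_{i \in I^+}}$ by Corollary \ref{corollary: Omega cyclic dihedral}.

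After this reduction, $\Phi_{\bar{D}_R}$ and $\Phi'_{\bar{D}_R}$ agree on $\{\bar{d}^0, \bar{d}^1\}$, and by Corollary \ref{corollary: Omega cyclic dihedral} both take values in a single path component $(\mathcal{A}_{\bar{e}})^0_{\psi_{\bar{e}}}$ on each edge segment of $\bar{D}_R$. Inspecting Table \ref{table: all about K_R} for the five groups in question, the isotropy subgroup $R_x$ at an interior edge point is always $\langle -a_n^{n/2}\rangle$, so $(G_\chi)_x / H \cong \Z_2$ is cyclic. Since $V_{\bar{f}^{-1}}$ is $\chi$-isotypical as an $H$-representation, Proposition \ref{proposition: psi for cyclic}.(3) guarantees that $(\mathcal{A}_{\bar{e}})_{\psi_{\bar{e}}}$ is simply connected. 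A relative homotopy with fixed endpoints connecting $\Phi_{\bar{D}_R}$ and $\Phi'_{\bar{D}_R}$ can therefore be produced inside $(\mathcal{A}_{\bar{e}})^0_{\psi_{\bar{e}}}$, yielding a path in $\Omega_{\bar{D}_R, (W_{d^i})_{i \in I^+}}$.

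The main technical obstacle is the case $R = \langle -a_n, -b\rangle$ with odd $n/2$, where the fundamental domain $D_R = [b(e^0), v^1] \cup [v^1, b(e^1)]$ breaks at the interior vertex $\bar{v}^1$. Here the construction must be applied on each of the two edge segments $\bar{e}^0, \bar{e}^1$ separately, and the two pieces must agree at $\bar{v}^1$. This is handled by first reducing at the additional interior point $\bar{v}^1$ as well: one uses Lemma \ref{lemma: inclusions of A_x's}.(3)--(5) to ensure that the value $\Phi_{\bar{D}_R}(\bar{v}^1)$ is pinned down by an element $\psi_{\bar{v}^1} \in \mathcal{A}_{\bar{v}^1}$ compatible with the edge data, and one performs the endpoint-adjusting homotopy at $\bar{v}^1$ as well. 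Since Table \ref{table: all about K_R} shows $R_{v^1}/H$ is also cyclic of order $2$ in this group, Proposition \ref{proposition: psi for cyclic}.(3) again delivers simple connectedness, allowing the relative homotopies on the two segments to be glued consistently.
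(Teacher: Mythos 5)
Your proposal is correct and follows essentially the same route as the paper, whose own proof is just the remark that the argument of Proposition \ref{proposition: platonic nonorientable case} carries over: nonemptiness from Proposition \ref{proposition: nonempty omega cyclic dihedral}, an endpoint-adjusting homotopy to equalize the determined elements of $\bar{A}_{G_\chi}(S^2,V_B)$, and then a relative homotopy inside the simply connected component supplied by Proposition \ref{proposition: psi for cyclic}.(3), since $(G_\chi)_x/H\cong\Z_2$ acts transitively on the two-point fibers. The only comment is that your ``main technical obstacle'' at $\bar{v}^1$ for $R=\langle -a_n,-b\rangle$ with odd $n/2$ is not actually an obstacle: in the two-hemisphere decomposition of Section \ref{section: clutching construction cyclic dihedral} every equatorial point has a two-point fiber, and $(G_\chi)_{v^1}=\rho^{-1}(\langle -a_n^{n/2}\rangle)$ coincides with the isotropy group of interior edge points (as $-a_n^{n/2}$ is central), so $\bar{v}^1$ is indistinguishable from an interior point of $\bar{D}_R$ and no separate gluing of the two segments is required.
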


\begin{proof}
Proof is similar to Proposition \ref{proposition: platonic
nonorientable case}. \qed
\end{proof}

\begin{proposition}
\label{proposition: D_n case} Assume that $R = \D_n.$ Then, Theorem
\ref{main: by isotropy and chern} holds for the case where $l_R =
2n.$
\end{proposition}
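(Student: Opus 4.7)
The plan is to adapt the arguments of Proposition \ref{proposition: platonic orientable case} (for the platonic groups) and Proposition \ref{proposition: cyclic case} (for $\Z_n$) to the dihedral case $R = \D_n$, using the cyclic-dihedral machinery of Section \ref{section: clutching construction cyclic dihedral}. From Table \ref{table: all about K_R} we have $\complexK_R = \complexK_n$, $\bar{D}_R = [\bar{v}_S^0, b(\bar{e}_S^0)]$ with $d^0 = v^0$, $d^1 = b(e^0)$, stabilizer quotients $(G_\chi)_{v^0}/H \cong \langle b \rangle \cong \Z_2$ and $(G_\chi)_{b(e^0)}/H \cong \langle a_n b \rangle \cong \Z_2$, and $(G_\chi)_x = H$ for every interior $x$ of $[v^0, b(e^0)]$. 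Since $v^0$ and $b(e^0)$ lie in distinct $G_\chi$-orbits, the compatibility condition in Definition \ref{definition: barA_G} does not couple $\psi_{\bar{d}^0}$ with $\psi_{\bar{d}^1}$.

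Fix $(W_{d^i})_{i \in I^+} \in A_{G_\chi}(S^2, \chi)$ and set $V_B = G_\chi \times_{(G_\chi)_{d^{-1}}} W_{d^{-1}}$, $F_{V_B} = G_\chi \times_{(G_\chi)_{d^{-1}}}(|\lineK_{R, S}| \times W_{d^{-1}})$. Proposition \ref{proposition: nonempty omega cyclic dihedral} gives nonemptiness of $\Omega_{\bar{D}_R, (W_{d^i})_{i \in I^+}}$, and Lemma \ref{lemma: existence of barA cyclic dihedral} supplies a basepoint $(\psi_{\bar{d}^i})_{i \in I}$ in $\bar{A}_{G_\chi}(S^2, V_B)$ determining $(W_{d^i})_{i \in I^+}$. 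Since each stabilizer quotient at $d^i$ is $\Z_2$ and every $W_{d^i}$ is $\chi$-isotypical, Proposition \ref{proposition: psi for cyclic}.(3) (applied with $m=2$) shows that the components $(\mathcal{A}_{\bar{d}^i})_{\psi_{\bar{d}^i}}$ are simply connected. On the interior of $\bar{D}_R$, Lemma \ref{lemma: pointwise clutching for m=2 nontransitive} gives $\mathcal{A}_{\bar{x}}^0 = \Iso_H(V_S, V_N)$. Combining Corollary \ref{corollary: Omega cyclic dihedral} with Lemma \ref{lemma: relative homotopy} then produces a bijection
\begin{equation*}
\pi_0\bigl(\Omega_{\bar{D}_R, (W_{d^i})_{i \in I^+}}\bigr) \;\cong\; \pi_1\bigl(\Iso_H(V_S, V_N)\bigr).
\end{equation*}

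For the Chern class computation, pick a reference $\gamma_0 \in \Omega_{\bar{D}_R, (W_{d^i})_{i \in I^+}}$ and a loop $\sigma_0$ generating $\pi_1(\Iso_H(V_S, V_N), \psi_{\bar{d}^0}(\bar{d}^0))$. Let $\Phi = \bigcup_{q \in B} \varphi_q$ and $\Phi' = \bigcup_{q \in B} \varphi_q'$ be the extensions of $\gamma_0$ and $\sigma_0.\gamma_0$. Choose a lift $g_0 \in G_\chi$ of $a_n b$; equivariance of $\Phi$ together with Condition N1$'$ forces the restriction of $\varphi_S$ to the second half $[b(\bar{e}_S^0), \bar{v}_S^1]$ of $\bar{e}_S^0$ to be $g_0 \gamma_0^* g_0^{-1}$, exactly as in the platonic computation. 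Consequently $\varphi_S'|_{\bar{e}_S^0}$ contains two copies of the generator, $\sigma_0$ and $g_0 \sigma_0^* g_0^{-1}$, which by Lemma \ref{lemma: reduce to smaller matrix} jointly contribute $\pm 2\chi(\id)$ (say $+2\chi(\id)$) to the Chern class difference. Propagating via the orientation-preserving rotations $a_n^i$ for $i = 1, \ldots, n-1$ distributes matching contributions to the remaining $n-1$ edges of the equator, yielding a total Chern class shift of $2n\,\chi(\id) = l_R\,\chi(\id)$. Together with Proposition \ref{proposition: proposition for isomorphism}.(1), this shows $c_1$ is injective on $\pi_0(\Omega_{\bar{D}_R, (W_{d^i})_{i \in I^+}})$ with image $\{\chi(\id)(2nk + k_0) : k \in \Z\}$, proving Theorem \ref{main: by isotropy and chern}.

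The main obstacle is confirming that $\sigma_0$ and $g_0 \sigma_0^* g_0^{-1}$ contribute with the same sign to the first Chern class rather than cancelling. Since $\sigma_0^*$ is, as a path, the pointwise inverse of a time-reversal of $\sigma_0$, the identity $\sigma_0.\gamma_0 \simeq \bar{\gamma}_0^{-1}(g_0 \sigma_0 g_0^{-1}) \cdot \gamma_0 \simeq \sigma_0 \cdot \gamma_0$ (abelian-ness of $\pi_1$ of the general linear group, exactly as in Proposition \ref{proposition: cyclic case}) lets one rewrite both pieces as positive powers of $\sigma_0$ in $\pi_1(\Iso_H(V_S, V_N))$. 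Once this sign issue is settled, the remaining edges contribute via the rotations $a_n^i$ which act by orientation-preserving isometries, and the count $n \cdot 2 = l_R$ follows immediately from the fact that $\D_n$ acts transitively on the $n$ edges of $\complexK_n$ and that each edge is a union of two copies of $\bar{D}_R$.
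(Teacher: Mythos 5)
Your argument is correct and is exactly the adaptation the paper intends: its own proof of this proposition consists of the single sentence ``Proof is similar to Proposition \ref{proposition: platonic orientable case},'' and your write-up supplies that adaptation faithfully, including the two points that genuinely need checking (that $b$ and $a_nb$ swap the hemispheres so Condition F2 and Proposition \ref{proposition: psi for cyclic}.(3) give simply connected endpoint components, and that the reflection $a_nb$ combined with Condition N1$'$ produces the $*$-operation so the two generators on each edge add rather than cancel). Nothing further is needed.
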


\begin{proof}
Proof is similar to Proposition \ref{proposition: platonic
orientable case}. \qed
\end{proof}

\begin{proposition}
\label{proposition: D_n x Z with odd n case} Assume that $R$ is
equal to one of the following:
\begin{center}
\begin{tabular}{lll}
$\D_n \times Z,$ odd $n,$  \qquad    & \qquad $\langle -a_n, b
\rangle,$ even $n/2,$ \qquad    & \qquad $\langle -a_n, -b \rangle,$
even $n/2.$
\end{tabular}
\end{center}
Then, Theorem \ref{main: only by isotropy} holds for these cases.
\end{proposition}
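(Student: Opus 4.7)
The plan is to apply Proposition \ref{proposition: proposition for isomorphism}(2): for each $(W_{d^i})_{i \in I^+} \in A_{G_\chi}(S^2, \chi)$ I would show that $\pi_0 \big( \Omega_{\bar{D}_R, (W_{d^i})_{i \in I^+}} \big)$ is a single point. Nonemptiness is given by Proposition \ref{proposition: nonempty omega cyclic dihedral}. In each of the three cases, Table \ref{table: all about K_R} gives $D_R = [v^0, b(e^0)]$ with $R_x = \langle \id \rangle$ for every interior $x$, so $(G_\chi)_{|e^0|} = H$ and Condition F1 holds for $\mathcal{A}_{\bar{e}^0}$; hence Lemma \ref{lemma: pointwise clutching for m=2 nontransitive} yields $\mathcal{A}_{\bar{e}^0}^0 = \Iso_H(V_S, V_N)$. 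By Corollary \ref{corollary: Omega cyclic dihedral}, an element of $\Omega_{\bar{D}_R, (W_{d^i})_{i \in I^+}}$ is a path in $\Iso_H(V_S, V_N)$ whose endpoints lie in the path components of $\mathcal{A}_{\bar{v}^0}^0$ and $\mathcal{A}_{b(\bar{e}^0)}^0$ determined by $W_{d^0}$ and $W_{d^1}$; since $v^0$ and $b(e^0)$ lie in distinct $G_\chi$-orbits, the compatibility condition in $\bar{A}_{G_\chi}(S^2, V_B)$ imposes no further relation between the two endpoints.

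The next step is to examine the isotropy at the endpoints. Using that $a_n^k$ fixes $N$ and $S$, while $b$ and $-\id$ each swap $N$ and $S$, one checks case by case that in each of the three $R$'s listed exactly one endpoint satisfies Condition F2 and the other satisfies Condition F1. Explicitly, for $R = \D_n \times Z$ with odd $n$ the generator $-a_n^{(n+1)/2}b$ of $R_{b(e^0)}$ fixes both $N$ and $S$ (F1 at $b(e^0)$); for $R = \langle -a_n, b \rangle$ with even $n/2$ the generator $-a_n^{n/2+1}b$ of $R_{b(e^0)}$ fixes both $N$ and $S$ (F1 at $b(e^0)$); for $R = \langle -a_n, -b \rangle$ with even $n/2$ the generator $-a_n^{n/2}b$ of $R_{v^0}$ fixes both $N$ and $S$ (F1 at $v^0$). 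Let $y$ denote the F1 endpoint; then $(G_\chi)_y/H$ is cyclic of order two and Lemma \ref{lemma: pointwise clutching for m=2 nontransitive} gives $\mathcal{A}_{\bar{y}}^0 = \Iso_{(G_\chi)_y}(V_S, V_N)$.

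Choosing a base point in $\Iso_{(G_\chi)_y}(V_S, V_N)$ identifies the inclusion $\mathcal{A}_{\bar{y}}^0 \hookrightarrow \mathcal{A}_{\bar{e}^0}^0$ with $\Iso_{(G_\chi)_y}(V_S) \hookrightarrow \Iso_H(V_S)$, and Lemma \ref{lemma: reduce to smaller matrix cyclic extension}, applied to the cyclic extension $(G_\chi)_y/H$, shows that this inclusion induces a surjection on $\pi_1$. Hence the image in $\pi_1 \big( \Iso_H(V_S, V_N) \big)$ of $\pi_1$ of the prescribed path component at $y$ already fills the whole group, and Lemma \ref{lemma: relative homotopy} forces $\pi_0 \big( \Omega_{\bar{D}_R, (W_{d^i})_{i \in I^+}} \big)$ to be a one-point set. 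Proposition \ref{proposition: proposition for isomorphism}(2) then delivers the theorem.

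The main obstacle is the endpoint analysis: in contrast with Proposition \ref{proposition: D_n x Z with even n case}, where $-a_n^{n/2} \in R$ acts as reflection through the $xy$-plane and enforces Condition F2 uniformly along $\bar{e}^0$, the element $-a_n^{n/2}$ now lies outside $R$, and F1 versus F2 must be distinguished separately at $v^0$ and at $b(e^0)$ for each of the three groups under consideration. The proof rests on the fact that in each such case at least one endpoint realizes the F1 pattern with a cyclic quotient mod $H$, which is precisely what Lemma \ref{lemma: reduce to smaller matrix cyclic extension} requires in order to collapse $\pi_1 \big( \Iso_H(V_S, V_N) \big)$.
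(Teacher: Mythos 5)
Your proof is correct and follows essentially the same route as the paper's: both reduce the interior constraint to $\Iso_H(V_S,V_N)$, locate the one endpoint of $D_R$ whose isotropy fixes $S$ and $N$ (Condition F1 with $(G_\chi)_y/H\cong\Z_2$), and combine Lemma \ref{lemma: reduce to smaller matrix cyclic extension} with Lemma \ref{lemma: relative homotopy} to collapse the relative homotopy set to a point. The paper additionally notes that the component at the other (F2) endpoint is simply connected, but as your argument shows this is not needed once $\pi_1$-surjectivity is available at the F1 endpoint.
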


\begin{proof}
For each $( W_{d^i} )_{i \in I^+}$ in $A_{G_\chi} ( S^2, \chi ),$
put $V_B = G_\chi \times_{(G_\chi)_{d^{-1}}} W_{d^{-1}}.$ In these
cases, $\mathcal{A}_{\bar{x}}$ is nonempty for each $\bar{x} \in
\bar{D}_R$ by Lemma \ref{lemma: existence of barA cyclic dihedral}.
Let us investigate $\mathcal{A}_{\bar{x}}$ more precisely. First,
$(G_\chi)_x = H$ and $\mathcal{A}_{\bar{x}}^0 = \Iso_H ( V_S, V_N )$
for each interior $\bar{x} \in \bar{D}_R$ and its image $x =
|\pi|(\bar{x})$ by Table \ref{table: all about K_R} and Lemma
\ref{lemma: pointwise clutching for m=2 nontransitive}. Also,
$(G_\chi)_x / H \cong \Z_2$ for $x = d^0$ or $d^1$ (say it $d^0$),
and $\rho \big( (G_\chi)_x \big)$ has an element of the form $-a_n^i
b$ fixing $S$ for some $i$ by Table \ref{table: all about K_R}. From
this, $(G_\chi)_{d^0}$ fixes the great circle containing $S, N, d^0$
so that $V_S$ and $V_N$ are $(G_\chi)_{d^0}$-isomorphic as in Lemma
\ref{lemma: restricted isotropy representation}. And,
$\mathcal{A}_{\bar{d}^0}^0$ is homeomorphic to
$\Iso_{(G_\chi)_{d^0}} ( V_S, V_N )$ by Lemma \ref{lemma: pointwise
clutching for m=2 nontransitive}. Pick an element $\psi_{\bar{d}^1}$
in $\mathcal{A}_{\bar{d}^1}$ which determines $W_{d^1}.$ Note that
$d^0$ and $d^1$ are not in a $G_\chi$-orbit because one is a vertex
and the other is not a vertex in $D_R$ by Table \ref{table:
introduction}. Similarly to (\ref{equation: Omega}) of Proposition
\ref{proposition: nonempty omega}, $\Omega_{\bar{D}_R, ( W_{d^i}
)_{i \in I^+}}$ is equal to
\begin{align*}
\Big\{ \Phi_{\bar{D}_R} \in C^0 ( \bar{D}_R, V_B ) ~ | ~
\Phi_{\bar{D}_R} ( \bar{x} ) \in \Iso_H (V_S, V_N) \text{ for each }
\bar{x} \in \bar{D}_R, \\
\Phi_{\bar{D}_R} (\bar{d}^0) \in \Iso_{(G_\chi)_{d^0}} ( V_S, V_N ),
\text{ and } \Phi_{\bar{D}_R} (\bar{d}^1) \in
(\mathcal{A}_{\bar{d}^1})_{\psi_{\bar{d}^1}}^0 \Big\}
\end{align*}
by Theorem \ref{theorem: clutching condition cyclic dihedral} which
is nonempty by Proposition \ref{proposition: nonempty omega cyclic
dihedral}. Here, $(\mathcal{A}_{\bar{d}^1})_{\psi_{\bar{d}^1}}$ is
simply connected by Proposition \ref{proposition: psi for cyclic}
because $(G_\chi)_{d^1}/H \cong \Z_2$ and $\bar{d}^1,$
$|c|(\bar{d}^1)$ are in a $G_\chi$-orbit by Table \ref{table: all
about K_R}. And, the inclusion from $\Iso_{(G_\chi)_{d^0}} ( V_S,
V_N )$ to $\Iso_H ( V_S, V_N )$ induces surjection in the level of
fundamental groups by Lemma \ref{lemma: reduce to smaller matrix
cyclic extension}. Therefore, we obtain a proof by Lemma \ref{lemma:
relative homotopy}. \qed
\end{proof}

Now, we deal with remaining two cases $R = \Z_n \times Z$ with odd
$n$ or $\langle -a_n \rangle$ with even $n/2.$ For a path $\gamma$
defined on $[0,1],$ define a path $\gamma^{tr}$ on $[0,1]$ as
$\gamma^{tr} (t) = \gamma (1-t)$ for $t \in [0,1].$

\begin{proposition}
 \label{proposition: Z_n x Z with odd n case}
Assume that $R = \Z_n \times Z$ with odd $n$ or $\langle -a_n
\rangle$ with even $n/2.$ For each triple $(W_{d^i})_{i \in I^+} \in
A_{G_\chi} (S^2, \chi),$ we have
\begin{equation*}
\pi_0 ( \Omega_{\bar{D}_R, (W_{d^i})_{i \in I^+}} ) \cong \Z.
\end{equation*}
And, $c_1 \big( F_{V_B} / \Phi \big)$ is constant for $\Phi$ in
$\Omega_{\bar{D}_R, (W_{d^i})_{i \in I^+}}$ when $V_B = G_\chi
\times_{(G_\chi)_{d^{-1}}} W_{d^{-1}}.$
\end{proposition}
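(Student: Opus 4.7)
The strategy mirrors that of Proposition \ref{proposition: cyclic case}, exploiting that in both listed cases we have $D_R = |e^0|$ (so $d^0 = v^0$, $d^1 = v^1$) and $R_x = \langle \id \rangle$ for every $x \in D_R$, hence $(G_\chi)_x = H$ throughout $\bar{D}_R$. The new feature is the existence of an element $g_0 \in G_\chi$ with $g_0 \cdot d^0 = d^1$ whose image $\rho(g_0)$ swaps $S$ and $N$ and is therefore \emph{orientation-reversing} on $S^2$; this orientation-reversal is what will force the Chern class to collapse to a constant.

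First, I would fix such a $g_0$. By Lemma \ref{lemma: pointwise clutching for m=2 nontransitive} the set $\mathcal{A}_{\bar{x}}^0$ equals $\Iso_H(V_S, V_N)$ for every $\bar{x} \in \bar{D}_R$, and Condition F1 together with Lemma \ref{lemma: existence of barA cyclic dihedral} guarantees that all elements of $\mathcal{A}_{\bar{d}^i}$ determine the same pair $(W_{d^0}, W_{d^1})$. Consequently $\Omega_{\bar{D}_R, (W_{d^i})_{i \in I^+}} = \Omega_{\bar{D}_R, V_B}$, and Corollary \ref{corollary: Omega cyclic dihedral} together with Condition E3$'$ identifies it with the set of continuous paths $\Phi_{\bar{D}_R} : \bar{D}_R \to \Iso_H(V_S, V_N)$ satisfying the endpoint constraint $\Phi_{\bar{D}_R}(\bar{d}^1) = g_0 \, \Phi_{\bar{D}_R}(\bar{d}^0)^{-1} \, g_0^{-1}$ interpreted via the fiber action of $g_0$ on the trivialization.

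Second, imitating Proposition \ref{proposition: cyclic case} I would pick a base point $* \in \Iso_H(V_S, V_N)$, put $*' = g_0 *^{-1} g_0^{-1}$, and use path-connectedness of $\Iso_H(V_S, V_N) \cong \GL(l, \C)$ (where $\dim V_S = l \, \chi(\id)$) to homotope every member of $\Omega_{\bar{D}_R, V_B}$ into a path with prescribed endpoints $*, *'$. Applying Lemma \ref{lemma: relative homotopy} to the resulting space of paths with fixed endpoints produces the identification $\pi_0(\Omega_{\bar{D}_R, (W_{d^i})_{i \in I^+}}) \cong \pi_1(\Iso_H(V_S, V_N), *) \cong \Z$, yielding the first assertion.

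Third, for constancy of $c_1$, I would adapt the edge-by-edge bookkeeping from Proposition \ref{proposition: cyclic case} and Proposition \ref{proposition: platonic orientable case}. Let $\gamma_0$ be a base path, $\sigma_0$ a loop in $\Iso_H(V_S, V_N)$ based at $*$ whose class generates $\pi_1$, and let $\Phi$, $\Phi^\prime$ be the equivariant extensions of $\gamma_0$ and $\sigma_0 . \gamma_0$, respectively. By equivariance $\Phi^\prime$ differs from $\Phi$ on every edge in the $R$-orbit of $|e^0|$, each carrying a conjugated copy of $\sigma_0$; by Lemma \ref{lemma: reduce to smaller matrix} each such copy contributes $\pm \chi(\id)$ to $c_1(F_{V_B}/\Phi^\prime) - c_1(F_{V_B}/\Phi)$, with sign $+$ for translates by orientation-preserving elements and $-$ for translates by orientation-reversing elements of $R$. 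The main obstacle will be verifying that this signed total is zero: for $R = \Z_n \times Z$ with odd $n$ the orientation-preserving subgroup $\Z_n$ and its orientation-reversing coset $-\Z_n$ each contribute exactly $n$ edges, giving $n\chi(\id) - n\chi(\id) = 0$; for $R = \langle -a_n \rangle$ with even $n/2$ the elements $(-a_n)^k$ with $k$ even are orientation-preserving and those with $k$ odd are orientation-reversing, and the $n$ edges in the orbit split evenly, again giving $0$. Since $[\sigma_0]$ generates the $\Z$ of $\pi_0$, this shows $c_1(F_{V_B}/\Phi)$ is invariant under the $\Z$-action on $\pi_0$ and hence constant throughout $\Omega_{\bar{D}_R, (W_{d^i})_{i \in I^+}}$.
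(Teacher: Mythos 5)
Your proposal reproduces the paper's proof essentially step for step: the same observation that every isotropy group along $D_R$ is $H$, so that the triple and $V_B$ are both determined by $W_{d^{-1}}$ and $\Omega_{\bar{D}_R, (W_{d^i})_{i \in I^+}} = \Omega_{\bar{D}_R, V_B}$; the same description of $\Omega_{\bar{D}_R, V_B}$ as paths in $\Iso_H(V_S,V_N)$ subject to the single endpoint relation $\Phi(\bar{d}^1) = g_0 \Phi(\bar{d}^0)^{-1} g_0^{-1}$; the same appeal to Lemma \ref{lemma: relative homotopy}; and the same cancellation for $c_1$. Your orientation-based sign bookkeeping in the last step is an equivalent repackaging of the paper's computation: the paper pairs the copy of $\sigma_0$ on an edge with the copy of $g_0 \sigma_0^{-1} g_0^{-1}$ that equivariance and Condition N1$^\prime$ place on the adjacent edge (the inverse appears exactly because $g_0$ exchanges the hemispheres, i.e. is orientation-reversing), and these cancel in $\pi_1$; your even/odd count over the free $R$-orbit of $|e^0|$ gives the same zero total.

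One caveat, which applies verbatim to the paper's own proof as well: the reduction to fixed endpoints computes $\big[[0,1],0,1;X,\{*\},\{*'\}\big] \cong \pi_1(X) \cong \Z$ with $X = \Iso_H(V_S,V_N)$, but the natural surjection from this set onto $\pi_0(\Omega_{\bar{D}_R,V_B})$ is not injective. A homotopy inside $\Omega_{\bar{D}_R,V_B}$ may carry $\Phi(\bar{d}^0)$ around a loop $\ell$ in $X$, and the endpoint relation then forces $\Phi(\bar{d}^1)$ around $g_0\ell^{-1}g_0^{-1}$; since inversion acts by $-1$ on $\pi_1(X) \cong \Z$, the fixed-endpoint class changes by $-2[\ell]$. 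Thus classes of equal parity are identified, and $\pi_0(\Omega_{\bar{D}_R,(W_{d^i})_{i\in I^+}})$ is really $\Z / 2\Z$ rather than $\Z$; for $n=1$ and $G_\chi = R = Z$ acting freely, this is just the statement that $\RP^2$ carries exactly two complex line bundles. This discrepancy does not affect anything downstream, since the proof of Theorem \ref{main: not by isotropy and chern class} uses only that the paths $\sigma_0^i.\gamma$, $i \in \Z$, exhaust $\pi_0$ together with a separate parity argument, and the constancy of $c_1$ is untouched; but as written your second step (like the paper's) proves only that $\Z \to \pi_0(\Omega_{\bar{D}_R,(W_{d^i})_{i\in I^+}})$ is surjective, not that it is bijective.
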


\begin{proof}
In these cases, $\mathcal{A}_{\bar{x}}$ is nonempty for each
$\bar{x} \in \bar{D}_R$ by Lemma \ref{lemma: existence of barA
cyclic dihedral}. Since $(G_\chi)_x=H$ for each $\bar{x} \in
\bar{D}_R$ and its image $x = |\pi|(\bar{x})$ by Table \ref{table:
all about K_R}, the triple satisfies
\begin{equation*}
W_{d^{-1}} = V_S \text{ and } W_{d^i} \cong \res_H^{(G_\chi)_S} V_S
\end{equation*}
for $i \in I$ by Definition \ref{definition: A_R}, i.e. each triple
is determined by the third entry $W_{d^{-1}}.$ So,
$\Omega_{\bar{D}_R, (W_{d^i})_{i \in I^+}}$ $= \Omega_{\bar{D}_R,
V_B}$ because $V_B$ is also determined by $W_{d^{-1}}.$ And,
$\mathcal{A}_{\bar{x}}^0 = \mathcal{A}_{\bar{e}}^0 = \Iso_H ( V_S,
V_N )$ for $\bar{x} \in |\lineL_{R, S}|,$ $\bar{e} \in \lineL_{R,
S}$ by Lemma \ref{lemma: pointwise clutching for m=2 nontransitive}
so that $\pi_0 ( \mathcal{A}_{\bar{x}}^0 ) \cong \Z$ by Schur's
Lemma because $V_S$ is $H$-isotypical.

Let $g_0 \in {G_\chi}$ be an element such that
\begin{enumerate}
  \item $\rho(g_0) = -a_n^{n+1/2}$ if $\rho(G_\chi) = \Z_n \times Z$ with
  odd $n,$
  \item $\rho(g_0) = -a_n^{n/2 +1}$ if $\rho(G_\chi) = \langle -a_n \rangle$ with
  even $n/2.$
\end{enumerate}
In both cases, $g_0 \bar{v}_S^i = \bar{v}_N^{i+1}$ for each $i \in
\Z_{m_R}$ in $\lineL_R.$ Similarly to Corollary \ref{corollary:
Omega cyclic dihedral}, $\Omega_{\bar{D}_R, (W_{d^i})_{i \in I^+}} =
\Omega_{\bar{D}_R, V_B}$ is equal to
\begin{align*}
\Big\{ \Phi_{\bar{D}_R} \in C^0 ( \bar{D}_R, V_B ) ~ | ~
\Phi_{\bar{D}_R} ( \bar{x} ) \in \Iso_H (V_S, V_N) \text{ for each }
\bar{x} \in \bar{D}_R, \\
\text{ and } \Phi_{\bar{D}_R} (\bar{d}^1) = g_0
\Phi_{\bar{D}_R}^{-1} (\bar{d}^0) g_0^{-1} \Big\}
\end{align*}
by Theorem \ref{theorem: clutching condition cyclic dihedral}. Pick
an element $*$ in $\mathcal{A}_{\bar{d}^0}^0 = \Iso_H ( V_S, V_N ),$
and put $*^\prime = g_0
*^{-1} g_0^{-1}$ in $\mathcal{A}_{\bar{d}^1}^0 = \Iso_H ( V_S, V_N ).$
Since $\Iso_H (V_S, V_N)$ is path-connected, we may assume that each
element $\Phi_{\bar{D}_R}$ in $\Omega_{\bar{D}_R, V_B}$ satisfies
\begin{equation*}
\Phi_{\bar{D}_R} (\bar{d}^0) =
* \text{ and } \Phi_{\bar{D}_R} (\bar{d}^1) = *^\prime.
\end{equation*}
So, $\pi_0 ( \Omega_{\bar{D}_R, V_B} ) \cong \Z$ by Lemma
\ref{lemma: relative homotopy} because $\pi_1 \big( \Iso_H ( V_S,
V_N ) \big) \cong \Z.$

Next, we calculate the first Chern class. Our calculation is done in
a similar way with Proposition \ref{proposition: cyclic case}. Pick
an arbitrary element $\Phi_{\bar{D}_R}$ in $\Omega_{\bar{D}_R, V_B}$
such that $\Phi_{\bar{D}_R} (\bar{d}^0) =
*$ and $\Phi_{\bar{D}_R} (\bar{d}^1) = *^\prime.$ Put $\gamma(t) =
\Phi_{\bar{D}_R}(t)$ on $t \in [0,1].$ Take a loop $\sigma_0 : [0,
1] \rightarrow \Iso_H ( V_S, V_N )$ such that $\sigma_0 (0) =
\sigma_0 (1) = *$ and $[ \sigma_0 ]$ is a generator of $\pi_1 (
\Iso_H ( V_S, V_N ), * ).$ To prove our result, we only have to show
that $c_1 \big( F_{V_B} / \sigma_0.\gamma \big) = c_1 \big( F_{V_B}
/ \gamma \big)$ where $\sigma_0.\gamma$ is in $\Omega_{\bar{D}_R,
V_B}$ by Corollary \ref{corollary: Omega cyclic dihedral}. Let $\Phi
= \cup_{q \in B} ~ \varphi_q$ be the extension of $\gamma,$ i.e.
$\varphi_S (t) = \gamma(t)$ on $t \in [0,1].$ Then, equivariance of
$\Phi$ shows that
\begin{align*}
\varphi_S (t) &= g_0 \varphi_N (g_0^{-1} t) g_0^{-1} \\
                   &= g_0 \varphi_S (g_0^{-1} t)^{-1} g_0^{-1} \\
                   &= g_0 \gamma(t-1)^{-1} g_0^{-1}
\end{align*}
for $t \in [1,2].$ That is, $\varphi_S = \gamma \wedge g_0
\gamma_{-1}^{-1} g_0^{-1}$ on $[0,2].$ By using this, if
$\Phi^\prime = \cup_{q \in B} ~ \varphi_q^\prime$ is the extension
of $\sigma_0 . \gamma,$ then we have
\begin{align*}
\varphi_S^\prime &= \sigma_0 . \gamma \wedge g_0 \big(
(\sigma_0)_{-1}^{-1} ~ . ~ \gamma_{-1}^{-1} \big) g_0^{-1} \\
&= \sigma_0 . \gamma \wedge \Big ( g_0 (\sigma_0)_{-1}^{-1} g_0^{-1}
~ . ~ g_0 \gamma_{-1}^{-1} g_0^{-1} \Big)
\end{align*}
on $[0,2].$ Here, two generators $[\sigma_0]$ in $\pi_1 \big( \Iso_H
( V_S, V_N ), * \big)$ and $\big[ g_0 (\sigma_0)_{-1}^{-1} g_0^{-1}
\big]$ in $\pi_1 \big( \Iso_H ( V_S, V_N ), *^\prime \big)$ cancel
each other in $\pi_1 ( \Iso_H ( V_S, V_N ) ).$ This is in contrast
with the calculation of Proposition \ref{proposition: cyclic case}.
Since the whole $\varphi_S$ and $\varphi_S^\prime$ are equivariantly
determined by $\gamma$ and $\sigma_0.\gamma$ as in the proof of
Proposition \ref{proposition: platonic orientable case}, we obtain
$c_1 \big( F_{V_B} / \sigma_0.\gamma \big) = c_1 \big( F_{V_B} /
\gamma \big).$ \qed
\end{proof}

The proposition says that Proposition \ref{proposition: proposition
for isomorphism} does not applies to these two cases. So, we need to
apply Lemma \ref{lemma: equivalent condition for isomorphism} to
these cases. For this, we prove two technical lemmas.

\begin{lemma}
\label{lemma: bundle isomorphism}
 Assume that $R = \Z_n \times Z$ with odd $n$ or
$\langle -a_n \rangle$ with even $n/2.$ For each $(W_{d^i})_{i \in
I^+} \in A_{G_\chi} (S^2, \chi),$ put $V_B = G_\chi
\times_{(G_\chi)_{d^{-1}}} W_{d^{-1}}.$ Let $\eta_S : |\lineK_{R,
S}| \times V_S \rightarrow |\lineK_{R, S}| \times V_S$ be a
$(G_\chi)_S$-isomorphism. For an element $g$ in $G_\chi$ such that
$g S = N,$ let $\eta_N : |\lineK_{R, N}| \times V_N \rightarrow
|\lineK_{R, N}| \times V_N$ be defined by $\eta_N (x) = g \eta_S (
g^{-1} x ) g^{-1}.$ Then,
\begin{enumerate}
  \item $\eta = \cup_{q \in B} ~ \eta_q$ is the unique ${G_\chi}$-isomorphism of
  $F_{V_B}$ extending $\eta_S.$
  \item For a map $\Phi = \bigcup_{ q \in B } ~ \varphi_q$
        in $\Omega_{V_B},$
        the map $\Phi^\prime = \bigcup_{ q \in B } ~ \varphi_q^\prime$
        with $\varphi_q^\prime = \eta_{q^\prime} \varphi_q
        \eta_q^{-1}$ is also an equivariant clutching map satisfying
        the following commutative diagram
\end{enumerate}

\begin{equation*}
\SelectTips{cm}{} \xymatrix{ |\lineL_{R, q}| \times V_q
\ar[r]^-{\eta_q |_{|\lineL_{R, q}|}} \ar[d]^-{\varphi_q} &
|\lineL_{R, q}| \times V_q
\ar[d]^-{\varphi_q^\prime}\\
|\lineL_{R, q^\prime}| \times V_{q^\prime} \ar[r]^-{\eta_{q^\prime}
|_{|\lineL_{R, q^\prime}|}} & |\lineL_{R, q^\prime}| \times
V_{q^\prime} }
\end{equation*}
when $B = \{ q, q^\prime \}.$
\end{lemma}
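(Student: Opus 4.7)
The plan is to verify the two parts in sequence: first that the formula $\eta_N(x) = g\,\eta_S(g^{-1}x)\,g^{-1}$ yields a well-defined $(G_\chi)_N$-isomorphism, that gluing it to $\eta_S$ gives a $G_\chi$-isomorphism of $F_{V_B}$, and that no other extension is possible; then that the conjugation formula $\varphi_q' = \eta_{q'} \varphi_q \eta_q^{-1}$ preserves the clutching conditions N1$^\prime$ and E1$^\prime$. In both listed cases the element $-\id$ (or the product $-a_n$) lies in $\rho(G_\chi)$, so there really does exist a $g \in G_\chi$ with $gS = N$; and by Table \ref{table: introduction} we have $d^{-1} = S$, so $V_B = G_\chi \times_{(G_\chi)_S} V_S$ is the natural induced bundle on $B$.

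For (1), I would first check independence of the choice of $g$. If $g,g' \in G_\chi$ both satisfy $gS = g'S = N$, then $h := g'^{-1}g \in (G_\chi)_S,$ and $(G_\chi)_S$-equivariance of $\eta_S$ gives $\eta_S(hx) = h\,\eta_S(x)\,h^{-1}$ on each fiber. A direct substitution then shows $g\,\eta_S(g^{-1}x)\,g^{-1} = g'\,\eta_S(g'^{-1}x)\,g'^{-1},$ so $\eta_N$ is well defined on all of $|\lineK_{R,N}|$ (since $g$ maps $|\lineK_{R,S}|$ onto $|\lineK_{R,N}|$). The definition manifestly makes $\eta$ intertwine the action of $g$, and combined with $(G_\chi)_S$-equivariance of $\eta_S$ and the relation $(G_\chi)_N = g\,(G_\chi)_S\,g^{-1}$ this yields full $G_\chi$-equivariance. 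Uniqueness is immediate: any $G_\chi$-extension $\eta'$ of $\eta_S$ must satisfy $\eta'(gx) = g\,\eta'(x)\,g^{-1} = g\,\eta_S(x)\,g^{-1}$ for $x \in |\lineK_{R,S}|$, forcing $\eta'|_{|\lineK_{R,N}|} = \eta_N$.

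For (2), it suffices to verify N1$^\prime$ and E1$^\prime$ for $\Phi'.$ Reading off the commuting square, $\varphi_q'(\bar{x}) = \eta_{q'}(|c|(\bar{x}))\,\varphi_q(\bar{x})\,\eta_q(\bar{x})^{-1}$ on fibers. Then $\varphi_{q'}'(|c|(\bar{x}))\,\varphi_q'(\bar{x})$ telescopes using $|c|^2 = \id$ and N1$^\prime$ for $\Phi$, proving N1$^\prime$ for $\Phi'.$ Condition E1$^\prime$ follows by substituting the equivariance relations $\eta_q(g_1 \bar{x}) = g_1\,\eta_q(\bar{x})\,g_1^{-1}$ and $\varphi_q(g_1 \bar{x}) = g_1\,\varphi_q(\bar{x})\,g_1^{-1}$ into the conjugation formula; the $g_1$'s cancel cleanly.

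The only subtle point in the whole argument is the initial well-definedness of $\eta_N$, because it is here that the cosets of $(G_\chi)_S$ in $G_\chi$ must be handled carefully; everything afterward is formal manipulation. I do not expect any obstacle that requires the specific arithmetic conditions on $n$ beyond the mere existence of some $g \in G_\chi$ with $gS = N$, which is what singles out these two cases among the $\pr(R) = \Z_n, \D_n$ families in Table \ref{table: introduction}.
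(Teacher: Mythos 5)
Your proposal is correct and follows essentially the same route as the paper: the paper's own (very terse) proof observes that well-definedness of the extension is the only issue and that it rests on the normality of $(G_\chi)_S = (G_\chi)_N$ in $G_\chi,$ which is exactly what your independence-of-$g$ computation and the conjugation relation $(G_\chi)_N = g(G_\chi)_S g^{-1}$ make explicit. The remaining verifications of N1$^\prime$ and E1$^\prime$ for $\Phi^\prime$ are the formal manipulations you describe, so there is no gap.
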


\begin{proof}
Well-definedness is an only issue here, and normality of $(G_\chi)_S
= (G_\chi)_N$ in ${G_\chi}$ is used for this. \qed
\end{proof}

\begin{remark}
~
\begin{enumerate}
  \item In this lemma, $\eta$ gives a $G_\chi$-isomorphism
  between $F_{V_B} / \Phi$ and $F_{V_B} / \Phi^\prime.$
  \item Let $\eta_{S, t}$ for $t \in [0,1]$ be a homotopy
of $(G_\chi)_S$-isomorphisms of $|\lineK_{R, S}| \times V_S.$ Let
$\Phi_t^\prime = \bigcup_{q \in B} ~\varphi_{q, t}^\prime$ for each
$t \in [0,1]$ be the equivariant clutching map determined by
$\eta_{S, t}$ and $\Phi$ in this lemma. Then, $F_{V_B} /
\Phi_0^\prime$ and $F_{V_B} / \Phi_1^\prime$ are $G_\chi$-isomorphic
by Lemma \ref{lemma: homotopy gives isomorphism}. \qed
\end{enumerate}
\end{remark}

\begin{lemma}
\label{lemma: bundle isomorhism standard form}
 Assume that $R = \Z_n \times Z$ with odd $n$ or
$\langle -a_n \rangle$ with even $n/2.$ For each $(W_{d^i})_{i \in
I^+} \in A_{G_\chi} (S^2, \chi),$ put $V_B = G_\chi
\times_{(G_\chi)_{d^{-1}}} W_{d^{-1}}.$ Then, each
${G_\chi}$-isomorphism $\eta = \cup_{q \in B} ~ \eta_q$ of $F_{V_B}$
is equivariantly homotopic to a ${G_\chi}$-isomorphism $\eta^\prime
= \cup_{q \in B} ~ \eta_q^\prime$ of $F_{V_B}$ such that
\begin{equation*}
\eta_S^\prime |_{|\bar{e}_S^0| \cup |\bar{e}_S^1|} ~ = ~ \gamma
\wedge (\gamma^{tr})_{-1}
\end{equation*}
for some loop $\gamma: \bar{D}_R = [0,1] \rightarrow \Iso_H ( V_S )$
satisfying $\gamma (\bar{v}_S^0) = \gamma (\bar{v}_S^1) = \id$ where
$(\gamma^{tr})_{-1}$ is defined on $[1,2].$
\end{lemma}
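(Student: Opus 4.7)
The plan is to put $\eta_S$ into the prescribed symmetric form on $|\lineK_{R,S}|$ by a sequence of $(G_\chi)_S$-equivariant homotopies, and then invoke Lemma \ref{lemma: bundle isomorphism} to propagate the result to $\eta_N$. The target loop $\gamma$ will be (a modification of) $\eta_S|_{|\bar{e}_S^0|}$, so the content is to equivariantly deform $\eta_S|_{|\bar{e}_S^1|}$ into $\gamma^{\mathrm{tr}}$ after preparatory trivializations at $S$ and at the vertices.

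I begin by exploiting that $V_S$ is $\chi$-isotypical as an $H$-representation, so Schur's lemma gives $\Iso_H(V_S) \cong \GL(l,\C)$ with $l = \dim V_S / \chi(\id)$, path connected with $\pi_1 \cong \Z$, while $\Iso_{(G_\chi)_S}(V_S)$ is a connected product of $\GL$'s. First, pick a path in $\Iso_{(G_\chi)_S}(V_S)$ from $\eta_S(S)$ to $\id$ and extend it radially on a small $R_S$-invariant disk around $S$ to arrange $\eta_S(S) = \id$. Next, the vertex set $\{\bar{v}_S^i\}$ splits into two $R_S$-orbits, represented by $\bar{v}_S^0$ and $\bar{v}_S^1$; choose paths in $\Iso_H(V_S)$ from $\eta_S(\bar{v}_S^j)$ to $\id$ for $j = 0,1$, transport them by $R_S$ to the whole orbits, and extend through disjoint $(G_\chi)_S$-equivariant collars at the vertices to obtain $\eta_S(\bar{v}_S^i) = \id$ for every $i$ while keeping $\eta_S(S) = \id$.

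Write $\gamma_0 = \eta_S|_{|\bar{e}_S^0|}$, $\gamma_1 = \eta_S|_{|\bar{e}_S^1|}$, and let $\beta$ denote $\eta_S$ restricted to the radial edge of $|\lineK_{R,S}|$ from $S$ to $\bar{v}_S^0$; all three are loops at $\id$ in $\Iso_H(V_S)$. Let $\tilde a \in (G_\chi)_S$ be a lift of the generator of $R_S$, so the opposite radius of the fundamental pizza sector $D = [S, \bar{v}_S^0, \bar{v}_S^1] \cup [S, \bar{v}_S^1, \bar{v}_S^2]$ carries the path $s \mapsto \tilde a \beta(s) \tilde a^{-1}$ by equivariance. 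Since $D$ is contractible and $\eta_S|_D$ is a continuous map into $\Iso_H(V_S)$, the boundary loop $\beta \cdot \gamma_0 \cdot \gamma_1 \cdot (\tilde a \beta \tilde a^{-1})^{\mathrm{tr}}$ is null-homotopic, yielding in $\pi_1(\Iso_H(V_S))$
\begin{equation*}
[\beta] + [\gamma_0] + [\gamma_1] - [\tilde a \beta \tilde a^{-1}] = 0.
\end{equation*}
The crux is to show $[\tilde a \beta \tilde a^{-1}] = [\beta]$: by Lemma \ref{lemma: reduce to smaller matrix} the inclusion $\Iso_H(V_S) \hookrightarrow \Iso(V_S)$ is multiplication by $\chi(\id)$ on $\pi_1$, a class in the target is detected by the winding of $\det_{\GL(V_S)}$, and this winding is preserved by $\GL(V_S)$-conjugation; injectivity of multiplication by $\chi(\id)$ then forces the equality in the source. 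Hence $[\gamma_1] = -[\gamma_0] = [\gamma_0^{\mathrm{tr}}]$.

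Since $\gamma_1$ and $\gamma_0^{\mathrm{tr}}$ are loops at $\id$ representing the same class in $\pi_1(\Iso_H(V_S))$, there is a rel-endpoint homotopy between them in $\Iso_H(V_S)$. Transport this homotopy $R_S$-equivariantly over the orbit of $|\bar{e}_S^1|$ and interpolate it into an $R_S$-invariant collar inside $D$ (with $\eta_S$ unchanged outside the collar), producing an equivariant homotopy of $\eta_S$ to an $\eta_S'$ whose restriction to $|\bar{e}_S^0| \cup |\bar{e}_S^1|$ is $\gamma \wedge (\gamma^{\mathrm{tr}})_{-1}$ with $\gamma := \gamma_0$; Lemma \ref{lemma: bundle isomorphism} then provides the matching $\eta_N'$ and the global equivariant homotopy of isomorphisms of $F_{V_B}$. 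The hardest piece is the conjugation-triviality on $\pi_1(\Iso_H(V_S))$; the equivariant collar extensions are standard once one handles the two $R_S$-orbits of edges independently.
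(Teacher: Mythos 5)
Your proof is correct, and it rests on the same underlying fact as the paper's: because $\eta_S$ extends over the disk $|\lineK_{R,S}|$, the restriction of $\eta_S$ to the fundamental arc $|\bar{e}_S^0|\cup|\bar{e}_S^1|$ is forced to be a null-homotopic loop in $\Iso_H(V_S)$, after which the symmetric form $\gamma\wedge(\gamma^{tr})_{-1}$ is obtained by a rel-endpoint homotopy absorbed into a collar. The execution differs in two ways worth noting. First, the paper works with the whole boundary circle: it reparametrizes the cone, inserts paths to $\id$ at the vertices $\bar{v}_S^0,\bar{v}_S^2$, and argues that the full boundary restriction is null-homotopic and is the concatenation of the $R_S$-conjugates of $\alpha$, so that $(m_R/2)[\alpha]=0$ and hence $[\alpha]=0$; you instead localize to the fundamental sector and extract the single relation $[\beta]+[\gamma_0]+[\gamma_1]=[\tilde a\beta\tilde a^{-1}]$, which avoids appealing to torsion-freeness of $\pi_1$. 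Second, both arguments need the fact that conjugation by $\tilde a$ (resp.\ $g_1$) acts trivially on $\pi_1(\Iso_H(V_S))$; the paper uses this implicitly when it identifies the classes of the conjugates of $\alpha$, whereas you prove it explicitly via the determinant winding number and the injectivity of $\pi_1(\Iso_H(V_S))\to\pi_1(\Iso(V_S))$ from Lemma \ref{lemma: reduce to smaller matrix}. Your version also normalizes the value at the middle vertex $\bar{v}_S^1$ to $\id$ explicitly, which the paper's ``we may assume'' glosses over; this is needed for the stated condition $\gamma(\bar{v}_S^1)=\id$. Both routes are sound; yours is slightly longer but leaves fewer steps implicit.
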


\begin{proof}
Note that $R_S = \Z_{m_R /2}$ for both cases, and pick an element
$g_1 \in (G_\chi)_S$ such that $g_1 \bar{v}_S^0 = \bar{v}_S^2.$
Consider $| \lineK_{R, S} |$ as the quotient $|\lineL_{R, S}| \times
[0,1] / |\lineL_{R, S}| \times 0,$ and parameterize points of it by
$(\bar{x},t)$ with $\bar{x} \in |\lineL_{R, S}|$ and $t \in [0,1].$
Let $\gamma_i : [1/2, 1] \rightarrow \Iso_H (V_S)$ be paths such
that
\begin{equation*}
\gamma_i(1/2) = \eta_S (\bar{v}_S^i), \quad \gamma_i (1) = \id,
\quad \gamma_2 (t) = g_1 \gamma_0 (t) g_1^{-1}
\end{equation*}
for each $t,$ $i \in \{ 0, 2 \}.$ Then, define
\begin{equation*}
\eta_S^\prime : |\lineL_{R, S}| \times [0,1/2] ~ \bigcup ~ \Big(
\cup_{i = 0,2} ~ ( \bar{v}_S^i \times [1/2, 1] ) \Big)
\longrightarrow \Iso_H (V_S)
\end{equation*}
to satisfy
\begin{equation*}
  \begin{array}{ll}
    \eta_S^\prime (\bar{x},t) = \eta_S (\bar{x}, 2t)      & \text{ for } t \in [0, 1/2], \\
    \eta_S^\prime (\bar{v}_S^i, t) = \gamma_i (t)   & \text{ for } t \in [1/2, 1], i \in \{ 0,2 \}.
  \end{array}
\end{equation*}
We can extend $\eta_S^\prime$ to $|\lineL_{R, S}| \times [0,1/2]
\bigcup \big( |\bar{e}_S^0| ~ \cup ~ |\bar{e}_S^1| \big) \times
[1/2, 1].$ Then, $\eta_S^\prime$ is equivariantly extended to all
$|\lineK_{R, S}|$ so that $\eta_S^\prime$ is a
$(G_\chi)_S$-isomorphism of $| \lineK_{R, S} | \times V_S.$ Since
$\eta_S^\prime$ is defined on $| \lineK_{R, S} |,$ $\eta_S^\prime
|_{|\lineL_{R, S}|}$ is a (nonequivariantly) homotopically trivial
map when we consider $\eta_S^\prime |_{|\lineL_{R, S}|}$ as a map
from $S^1$ to $\Iso_H (V_S).$ Let $\alpha: |\bar{e}_S^0| \cup
|\bar{e}_S^1| \rightarrow \Iso_H (V_S)$ be the restriction
$\eta_S^\prime |_{|\bar{e}_S^0| \cup |\bar{e}_S^1|}.$ Note that
$\alpha ( \bar{v}_S^0 ) = \alpha ( \bar{v}_S^2 ) = \id.$ Since
$\eta_S^\prime |_{|\lineL_{R, S}|}$ is the ${G_\chi}$-orbit of
$\alpha,$ $\alpha$ should be also (nonequivariantly) homotopically
trivial. Therefore, we may assume that $\alpha = \gamma \wedge
(\gamma^{tr})_{-1}$ for some loop $\gamma: \bar{D}_R \rightarrow
\Iso_H ( V_S )$ satisfying $\gamma (\bar{v}_S^0) = \gamma
(\bar{v}_S^1) = \id.$ \qed
\end{proof}

Now, we can prove Theorem \ref{main: not by isotropy and chern
class}.

\begin{proof}[Proof of Theorem \ref{main: not by isotropy and chern
class}] For each $(W_{d^i})_{i \in I^+},$ put $V_B = G_\chi
\times_{(G_\chi)_{d^{-1}}} W_{d^{-1}}.$ Since $\Omega_{\bar{D}_R,
V_B}$ $=$ $\Omega_{\bar{D}_R, (W_{d^i})_{i \in I^+}}$ as we have
seen in the proof of Proposition \ref{proposition: Z_n x Z with odd
n case}, we have $p_\vect^{-1} \big( (W_{d^i})_{i \in I^+} \big)$
$=$ $\Vect_{G_\chi} (S^2, \chi)_{V_B}.$ So, it suffices to show that
the set $\Vect_{G_\chi} (S^2, \chi)_{V_B}$ has exactly two elements
for each $V_B$ to prove the first statement.

Let $\Phi = \cup_{ q \in B } ~ \varphi_q$ be a map in
$\Omega_{V_B}.$ Put $\Phi (\bar{d}^0) = *$ and $\Phi (\bar{d}^1) =
*^\prime.$ Let $g_0 \in {G_\chi}$ be the element in the proof of
Proposition \ref{proposition: Z_n x Z with odd n case} such that
$g_0 \bar{v}_S^i = \bar{v}_N^{i+1}$ for each $i.$ We would determine
which classes of $\Omega_{\bar{D}_R, V_B}$ give isomorphic
equivariant vector bundles. Take a loop $\sigma_0 : \bar{D}_R =
[0,1] \rightarrow \Iso_H (V_S)$ such that $\sigma_0 (0) = \sigma_0
(1) = \id$ and $[\sigma_0]$ is a generator of $\pi_1 \big( \Iso_H
(V_S), \id \big).$ For each ${G_\chi}$-isomorphism $\eta = \cup_{q
\in B} ~ \eta_q$ of $F_{V_B},$ we may assume that $\eta_S |_{
|\bar{e}_S^0| \cup |\bar{e}_S^1| } = \sigma_0^j \wedge
(\sigma_0^{tr})_{-1}^j$ for some $j \in \Z$ by Lemma \ref{lemma:
bundle isomorhism standard form}. Put
\begin{equation*}
\varphi_S^\prime (\bar{x}) = \eta_N (|c|(\bar{x})) \varphi_S
(\bar{x}) \eta_S(\bar{x})^{-1}
\end{equation*}
for all $\bar{x} \in \bar{D}_R.$ By equivariance,
\begin{equation*}
\eta_N (|c|(\bar{x})) = g_0^{-1} \eta_S \big( g_0 |c|(\bar{x}) \big)
g_0 = g_0^{-1} \eta_S (\bar{x}+1) g_0.
\end{equation*}
Since $\eta_S |_{ |\bar{e}_S^0| \cup |\bar{e}_S^1| } = \sigma_0^j
\wedge (\sigma_0^{tr})_{-1}^j,$
\begin{equation*}
\varphi_S^\prime (\bar{x}) = g_0^{-1} ~ \sigma_0^{tr} (\bar{x})^j ~
g_0 ~ \varphi_S (\bar{x}) ~ \sigma_0 (\bar{x})^{-j}
\end{equation*}
for $\bar{x} \in \bar{D}_R.$ And, this path is nonequivariantly
homotopic to $\varphi_S (\bar{x}) \sigma_0 (\bar{x})^{-2j}$ in
$\big[ [0,1], 0, 1; \Iso_H (V_S, V_N), *, *^\prime \big].$ Since
each class in $\Omega_{\bar{D}_R, V_B}$ is nonequivariantly
homotopic to one of $\varphi_S (\bar{x}) \sigma_0 (\bar{x})^i$ for
$i \in \Z$ by Proposition \ref{proposition: Z_n x Z with odd n
case}, this says that the equivariant vector bundle determined by an
equivariant clutching map on $\bar{D}_R$ with respect to $V_B$
depends only on the parity of $i$ by Lemma \ref{lemma: equivalent
condition for isomorphism}. Therefore, $\Vect_{G_\chi} (S^2,
\chi)_{V_B}$ have two different ${G_\chi}$-bundles. By the arguments
on parity, we similarly obtain the second statement. \qed
\end{proof}

\section{Proof for cases when $\pr( \rho(G_\chi) ) = \SO(2),$ $\orthogonal(2)$}
\label{section: nonzero-dimensional case}

Assume that $\rho(G_\chi)=R$ for some one-dimensional $R$ in Table
\ref{table: introduction}. In these cases, $S^2$ can not have
equivariant simplicial complex structure. So, we introduce new
notations. Consider the disjoint union $\bar{S}^2 = S_S^2 \amalg
S_N^2$ of the lower and upper hemispheres $S_S^2$ and $S_N^2$ of
$S^2,$ and denote by $\bar{S}_S^2$ and $\bar{S}_N^2$ hemispheres
$S_S^2$ and $S_N^2$ in $\bar{S}^2,$ respectively. Denote by $\pi$
the natural quotient maps from $\bar{S}^2$ to $S^2.$ Denote by
$\bar{S}_S^1$ and $\bar{S}_N^1$ boundaries of $\bar{S}_S^2$ and
$\bar{S}_N^2$ in $\bar{S}^2,$ respectively. And, denote by
$\bar{S}^1$ the disjoint union $\bar{S}_S^1 \amalg \bar{S}_N^1$
which is the preimage of the equator through $\pi.$ Put $B = \{ S, N
\} \subset S^2$ on which $R$ (and $G_\chi$) acts. The subset
$\pi^{-1} (B)$ in $\bar{S}^2$ is often confused with $B$ in $S^2.$
Denote by $\bar{v}_q^0$ be the point $\pi^{-1}(v^0) \cap
\bar{S}_q^1$ for $q \in B$ where $v^0 = (1, 0, 0).$ Note that if $R
=$ $\SO(2),$ $\langle \SO(2), -b \rangle,$ then $R$ fixes $B,$ and
otherwise $R$ acts transitively on $B.$ We can redefine notations
$V_B,$ $F_{V_B},$ $\Phi,$ $F_{V_B}/\Phi,$ $\imath_\Omega,$
$p_\Omega,$ $p_{\pi_0},$ $\Omega_{V_B},$ $\cdots$ of Section
\ref{section: clutching construction cyclic dihedral} by replacing
$|\complexK_R|,$ $|\lineK_R|,$ $|\lineK_{R, q}|,$ $|\lineL_R|,$
$|\lineL_{R, q}|$ with $S^2,$ $\bar{S}^2,$ $\bar{S}_q^2,$
$\bar{S}^1,$ $\bar{S}_q^1,$ respectively. Here, notations for cases
when $R =$ $\SO(2),$ $\langle \SO(2), -b \rangle$ and $R \ne$
$\SO(2),$ $\langle \SO(2), -b \rangle$ are redefined in the same way
with cases when $R = \Z_n,$ $\langle a_n, -b \rangle$ and $R \ne
\Z_n,$ $\langle a_n, -b \rangle$ of Section \ref{section: clutching
construction cyclic dihedral}, respectively.

Put $\bar{\mathbf{x}} = \{ \bar{x}_j ~ | ~ j \in \Z_2 \}$ with
$\bar{x}_0 = \bar{v}_S^0$ and $\bar{x}_1 = \bar{v}_N^0.$ Let
$\mathcal{A}_{\bar{v}_S^0}$ be the set of equivariant pointwise
clutching maps with respect to the $(G_\chi)_{v^0}$-bundle $\big(
\res_{(G_\chi)_{v^0}}^{G_\chi} F_{V_B} \big) |_{ \bar{\mathbf{x}}
}.$

\begin{proposition}
 \label{proposition: equivariant homotopy nonzero-dimensional no fixed point}
Assume that $R$ is equal to one of the following:
\begin{center}
\begin{tabular}{lll}
$\orthogonal(2) \times Z,$  \quad    & \quad $\orthogonal(2),$ \quad
& \quad $\langle \SO(2), ~ -a_2 \rangle.$
\end{tabular}
\end{center}
Then, Theorem \ref{main: only by isotropy} holds for these cases.
\end{proposition}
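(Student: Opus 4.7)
The strategy mirrors the proof of Proposition \ref{proposition: platonic nonorientable case}: the plan is to show that $\pi_0(\Omega_{\bar{D}_R, (W_{d^i})_{i \in I^+}})$ reduces to a single point for every triple $(W_{d^i})_{i \in I^+} \in A_{G_\chi}(S^2, \chi)$ and then invoke the one-dimensional analog of Proposition \ref{proposition: proposition for isomorphism}.(2). Fix such a triple. Since for one-dimensional $R$ we have $d^{-1} = S$ and $d^0 = d^1 = v^0$, I put $V_B = G_\chi \times_{(G_\chi)_S} W_S$ and $F_{V_B} = G_\chi \times_{(G_\chi)_S} (\bar{S}_S^2 \times W_S)$. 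Inspecting Table \ref{table: isotropy of nonzero-dimensional} for the three groups in question shows that in every case $(G_\chi)_{v^0}$ acts transitively on $\{\bar{v}_S^0, \bar{v}_N^0\}$ with kernel $(G_\chi)_{v^0} \cap (G_\chi)_S$ and quotient $\Z_2$, so the bundle $\big(\res_{(G_\chi)_{v^0}}^{G_\chi} F_{V_B}\big)|_{\pi^{-1}(v^0)}$ satisfies Condition F2, and Proposition \ref{proposition: psi for cyclic}.(2) makes the evaluation map $\mathcal{A}_{\bar{v}_S^0} \to \mathcal{A}_{\bar{v}_S^0}^0$ a homeomorphism.

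The first step is to establish the analog of Theorem \ref{theorem: clutching condition cyclic dihedral} for $\bar{D}_R = \{\bar{v}_S^0\}$: a preclutching map $\Phi_{\bar{D}_R}$ lies in $\Omega_{\bar{D}_R, V_B}$ iff there is a unique $\psi_{\bar{v}_S^0} \in \mathcal{A}_{\bar{v}_S^0}$ with $\psi_{\bar{v}_S^0}(\bar{v}_S^0) = \Phi_{\bar{D}_R}(\bar{v}_S^0)$. The only nontrivial direction is the continuous equivariant extension: given $\psi_{\bar{v}_S^0}$, I would define $\Phi(\bar{x}) = g \psi_{\bar{v}_S^0}(\bar{v}_S^0) g^{-1}$ for any $g \in G_\chi$ with $g \bar{v}_S^0 = \bar{x}$, and similarly on $\bar{S}_N^1$. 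Well-definedness follows from Lemma \ref{lemma: equivalent condition for psi}.(2) applied to the isotropy subgroup $(G_\chi)_{\bar{v}_S^0}$; continuity follows because $\rho(G_\chi) \supset \SO(2)$ acts transitively on the equator and the principal bundle $\rho(G_\chi) \to \rho(G_\chi)/\rho(G_\chi)_{v^0}$ admits local sections. Conditions N1$^\prime.$ and E1$^\prime.$ are routine to verify from the definition.

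With this correspondence, $\Omega_{\bar{D}_R, V_B}$ is homeomorphic to $\mathcal{A}_{\bar{v}_S^0}^0$, and the subspace $\Omega_{\bar{D}_R, (W_{d^i})_{i \in I^+}}$ is homeomorphic to the single path-component $(\mathcal{A}_{\bar{v}_S^0})_\psi^0$ for any $\psi$ whose quotient realizes the prescribed $(G_\chi)_{v^0}$-extension $W_{v^0} = W_{d^0} = W_{d^1}$ of $\res_{(G_\chi)_{v^0} \cap (G_\chi)_S}^{(G_\chi)_S} W_S$. Such a $\psi$ exists by Theorem \ref{theorem: bijectivity with extensions} (with the extension $W_{v^0}$ supplied by Definition \ref{definition: A_R}.(2)), so $(\mathcal{A}_{\bar{v}_S^0})_\psi^0$ is nonempty and path-connected. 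Hence $\pi_0(\Omega_{\bar{D}_R, (W_{d^i})_{i \in I^+}})$ is a singleton, and the one-dimensional analog of Proposition \ref{proposition: proposition for isomorphism}.(2) finishes the proof.

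The main obstacle is the continuous extension step: the machinery of Sections \ref{section: clutching construction}--\ref{section: clutching construction cyclic dihedral} was developed in a simplicial framework, and some care is required to transport the arguments to the smooth orbit $G_\chi \cdot \bar{v}_S^0 = \bar{S}^1$, particularly in verifying well-definedness and continuity of the extended $\Phi$ purely from Lemma \ref{lemma: equivalent condition for psi} and the local section property of $\rho(G_\chi)$. Once this is in place, the three cases are treated uniformly because the isotropy data in Table \ref{table: isotropy of nonzero-dimensional} plays the same role in each.
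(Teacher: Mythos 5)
Your proposal is correct and follows essentially the same route as the paper: reduce to showing each $\pi_0 ( \Omega_{(W_{d^i})_{i \in I^+}} )$ is a singleton, verify Condition F2 and nonemptiness of $\mathcal{A}_{\bar{v}_S^0}$ via Theorem \ref{theorem: bijectivity with extensions}, and identify $\Omega_{(W_{d^i})_{i \in I^+}}$ with the path-connected component $(\mathcal{A}_{\bar{v}_S^0})_{\psi_{\bar{v}_S^0}}^0$ through evaluation at $\bar{v}_S^0$, using transitivity of $G_\chi$ on $\bar{S}^1$ for injectivity and the conjugation formula $\Phi(g\bar{v}_S^0) = g\psi(\bar{v}_S^0)g^{-1}$ for surjectivity. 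Your added remarks on well-definedness via Lemma \ref{lemma: equivalent condition for psi}.(2) and continuity via local sections of $\rho(G_\chi) \rightarrow \rho(G_\chi)/\rho(G_\chi)_{v^0}$ merely fill in details the paper leaves as ``we can show.''
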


\begin{proof}
Put $V_B = G_\chi \times_{(G_\chi)_{d^{-1}}} W_{d^{-1}}$ for each $(
W_{d^i} )_{i \in I^+}$ in $A_{G_\chi} ( S^2, \chi ).$ By Proposition
\ref{proposition: proposition for isomorphism}.(2), we only have to
show that $\pi_0 ( \Omega_{(W_{d^i})_{i \in I^+}} )$ consists of
exactly one element for each $(W_{d^i})_{i \in I^+}.$ First, we show
nonemptiness of $\mathcal{A}_{\bar{v}_S^0}.$ For those $R$'s, the
$(G_\chi)_{v^0}$-bundle $F = \big( \res_{(G_\chi)_{v^0}}^{G_\chi}
F_{V_B} \big) |_{ \bar{\mathbf{x}} }$ satisfies Condition F2. by
Table \ref{table: all about K_R}. Since $d^0 = d^1 = v^0,$ we have
$W_{d^0} \cong W_{d^1}$ which is an $(G_\chi)_{v^0}$ extension of
$F_{\bar{x}_0}=$ $\res_{ (G_\chi)_S \cap (G_\chi)_{v^0}
}^{(G_\chi)_S} W_{d^{-1}}$ by Definition \ref{definition: A_R}. This
means that $\mathcal{A}_{\bar{v}_S^0}$ is nonempty by Theorem
\ref{theorem: bijectivity with extensions} as in the proof of Lemma
\ref{lemma: existence of barA}. Pick an element $\psi_{\bar{v}_S^0}$
in $\mathcal{A}_{\bar{v}_S^0}$ which determines $W_{d^0}$

Consider the evaluation map at $\bar{v}_S^0$
\begin{equation*}
\Omega_{(W_{d^i})_{i \in I^+}} \rightarrow
(\mathcal{A}_{\bar{v}_S^0})_{\psi_{\bar{v}_S^0}}^0, \quad \Phi
\mapsto \Phi(\bar{v}_S^0).
\end{equation*}
By definition of $\Omega_{(W_{d^i})_{i \in I^+}},$ the isotropy
representation $(F_{V_B} / \Phi)_{d^0}$ for each $\Phi$ is
isomorphic to $W_{d^0}$ so that $\Phi(\bar{v}_S^0)$ is contained in
$(\mathcal{A}_{\bar{v}_S^0})_{\psi_{\bar{v}_S^0}}^0,$ i.e. the
evaluation map is well-defined. We show that it is a one-to-one
correspondence. For those $R$'s, ${G_\chi}$ acts transitively on
$\bar{S}^1.$ So, each map $\Phi$ in $\Omega_{V_B}$ is determined by
$\Phi (\bar{v}_S^0)$ through equivariance, and the evaluation is
injective. For each $\psi$ in
$(\mathcal{A}_{\bar{v}_S^0})_{\psi_{\bar{v}_S^0}},$ we construct a
map $\Phi$ to satisfy
\begin{equation*}
\Phi ( g \bar{v}_S^0 ) = g \psi( \bar{v}_S^0 ) g^{-1}.
\end{equation*}
for each $g \in {G_\chi},$ especially $\Phi |_{ \bar{\mathbf{x}} } =
\psi.$ We can show that this is a well-defined equivariant clutching
map with respect to $V_B$ so that the evaluation is surjective. And,
it induces the bijection from $\pi_0 ( \Omega_{(W_{d^i})_{i \in
I^+}} )$ to a one point set $\pi_0 \big(
(\mathcal{A}_{\bar{v}_S^0})_{\psi_{\bar{v}_S^0}} \big).$ Therefore,
we obtain a proof. \qed
\end{proof}

\begin{proposition}
 \label{proposition: equivariant homotopy nonzero-dimensional fixed point}
Assume that $R$ is equal to one of $\SO(2),$ $\langle \SO(2), -b
\rangle.$ Then, Theorem \ref{main: only by isotropy} holds for these
cases.
\end{proposition}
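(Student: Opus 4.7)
The plan is to mirror the arguments used for $R = \Z_n, \langle a_n, -b \rangle$ in Propositions \ref{proposition: cyclic case} and \ref{proposition: <a_n, -b> case}, but with the circles $\bar{S}_q^1$ and the continuous group $\SO(2) \subset R$ taking the roles of the equator and the discrete cyclic symmetry. By Proposition \ref{proposition: proposition for isomorphism}.(2), it suffices to show that $\pi_0 (\Omega_{\bar{D}_R, (W_q)_{q \in B}})$ is a one-point set for every pair $(W_q)_{q \in B} \in A_{G_\chi}(S^2, \chi)$. For such a pair I will take the $G_\chi$-bundle $V_B$ over $B$ with $V_B|_q = W_q$, so that $\Omega_{\bar{D}_R, V_B} = \Omega_{\bar{D}_R, (W_q)_{q \in B}}$. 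Since $D_R = \{v^0\}$, the one-dimensional fundamental domain reduces to the single point $\bar{D}_R = \{\bar{v}_S^0\}$, and restriction to $\bar{D}_R$ identifies $\Omega_{\bar{D}_R, V_B}$ with the set of admissible values $\Phi(\bar{v}_S^0)$.

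Next I will show that the evaluation map $\Omega_{V_B} \to \Iso_{(G_\chi)_{v^0}}(V_S, V_N)$, $\Phi \mapsto \Phi(\bar{v}_S^0)$, is a homeomorphism. Injectivity uses equivariance: the connected subgroup $\SO(2) \subset R$ acts transitively on each circle $\bar{S}_q^1$, so condition E1$^\prime$ determines $\Phi$ on $\bar{S}_S^1$ from $\Phi(\bar{v}_S^0)$, while condition N1$^\prime$ then determines $\Phi$ on $\bar{S}_N^1$. For surjectivity, given $\Psi_0 \in \Iso_{(G_\chi)_{v^0}}(V_S, V_N)$ I will set $\Phi(g\bar{v}_S^0) = g \Psi_0 g^{-1}$ for $g \in G_\chi$; this is well-defined because $g_1 \bar{v}_S^0 = g_2 \bar{v}_S^0$ forces $g_1^{-1}g_2 \in (G_\chi)_{v^0}$, which commutes with $\Psi_0$, and continuity follows from the connectedness of $\SO(2)$. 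Extending by N1$^\prime$ gives a full equivariant clutching map.

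Finally I will show $\Iso_{(G_\chi)_{v^0}}(V_S, V_N)$ is nonempty and path-connected. Nonemptiness is immediate from Definition \ref{definition: A_R}.(1).(ii). For path-connectedness, Table \ref{table: all about K_R} gives $(G_\chi)_{v^0}/H$ trivial for $R = \SO(2)$ and cyclic of order two for $R = \langle \SO(2), -b \rangle$; Theorem \ref{theorem: extension by cyclic} and Corollary \ref{corollary: expressed by extensions} of the appendix then let me decompose both $V_S$ and $V_N$ as sums $\bigoplus_k m_k \bar{U}_k$ of $(G_\chi)_{v^0}$-extensions of the irreducible $H$-representation with character $\chi$, so Schur's Lemma presents $\Iso_{(G_\chi)_{v^0}}(V_S, V_N)$ as a finite product of general linear groups. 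Hence $\pi_0(\Omega_{\bar{D}_R, V_B})$ is a singleton, and Proposition \ref{proposition: proposition for isomorphism}.(2) yields the theorem.

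The main technical point to check will be that the equivariantly extended $\Phi$ on $\bar{S}^1$ is a \emph{continuous} equivariant clutching map, rather than merely a set-theoretic equivariant assignment; unlike the cyclic case of Proposition \ref{proposition: cyclic case}, no relative homotopy argument is needed because $\bar{D}_R$ degenerates to a point, and the continuity of the extension is delivered for free by the connectedness of $\SO(2)$, which is precisely why no Chern-class winding arises here in contrast to the discrete $R = \Z_n$ case.
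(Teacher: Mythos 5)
Your argument is correct and takes essentially the same route as the paper: both reduce via Proposition \ref{proposition: proposition for isomorphism}.(2) to showing that evaluation at $\bar{v}_S^0$ identifies $\Omega_{(W_q)_{q\in B}}$ bijectively with $\mathcal{A}_{\bar{v}_S^0}^0 = \Iso_{(G_\chi)_{v^0}}(V_S,V_N)$, which is nonempty by Definition \ref{definition: A_R}.(1).ii (Condition F1.) and path-connected by Schur's Lemma, so that $\pi_0$ of the clutching-map space is a singleton. Your explicit Schur/extension decomposition of the target and the continuity check for the equivariantly extended clutching map are just spelled-out versions of steps the paper leaves implicit.
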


\begin{proof}
Pick a $G_\chi$-vector bundle $V_B$ over $B$ such that $V_B|_q =
W_q$ for each pair $(W_q)_{q \in B} \in A_{G_\chi} (S^2, \chi)$ and
$q \in B.$ By Proposition \ref{proposition: proposition for
isomorphism}.(2), we only have to show that $\pi_0 (
\Omega_{(W_q)_{q \in B}} )$ consists of exactly one element for each
$(W_q)_{q \in B}.$ For those $R$'s, the $(G_\chi)_{v^0}$-bundle $F =
\big( \res_{(G_\chi)_{v^0}}^{G_\chi} F_{V_B} \big) |_{
\bar{\mathbf{x}} }$ satisfies Condition F1. by Table \ref{table: all
about K_R} and Definition \ref{definition: A_R}. This means that
$\mathcal{A}_{\bar{v}_S^0}^0$ is isomorphic to nonempty $\Iso_H (
V_S, V_N )$ by Proposition \ref{lemma: pointwise clutching for m=2
nontransitive}. Consider the evaluation map at $\bar{v}_S^0$
\begin{equation*}
\Omega_{(W_q)_{q \in B}} \rightarrow \mathcal{A}_{\bar{v}_S^0}^0, ~
\Phi \mapsto \Phi(\bar{v}_S^0).
\end{equation*}
In these cases, ${G_\chi}$ acts transitively only on $\bar{S}_S^1,$
but each map $\Phi$ in $\Omega_{V_B}$ is determined by $\Phi
(\bar{v}_S^0)$ through equivariance and inverse. It can be shown
that the evaluation is bijective as in Proposition \ref{proposition:
equivariant homotopy nonzero-dimensional no fixed point}. And, it
induces the bijective map from $\pi_0 ( \Omega_{(W_q)_{q \in B}} )$
to $\pi_0 \big( \mathcal{A}_{\bar{v}_S^0}^0 \big)$ which is
one-point set. Therefore, we obtain a proof. \qed
\end{proof}

\section{Equivariant line bundles over effective $G_\chi/H$-actions}
\label{section: reduction to effective}

In this section, we prove Theorem \ref{main: reduction to line
bundle} and calculate Chern classes. Since $H$ is the kernel of the
$G_\chi$-action on $S^2,$ $S^2$ delivers the $G_\chi/H$-action.
Since $\rho(G_\chi)=R$ for some $R$ of Table \ref{table:
introduction} by assumption, we may assume that $G_\chi/H$ is equal
to $R$ and the $G_\chi/H$-action is equal to the $R$-action on
$S^2.$

\begin{proof}[Proof of Theorem \ref{main: reduction to line bundle}]
We prove this only for the case when $R = \Z_n$ because other cases
are proved similarly. Let $U$ be the $H$-representation with the
character $\chi.$ Let $\bar{U}$ be a $(G_\chi)_q$-extension of $U$
for $q \in B$ whose existence is guaranteed by Theorem \ref{theorem:
extension by cyclic}. Pick a bundle $E$ in $\Vect_{G_\chi} (S^2,
\chi),$ and put $(W_q)_{q \in B} = p_\vect (E).$ Then, $W_q$'s are
direct sums of $\bar{U} \otimes \Omega(l)$'s by Corollary
\ref{corollary: expressed by extensions} because $(G_\chi)_q/H \cong
\Z_n$ for $q \in B.$ Pick arbitrary direct summands $\bar{U} \otimes
\Omega(l_0)$ and $\bar{U} \otimes \Omega(l_1)$ of $W_S$ and $W_N,$
respectively. Define $(W_q^\prime)_{q \in B}$ by
\begin{equation*}
W_S^\prime = \bar{U} \otimes \Omega(l_0) \quad \text{and} \quad
W_N^\prime = \bar{U} \otimes \Omega(l_1).
\end{equation*}
By definition of $A_{G_\chi} (S^2, \chi),$ the pair $(W_q^\prime)_{q
\in B}$ is contained in $A_{G_\chi} (S^2, \chi).$ Since
$p_\vect^{-1} \Big( (W_q^\prime)_{q \in B} \Big)$ is nonempty by
Theorem \ref{main: by isotropy and chern}, there exists a bundle $L$
with rank $\chi(\id)$ in $\Vect_{G_\chi} (S^2, \chi).$ Existence of
$L$ proves the isomorphism by \cite[Lemma 2.2]{CKMS}. By similar
arguments, we can show that $A_R (S^2, \id)$ is generated by all the
elements with one-dimensional entries. By using this and Theorem
\ref{main: by isotropy and chern}, \ref{main: not by isotropy and
chern class}, we can show that $\Vect_R (S^2)$ is generated by line
bundles.

Now, we calculate the number of elements in $A_R (S^2, \id)$ with
one-dimensional entries. By Definition \ref{definition: A_R}, each
pair $(W_q)_{q \in B}$ in $\Rep(R)^2$ is in $A_R (S^2, \id),$ i.e.
there is no relation between $W_S$ and $W_N.$ Since the number of
one-dimensional representations in $\Rep(R)$ is equal to $n = |R_S|
= |R_N|,$ we obtain a proof. \qed
\end{proof}

\begin{remark}
We explain for the reason why we prove the isomorphism of Theorem
\ref{main: reduction to line bundle} only for $R$'s appearing in
Theorem \ref{main: by isotropy and chern}, \ref{main: not by
isotropy and chern class}. In the proof of Theorem \ref{main:
reduction to line bundle}, existence of $(G_\chi)_q$-extensions of
$U$ for all $q$ or $(G_\chi)_{d^i}$-extensions of $U$ for all $i$ is
critical according to $R.$ But, such existence is not guaranteed if
$R_q$ or $R_{d^i}$ is isomorphic to $\D_{2m}$ for some $m$ by
\cite[Corollary 3.5.(2)]{CMS}, and almost all $R$'s appearing in
Theorem \ref{main: only by isotropy} satisfy that $R_q$ or $R_{d^i}$
is isomorphic to $\D_{2m}$ for some $q$ or $i$ according to $R.$ So,
we can not obtain the isomorphism for such $R$'s. The
inextensibility does not happen in dealing with equivariant vector
bundles over circle. \qed
\end{remark}

In the below, we use the notation $\mathbf{W}$ to denote an element
in $A_R (S^2, \id)$ with one-dimensional entries. We would calculate
Chern classes of line bundles in $\Vect_R (S^2).$ Especially, we
would calculate $k_0$ of Theorem \ref{main: by isotropy and chern}
which is dependent on $\mathbf{W}.$ Denote it by $k_0(\mathbf{W})$
to stress its dependency. By Theorem \ref{main: by isotropy and
chern}, $k_0(\mathbf{W})$ is determined up to $l_R \cdot \Z,$ i.e.
$k_0(\mathbf{W})$ lives in $\Z_{l_R}.$ More precisely, Theorem
\ref{main: by isotropy and chern} says that $k_0(\mathbf{W})$ is
congruent modulo $l_R$ to $c_1 (L)$ for any line bundle $L \in
p_\vect^{-1} (\mathbf{W}).$ So, we will calculate $c_1 (L)$
$(\modulo$ $l_R)$ for one bundle $L$ in $p_\vect^{-1} (\mathbf{W}).$
In doing so, $c_1 (L)$ is expressed by using $\mathbf{W} = (L_q)_{q
\in B}$ or $\mathbf{W} = (L_{d^i})_{i \in I^+}$ according to $R.$
When $n \in \N$ is understood, put $\xi_0 = \exp ( 2\pi \sqrt{-1} /
n ),$ and let $\Omega(l)$ for $l \in \Z_n$ be the one-dimensional
$\Z_n$-representation satisfying $a_n \cdot v = \xi_0^l v$ for each
$v \in \C.$ Then, we have the following well-known result:

\begin{lemma} \label{lemma: line Z_n-bundle}
Assume that $R=\Z_n.$ For any line bundle $L$ in $\Vect_R (S^2),$ if
$L_q \cong \Omega(l_q)$ for $q \in B,$ $l_q \in \Z_n,$ then $c_1 (L)
\equiv l_N-l_S ~ (\modulo ~ n).$
\end{lemma}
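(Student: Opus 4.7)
The plan is to exhibit one explicit equivariant line bundle with the prescribed isotropy characters $\Omega(l_S), \Omega(l_N)$ and compute its first Chern class by a direct winding-number count; Theorem \ref{main: by isotropy and chern} then pins down $c_1$ modulo $n$ for every such bundle. Since we are working in the effective setting ($H$ trivial, $G_\chi = R = \Z_n$), the character $\chi$ is trivial so $\chi(\id)=1$, and by Table \ref{table: introduction} together with Table \ref{table: all about K_R} we have $l_R = |R|/|R_{D_R}| = n$. Thus Theorem \ref{main: by isotropy and chern} gives $c_1(p_\vect^{-1}(\mathbf{W})) = \{ nk + k_0(\mathbf{W}) \mid k \in \Z \}$, so it is enough to produce a single representative and read off its Chern class modulo $n$.

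For the construction I would take $V_S = \Omega(l_S)$, $V_N = \Omega(l_N)$ as one-dimensional $R$-representations and form $F_{V_B} = \bigsqcup_{q \in B} |\lineK_{R,q}| \times V_q$ as in Section \ref{section: clutching construction cyclic dihedral}. Identify $\Iso(V_S, V_N)$ with $\C^*$ and parametrize the equator copy $|\lineL_{R,S}|$ by $\theta \in [0,2\pi]$ with $\bar{v}_S^i \leftrightarrow 2\pi i/n$. Define
\begin{equation*}
\varphi_S(\theta) = e^{i(l_N - l_S)\theta}, \qquad \varphi_N(\bar{x}) = \varphi_S(|c|(\bar{x}))^{-1},
\end{equation*}
and set $\Phi = \varphi_S \cup \varphi_N$. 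The equality $\varphi_S(2\pi) = e^{2\pi i(l_N-l_S)} = 1 = \varphi_S(0)$ makes $\varphi_S$ well-defined on the circle, Condition N1$'$ holds by construction, and Condition E1$'$ follows because conjugation by $a_n$ on $\C^* = \Iso(V_S,V_N)$ is scalar multiplication by $\xi_0^{l_N}\xi_0^{-l_S} = \xi_0^{l_N-l_S}$, which matches $\varphi_S(\theta + 2\pi/n) = \xi_0^{l_N - l_S}\varphi_S(\theta)$. Hence $\Phi \in \Omega_{V_B}$ and produces a line bundle $L = F_{V_B}/\Phi$ with $L_S \cong \Omega(l_S)$, $L_N \cong \Omega(l_N)$, so $p_\vect([L]) = \mathbf{W}$.

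For the Chern-class computation, observe that forgetting the $R$-action, $F_{V_B}/\Phi$ is precisely the standard clutching of the two trivial line bundles over the hemispheres via $\varphi_S \colon S^1 \to \C^*$. Its first Chern class is the winding number of $\varphi_S$, which by inspection equals $l_N - l_S$. Combined with the first paragraph this gives $c_1(L) \equiv l_N - l_S \pmod{n}$ for every $L \in p_\vect^{-1}(\mathbf{W})$, as required.

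The only real obstacle I expect is a bookkeeping one: making sure that the orientation implicit in the paper's decomposition $S^2 = |\lineK_{R,S}| \cup |\lineK_{R,N}|$ and the convention for $|c|$ yield the sign $+(l_N - l_S)$ rather than its negative. This is easily pinned down by checking the trivial case $l_S = l_N = 0$ (where $\Phi \equiv 1$ gives $c_1 = 0$) and one nontrivial test case such as $l_S = 0$, $l_N = 1$, so the calculation above and the lemma's sign are aligned.
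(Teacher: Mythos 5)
Your proposal is correct and follows exactly the strategy the paper lays out in the paragraph preceding this lemma (compute $c_1$ of one explicit representative of $p_\vect^{-1}(\mathbf{W})$ and let Theorem \ref{main: by isotropy and chern}, with $l_R=n$ and $\chi(\id)=1$, propagate the value modulo $n$); the paper itself states the lemma without proof as well-known, but your explicit equivariant clutching function $e^{i(l_N-l_S)\theta}$ is the same kind of construction the paper uses in its proofs of Lemmas \ref{lemma: line D_n-bundle}--\ref{lemma: line <-a_n>-bundle}. The only caveat, which you already flag, is that the sign of the winding number must be fixed by the orientation conventions themselves rather than by a test case (your proposed check with $l_S=0$, $l_N=1$ presupposes the answer), but this does not affect the substance of the argument.
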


We obtain similar results for cases when $R = \D_n,$ $\Z_n \times Z$
with odd $n,$ or $\langle -a_n \rangle$ with even $n/2.$

\begin{lemma} \label{lemma: line D_n-bundle}
Assume that $R=\D_n.$ For any line bundle $L$ in $\Vect_R (S^2),$ if
$L_q \cong \Omega(l_q)$ for $q \in B,$ $l_q \in \Z_n,$ then $l_N
\equiv -l_S ~ (\modulo ~ n),$ and $c_1 (L)$ is congruent modulo $2n$
to
\begin{equation*}
\left\{
  \begin{array}{ll}
    -2l_S        &  \text{if } L_{d^0} \cong L_{d^1},   \\
    -2l_S + n    &  \text{if } L_{d^0} \ncong L_{d^1}.
  \end{array}
\right.
\end{equation*}
\end{lemma}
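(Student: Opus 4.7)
The plan is to realize an arbitrary $L \in p_{\vect}^{-1}(\mathbf{W})$ as an explicit equivariant clutching construction along the equator of $S^2$ and compute $c_1(L)$ as a winding number. Since $b \in \D_n$ swaps $S$ and $N$ and satisfies $b a_n b^{-1} = a_n^{-1}$, Lemma \ref{lemma: elementary lemma on isotropy representation} gives $L_N \cong {}^b L_S \cong \Omega(-l_S)$ as $\Z_n$-representations, which proves $l_N \equiv -l_S \pmod n$. For the Chern class, trivialize $L_S \cong L_N \cong \C$ so that the scalar of $b_\sharp : L_S \to L_N$ equals $1$. Since $H$ is trivial, an equivariant clutching map with respect to $V_B$ reduces to a single continuous function $\varphi_S : |\bar{S}_S^1| = S^1 \to \C^*$, with $\varphi_N = \varphi_S^{-1}$ by Condition N1$^\prime$, and $c_1(L)$ equals the winding number $\deg \varphi_S$ up to a fixed sign convention.

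Writing out Condition E1$^\prime$ for the generators $a_n$, $b$, and $a_n b$ produces the three equivariance relations
\begin{equation*}
\varphi_S(\theta + 2\pi/n) = \xi_0^{-2 l_S}\, \varphi_S(\theta), \quad \varphi_S(-\theta) = \varphi_S(\theta)^{-1}, \quad \varphi_S(-\theta + 2\pi/n) = \xi_0^{-2l_S}\, \varphi_S(\theta)^{-1}.
\end{equation*}
The descriptions of $\mathcal{A}^0_{\bar{v}^0}$ and $\mathcal{A}^0_{b(\bar{e}^0)}$ used in the proof of Proposition \ref{proposition: D_n case} then identify $\varphi_S(0) = \epsilon_0 \in \{\pm 1\}$, where $\epsilon_0$ records $L_{d^0}$ as a character of $R_{v^0} = \langle b \rangle$, and $\varphi_S(\pi/n) = \epsilon_1\, \xi_0^{-l_S}$ with $\epsilon_1 \in \{\pm 1\}$ recording $L_{d^1}$ as a character of $R_{b(e^0)} = \langle a_n b \rangle$; in particular $L_{d^0} \cong L_{d^1}$ if and only if $\epsilon_0 = \epsilon_1$.

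Choose a continuous lift $\tilde\alpha : [0, 2\pi] \to \R$ of $\arg \varphi_S$. The $a_n b$-relation, being an involution that fixes $\theta = \pi/n$, forces $\tilde\alpha(\pi/n) = -2\pi l_S/n + \pi m_0$ for some $m_0 \in \Z$; matching with the known value of $\arg \varphi_S(\pi/n)$ modulo $2\pi$ yields $m_0 \equiv (1-\epsilon_1)/2 \pmod 2$. Evaluating the same relation at $\theta = 0$ gives $\tilde\alpha(2\pi/n) - \tilde\alpha(0) = -4\pi l_S/n + 2\pi\bigl(m_0 - (1-\epsilon_0)/2\bigr)$, and the $a_n$-equivariance then extends this around the full equator to give
\begin{equation*}
c_1(L) \;=\; \deg \varphi_S \;=\; -2 l_S + n\!\left( m_0 - \tfrac{1-\epsilon_0}{2} \right).
\end{equation*}
The coefficient of $n$ has the same parity as $(\epsilon_0 - \epsilon_1)/2$, which is even precisely when $\epsilon_0 = \epsilon_1$, i.e.\ when $L_{d^0} \cong L_{d^1}$. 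This gives $c_1(L) \equiv -2l_S \pmod{2n}$ in that case and $c_1(L) \equiv -2l_S + n \pmod{2n}$ when $L_{d^0} \ncong L_{d^1}$. The main obstacle is purely bookkeeping: fixing the normalization of $b_\sharp$, pinning down the sign relating $\deg \varphi_S$ to $c_1(L)$, and identifying $\epsilon_i$ with $L_{d^i}$ through Lemma \ref{lemma: pointwise clutching for m=2 nontransitive} at each fixed point; beyond these conventions everything is a direct computation within the framework of Sections \ref{section: clutching construction cyclic dihedral}--\ref{section: cyclic dihedral cases}.
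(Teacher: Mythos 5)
Your proof is correct and works inside the same framework as the paper's (equivariant clutching along the equator, with the values at $d^0=v^0$ and $d^1=b(e^0)$ pinned down by the two-element sets $\mathcal{A}_{\bar{v}^0_S}^0$ and $\mathcal{A}_{b(\bar{e}^0_S)}^0$), but the logical route differs in one respect worth noting. You take an \emph{arbitrary} line bundle $L$, write down the three equivariance constraints on $\varphi_S$, and extract $\deg\varphi_S \pmod{2n}$ by a lifting argument: the $a_n$-relation gives the increment $-4\pi l_S/n+2\pi k$ per period, and the $a_nb$-relation evaluated at $\theta=\pi/n$ and $\theta=0$ determines the parity of $k$ from $\epsilon_0,\epsilon_1$. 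The paper instead constructs four \emph{explicit} equivariant clutching maps ($\Phi(t)=\exp(-4\pi t\,l_S\sqrt{-1}/n)\varphi_0$ and $\Phi'(t)=\exp(-4\pi t(l_S+n/2)\sqrt{-1}/n)\varphi_0$ and their negatives), reads off their Chern classes and isotropy representations, and then transfers the conclusion to an arbitrary $L$ by observing that these four realize all four elements of $A_R(S^2,\id)$ with $W_{d^{-1}}\cong\Omega(l_S)$, so Theorem \ref{main: by isotropy and chern} (Chern classes within a $p_\vect$-fiber are congruent mod $l_R=2n$) finishes the job. Your version is self-contained at this point and does not invoke the classification theorem; the paper's version avoids the lifting/parity bookkeeping at the cost of that appeal. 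The conventions you flag (normalization of $b_\sharp$, the sign relating $\deg\varphi_S$ to $c_1$, and the identification of $\epsilon_i$ with the character $L_{d^i}$ of $\langle b\rangle$ resp.\ $\langle a_nb\rangle$) are exactly the ones the paper fixes in items (1)--(3) of its computation of $\mathcal{A}^0_{\bar{x}}$, and once fixed your parity count $k\equiv(\epsilon_0-\epsilon_1)/2\pmod 2$ reproduces the stated dichotomy.
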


\begin{proof}
The first statement easily follows from the relation $b a_n b^{-1} =
a_n^{-1}.$ To prove the second statement, we would construct line
bundles $L^\prime$ in $\Vect_R (S^2)$ such that $L_q^\prime \cong
\Omega(l_q).$ Pick the $R$-bundle $V_B$ over $B$ such that
\begin{equation*}
a_n \cdot v = \xi_0^{l_q} v \quad \text{and} \quad b \cdot v = v
\end{equation*}
for $q \in B$ and $v \in V_q = \C.$ And, define $F_{V_B}$ as
$\amalg_{q \in B} ~ |\lineK_{R, q}| \times V_q$ such that
\begin{equation*}
g \cdot (\bar{x}, v) = (g \cdot \bar{x}, g \cdot v)
\end{equation*}
for $g \in R,$ $q \in B,$ $\bar{x} \in |\lineK_{R, q}|,$ $v \in
V_q.$ We calculate $\mathcal{A}_{\bar{x}}.$ Let $\varphi_0$ be the
element $\id$ in $\Iso(V_S, V_N) = \Iso(\C).$ Then, we can show the
following:
\begin{enumerate}
  \item $\mathcal{A}_{\bar{v}_S^0}^0 = \{ \varphi_0, ~ -\varphi_0 \}.$
  And, $\varphi_0,$ $-\varphi_0$ determine $\langle b \rangle$-representations
  $\Omega(0),$ $\Omega(1),$ respectively.
  \item $\mathcal{A}_{b(\bar{e}_S^0)}^0
  = \{ \xi_0^{-l_S} \varphi_0, ~ -\xi_0^{-l_S} \varphi_0 \}.$
  And, $\xi_0^{-l_S} \varphi_0,$ $-\xi_0^{-l_S} \varphi_0$
  determine $\langle a_n b \rangle$-representations
  $\Omega(0),$ $\Omega(1),$ respectively.
  \item $\mathcal{A}_{\bar{x}}^0 = \Iso(V_S, V_N) = \Iso(\C)$
  for each interior $\bar{x}$ of $[\bar{v}_S^0, b(\bar{e}_S^0)].$
\end{enumerate}
By using the parametrization on $|\lineL_{R, S}|,$ we can define two
equivariant clutching maps $\Phi$ and $\Phi^\prime$ with respect to
$V_B$ which satisfy
\begin{equation*}
\Phi(t) = \exp \Big( -\frac {4 \pi t ~ l_S \sqrt{-1} ~} {n} ~ \Big)
\varphi_0 \quad \text{and} \quad \Phi^\prime (t) = \exp \Big( -\frac
{4 \pi t ~ \big( l_S + \frac n 2 \big) \sqrt{-1} ~ } {n} ~ \Big)
\varphi_0
\end{equation*}
for $t \in [0, n].$ And, we can show the following:
\begin{enumerate}
  \item $L_{d^{-1}}^\prime \cong \Omega(l_S),$ $L_{d^0}^\prime \cong \Omega(0),$
  $L_{d^1}^\prime \cong \Omega(0),$ and $c_1(L^\prime) \equiv -2l_S ~ (\modulo ~ 2n)$
  for $L^\prime = F_{V_B}/\Phi,$
  \item $L_{d^{-1}}^\prime \cong \Omega(l_S),$ $L_{d^0}^\prime \cong \Omega(1),$
  $L_{d^1}^\prime \cong \Omega(1),$ and $c_1(L^\prime) \equiv -2l_S ~ (\modulo ~ 2n)$
  for $L^\prime = F_{V_B}/-\Phi,$
  \item $L_{d^{-1}}^\prime \cong \Omega(l_S),$ $L_{d^0}^\prime \cong \Omega(0),$
  $L_{d^1}^\prime \cong \Omega(1),$ and $c_1(L^\prime) \equiv -2l_S+n ~ (\modulo ~ 2n)$
  for $L^\prime = F_{V_B}/\Phi^\prime,$
  \item $L_{d^{-1}}^\prime \cong \Omega(l_S),$ $L_{d^0}^\prime \cong \Omega(1),$
  $L_{d^1}^\prime \cong \Omega(0),$ and $c_1(L^\prime) \equiv -2l_S+n ~ (\modulo ~ 2n)$
  for $L^\prime = F_{V_B}/-\Phi^\prime.$
\end{enumerate}
Images of these four line bundles through $p_\vect$ are four
$\mathbf{W}$'s in $A_R (S^2, \id)$ whose $W_{d^{-1}}$-entry is
isomorphic to $\Omega(l_S).$ Therefore, we obtain a proof. \qed
\end{proof}

\begin{remark}
We explain for how to calculate $k_0(\mathbf{W})$ for cases when $R
= \tetra,$ $\octa,$ $\icosa.$ Let $\mathbf{W} = p_\vect(L)$ for some
line bundle $L$ in $\Vect_R (S^2).$ Then, it suffices to calculate
$c_1(L) ~ (\modulo ~ l_R).$ Note $l_R = |R|$ in these cases. For a
$2$-Sylow subgroup $P$ of $R,$ observe that the restricted
$P$-action on $S^2$ is conjugate to $\D_m$ for some $m.$ Then, we
can calculate $c_1(L) ~ (\modulo ~ 2m)$ by applying Lemma
\ref{lemma: line D_n-bundle} to $\res_P^R L$ where $|\D_m| = 2m.$
For other prime number $p$ dividing $|R|$ and a $p$-Sylow subgroup
$P$ of $R,$ observe that the restricted $P$-action on $S^2$ is
conjugate to $\Z_p,$ and we can calculate $c_1(L) ~ (\modulo ~ p)$
by applying Lemma \ref{lemma: line Z_n-bundle} to $\res_P^R L$ where
$|\Z_p| = p.$ So, we can calculate $c_1(L) ~ (\modulo ~ l_R)$ by
Chinese Remainder Theorem because $l_R = |R|.$ \qed
\end{remark}

\begin{lemma} \label{lemma: line Z_nxZ-bundle}
Assume that $R = \Z_n \times Z$ with odd $n.$ For any line bundle
$L$ in $\Vect_R (S^2),$ if $L_q \cong \Omega(l_q)$ for $q \in B,$
$l_q \in \Z_n,$ then $l_N \equiv l_S ~ (\modulo ~ n),$ and $c_1 (L)$
is trivial.
\end{lemma}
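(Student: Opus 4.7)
The plan is to prove the two assertions separately. For the relation $l_N \equiv l_S \pmod n$, the idea is to exploit the centrality of $-\id$ in $R$. Since $-\id$ carries $S$ to $N$, Lemma \ref{lemma: elementary lemma on isotropy representation} gives an isomorphism of $\Z_n$-representations $L_N \cong {}^{-\id} L_S$. But $-\id$ is central in $R$, so conjugation by $-\id$ acts trivially on $\Z_n$ and $^{-\id}\Omega(l_S) \cong \Omega(l_S)$, forcing $L_N \cong \Omega(l_S)$ and hence $l_N \equiv l_S \pmod n$.

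For the vanishing $c_1(L) = 0$, the strategy is to exhibit one concrete line bundle with the prescribed isotropy data and verify its Chern class is zero, then appeal to Proposition \ref{proposition: Z_n x Z with odd n case}, which asserts that $c_1$ is constant on $p_\vect^{-1}(\mathbf{W})$ for the triple $\mathbf{W} = (L_{d^{-1}}, L_{d^0}, L_{d^1}) \in A_R(S^2, \id)$ attached to $L$. By Table \ref{table: all about K_R} the isotropy subgroups $R_{d^0}$ and $R_{d^1}$ are trivial, so $W_{d^0}$ and $W_{d^1}$ are forced to be the one-dimensional trivial representation, and the triple is entirely encoded by $L_{d^{-1}} = L_S \cong \Omega(l_S)$.

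As the model I would take the product $L_0 = S^2 \times \C$ equipped with the $R$-action
\[
a_n \cdot (x, v) = (a_n x, \xi_0^{l_S} v), \qquad -\id \cdot (x, v) = (-x, v).
\]
Centrality of $-\id$ (and the relation $(-\id)^2 = \id$) makes this a well-defined $R$-action; the underlying nonequivariant bundle is trivial so $c_1(L_0) = 0$; and the isotropy representation at each of $S$, $N$ is visibly $\Omega(l_S)$. Hence $L_0 \in p_\vect^{-1}(\mathbf{W})$, and Proposition \ref{proposition: Z_n x Z with odd n case} gives $c_1(L) = c_1(L_0) = 0$. The main subtlety is simply to recognize that, in contrast with Lemmas \ref{lemma: line Z_n-bundle} and \ref{lemma: line D_n-bundle}, no explicit clutching-map computation is required here: Proposition \ref{proposition: Z_n x Z with odd n case} already did the heavy lifting by showing that $c_1$ factors through $\mathbf{W}$, so the task collapses to producing one convenient representative and reading off its Chern class.
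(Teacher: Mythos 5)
Your proof is correct and follows essentially the same route as the paper: the first claim from centrality of $-\id$ in $R=\Z_n\times Z$, and the second by exhibiting the product bundle with trivial underlying bundle (which is exactly the paper's $F_{V_B}/\Phi$ with $\Phi\equiv\id$) and invoking the constancy of $c_1$ on $p_\vect^{-1}(\mathbf{W})$ (the paper cites Theorem \ref{main: not by isotropy and chern class}, you cite Proposition \ref{proposition: Z_n x Z with odd n case}; these are interchangeable here). No gaps.
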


\begin{proof}
The first statement easily follows because $a_n$ and $-\id$ commute
in $R.$ To prove the second statement, we would construct a line
bundle $L^\prime$ in $\Vect_R (S^2)$ such that $L_q^\prime \cong
\Omega(l_q).$ Pick the $R$-bundle $V_B$ over $B$ such that
\begin{equation*}
a_n \cdot v = \xi_0^{l_q} v \quad \text{and} \quad -\id \cdot v = v
\end{equation*}
for $q \in B$ and $v \in V_q = \C.$ And, define $F_{V_B}$ as
$\amalg_{q \in B} ~ |\lineK_{R, q}| \times V_q$ such that
\begin{equation*}
g \cdot (\bar{x}, v) = (g \cdot \bar{x}, g \cdot v)
\end{equation*}
for $g \in R,$ $q \in B,$ $\bar{x} \in |\lineK_{R, q}|,$ $v \in
V_q.$ Then, we can define the equivariant clutching map $\Phi$ with
respect to $V_B$ which satisfies $\Phi(\bar{x}) = \id \in \Iso(V_S,
V_N)=\Iso(\C)$ for each $\bar{x} \in \lineL_{R, S}.$ For $L^\prime =
F_{V_B}/\Phi,$ the Chern class $c_1(L^\prime)$ is trivial. Since two
equivariant vector bundles in $\Vect_R (S^2)$ with the same isotropy
representation at each $d^i$ have the same Chern class by Theorem
\ref{main: not by isotropy and chern class}, we obtain a proof. \qed
\end{proof}

\begin{lemma} \label{lemma: line <-a_n>-bundle}
Assume that $R = \langle -a_n \rangle$ with even $n/2.$ For any line
bundle $L$ in $\Vect_R (S^2),$ if $L_q \cong \Omega(l_q)$ for $q \in
B,$ $l_q \in \Z_{n/2},$ then $l_N \equiv l_S ~ (\modulo ~ n/2),$ and
$c_1 (L)$ is trivial.
\end{lemma}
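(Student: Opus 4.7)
The plan is to mimic the strategy of Lemma \ref{lemma: line Z_nxZ-bundle}: first extract the constraint on $l_N$ directly from the group structure, then reduce the Chern class assertion to exhibiting a single representative with trivial Chern class and invoke Theorem \ref{main: not by isotropy and chern class}.

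For the congruence $l_N \equiv l_S ~ (\modulo ~ n/2)$, I will use that $R = \langle -a_n \rangle$ is cyclic of order $n$ (since $n/2$ is even), generated by $g_0 := -a_n$, with $g_0^2 = a_n^2$, and that $g_0 \cdot S = -S = N$. Hence $R_S = R_N = \langle a_n^2 \rangle \cong \Z_{n/2}$, and Lemma \ref{lemma: elementary lemma on isotropy representation} applied to $g_0$ yields $L_N \cong {}^{g_0} L_S$ as $R_N$-representations. Since $R$ is abelian, conjugation by $g_0$ acts trivially on $R_S$, so the characters of $L_S$ and $L_N$ on $\langle a_n^2 \rangle$ coincide, giving $l_N \equiv l_S ~ (\modulo ~ n/2)$.

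For the triviality of $c_1(L)$, I first read off from Table \ref{table: all about K_R} that $D_R = |e^0|$, $d^{-1} = S$, and $R_{d^0} = R_{d^1} = \langle \id \rangle$. Therefore $p_\vect([L])$ is determined entirely by $L_S \cong \Omega(l_S)$, the remaining two entries being one-dimensional representations of the trivial group. By Theorem \ref{main: not by isotropy and chern class}, it then suffices to produce a single line bundle $L'$ with $p_\vect([L']) = p_\vect([L])$ and $c_1(L') = 0$.

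The construction will take $L' = S^2 \times \C$ with $R$-action $g \cdot (x, v) = (gx, \chi(g) v)$, where $\chi : R \to U(1)$ is the character determined by $\chi(g_0) := \exp(2\pi i \tilde{l}_S / n)$ for any integer lift $\tilde{l}_S \in \Z$ of $l_S \in \Z_{n/2}$; this is well-defined because $\chi(g_0^n) = \exp(2\pi i \tilde{l}_S) = 1$. Then $\chi(a_n^2) = \exp(2\pi i \tilde{l}_S /(n/2))$, so $L'_S \cong \Omega(l_S)$, matching the $p_\vect$-datum of $L$. Being topologically trivial, $L'$ satisfies $c_1(L') = 0$, and Theorem \ref{main: not by isotropy and chern class} then forces $c_1(L) = 0$. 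No serious obstacle arises; the only point that requires care is verifying that the prescribed character of $R_S$ extends to a character of $R$, which is automatic since $R/R_S \cong \Z_2$ and $U(1)$ is divisible.
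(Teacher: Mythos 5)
Your proposal is correct and takes essentially the same route as the paper: the congruence is obtained there too from the fact that $R$ is cyclic, and the representative with trivial Chern class that the paper builds --- $\amalg_{q \in B} |\lineK_{R,q}| \times V_q$ with $-a_n$ acting by $c_0 = \exp(2\pi l_S \sqrt{-1}/n)$, glued by the identity clutching map --- is exactly your product bundle $S^2 \times \C$ twisted by the character $\chi.$ The final step, invoking Theorem \ref{main: not by isotropy and chern class} to transfer $c_1(L') = 0$ to every bundle with the same isotropy data, is identical.
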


\begin{proof}
Similarly to Lemma \ref{lemma: line D_n-bundle} and Lemma
\ref{lemma: line Z_nxZ-bundle}, the first statement can be proved by
using the fact that $R$ is a cyclic group. To prove the second
statement, we would construct a line bundle $L^\prime$ in $\Vect_R
(S^2)$ such that $L_q^\prime \cong \Omega(l_q).$ Put $c_0 = \exp
\big( \frac {2 \pi l_S \sqrt{-1} ~} {n} \big)$ so that $c_0^2 =
\xi_0^{l_S}$ where $\xi_0 = \exp \big( \frac {2 \pi \sqrt{-1} ~}
{n/2} \big).$ Pick the $R$-bundle $V_B$ over $B$ such that
\begin{equation*}
-a_n \cdot v = c_0 v
\end{equation*}
for $q \in B$ and $v \in V_q = \C.$ And, define $F_{V_B}$ as
$\amalg_{q \in B} ~ |\lineK_{R, q}| \times V_q$ such that
\begin{equation*}
g \cdot (\bar{x}, v) = (g \cdot \bar{x}, g \cdot v)
\end{equation*}
for $g \in R,$ $q \in B,$ $\bar{x} \in |\lineK_{R, q}|,$ $v \in
V_q.$ Then, the remaining is the same with Lemma \ref{lemma: line
Z_nxZ-bundle}. \qed
\end{proof}

\section{Appendix: representation extension}
\label{section: representation extension}

Let $N_0$ and $N_2$ be compact Lie groups such that $N_0 \lhd N_2$
and $N_2 / N_0 \cong \Z_m.$ Let $a_0$ be a fixed generator of
$N_2/N_0,$ and let $\Omega(l)$ be the representation defined by
\begin{equation*}
N_2 /N_0 \times \C \rightarrow \C, \quad (a_0, z) \mapsto \exp( 2
\pi l \sqrt{-1} / m ) z
\end{equation*}
for $l \in \Z_m.$ We also consider $\Omega(l)$ to be an
$N_2$-representation via the projection $N_2 \rightarrow N_2/N_0.$
Then, we obtain the following result from \cite{CMS}:

\begin{theorem} \label{theorem: extension by cyclic}
For $U \in \Irr(N_0),$ if the character of $U$ is fixed by $N_2,$
then there exists an $N_2$-extension of $U.$ If $\bar{U}$ is an
$N_2$-extension of $U,$ then the number of mutually nonisomorphic
$N_2$-extensions of $U$ is $m$ and they are $\bar{U} \otimes
\Omega(l)$ for $l \in \Z_m.$
\end{theorem}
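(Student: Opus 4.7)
The plan is to reduce the existence problem to lifting a single operator representing the class of $a_0$ in $N_2/N_0$, and then to apply Schur's Lemma to control the remaining freedom. First I would exploit the $N_2$-invariance of $\chi_U$: it implies that the conjugate $N_0$-representation $^{a_0} U$ (Definition \ref{definition: conjugate representation}) has the same character as $U$, hence is $N_0$-isomorphic to $U$. Unraveling the definition yields an operator $T \in \GL(U)$ satisfying the conjugation relation
\[
T \rho(h) T^{-1} = \rho(a_0 h a_0^{-1}) \quad \text{for all } h \in N_0,
\]
where $\rho$ denotes the $N_0$-action on $U$. The natural candidate is $\bar\rho(a_0^k h) := T^k \rho(h)$, and the only obstruction to well-definedness is compatibility with the relation $a_0^m \in N_0$, namely $T^m = \rho(a_0^m)$.

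For existence, I would iterate the conjugation relation to see that $T^m \rho(a_0^m)^{-1}$ commutes with $\rho(N_0)$, whence Schur's Lemma forces $T^m = c \cdot \rho(a_0^m)$ for some scalar $c \in \C^*$. Replacing $T$ by $c^{-1/m} T$ (any choice of $m$-th root) preserves the conjugation relation and enforces the normalization $T^m = \rho(a_0^m)$, so $\bar\rho$ becomes a well-defined homomorphism $N_2 \to \GL(U)$, giving the desired $N_2$-extension $\bar{U}$.

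For the classification, suppose $\bar{U}$ and $\bar{U}'$ are two $N_2$-extensions with operators $T$ and $T'$ representing $a_0$. Both satisfy the same conjugation relation with $N_0$, so $T^{-1} T'$ is $N_0$-linear, hence scalar by Schur: $T' = \zeta T$ for some $\zeta \in \C^*$. The common normalization $(T')^m = \rho(a_0^m) = T^m$ then forces $\zeta^m = 1$, so $\zeta = \exp( 2 \pi l \sqrt{-1} / m )$ for a unique $l \in \Z_m$. Unwinding definitions, this identification is exactly an $N_2$-isomorphism $\bar{U}' \cong \bar{U} \otimes \Omega(l)$. The pairwise non-isomorphism of the $m$ extensions $\bar{U} \otimes \Omega(l)$ is the same argument run backwards: any $N_2$-isomorphism between them is $N_0$-linear and thus scalar by Schur, and compatibility with the $a_0$-action forces $\exp( 2 \pi l \sqrt{-1} / m ) = 1$.

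The only subtlety I anticipate is the rescaling step: the choice of $m$-th root of $c^{-1}$ is ambiguous up to $\mu_m$, but this ambiguity is precisely what is absorbed by tensoring with $\Omega(l)$, which is why the count comes out as exactly $m$ extensions rather than one or a continuum. No deeper obstruction arises because $N_2/N_0$ is cyclic, so only a single lift $T$ (rather than a system of compatible lifts indexed by a larger quotient) needs to be normalized.
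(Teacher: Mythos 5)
Your proof is correct. Note, however, that the paper does not actually prove this theorem internally: its ``proof'' consists of citing \cite[Theorem 3.2, Proposition 3.1]{CMS}. What you have written is a self-contained version of the standard Clifford-theory argument for a cyclic quotient, which is presumably close to what the cited reference does: lift the generator $a_0$ to an intertwiner $T$ between $U$ and its $a_0$-conjugate, observe via Schur that $T^m\rho(a_0^m)^{-1}$ is a scalar, rescale by an $m$-th root to kill the obstruction (this is where $N_2/N_0$ being cyclic, i.e.\ the vanishing of the relevant $H^2(\Z_m,\C^*)$-class, is used), and then classify all lifts as $\zeta T$ with $\zeta^m=1$. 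Two small points you leave implicit but which are routine: (i) continuity of the resulting homomorphism $\bar\rho$ on $N_2$ is automatic because $N_0$ is open (finite index) and $\bar\rho$ is continuous on each coset; (ii) in the uniqueness step, before asserting that $T$ and $T'$ satisfy ``the same'' conjugation relation you should first identify $\res_{N_0}^{N_2}\bar U'$ with $(U,\rho)$ by an $N_0$-isomorphism, after which $(T')^m=\rho(a_0^m)$ holds automatically since $a_0^m\in N_0$. Neither point affects the validity of the argument.
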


\begin{proof}
By \cite[Theorem 3.2.]{CMS}, $U$ has $m$ mutually nonisomorphic
$N_2$-extensions. Call one of them $\bar{U}.$ By \cite[Proposition
3.1.]{CMS} and its proof, each extension of $U$ is expressed as
$\bar{U} \otimes \Omega(l)$ for some $l \in \Z_m.$ \qed
\end{proof}

\begin{corollary}
 \label{corollary: expressed by extensions}
Let $U$ be an irreducible $N_0$-representation whose character is
fixed by $N_2,$ and $\bar{U}$ be an $N_2$-extension of $U.$ If $W$
be an $N_2$-representation such that $\res_{N_0}^{N_2} W$ is
$U$-isotypical, $W$ is a direct sum of $\bar{U} \otimes
\Omega(l)$'s.
\end{corollary}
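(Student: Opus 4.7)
My plan is to reduce the statement to a question about irreducible $N_2$-summands and then invoke Frobenius reciprocity together with Theorem~\ref{theorem: extension by cyclic}. By complete reducibility of finite-dimensional representations of the compact group $N_2$, I can decompose $W = \bigoplus_i V_i$ into $N_2$-irreducibles. Since $\res_{N_0}^{N_2} W$ is $U$-isotypical, each restriction $\res_{N_0}^{N_2} V_i$ is a direct summand of a $U$-isotypical representation, hence itself $U$-isotypical. Therefore the task reduces to showing that any irreducible $N_2$-representation $V$ with $\res_{N_0}^{N_2} V$ $U$-isotypical is isomorphic to $\bar{U} \otimes \Omega(l)$ for some $l \in \Z_m$.

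To identify the possible $V$, I plan to compute the induced representation $\ind_{N_0}^{N_2} U$. Using the projection (tensor) formula $\ind_{N_0}^{N_2} (\res_{N_0}^{N_2} \bar{U}) \cong \bar{U} \otimes \ind_{N_0}^{N_2} \mathbf{1}_{N_0}$ and the fact that $\ind_{N_0}^{N_2} \mathbf{1}_{N_0} \cong \bigoplus_{l \in \Z_m} \Omega(l)$ because $N_2/N_0 \cong \Z_m$ is cyclic, I obtain
\begin{equation*}
\ind_{N_0}^{N_2} U \;\cong\; \bigoplus_{l \in \Z_m} \bar{U} \otimes \Omega(l).
\end{equation*}
By Theorem~\ref{theorem: extension by cyclic} the summands on the right are $m$ mutually nonisomorphic irreducible $N_2$-representations, so this is the full isotypic decomposition of $\ind_{N_0}^{N_2} U$.

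Now for the irreducible $V$ above, Frobenius reciprocity gives
\begin{equation*}
\Hom_{N_2}\!\bigl(V,\, \ind_{N_0}^{N_2} U \bigr) \;\cong\; \Hom_{N_0}\!\bigl(\res_{N_0}^{N_2} V,\, U\bigr),
\end{equation*}
which is nonzero since $\res_{N_0}^{N_2} V$ is a nontrivial multiple of $U$. Hence $V$ embeds as a summand of $\ind_{N_0}^{N_2} U$, and by the identification above $V \cong \bar{U} \otimes \Omega(l)$ for some $l \in \Z_m$. Assembling the $V_i$'s yields the claimed decomposition of $W$.

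The only point requiring a little care is the computation $\ind_{N_0}^{N_2} \mathbf{1}_{N_0} \cong \bigoplus_{l} \Omega(l)$; this is standard for $N_2/N_0$ cyclic, and follows from the character formula for induction, or equivalently from the regular representation decomposition of $\Z_m$ pulled back along $N_2 \to N_2/N_0$. No step appears to be a genuine obstacle, as the whole argument is a routine application of Clifford-type reasoning once Theorem~\ref{theorem: extension by cyclic} has pinned down the full list of $N_2$-extensions of $U$.
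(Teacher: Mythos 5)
Your proof is correct and follows essentially the same route as the paper: reduce to an irreducible summand, identify $\ind_{N_0}^{N_2} U$ with $\bigoplus_{l \in \Z_m} \bar{U} \otimes \Omega(l),$ and conclude by Frobenius reciprocity that the summand embeds in the induced representation. The only (minor) difference is that you obtain the decomposition of $\ind_{N_0}^{N_2} U$ via the projection formula and the regular representation of $\Z_m,$ whereas the paper gets it by applying Frobenius reciprocity to each $\bar{U} \otimes \Omega(l)$ and an implicit dimension count; both are valid.
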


\begin{proof}
First, we prove that the induced representation $\ind_{N_0}^{N_2} U$
is isomorphic to the direct sum $\oplus_{l \in \Z_m} (\bar{U}
\otimes \Omega(l)).$ By Frobenius reciprocity, $\Hom_{N_2} ( \bar{U}
\otimes \Omega(l), \ind_{N_0}^{N_2} U ) \cong \Hom_{N_0} (
\res_{N_0}^{N_2} \bar{U} \otimes \Omega(l), U)$ is one-dimensional,
and this means that each $\bar{U} \otimes \Omega(l)$ for $l \in
\Z_m$ is a subrepresentation of $\ind_{N_0}^{N_2} U$ by Schur's
Lemma. So, $\ind_{N_0}^{N_2} U$ is isomorphic to the direct sum
$\oplus_{l \in \Z_m} (\bar{U} \otimes \Omega(l)).$

We may assume that $W$ is irreducible. We only have to show that $W$
is one of $\bar{U} \otimes \Omega(l)$'s. Since $\res_{N_0}^{N_2} W$
is $U$-isotypical, $\res_{N_0}^{N_2} W \cong lU$ for some integer
$l.$ By Frobenius reciprocity, $\Hom_{N_2} ( W, \ind_{N_0}^{N_2} U )
\cong \Hom_{N_0} ( \res_{N_0}^{N_2} W, U).$ Since $\res_{N_0}^{N_2}
W$ is isomorphic to $lU,$ we obtain that $\Hom_{N_0} (
\res_{N_0}^{N_2} W, U)$ is $l$-dimensional by Schur's Lemma. But,
since $\bar{U} \otimes \Omega(l)$'s are all different and $W$ is
irreducible, Schur's Lemma says that $\Hom_{N_2} ( W,
\ind_{N_0}^{N_2} U )$ is at most one-dimensional, i.e. $l \le 1.$
Therefore, $l$ is equal to 1, and this gives a proof. \qed
\end{proof}

\end{document}